\theoremstyle{plain}
\newtheorem{theorem}{Theorem}[section]
\newtheorem{lemma}{Lemma}
\newtheorem*{lemma*}{Lemma}
\newtheorem{corollary}{Corollary}
\newtheorem{proposition}{Proposition}
\newtheorem{conjecture}{Conjecture}
\DeclareMathOperator{\iid}{\stackrel{iid}{\sim}}
\DeclareMathOperator{\st}{\text{ s.t. }}
\DeclareMathOperator{\period}{\text{.}}
\DeclareMathOperator{\comma}{\text{ , }}
\DeclareMathOperator{\as}{\text{ as }}
\DeclareMathOperator{\since}{\text{since }}
\DeclareMathOperator{\Given}{\text{Given }}
\DeclareMathOperator{\Test}{\text{Test }}
\DeclareMathOperator{\vs}{\text{ versus }}
\DeclareMathOperator{\where}{\text{ where }}
\DeclareMathOperator{\by}{\text{by }}
\DeclareMathOperator{\textif}{\text{if }}
\DeclareMathOperator{\otherwise}{\text{otherwise }}
\DeclareMathOperator{\textand}{\text{ and }}
\DeclareMathOperator{\textor}{\text{ or }}
\DeclareMathOperator{\for}{\text{ for }}
\DeclarePairedDelimiter{\ceil}{\lceil}{\rceil}
\DeclareMathOperator{\supp}{supp}
\newcommand{\norm}[1]{\lVert #1 \rVert}
\newcommand{\inner}[2]{\langle #1 , #2 \rangle}
\newcommand{\Normal}{\mathcal{N}}
\DeclareMathOperator{\Poisson}{Poi}
\DeclareMathOperator{\Poi}{Poi}
\newcommand{\Rd}{\mathbb{R}^d}
\newcommand{\R}{\mathbb{R}}
\newcommand{\Zplus}{\mathbb{Z}_+}
\newcommand{\D}{\mathcal{D}}
\newcommand{\Pdist}{\mathcal{P}}
\newcommand{\Pnull}{\mathcal{P}_0}
\newcommand{\Palt}{\mathcal{P}_1}
\DeclareMathOperator{\GS}{GS}
\DeclareMathOperator{\GW}{GW}
\newcommand{\GWnull}{\mathcal{G}_0}
\newcommand{\GWalt}{\mathcal{G}_1^s}
\newcommand{\GWnullTilde}{\tilde{\mathcal{G}}_0}
\newcommand{\GWaltTilde}{\tilde{\mathcal{G}}_1^s}
\newcommand{\Multinomial}{\mathcal{M}}
\newcommand{\Multi}{\Multinomial}
\newcommand{\M}{\Multinomial}
\newcommand{\Dnull}{\mathcal{D}_0}
\newcommand{\Dalt}{\mathcal{D}_1^s}
\newcommand{\DnullTilde}{\tilde{\mathcal{D}}_0}
\newcommand{\DaltTilde}{\tilde{\mathcal{D}}_1^s}
\newcommand{\Tolerance}{\lambda}
\begin{document}

\begin{center}

{\bf{\Large{Testing Imprecise Hypotheses }}}

\vspace*{.2in}

{{
\begin{tabular}{cccc}
Lucas Kania$^{\dagger}$ & Tudor Manole$^{\diamond}$ 
& Larry Wasserman$^{\dagger, \ddagger}$
& Sivaraman Balakrishnan$^{\dagger,\ddagger}$ 
\end{tabular}
}}

\vspace{.15in}

\begin{tabular}{c}
	$^\dagger$Department of Statistics 
    and Data Science, Carnegie Mellon University\\
	$^\diamond$Statistics and Data Science Center, Massachusetts Institute of Technology \\
	$^\ddagger$Machine Learning Department, Carnegie Mellon University \\[0.12in]
\end{tabular}
\begin{tabular}{cc}
   				\texttt{lucaskania@cmu.edu}, \texttt{tmanole@mit.edu},  
   				 \texttt{\{larry,siva\}@stat.cmu.edu} 
\end{tabular}

\vspace{.15in}

\today

\end{center}

\begin{abstract}
Many scientific applications involve testing theories that are only partially specified. This task often amounts to testing the goodness-of-fit of a candidate distribution while allowing for reasonable deviations from it. The {\it tolerant testing} framework provides a systematic way of constructing such tests. Rather than testing the simple null hypothesis that data was drawn from a candidate distribution, a tolerant test assesses whether the data is consistent with any distribution that lies within a given neighborhood of the candidate. As this neighborhood grows, the tolerance to misspecification increases, while the power of the test decreases. In this work, we characterize the information-theoretic trade-off between the size of the neighborhood and the power of the test, in several canonical models. On the one hand, we characterize the optimal trade-off for tolerant testing in the Gaussian sequence model, under deviations measured in both smooth and non-smooth norms. On the other hand, we study nonparametric analogues of this problem in smooth regression and density models. Along the way, we establish the sub-optimality of the classical $\chi^2$-statistic for tolerant testing, and study simple alternative hypothesis tests.
\end{abstract}

\etocdepthtag.toc{mtchapter}
\etocsettagdepth{mtchapter}{section}
\etocsettagdepth{mtappendix}{none}
\etocsettagdepth{mtreferences}{section}
{
\renewcommand{\baselinestretch}{0}
\normalsize
\parskip=0em
\renewcommand{\contentsname}{\normalsize Table of contents}
\tableofcontents
}

\pagebreak

\section{Introduction}\label{sec:introduction}

Evaluating whether data support or refute a hypothesis is a fundamental task in
the sciences. Statistical hypothesis testing provides a formal framework for this problem. Typically, a researcher states a null hypothesis
which represents the absence of a scientific discovery, and designs a test to assess it.
A rejection of the null hypothesis thus suggests a discovery of the scientific phenomenon of interest.

This process conflates statistical and scientific discovery. To test a hypothesis, one specifies a model representing \textit{no scientific discovery} and checks whether the observed data aligns with its predictions. If the model accurately reflects the underlying process, statistical rejection implies scientific discovery. In practice, however, models are imperfect, and rejection simply indicates a discrepancy between the model and the data, not necessarily a discovery.

To mitigate this issue,  practitioners attempt to account for acceptable deviations from the \textit{no discovery} model, thus requiring stronger evidence for rejection.
A prominent example is the field of experimental high-energy physics, in which
datasets are collected from large-scale particle colliders with the aim of probing the {\it Standard Model} of particle physics~\citep{van2014role}.
The Standard Model is a classification of elementary particles and the forces which act upon them,
which leads to predictions of the probability distribution $P_0$ governing the kinematic behavior of particles within a collider. Given a set of experimental observations $X_1,\dots,X_n  \sim P$ of such particles, a sought-after goal  is to test
the {goodness-of-fit} hypothesis
\begin{align}\label{eq:GOF}
H_0: P = P_0, \vs H_1: d(P,P_0) > \epsilon,
\end{align}
where $d$ is a divergence between probability distributions, and $\epsilon > 0$
is a separation parameter. A rejection of the null indicates a departure
from the Standard Model, and thus a discovery of new physics.  One of the difficulties in testing this hypothesis lies in the fact that $P_0$ is unavailable in closed form, and is typically approximated through simulators. These simulators themselves rely on parameters which are either inferred from past analyses, or are computed mathematically, typically using approximations. As a result of these various imperfections, physicists acknowledge that the putative null distribution $P_0$ may be slightly misspecified.
The degree of misspecification is quantified through the notion of {\it systematic uncertainties}~\citep{heinrich2007systematic}, and an important goal is to construct goodness-of-fit tests which are tolerant to deviations of the null hypothesis within the confines of a systematic uncertainty set.

Motivated by such applications---which we return
to in Section~\ref{sec:null_approximation}---the goal of our work is to develop  methods
for testing
{\it imprecise} goodness-of-fit hypotheses, and to sharply
characterize the information-theoretic trade-off between statistical power
and  imprecision.
We   study these questions for several of
canonical divergence
functionals and model classes, such as high-dimensional location families and smoothness classes.
Although  each problem will involve different optimal tests, and different
information theoretic trade-offs, we identify a variety of common
phenomena across these problems which allow us to reason more broadly about
null misspecification in goodness-of-fit testing.

To formalize our problem, we adopt a framework known
as {\it tolerant testing}~\citep{ingsterTestingHypothesisWhich2001,parnasTolerantPropertyTesting2006}.
Unlike the goodness-of-fit problem~\eqref{eq:GOF}, the tolerant testing problem consists of the hypotheses
\begin{align}\label{eq:generic_tolerant_testing}
H_0: d(P,P_0)\leq \epsilon_0, \vs H_1: d(P,P_0) \geq \epsilon_1,
\end{align}
where $\epsilon_0 > 0$ is a {\it tolerance} or {\it imprecision} parameter, and $\epsilon_1-\epsilon_0>0$
is a {\it separation} parameter.
By analogy with the physical applications described previously, one
can think of $\epsilon_0$ as defining a {\it systematic} uncertainty
ball $\{\tilde P_0: d(\tilde P_0,P_0) \leq \epsilon_0\}$
centered at $P_0$, and the goal
is to detect deviations not only from $P_0$, but from any distribution
which lies in this ball.

We study tolerant testing  via the {\it minimax framework} for hypothesis testing,
which has its roots in the foundational works of \citet{mannChoiceNumberClass1942}, \citet{ingsterMinimaxNonparametricDetection1982,ingsterAsymptoticallyMinimaxHypothesisI1993}, \citet{ermakovAsymptoticallyMinimaxTests1990,ermakovMinimaxDetectionSignal1991}, and \citet{lepskiMinimaxNonparametricHypothesis1999}.
Given a tolerance parameter $\epsilon_0$,
the main figure of merit in the minimax framework  is
the notion of {critical separation}, also called the critical separation radius,
defined as the smallest
distance $\epsilon_1-\epsilon_0$
for which the hypotheses can be tested with nontrivial power, uniformly over
all elements of the null and alternative
classes; see~ Section \ref{sec:minimax_framework} below for
precise definitions. Any test which
achieves this condition is said to be
{minimax optimal}.

The minimax criterion stands in contrast
to more traditional frameworks
to quantifying the difficulty
of hypothesis testing problems,
which typically quantify the relative efficiency of
tests along contiguous alternatives.
These two viewpoints
both have their strengths and drawbacks~\citep{bickelTailormadeTestsGoodness2006}.
The contiguous framework is well-suited
to problems where the set of alternatives
is believed to lie in a small set of directions
that can accurately be captured by one-dimensional
 paths of alternatives.
On the other hand,
analyzing power against such paths
can be misleading in  genuinely
nonparametric problems,
as they typically cannot capture the full dimensionality
of the space of alternatives, and can therefore
lead to conclusions which obscure the curse of
dimensionality~\citep{arias-castroRememberCurseDimensionality2018}.
We find the minimax framework to be particularly
illuminating for analyzing the tolerant
testing problem, since it allows
us to quantify the interplay
between the geometry and dimensionality
of the composite null and alternative classes
as a function of the parameter $\epsilon_0$.



From a more technical lens, the minimax
perspective on tolerant
testing provides a common framework
for analyzing
two seemingly-distinct but well-studied
problems in the minimax theory literature:
\begin{enumerate}
\item[(a)] {\it Testing} the
  goodness-of-fit hypothesis~\eqref{eq:GOF}.
\item[(b)] {\it Estimating} the divergence functional
$d(P,P_0)$.
\end{enumerate}
It is clear that problem (a)
is a special case of tolerant testing.
To see the connection to problem (b), notice
that any estimator $\hat{d}_n$ of the divergence
functional $d(P,P_0)$   induces
a tolerant test,
which rejects the null hypothesis
when $\hat d_n$ takes on values
which are  appreciably larger than
$\epsilon_0$. In particular,
if the minimax
rate of
{\it estimating} $d(P,P_0)$ is $\delta_n$, then
there exists  a
tolerant {\it test} with nontrivial
power whenever
\begin{equation}
\epsilon_1 - \epsilon_0 \geq C \cdot \delta_n,
\end{equation}
for a sufficiently large constant $C > 0$.
This reduction is described in further
detail in  Section \ref{sec:functional_estimation_rates}, though
is well-known, and forms
a common strategy for certifying
lower bounds on the minimax
estimation risk~\citep{tsybakovIntroductionNonparametricEstimation2009}.
In our context, this reduction
is typically sharp when $\epsilon_0$ takes
on sufficiently large values,
thus  one can
generally interpret
the critical
separation
as interpolating between the minimax testing rate
of problem (a) when $\epsilon_0$ is small, and the minimax
estimation rate (b) when $\epsilon_0$ is large.
In between these two regimes, there
sometimes appears
an {\it interpolation} regime,
which is unique to tolerant
testing. In what follows,
we provide a more detailed
view into these phenomena, before
drawing further comparisons to prior work.

\paragraph*{Outline}  Section \ref{sec:overview} provides an overview of our main results, while  Section \ref{sec:minimax_framework} introduces the minimax framework for tolerant testing. In  Section \ref{sec:gaussian_sequece}, we characterize the critical separation in the Gaussian sequence model. In  Sections \ref{sec:gaussian_white_noise} and \ref{sec:testing_densities}, we characterize the critical separation for the Gaussian white noise model and densities separated in the $L_1$ norm. Finally, in  Section \ref{sec:null_approximation}, we comment on the application of tolerant testing for handling systematic uncertainties in high-energy physics.

\paragraph*{Notation} We write $a_n \lesssim b_n$ if there exists a positive constant $C$ such that $a_n \leq C \cdot b_n$ for all $n$ large enough. Analogously, $a_n \asymp b_n$ denotes that $a_n \lesssim b_n$ and $b_n \lesssim a_n$. Furthermore, given a distribution $\pi$, we use $m_l(\pi)=E_{p\sim \pi}[p^l]$ to denote its $l$-th moment. Additionally, let $\delta_A(a)$ equal $1$ if $a \in A$, and $0$ otherwise. Similarly, we use the following notation $I(\text{condition})$ to denote the indicator function that evaluates to one whenever the condition is true and returns zero otherwise. Whenever clear from context, we omit stating the random variable in expectations and variances: $E_{P}[T] =  E_{X\sim P}[T(X)]$ and $V_{P}[T] = V_{X\sim P}[T(X)]$.

\subsection{Overview of main results}\label{sec:overview}
We study the minimax tolerant testing problem
in the following three settings:
\begin{enumerate}
\item[(i)] The Gaussian sequence model, under the $\ell_p$-metrics ( Section \ref{sec:gaussian_sequece}).
\item[(ii)]
A smooth
Gaussian white noise model,
under the $L_1$ metric ( Section \ref{sec:gaussian_white_noise}).
\item[(iii)] A smooth density testing model,
under the $L_1$ metric ( Section \ref{sec:testing_densities}).
\end{enumerate}
We derive minimax upper bounds across
these various problems, and
we complement them
with matching minimax lower bounds in many
cases. Along the way, we
  identify the suboptimality
of certain classical goodness-of-fit tests
for these  problems, and provide
alternative tests which are both
practical and minimax-optimal.

Let us illustrate our results
in the simplest case of the Gaussian sequence
model (i), which is a canonical
model for the study of minimax
hypothesis testing problems, and serves
as a convenient benchmark for more general nonparametric
regression problems~\citep{ingsterNonparametricGoodnessofFitTesting2003,johnstone2019}. In this setting, one
is given a $d$-dimensional, homoscedastic  observation of the form
\begin{equation}\label{eq:gsn_seq_model_expo}
X \sim \mathcal{N}(v,I_d/n),
\end{equation}
for some $v \in \mathbb{R}^d$
and $n,d \geq 1$.
Let us first consider the $\ell_1$-tolerant
testing problem:
\begin{equation}\label{eq:gsn_seq_expo}
H_0: \norm{v}_1 \leq \epsilon_0,
\vs H_1: \norm{v}_1 \geq \epsilon_1.
\end{equation}
It has been known since the work
of~\cite{ingsterAsymptoticallyMinimaxHypothesisI1993} that the classical $\chi^2$-test
is minimax optimal for this problem
when $\epsilon_0=0$, with corresponding
critical radius scaling as $d^{3/4}/\sqrt n$.
Surprisingly, however,
we will show that the $\chi^2$-test
is generally suboptimal
for the tolerant testing problem,
and we will instead show that
a different test achieves the following
minimax hypothesis testing rate.

\begin{theorem}[Informal version of Theorem \ref{thm:gaussian_testing_l1} in  Section \ref{sec:simple_null_rates}]\label{thm:l1_informal}
Given $\epsilon_0 \geq 0$, let $\Tolerance=\sqrt{n}\epsilon_0$.
Then, the
hypotheses~\eqref{eq:gsn_seq_expo}  can be tested with nontrivial power if and only if
\\[0.02in]
$$\epsilon_1 - \epsilon_0 \gtrsim
\frac{1}{\sqrt n}\cdot
\left\{
\begin{array}{lll}
\displaystyle d^{3/4} ,
& \textif 0\leq \Tolerance \lesssim \sqrt d,
 &~~ \text{(Free tolerance regime)}\\[2ex]
\displaystyle d^{1/2} \cdot \Tolerance^{1/2},
& \textif \sqrt d  \lesssim  \Tolerance \lesssim d,
& ~~\text{(Interpolation regime)} \\[2ex]
\displaystyle d ,
& \textif \Tolerance\asymp d,
&~~ \text{(Functional estimation regime)}
\end{array}
\right.
$$  where we hide polylogarithmic factors in $d$.
\end{theorem}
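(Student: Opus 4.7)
The plan is to establish matching upper and lower bounds in each of the three regimes, treating them as three semi-separate problems tied together by a common framework. The critical separation in the free-tolerance regime coincides with the simple-null rate, while in the functional estimation regime it coincides with the minimax estimation rate for the functional $v\mapsto \|v\|_1$; the interpolation regime arises as the ``crossover'' of the two. I would prove the theorem by constructing one test and one prior pair per regime.

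\textbf{Upper bounds (three tests).} For the \emph{functional estimation} regime, I would invoke the plug-in reduction mentioned in the introduction: it suffices to exhibit an estimator $\widehat T_n$ of $\|v\|_1$ with worst-case risk of order $d/\sqrt n$ (up to polylogarithmic factors), and reject when $\widehat T_n>(\epsilon_0+\epsilon_1)/2$. Such an estimator can be built coordinate-wise from a best polynomial approximation of $|x|$ on a bounded interval (the Cai--Low/Lepski--Nemirovski--Spokoiny construction), estimated unbiasedly through Hermite polynomials, with a separate truncation step to handle large $|X_i|$; the standard analysis yields approximation error $O(1/k)$ and stochastic error $O(\sqrt{c^{k}/n})$ per coordinate, balancing at $k\asymp \log n$. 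For the \emph{free tolerance} regime, my plan is to use a centered second-moment (i.e.\ $\chi^2$-type) statistic, but chosen to be \emph{insensitive to the admissible null mass}: specifically, calibrate the rejection threshold using the worst-case null variance $\sup_{\|v\|_1\le\epsilon_0}V_v[\|X\|_2^2]$, and verify that when $\lambda\lesssim\sqrt d$ this worst case still matches the simple-null calibration up to constants, so the Ingster rate $d^{3/4}/\sqrt n$ is preserved. For the \emph{interpolation} regime, I would use a hybrid test that combines the two ideas: threshold the $X_i$'s at a level $\tau\asymp\sqrt{\log d/n}$, use the plug-in estimator on the large coordinates (which contribute $\|v\|_1$-mass efficiently), and a $\chi^2$-type statistic on the small ones.

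\textbf{Lower bounds (matching priors).} For each regime, I would exhibit priors $\pi_0$ supported on $\{\|v\|_1\le\epsilon_0\}$ and $\pi_1$ supported on $\{\|v\|_1\ge\epsilon_1\}$ whose induced mixtures under $X\sim\mathcal N(v,I_d/n)$ have $\chi^2$-divergence bounded by a constant, and then invoke the standard Le Cam / Ingster minimax testing inequality. In the \emph{free tolerance} regime, I would take $\pi_0=\delta_0$ and $\pi_1$ to be the Ingster sparse prior: choose $k$ coordinates at random and set them to $\pm\tau$ with $k\tau=\epsilon_1$, optimizing $(k,\tau)$ subject to $nk\tau^4\lesssim 1$. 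In the \emph{functional estimation} regime, I would use the moment-matching duality behind the best polynomial approximation of $|x|$: construct two symmetric probability measures $\mu_0,\mu_1$ on a bounded interval with matching moments up to degree $\asymp\log n$ but $\int |x|\,d\mu_1-\int|x|\,d\mu_0\gtrsim 1/\log n$, scale appropriately to the problem's noise level $1/\sqrt n$, and take $\pi_j=\mu_j^{\otimes d}$ after a conditioning/truncation step so that $\|v\|_1$ concentrates near $\epsilon_0$ and $\epsilon_1$ respectively; matching moments kill the leading terms of the $\chi^2$-tensorization, yielding the $d/\sqrt n$ rate. The \emph{interpolation} regime is obtained by interpolating between these two constructions: place a ``baseline'' contribution of deterministic mass on many coordinates to reach $\|v\|_1\approx\epsilon_0$ on both sides, and add a sparse sign-perturbation of size controlled by $\lambda$.

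\textbf{Main obstacle.} The hardest step is the functional estimation lower bound and its smooth merging with the interpolation regime. Reducing tolerant testing to a two-point problem via polynomial moment duality requires a careful conditioning argument so that the priors genuinely live inside $\{\|v\|_1\le\epsilon_0\}$ and $\{\|v\|_1\ge\epsilon_1\}$ (not merely on average); the usual trick is to show by Bernstein-type concentration that the failure probability is small and then pass from the original priors to their conditioned versions while losing only a constant in the $\chi^2$ bound. A secondary technical difficulty is verifying that the free-tolerance upper bound really allows the full range $\lambda\lesssim\sqrt d$: this reduces to controlling the variance inflation of the $\chi^2$-statistic under a worst-case null vector $v$ with $\|v\|_1=\epsilon_0$, which is an extremal problem one can solve by noting that the worst case is attained on nearly one-sparse vectors of magnitude $\epsilon_0$ and checking directly. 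Once these two pieces are in place, the three regimes glue together at the stated transition points $\lambda\asymp\sqrt d$ and $\lambda\asymp d$.
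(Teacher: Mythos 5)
Your lower-bound plan is broadly in line with the paper (two-point priors for the free regime; Bernstein polynomial duality for the functional-estimation regime; a constrained moment-matching construction in between), but the upper-bound plan contains a genuine gap in the free-tolerance regime. You propose to use a $\chi^2$-type statistic $\|X\|_2^2$ calibrated at the worst-case null and assert that this preserves the Ingster rate $d^{3/4}/\sqrt n$ for all $\lambda\lesssim\sqrt d$. That claim is false, and the extremal analysis you sketch actually proves the opposite. Under the null $\{\|v\|_1\le\epsilon_0\}$ the worst case for the $\chi^2$ statistic is the one-sparse vector with a single coordinate equal to $\epsilon_0$, giving $E_v[\|X\|_2^2]-d\sigma^2=\|v\|_2^2=\epsilon_0^2$; any valid threshold must exceed roughly $\epsilon_0^2/2$. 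Under a spread-out alternative with $\|v\|_1=\epsilon_1$ one has $\|v\|_2^2=\epsilon_1^2/d$, so to separate means you need $\epsilon_1^2/d\gtrsim\epsilon_0^2$, i.e.\ $\epsilon_1\gtrsim\sqrt d\,\epsilon_0$. At the free-tolerance rate $\epsilon_1\asymp d^{3/4}\sigma$ this forces $\epsilon_0\lesssim d^{1/4}\sigma$, i.e.\ $\lambda\lesssim d^{1/4}$, \emph{not} $\lambda\lesssim\sqrt d$. The paper proves exactly this (its Lemma~\ref{lemma:chi_squared_test_under_l1}, ``Suboptimality of the chi-squared test''), and the point is central: the $\ell_2$-based statistic cannot be repaired by threshold calibration because its \emph{mean}, not its variance, is inflated by sparse nulls.

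What is actually needed in the free-tolerance regime (and what the paper does) is to abandon the $\chi^2$ statistic and use the $\ell_1$ plug-in statistic $T_1(X)=\|X\|_1-E_{P_0}\|X\|_1$, calibrated at the worst-case null quantile. Because the bias of $T_1$ as an estimator of $\|v\|_1$ is bounded by $\|v\|_1\wedge \mu_1\sigma d$ while its variance is $\asymp \sigma^2 d$ uniformly in $v$, the worst-case null mean grows only like $\epsilon_0$, not $\epsilon_0^2$, and the same single test is then optimal across all three regimes (free tolerance, interpolation, functional estimation), with the regime boundaries emerging from a bias–variance comparison for $T_1$ itself. This is a cleaner route than your three-separate-tests plan and, unlike your plan, correct in the free-tolerance regime. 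Your separate-test strategy for the functional-estimation regime (Cai–Low polynomial estimator plus thresholding) would work and even saves a $\sqrt{\log d}$ over the plug-in, and your hybrid test for the interpolation regime might be made to work, but neither is necessary once one notices the plug-in statistic does the whole job.
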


A striking feature of the critical separation under the $\ell_1$-norm  is the existence of
three regimes.
When $\epsilon_0$
takes on small values, the critical
separation remains constant,
and scales as $d^{3/4} /\sqrt n$.
Once again, this matches
the well-known critical separation   for
goodness-of-fit testing under the $\ell_1$ norm~\citep{ingsterAsymptoticallyMinimaxHypothesisI1993}. We refer
to this regime as the {\it free tolerance
regime},  since the goodness-of-fit
rate persists  even as $\epsilon_0$ increases.
Let us emphasize that the existence of
this regime was already known
from the work of~\cite{ingsterTestingHypothesisWhich2001}.
When $\epsilon_0$ crosses the
critical rate $\sqrt {d / n}$,
we enter the {\it interpolation regime},
which exhibits a new rate that
is specific to tolerant testing,
and decays polynomially in~$\epsilon_0$.
Finally, a third
regime occurs when $\epsilon_0 \asymp d / \sqrt n$,
in which case, for the first time,
 the critical separation
$\epsilon_1-\epsilon_0$ coincides with $\epsilon_0$.
The convergence rate of this
regime coincides with the
minimax
rate of estimating the functional $\|v\|_1$, up to polylogarithmic
factors~\citep{caiTestingCompositeHypotheses2011a}.  Although
we do not characterize the $\ell_1$ tolerant
testing rate for larger
values of $\epsilon_0$,
we shall do so for
some other norms discussed below.

The existence of a free tolerance regime
provides a means of quantifying
  the robustness
of goodness-of-fit tests
to null misspecification.
Indeed, one can heuristically
define the {\it tolerance}
of a goodness-of-fit
test as the largest value of $\epsilon_0$
for which the
critical separation of the test
stays constant---namely, the largest
$\epsilon_0$ for which the test remains
powerful when $\epsilon_1$ equals
the goodness-of-fit minimax critical
separation.
In the Gaussian sequence model, Theorem~\ref{thm:l1_informal} shows that the maximal achievable tolerance is $d^{1/2}/\sqrt n$.
Perhaps surprisingly, this tolerance is
not achieved by certain well-known
goodness-of-fit tests:
In Section~\ref{sec:gaussian_sequece},
we will see that the test statistics
$$\chi^2 = \sum_{j=1}^d X_j^2, ~~~\text{and}~~~
 T = \sum_{j=1}^d |X_j|,$$
are both minimax-optimal for
goodness-of-fit testing
(i.e. for problem~\eqref{eq:gsn_seq_expo} with $\epsilon_0=0$), however the
  $\chi^2$-test
merely has tolerance $d^{1/4}/\sqrt n$,
whereas the plugin test $T$
has optimal tolerance $d^{1/2}/\sqrt n$.
Despite its popularity,
the $\chi^2$-test thus has suboptimal
tolerance, and can be improved
by   a simple alternative test
without sacrificing
minimax optimality, at least
when measuring deviations
under the $\ell_1$ norm.

Theorem~\ref{thm:l1_informal} builds upon recent work by \cite{canonnePriceToleranceDistribution2021}, which established closely-related phenomena for tolerant testing in the setting of discrete distributions. Concretely, given $n$ observations from a discrete probability
distribution $P$ supported on $[d]=\{1,\dots,d\}$, and a reference distribution $P_0$,
\cite{canonnePriceToleranceDistribution2021} study the tolerant testing problem
\begin{equation}\label{eq:tolerant_testing_multinomial_l1}
H_0: V(P,P_0) \leq \epsilon_0 \vs H_1 : V(P,P_0) \geq \epsilon_1,
\end{equation}
where $V$ denotes the total variation distance.
They prove that for
$ d\lesssim n $, the hypotheses \eqref{eq:tolerant_testing_multinomial_l1} can be tested with non-trivial power, up to polylogarithmic factors, if and only if
\begin{equation}\label{eq:critical_separation_multinomial_l1}
\epsilon_1-\epsilon_0 \gtrsim
\frac 1 {\sqrt n} \cdot
\left\{
\begin{array}{lll}
 d^{1/4},
& \textif 0\leq \lambda \lesssim  1,
&~~\text{(\textit{Free tolerance regime})} \\[2ex]
 d^{1/4} \cdot \lambda^{1/2},
& \textif 1 \lesssim \lambda \lesssim \sqrt d,
&~~\text{(\textit{Interpolation regime})} \\[2ex]
\displaystyle d^{1/2},
& \textif \lambda \asymp \sqrt{{d}}.
&~~\text{(\textit{Functional estimation regime})}
\end{array}
\right.
\end{equation}
Here again, we can observe that the critical separation interpolates between the goodness-of-fit rate \citep{paninskiCoincidenceBasedTestUniformity2008}, which persists over
a free tolerance region, and the risk of estimating the total variation distance between the distributions \citep{jiaoMinimaxEstimationL_12018a}.
We will later see that this same pattern
also occurs in the
infinite-dimensional models~(ii) and~(iii).

Nevertheless, the interpolation regime does not always need to exist. To illustrate this point, we now turn to a second
setting considered in this paper,
consisting of the hypotheses
\begin{equation}\label{eq:gsn_seq_expo_even_p}
H_0: \norm{v}_p \leq \epsilon_0,
\vs H_1: \norm{v}_p \geq \epsilon_1,
\quad \text{with } p  \text{ an even integer},
\end{equation}
where we continue to work
under the Gaussian sequence model~\eqref{eq:gsn_seq_model_expo}.
Unlike the $\ell_1$ problem
studied previously,
the $\ell_p$ norms arising in
problem~\eqref{eq:gsn_seq_expo_even_p}
are smooth.
It is well-known that
functional estimation
and goodness-of-fit
testing rates tend to coincide
for smooth functionals~\citep{gineMathematicalFoundationsInfinitedimensional2016};
for example, when $p=2$,
it is is known that both the goodness-of-fit testing rate and the functional estimation rate are of the same order $d^{1/4}/\sqrt{n}$, in the Gaussian
sequence model.
It is therefore natural to
expect the critical separation to remain constant for all $\epsilon_0\leq d^{1/4}/\sqrt{n}$, absent any interpolation
regime. The following
result proves that
this is indeed the case, and
 additionally
characterizes the
tolerant testing rate when $\epsilon_0$
exceeds the functional estimation rate.

\begin{theorem}[Informal version of Lemma \ref{lemma:upper_bound_lp_even} in  Section \ref{sec:smooth_lp_norm}]
\label{thm:smooth_informal}
Given $\epsilon_0 \geq 0$, let $\Tolerance=\sqrt{n}\epsilon_0$.
Then, the
hypotheses~\eqref{eq:gsn_seq_expo_even_p}
can be tested with nontrivial power if\begin{equation}
\epsilon_1 - \epsilon_0 \gtrsim
\frac{1}{\sqrt{n}} \cdot
\left\{
\begin{array}{lll}
\displaystyle
d^{1/2p},
& \textif 0\leq \Tolerance \lesssim d^{1/2p},
&~~\text{(Free tolerance regime)}
\\[2ex]
\displaystyle
d^{1/2}\cdot \lambda^{1-p}, &\textif d^{1/2p} \lesssim \Tolerance \lesssim d^{1/p},
\\[2ex]
\displaystyle
d^{1/p - 1/2}, &\textif \Tolerance \asymp d^{1/p}.
\end{array}
\right.
\end{equation}
\end{theorem}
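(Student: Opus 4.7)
Since $p$ is a positive even integer, $\|v\|_p^p = \sum_{j=1}^d v_j^p$ is a polynomial functional of the unknown mean, so the plan is to analyze the plug-in test based on an unbiased Hermite estimator. Concretely, I would set
\[
T := \sum_{j=1}^{d} T_j,\qquad T_j := n^{-p/2}\, H_p\!\bigl(\sqrt n\, X_j\bigr),
\]
where $H_p$ is the probabilist's Hermite polynomial of degree $p$. Using the identity $E[H_p(Z)] = \mu^p$ for $Z\sim \Normal(\mu,1)$, one checks $E_v[T] = \|v\|_p^p$. The test would reject when $T > \tau$ for $\tau = (\epsilon_0^p+\epsilon_1^p)/2$.

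Next, I would compute $\mathrm{Var}_v(T)$ by decomposing each $T_j$ in the Hermite basis around $v_j$ via the addition formula $H_p(\mu+W) = \sum_{k=0}^{p}\binom{p}{k}\mu^{p-k}H_k(W)$ (with $W\sim\Normal(0,1)$) and the orthogonality $E[H_k(W)H_\ell(W)] = k!\,\delta_{k\ell}$. Summing across coordinates gives
\[
\mathrm{Var}_v(T) \;=\; \sum_{k=1}^{p}\binom{p}{k}^{\!2}k!\,\frac{\|v\|_{2(p-k)}^{2(p-k)}}{n^{k}}.
\]
I would then bound each $\|v\|_q^q$ over the null/alternative via the monotonicity $\|v\|_q \leq \|v\|_p$ for $q\geq p$ and the power-mean estimate $\|v\|_q^q\leq d^{1-q/p}\|v\|_p^q$ for $q\leq p$. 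Rewriting with $\lambda=\sqrt n\,\epsilon_0$, every summand takes the shape $\mathrm{const}(d,\lambda)/n^{p}$, and the key fact I would establish is that, throughout the regime $\lambda\lesssim d^{1/p}$, the noise floor $d/n^p$ (the $k=p$ term) is the dominant contribution.

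Given the variance bound, Chebyshev's inequality reduces non-trivial power to the balance condition $(\epsilon_1^p - \epsilon_0^p)^2 \gtrsim \mathrm{Var}_v(T)$, uniformly in $v$. The three rates then emerge from a case split on the signal gap: when $\epsilon_1\gg\epsilon_0$ one has $\epsilon_1^p - \epsilon_0^p \asymp \epsilon_1^p$, and equating $\epsilon_1^p \asymp \sqrt{d/n^p}$ yields the free tolerance rate $\epsilon_1-\epsilon_0 \asymp d^{1/(2p)}/\sqrt n$; when $\epsilon_1\approx \epsilon_0$ the mean-value theorem gives $\epsilon_1^p-\epsilon_0^p \asymp p\,\epsilon_0^{p-1}(\epsilon_1-\epsilon_0)$, and solving against the same $\sqrt{d/n^p}$ noise floor yields $\epsilon_1-\epsilon_0 \asymp \sqrt d\,\lambda^{1-p}/\sqrt n$ throughout the interpolation regime. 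The third case is the boundary $\lambda\asymp d^{1/p}$, where either expression reduces to $d^{1/p-1/2}/\sqrt n$.

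The main technical obstacle I anticipate is verifying that the noise floor really does dominate $\mathrm{Var}_v(T)$ across the interpolation window. Sparse configurations saturate $\|v\|_{2(p-1)}^{2(p-1)}\leq\epsilon_0^{2(p-1)}$, producing a linear-in-signal variance contribution of order $\lambda^{2(p-1)}/n^{p}$ that can compete with $d/n^p$ once $\lambda$ becomes large. The term-wise bound alone only controls this up to $\lambda\lesssim d^{1/(2(p-1))}$, so pushing the upper bound all the way to $\lambda\lesssim d^{1/p}$ requires a sharper worst-case variance argument — exploiting the fact that sparse and dense configurations cannot simultaneously saturate every $\|v\|_q^q$ bound — or a careful identification of the worst alternative class element. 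This is where the bulk of the delicate work in the formal lemma should lie.
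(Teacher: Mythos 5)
Your plan coincides with the paper's argument almost line by line: for $p$ even, the debiased plug-in statistic $T_p$ of \eqref{eq:Tp} is precisely the Hermite estimator $T = \sigma^p\sum_j H_p(X_j/\sigma)$ you write down, and the calibration is done through \zcref{chebyshev_upperbound} (Cantelli). The variance decomposition you obtain from the Hermite addition formula is exact,
\[
\mathrm{Var}_v(T) \;=\; \sum_{k=1}^{p}\binom{p}{k}^{2}k!\,\sigma^{2k}\,\norm{v}_{2(p-k)}^{2(p-k)},
\]
and matches what \zcref{Ingsterlemma3.2} and \zcref{lemma:bound_p_gtr_2} are meant to deliver.

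The obstacle you raise at the end, however, is not a gap you can close with a ``sharper worst-case variance argument,'' and I would encourage you to press on it rather than take the stated theorem at face value. Since $2(p-k) \geq p$ for every $k \leq p/2$, the sparse vector $v^\star = (\epsilon_0,0,\dots,0)$ simultaneously saturates the mean constraint $\norm{v}_p \leq \epsilon_0$ \emph{and} maximizes every term $\norm{v}_{2(p-k)}^{2(p-k)}$ in the variance; there is no competing dense configuration to trade off against, so your identity for $\mathrm{Var}_v(T)$ is already tight at the worst case. The $k=1$ term therefore contributes $p^2\sigma^2\epsilon_0^{2(p-1)}$ to the null variance, and after Cantelli the critical separation of this test cannot fall below $\sigma$; a two-point computation with $v_0=(\epsilon_0,0,\dots,0)$ against $v_1=(\epsilon_1,0,\dots,0)$ shows the same $\sigma$-floor is information-theoretically necessary. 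Thus for $p>2$ the interpolation rate $\sigma\, d^{1/2}\lambda^{1-p}$ can only hold up to $\lambda \asymp d^{1/(2(p-1))}$, after which the rate must plateau at $\sigma$; the third regime $\sigma\, d^{1/p-1/2}$ with $\lambda\asymp d^{1/p}$ lies below this floor and should not be achievable.

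The source of the discrepancy is in \zcref{lemma:bound_p_gtr_2}: the displayed bound $V[T_p] \leq C_3\bigl(d\sigma^{2p} + d^{2/p-1}\sigma^2\norm{v}_p^{2(p-1)}\bigr)$ implicitly applies $\norm{v}_q^q \leq d^{1-q/p}\norm{v}_p^q$ with $q = 2(p-1)$, but that power-mean inequality is valid only for $q \leq p$; for $q > p$ (i.e.\ $p>2$) the correct direction is $\norm{v}_{2(p-1)}^{2(p-1)} \leq \norm{v}_p^{2(p-1)}$, with no negative power of $d$. For $p=2$ the factor is $d^{0}=1$, which is why the issue is invisible in the $\ell_2$ case. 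So your proposal is methodologically correct and the first two regimes go through as you describe, but the regime boundary $\lambda\lesssim d^{1/p}$ in the statement (and in \zcref{lemma:upper_bound_lp_even}) appears to be too generous for $p>2$; you should replace it with $\lambda\lesssim d^{1/(2(p-1))}$ and add a plateau at $\sigma$ beyond that, and flag the issue rather than try to engineer a refinement that the exact variance identity rules out.
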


Theorem~\ref{thm:smooth_informal}
confirms the absence of an interpolation regime for tolerant
testing under the smooth $\ell_p$ norms,
and shows that the traditional
goodness-of-fit critical separation radius $d^{1/2p} / \sqrt n$~\citep{ingsterAsymptoticallyMinimaxHypothesisI1993} persists until $\epsilon_0$
coincides with this rate.
Once again, the existence of this free
tolerance regime was already known from the work of~\cite{ingsterTestingHypothesisWhich2001}.
On the other hand, when $\epsilon_0$
exceeds this rate, we find
that the critical separation begins to shrink,
indicating that the tolerant
testing problem becomes easier. Nevertheless, this phenomenon is still linked to functional estimation, which we explain in  Section \ref{sec:smooth_lp_norm}.

Since only moderate tolerance levels are relevant in practice—when testing provides a clear advantage over estimation—we focus on the case where the tolerance parameter $\epsilon_0$ lies between zero and the functional estimation rate, throughout what follows. Beyond this point, an estimation-based test becomes preferable.

\subsection{Related Work}

\begin{figure}
    \centering
    \includegraphics[width=\linewidth]{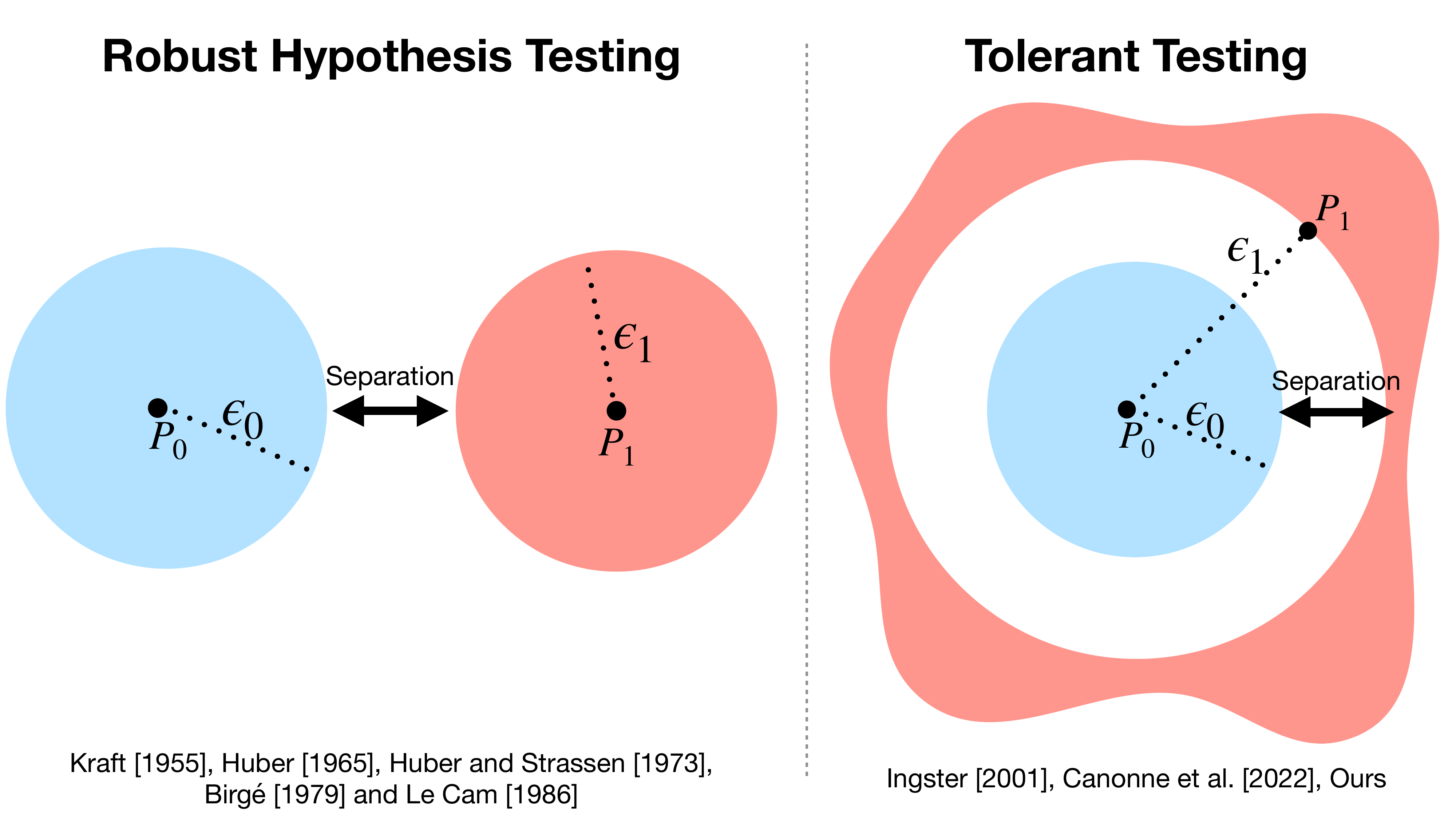}
    \caption{Comparison between robust hypothesis testing and tolerant testing.}
    \label{fig:comparison}
\end{figure}

The study of robust hypothesis testing within the minimax framework has a long history. \citet{LeCamAsymptoticMethodsStatistical1986} and \citet{kraftConditionsConsistencyUniform1955} studied the conditions for the distinguishability of disjoint convex sets of distributions, see the left panel of Figure \ref{fig:comparison}. \citet{huberRobustVersionProbability1965} derived minimax optimal tests when the distributions under the null and alternative hypotheses form disjoint balls in the total variation metric. \citet{huberMinimaxTestsNeymanPearson1973,riederLeastFavorablePairs1977,bednarskiBinaryExperimentsMinimax1982} extended these results to general disjoint sets dominated by capacities. Later works by \citet{birge1979theoreme,birgeRobustTestingIndependent1983,birgeApproximationDansEspaces1983,birgeDiscussionHypothesesTesting2015} and \citet{LeCamAsymptoticMethodsStatistical1986} developed analogous tests for Hellinger balls, with the aim of studying the convergence of estimators  \citep{lecamConvergenceEstimatesDimensionality1973}. In all cases, the disjointness of the sets ensures that the optimal testing procedure is a likelihood ratio test between a pair of \textit{closest} distributions lying on the boundaries of each set. Together, these developments provide a strong generalization of binary hypothesis testing. However, they do not apply to tolerant testing \eqref{eq:generic_tolerant_testing}, where distributions under the alternative hypothesis \textit{surround} the set of distributions under the null hypothesis \citep{ingsterNonparametricGoodnessofFitTesting2003}, see the right panel of Figure \ref{fig:comparison}.

To the best of our knowledge, the first study of minimax tolerant testing is due to \citet{ingsterTestingHypothesisWhich2001} for the Gaussian sequence and white noise models under general $\ell_p$ metrics, and by \cite{batuTestingThatDistributions2000} for discrete distributions under the $\ell_1$ distance. Both of these works focused on the case where $\epsilon_0$ is small. In the Gaussian sequence model, \citet{carpentierAdaptiveEstimationSparsity2019} subsequently
analyzed tolerant testing under
the $\ell_0$ divergence.
The discrete Multinomial
model~\eqref{eq:tolerant_testing_multinomial_l1} has received comparatively
more attention, including
further refinements
in the small-$\epsilon_0$ regime under the $\ell_1$ norm~\citep{paninskiCoincidenceBasedTestUniformity2008,valiantTestingSymmetricProperties2011,valiantAutomaticInequalityProver2014,diakonikolas2015}, mild tolerance under $\ell_1$ and $\ell_2$ \citep{batu2013,chanOptimalAlgorithmsTesting2014,acharyaOptimalTestingProperties2015a,diakonikolas2016new,daskalakis2018distribution,bhattacharyyaLearningMultivariateGaussians2025}, intermediate tolerance under $\ell_1$ \citep{canonnePriceToleranceDistribution2021}, and more general metrics \citep{chakrabortyExploringGapTolerant2022}.

As mentioned in the previous section, when $\epsilon_0$ is of the same order as the risk of
estimating $d(P,P_0)$,
testing becomes equally hard to estimating this divergence~\citep{parnasTolerantPropertyTesting2006}.
For smooth and non-smooth divergences, this estimation problem
has   been studied extensively,   in the discrete model~\citep{valiantEstimatingUnseenLog2011,hanMinimaxEstimationDiscrete2015,jiaoMinimaxEstimationL_12018a},
the Gaussian sequence model~\citep{caiTestingCompositeHypotheses2011a,koltchinskiiEfficientEstimationSmooth2021,koltchinskiiEstimationSmoothFunctionals2021}, and the Gaussian white noise model \citep{ibragimovProblemsNonparametricEstimation1987,lepskiEstimationLrnormRegression1999,hanEstimationL_Norms2020}.
In many of these works, lower bounds on the critical separation are established by creating pairs of distributions that share many moments but remain significantly different in terms of the divergence of interest \citep{wuMinimaxRatesEntropy2016}. Explicit constructions are a central focus of the work by \cite{ingsterTestingHypothesisWhich2001}, while \cite{canonnePriceToleranceDistribution2021} relies on implicit constructions that are based on the duality between moment-matching and polynomial approximation. There is also a wealth of work on minimax estimation for smooth divergences in the various model classes studied in this paper (e.g.~\cite{bickel1988,gine2008,kerkyacharian1996,robins2009,birge1995,laurent1996,tchetgen2008minimax}), for which minimax lower bounds are typically obtained through reductions to goodness-of-fit testing problems~\citep{ingsterAsymptoticallyMinimaxHypothesisI1993}. In the special case of linear functionals, we emphasize that the celebrated work of \citet{donohoGeometrizingRatesConvergence1991} established a tight link between estimation and tolerant testing.

As discussed in  Section \ref{sec:introduction}, one key motivation for studying the tolerant testing problem comes from scientific applications that require testing goodness-of-fit hypotheses under systematic or epistemic uncertainty. A common source of systematic uncertainty occurs when the null distribution $P_0$ is only approximately known due to simulation error. This setting is often referred to as \textit{likelihood-free hypothesis testing}, a problem recently investigated within the minimax framework~\citep{gerber2024likelihood,gerber2023minimax,jiaGaussianSequenceModel2025b}. A related line of research in \textit{imprecise probability} \citep{walley1991statistical} models epistemic uncertainty arising from a practitioner's partial knowledge. In this case, $P_0$ is assumed to belong to \textit{Credal set}, a convex set of distributions, that captures this lack of complete information~\citep{levi1980enterprise,sale2023volume}. Across these frameworks and related ones, tests based on the maximum mean discrepancy~\citep{gretton2006kernel} have become prominent for their strong empirical performance and theoretical tractability~\citep{gerber2023kernel,chauCredalTwoSampleTests2025,zhouKernelDistributionCloseness2025}.

Finally, we note that tolerant testing subsumes several other robustness frameworks in the literature. These include the semiparametric framework of~\cite{liuBuildingUsingSemiparametric2009}, which examines deviations from a parametric model through the Kullback–Leibler divergence; and the equivalence testing framework~\citep{wellekTestingStatisticalHypotheses2002,romanoOptimalTestingEquivalence2005}, which reverses the roles of the hypotheses in~\eqref{eq:generic_tolerant_testing}. Therefore, general results in tolerant testing provide unified tools for reasoning about these frameworks. In Appendix \ref{sec:equivalence_testing} of the supplementary material, we illustrate this connection by examining how the critical separation in tolerant testing relates to that in equivalence testing. 

\section{The minimax framework for tolerant testing}\label{sec:minimax_framework}

Our aim is to distinguish hypotheses \eqref{eq:generic_tolerant_testing} while controlling the probability of choosing the wrong hypothesis. In this section, we define the smallest distance between the hypotheses, called the critical separation, such that a test is able to reliably distinguish them.

Given a set of distributions $\Pdist$, define the set of distributions that belong to the null and alternative hypotheses: \begin{equation}
\Pnull = \left\{P \in \Pdist : d(P,P_0)\leq \epsilon_0\right\} \textand \Palt = \left\{P \in \Pdist : d(P,P_0)\geq \epsilon_1\right\}.
\end{equation} A test $\psi$ maps the sample space to $\{0,1\}$. It returns $0$ if it considers that the data supports the null hypothesis and $1$ otherwise. The type-I error corresponds to the probability of choosing the alternative hypothesis when the null hypothesis is true. Henceforth, let $\Psi$ denote the set of all tests that control the type-I error by $\alpha$, known as valid tests, \begin{equation}
\Psi(\Pnull) = \left\{\psi: \sup_{P \in \Pnull}P(\psi(X)=1) \leq \alpha \right\}
\end{equation} where $X=(X_1,\dots,X_n)$ is a vector of $n$ observations.  The type-II error is the probability of selecting the null when the alternative is true. The risk of a valid test is the maximum type-II error under the alternative: \begin{equation}\label{eq:test_risk_def}
R(\epsilon_0,\epsilon_1,\Pdist,\psi) = \sup_{P\in\Palt} P(\psi(X) = 0) \for \psi \in \Psi(\Pnull).
\end{equation} Throughout the paper, we call a test powerful if its risk is bounded by $\beta \in (0,1-\alpha)$. Finally, the minimax risk quantifies the best possible performance over all valid tests: \begin{equation}\label{eq:minimax_risk}
R_*(\epsilon_0,\epsilon_1,\Pdist) = \inf_{\psi \in \Psi(\Pnull)}R(\epsilon_0,\epsilon_1,\Pdist,\psi).
\end{equation} We say that a valid test $\psi$ is optimal if its risk $R(\epsilon_0,\epsilon_1,\psi,\Pdist)$ equals the minimax risk $R_*(\epsilon_0,\epsilon_1,\Pdist)$ up to constants and logarithmic factors. For a fixed sample size $n$, a powerful valid test may not exist when the gap between $\epsilon_0$ and $\epsilon_1$ is too small. Thus, we define the critical separation as the smallest distance that admits a powerful valid test: \begin{equation}\label{eq:critical_separation}
\epsilon_1^*(\epsilon_0,\Pdist)-\epsilon_0 = \inf\{\epsilon_1-\epsilon_0: \epsilon_1 \geq \epsilon_0 \textand R_*(\epsilon_0,\epsilon_1,\Pdist) < \beta\}.
\end{equation} The critical separation is the detection threshold for the problem class $\Pdist$ at sample size $n$: below this threshold, no valid test can reliably distinguish the hypotheses in \eqref{eq:generic_tolerant_testing}.

\section{The Gaussian sequence model}\label{sec:gaussian_sequece}

As a starting point, we study the $d$-dimensional isotropic Gaussian sequence model, since it provides the right intuition for later models studied in the paper: \begin{equation}\label{eq:gaussian_model}
X \sim P_v \where P_v = \Normal\left(v, \sigma^2 \cdot I_d\right) \comma \sigma \in (0,1] \textand \theta \in \mathbb{R}^d.
\end{equation}
Given a real number $p\geq 1$ and a tolerance parameter
$\epsilon_0 > 0$, we consider the problem of testing whether the mean of $P_v$ lies near the origin under the $\ell_p$ norm:
\begin{align}\label{eq:gaussian_testing_lp}
H_0: \norm{v}_p \leq \epsilon_0 \vs H_1: \norm{v}_p \geq \epsilon_1\period
\end{align} Let $\GS = \left\{P_v : v \in \Rd\right\}$ denote the set of Gaussian distributions \eqref{eq:gaussian_model}. Our goal is to sharply characterize the critical separation $s_1(\epsilon_0,\mathrm{GS})$ for this problem. We begin by discussing two special cases---where $p=1$ or $p=2$---which capture many of the essential phenomena of our problem.

\subsection{Tolerant Testing under the \texorpdfstring{$\ell_1$}{l1} norm}\label{sec:simple_null_rates}

Consider testing the simple null hypothesis problem, $\epsilon_0=0$ in \eqref{eq:gaussian_testing_lp} with $p=1$: \begin{equation}\label{eq:simple_null_testing_l1}
H_0: v=0 \vs H_1: \norm{v}_1 \geq \epsilon_1.
\end{equation} A minimax test for \eqref{eq:simple_null_testing_l1} is the well-known Pearson chi-squared test: \begin{equation}
\psi_2(X,0) = I\left(T_2(X) \geq q_{1-\alpha}\left(T_2,P_0\right)\right) \where T_2(X)=\norm{X}_2^2 - E_{P_0}\norm{X}_2^2,
\end{equation} where $q_{1-\alpha}\left(T_2,P_0\right)$ is the $1-\alpha$ quantile of $T_2(X)$ for $X\sim P_0$. The test is known \citep{ingsterAsymptoticallyMinimaxHypothesisI1993} to consistently distinguish the hypotheses \eqref{eq:simple_null_testing_l1} as soon as $\epsilon_1 \gtrsim \sigma \cdot d^{3/4}$. We refer to this rate as the simple null hypothesis testing rate. To understand its significance, note that the testing rate is faster than the rate at which the functional $\norm{v}_1$ can be estimated, which is $\sigma \cdot d/\sqrt{\log d}$ and called the functional estimation rate \citep{caiTestingCompositeHypotheses2011a}. If we choose
the natural scaling $\sigma = n^{-1/2}$, this means that for $\sqrt{n} \lesssim d \lesssim n^{2/3}$, we can test the size of $\norm{v}_1$ even though we cannot reliably estimate it.

Given that the chi-squared test is valid, it cannot reject distributions that are
statistically indistinguishable
from $P_0$. Thus, one would expect that the null hypothesis can be \textit{enlarged}  without   affecting the
test's power. By slightly modifying the decision threshold, it can be shown that the chi-squared test maintains the same power guarantee (up to constants) for larger null hypotheses. The proof is deferred to Appendix \ref{sec:failure_l2_test} of the supplementary material. 

\begin{lemma}[Suboptimality of the chi-squared test]\label{lemma:chi_squared_test_under_l1}
For hypotheses \eqref{eq:gaussian_testing_lp} with $p=1$, there exists a decision threshold $t_*$, such that the chi-squared test $\psi(X)=I(T_2(X)\geq t_*)$ is valid. Futhermore, there exists positive constants $C_1$ and $C_2$ such that, the test is powerful whenever \begin{equation}\label{eq:chi2_free_tolerance}
\epsilon_1 - \epsilon_0 \gtrsim \sigma \cdot d^{3/4} \quad \for \epsilon_0 \leq C_1\cdot \sigma \cdot d^{1/4}.
\end{equation} However, for $\epsilon_1 - \epsilon_0 \asymp \sigma \cdot d^{3/4}$ and $C_2\cdot \sigma \cdot d^{1/4} \leq \epsilon_0  \lesssim \sigma \cdot d^{3/4}$, there exists no decision threshold $t$ such that the chi-squared test $\psi(X)=I(T_2(X)\geq t)$ is both valid and powerful.
\end{lemma}

In other words, the chi-squared test tolerates some deviations from $P_0$ for free, in the sense that it maintains the same power guarantees as in goodness-of-fit testing, although the null has size $\epsilon_0\asymp d^{1/4}\cdot \sigma$. However, the second part  Lemma \ref{lemma:chi_squared_test_under_l1} states that as soon as $\epsilon_0$ is larger than $d^{1/4}\cdot \sigma$, this power guarantee is lost.

It is natural to wonder whether the tolerance stated in equation~\eqref{eq:chi2_free_tolerance} is optimal: Is it possible to tolerate larger deviations from $P_0$ while maintaining the same power as when testing a simple null? In a follow-up work, \citet{ingsterTestingHypothesisWhich2001} proved that the
following simple plug-in test achieves a better guarantee:
\begin{equation}\label{eq:plugin_test_l1}
\psi_1(X,\epsilon_0) = I\left(T_1(X) \geq \sup_{\norm{v}_1 \leq \epsilon_0} q_{1-\alpha}\left(T_1,P_v\right) \right) \where T_1(X)=\norm{X}_1-E_{P_0}\norm{X}_1.
\end{equation} The plug-in test is usually not employed for testing $v=0$ due to the $\ell_1$ norm being non-smooth, i.e., non-differentiable, which complicates studying its asymptotic distribution. However, the $\ell_1$ norm naturally provides more robustness than the plug-in test. This can be anticipated
from the fact that $V_{P_v}[T_1]\asymp \sigma^{2}\cdot d$ while $V_{P_v}[T_2]\asymp \sigma^{4}\cdot d + \sigma^{2}\cdot \norm{v}_2^2$. Thus, large deviations from $P_0$, under the null hypothesis, affect the variance of $T_2$ more than $T_1$, which is detrimental to the performance of the chi-squared test.

\begin{proposition}[\citet{ingsterTestingHypothesisWhich2001}, Optimal tolerant testing for small deviations]\label{prop:free_tolerance_l1} For hypotheses \eqref{eq:gaussian_testing_lp} with $p=1$, the plug-in test \eqref{eq:plugin_test_l1} is powerful whenever  \begin{equation}\label{eq:free_tolerance_l1}
\epsilon_1 - \epsilon_0 \gtrsim \sigma \cdot d^{3/4} \quad \for \epsilon_0 \lesssim \sigma \cdot \sqrt{d}. \qquad \text{(Free tolerance regime)}
\end{equation} Furthermore, the above result is optimal. That is, for $\epsilon_1 - \epsilon_0 \lesssim \sigma \cdot d^{3/4}$ and $\epsilon_0 \lesssim \sigma \cdot \sqrt{d}$, there is no test that is both valid and powerful.
\end{proposition}


In what follows, we prove that the plug-in test remains optimal even when we allow for larger values of $\epsilon_0$. Most interestingly, as soon as we leave the free tolerance regime, the performance of the plug-in test depends on both the size of the null hypothesis, dictated by the amount of tolerance allowed, and the cost of estimating the underlying $\ell_1$ norm.

\begin{theorem}\label{thm:gaussian_testing_l1} Given $\epsilon_0 \geq 0$, let $\Tolerance=\epsilon_0/\sigma$.
Then, for hypotheses \eqref{eq:gaussian_testing_lp} with $p=1$, the critical separation is characterized up to logarithmic factors by \begin{equation}
\epsilon_1^*(\epsilon_0,\GS)-\epsilon_0 \asymp
\sigma \cdot \left\{
\begin{array}{lll}
\displaystyle d^{3/4} ,
& \textif 0\leq \Tolerance \lesssim \sqrt d,
 &~~\text{(Free tolerance regime)}\\[2ex]
\displaystyle d^{1/2}\cdot \lambda^{1/2},
& \textif d^{1/2} \lesssim \lambda \lesssim d,
&~~\text{(Interpolation regime)}\\[2ex]
\displaystyle d,
& \textif \lambda \asymp d.
&~~\text{(Functional estimation regime)}
\end{array}
\right.\end{equation}
Furthermore, the plug-in test \eqref{eq:plugin_test_l1} attains it.
\end{theorem}

The free tolerance regime in Theorem \ref{thm:gaussian_testing_l1} simply restates Proposition \ref{prop:free_tolerance_l1}.  In the following two subsections, we discuss upper bounds on the critical separation for the interpolation and functional estimation regimes of  Theorem \ref{thm:gaussian_testing_l1}, beginning with the second. Then, in  Section \ref{sec:lower_bounds_l1}, we provide lower bounds on the critical separation for both regimes. Together, these sections provide a proof of Theorem \ref{thm:gaussian_testing_l1}.

\subsubsection{Reduction to functional estimation under the \texorpdfstring{$\ell_1$}{l1} norm}\label{sec:functional_estimation_rates}

Using an estimator to construct a test often leads to suboptimal performance for simple null hypotheses. However, estimation becomes optimal for tolerant testing when the required tolerance is larger than the estimator's accuracy. The following lemma formalizes this intuition. It shows that any estimator of $\norm{v}_1$ might be used to construct a tolerant test insofar as a bound of its accuracy is available \citep{parnasTolerantPropertyTesting2006}. Similar statements appear in Proposition 2.17 of \citet{ingsterNonparametricGoodnessofFitTesting2003} and Proposition 6.2.2 of \citet{gineMathematicalFoundationsInfinitedimensional2016}; the proof is deferred to Appendix \ref{sec:general_upperbounds} of the supplementary material. 

\begin{lemma}[Testing by learning]\label{lemma:testing_by_learning_1} For any estimator $T$ such that \begin{equation}\label{eq:estimation_rate}
\sup_{v\in \R^d}E_{X\sim P_v}[\ T(X)-\norm{v}_1\ ]^2 \leq \phi,
\end{equation} the following estimation-based test is valid
for hypotheses~\eqref{eq:gaussian_testing_lp} with $p=1$
\begin{equation}\label{eq:estimation_based_test}
\psi(X) = I\left(T(X) > \epsilon_0 + \sqrt{\phi/\alpha}\right).
\end{equation} Furthermore, it is powerful whenever $\epsilon_1 - \epsilon_0 \geq C_\alpha\cdot \sqrt{\phi}$ where $C_\alpha = \alpha^{-1/2} + \beta^{-1/2}$.
\end{lemma}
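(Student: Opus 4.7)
The plan is to prove both validity and power via a single tool: Chebyshev's/Markov's inequality applied to $(T(X)-\|v\|_1)^2$, whose expectation is bounded by $\phi$ uniformly over $v \in \mathbb{R}^d$ by assumption~\eqref{eq:estimation_rate}. Concretely, for every $v$ and every $t > 0$,
\begin{equation}
P_v\bigl(|T(X)-\|v\|_1| \geq t\bigr) \;\leq\; \frac{E_{P_v}[(T(X)-\|v\|_1)^2]}{t^2} \;\leq\; \frac{\phi}{t^2}.
\end{equation}

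For validity, I would fix any $P_v \in \GS$ with $\|v\|_1 \leq \epsilon_0$. Since $\|v\|_1 \leq \epsilon_0$, the event $\{T(X) > \epsilon_0 + \sqrt{\phi/\alpha}\}$ is contained in $\{T(X) - \|v\|_1 > \sqrt{\phi/\alpha}\}$, and hence in $\{|T(X)-\|v\|_1| > \sqrt{\phi/\alpha}\}$. Applying the Chebyshev bound with $t = \sqrt{\phi/\alpha}$ immediately gives
\begin{equation}
P_v(\psi(X)=1) \;\leq\; \frac{\phi}{\phi/\alpha} \;=\; \alpha,
\end{equation}
so $\psi \in \Psi(\Pnull)$.

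For power, I would fix any $P_v$ with $\|v\|_1 \geq \epsilon_1$ and choose $C = 1/\sqrt{\alpha}+1/\sqrt{\beta}$, so that $\epsilon_1-\epsilon_0 \geq C\sqrt{\phi}$ implies $\epsilon_1 - \epsilon_0 - \sqrt{\phi/\alpha} \geq \sqrt{\phi/\beta}$. Then
\begin{equation}
\{\psi(X)=0\} \;=\; \{T(X) \leq \epsilon_0+\sqrt{\phi/\alpha}\} \;\subseteq\; \{\|v\|_1-T(X) \geq \sqrt{\phi/\beta}\} \;\subseteq\; \{|T(X)-\|v\|_1| \geq \sqrt{\phi/\beta}\},
\end{equation}
and a second application of Chebyshev with $t=\sqrt{\phi/\beta}$ yields $P_v(\psi(X)=0) \leq \beta$, uniformly in $v$. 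Taking a supremum over $\Palt$ gives $R(\epsilon_0,\epsilon_1,\GS,\psi)\leq \beta$.

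There is no real obstacle here: the proof is one-sided Chebyshev applied twice, once under each hypothesis, with the threshold $\sqrt{\phi/\alpha}$ chosen precisely so that the null-side deviation probability is $\leq\alpha$, and the separation budget $\epsilon_1-\epsilon_0$ consumed to pay both the null margin $\sqrt{\phi/\alpha}$ and the alternative margin $\sqrt{\phi/\beta}$. The only point to flag is that the MSE hypothesis must hold uniformly in $v$ (not just under $P_0$), which is what allows the same argument to go through at every $P_v$ in the null and alternative classes, and thus yields a minimax (rather than pointwise) guarantee.
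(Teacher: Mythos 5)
Your proof is correct and follows essentially the same route as the paper: the paper proves a general Chebyshev-style bound (Lemma~\ref{lemma:cheby_bound} and its Corollary~\ref{lemma:testing_by_learning} in Appendix~\ref{sec:general_upperbounds}), which also applies Markov's inequality to the squared deviation $(T(X)-\mu(P))^2$ twice, once under each hypothesis, with $\mu(P_v)=\|v\|_1$ and $\phi(P)\equiv\phi$ in the specialization to this lemma. The separation budget is spent exactly as you describe, with constant $C=\alpha^{-1/2}+\beta^{-1/2}$; no gap.
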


Intuitively, the above result states that for tolerant testing, we cannot do worse than estimating the underlying $\ell_1$ norm. For the plug-in statistic \eqref{eq:plugin_test_l1}, the bias and variance satisfy
\begin{equation}\label{eq:bias_variance_l1}
\forall v \in \R^d \quad |E_{P_v}[T_1]-\norm{v}_1|\leq \norm{v}_1 \wedge \mu_1 \cdot \sigma \cdot d \textand   V_{P_v}\left[T_1\right]\leq \sigma^2 \cdot d,
\end{equation} where $\mu_1=\sqrt{2/\pi}$. Hence \eqref{eq:estimation_rate} holds with $\phi = (\mu_1^2+1) \cdot \sigma^2d^2$, since the bias dominates the variance when $v$ is unbounded. Consequently, the test \eqref{eq:estimation_based_test} is valid and powerful whenever:
\begin{equation}\label{eq:maximum_tolerance_l1}
\epsilon_1 - \epsilon_0 \gtrsim \sigma \cdot d\quad \for \epsilon_0 \geq 0.
\end{equation} Proposition \ref{prop:free_tolerance_l1} proves that the above result is suboptimal when the tolerance parameter is small. However, we should expect it to be optimal whenever the distance between the hypotheses is larger than the approximation error of estimating the $\ell_1$ norm. In  Section \ref{sec:functional_estimation_lb}, we argue that this is the case when $\epsilon_0 \asymp \sigma \cdot d$. We can foresee that result by noting that at $\epsilon_0 = C_\alpha \cdot \sqrt{\mu_1^2+1} \sigma\cdot d$, \eqref{eq:estimation_based_test} consistently distinguishes the hypotheses:
$H_0: \norm{v}_1 \leq \epsilon_0$ versus $H_1: \norm{v}_1 \geq 2\epsilon_0$, which we cannot expect to improve rate-wise.

Finally, we note that it is possible to improve over the plug-in estimator by approximating the absolute value function with a polynomial. Based on that strategy, \citet{caiTestingCompositeHypotheses2011a} propose an estimator that guarantees \eqref{eq:estimation_rate} with $\phi=\sigma^2 \frac{d^2}{\log d}$. Thus, \eqref{eq:maximum_tolerance_l1} can be improved by a polylogarithmic factor.

\subsubsection{Interpolation regime under the \texorpdfstring{$\ell_1$}{l1} norm}\label{sec:interpolation_rates}

From the discussion in the previous section, we would expect that being able to consistently test $H_0: \norm{v}_1\leq \epsilon_0$ versus $H_1: \norm{v}_1\geq \epsilon_1$ for $\epsilon_1=2\epsilon_0$ implies that we can estimate $\norm{v}_1$ up to error $\epsilon_0$. This is indeed the case for $\epsilon_0 \asymp d\cdot \sigma$. However, we know that this cannot be the case for $\epsilon_0 \asymp d^{3/4}\cdot \sigma$, since \citet{caiTestingCompositeHypotheses2011a} proved that it is not possible to estimate $\norm{v}_1$ faster than $d \cdot \sigma$. Thus, whenever $\epsilon_1 \asymp d^{3/4} \cdot \sigma$, it must be that $\epsilon_0$ is smaller. Indeed, in  Section \ref{sec:simple_null_rates}, we show that $\epsilon_0$ can be at most $d^{1/2}\cdot \sigma$. Consequently, for $\epsilon_0$ between $d^{1/2}\cdot \sigma$ and $d\cdot \sigma$, we expect $\epsilon_1$ to interpolate between $d^{3/4} \cdot \sigma$ and $d\cdot \sigma$, i.e. between the goodness-of-fit testing and functional estimation rates.

\begin{lemma} Let $\lambda=\epsilon_0/\sigma$. For hypotheses \eqref{eq:gaussian_testing_lp} with $p=1$, the plug-in test \eqref{eq:plugin_test_l1} is powerful whenever \begin{equation}\label{eq:interpolation_regime_l1}
\epsilon_1 - \epsilon_0  \gtrsim \sigma \cdot \sqrt{d\cdot  \lambda}  \quad \for \sqrt{d} \lesssim \lambda \lesssim d.
\end{equation}\end{lemma}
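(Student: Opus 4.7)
The plan is to carry out a Chebyshev-based analysis of the plug-in statistic $T_1$, organized around the coordinate-wise bias function
\[
g(v)\;=\;E|v+\sigma Z|-\sigma\mu_1,\qquad Z\sim\mathcal N(0,1),
\]
so that $\mu(v):=E_{P_v}[T_1]=\sum_{j=1}^{d} g(v_j)$, while independence of the coordinates together with $V(|X_j|)\le V(X_j)=\sigma^2$ gives the uniform variance bound $V_{P_v}[T_1]\le \sigma^2 d$ already recorded in~\eqref{eq:bias_variance_l1}. I would first establish three structural properties of $g$: (i) $g$ is even, convex, and $g(0)=0$; (ii) $g(v)\le |v|$ by the triangle inequality; and (iii) $g(v)\gtrsim v^2/\sigma$ for $|v|\le\sigma$, obtained from $g''(v)=(2/\sigma)\phi(v/\sigma)\gtrsim 1/\sigma$ on $[-\sigma,\sigma]$ via a second-order Taylor expansion around the origin.

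Next, I would bound the data-dependent threshold. By Chebyshev's inequality $q_{1-\alpha}(T_1,P_w)\le \mu(w)+\sigma\sqrt{d/\alpha}$, and property (ii) yields $\mu(w)\le\|w\|_1\le\epsilon_0$ for every $w$ in the null, so that
\[
t_*\;\le\;\epsilon_0+\sigma\sqrt{d/\alpha}.
\]
Under the alternative $\|v\|_1\ge\epsilon_1$, Jensen's inequality applied to the convex function $g$ on $[0,\infty)$, together with evenness, gives the matching lower bound
\[
\mu(v)\;=\;\sum_{j=1}^{d} g(|v_j|)\;\ge\; d\,g\!\left(\tfrac{\|v\|_1}{d}\right)\;\ge\; d\, g\!\left(\tfrac{\epsilon_1}{d}\right).
\]
These two inequalities correspond to the worst-case null configuration (all mass on one coordinate) and the worst-case alternative configuration (mass spread uniformly), which is the core geometric content of the proof.

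To conclude, I would reduce to the boundary case $\epsilon_1-\epsilon_0\asymp \sigma\sqrt{d\lambda}$ by monotonicity of power in $\epsilon_1$. Throughout the interpolation range $\sqrt d\lesssim\lambda\lesssim d$ this choice of $\epsilon_1$ satisfies $\epsilon_1/d\lesssim \sigma\sqrt{\lambda/d}\le \sigma$, so property (iii) is applicable and yields $\mu(v)\gtrsim \epsilon_1^2/(d\sigma)$. A second application of Chebyshev under the alternative then shows that the plug-in test is powerful as soon as
\[
\frac{\epsilon_1^{2}}{d\sigma}\;\gtrsim\;\epsilon_0+\sigma\sqrt{d}\;\asymp\;\epsilon_0,
\]
where the final equivalence uses $\epsilon_0=\sigma\lambda\gtrsim\sigma\sqrt{d}$. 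Solving for $\epsilon_1$ recovers $\epsilon_1\gtrsim\sigma\sqrt{d\lambda}$, which is exactly the hypothesis of the lemma up to an absolute constant.

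The hardest step is the bias analysis: extracting the \emph{quadratic} lower bound $g(v)\gtrsim v^2/\sigma$ on $[-\sigma,\sigma]$ and verifying that the alternative configuration $v_j\equiv\epsilon_1/d$ actually sits inside this quadratic window for every $\lambda\lesssim d$. It is precisely this quadratic cancellation---absent from a cruder bound $g(v)\asymp |v|$---that drives the improvement from the simple-null rate $\sigma d^{3/4}$ down to the interpolation rate $\sigma\sqrt{d\lambda}$; the remaining arguments are routine bookkeeping of the Chebyshev constants and the convexity/Jensen reductions identifying the extremal null and alternative configurations.
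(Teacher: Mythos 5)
Your proof is correct and follows essentially the same approach as the paper: a Chebyshev analysis of the plug-in statistic $T_1$, driven by the quadratic lower bound $g(v)\gtrsim v^2/\sigma$ on the coordinate bias near the origin and the uniform variance bound $V_{P_v}[T_1]\lesssim\sigma^2 d$. The only presentational difference is that you extract the worst-case alternative mean via a direct Jensen step on the convex bias function $g$ (identifying the extremal alternative $v_j\equiv\epsilon_1/d$), whereas the paper obtains the equivalent bound $E_{P_v}[T_1]\gtrsim \left(\|v\|_1^2/(d\sigma)\right)\wedge\|v\|_1$ for general $p\le 2$ through a case analysis on the index set $\{i:|v_i|\le\sigma\}$; the two reductions coincide for $p=1$ in the interpolation range.
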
 The proof is deferred to Appendix \ref{sec:upper_bound_lp_odd_less_2} of the supplementary material. Note that the upper bound on critical separation in this regime, called the interpolation regime, depends on the functional estimation rate and the required tolerance. This mirrors the aforementioned findings of \cite{canonnePriceToleranceDistribution2021} in the Multinomial model under the $\ell_1$ metric. 

Looking at \eqref{eq:bias_variance_l1}, note that in the free tolerance regime ($\epsilon_0 \lesssim \sigma \cdot \sqrt{d}$), the variance of $T_1$ dominates its bias. In the functional estimation regime ($\epsilon_0 \asymp \sigma \cdot d$), the bias is of the same order as the functional estimation rate $\sigma \cdot d$. However, the interpolation regime arises due to the bias dominating the variance but remaining smaller than the functional estimation rate,
thus allowing us to test the null hypothesis faster than we can estimate the $\ell_1$ functional.

Intuitively, to derive \eqref{eq:interpolation_regime_l1}, consider the case where $\epsilon_0\gg \sqrt{d}\cdot \sigma$ and $\epsilon_0 \leq \epsilon_1 \ll d\cdot \sigma$. In this intermediate regime, we can establish that $\norm{v}_1^2 \cdot (d\cdot \sigma)^{-1} \lesssim E_{P_v}[T_1] \leq \norm{v}_1
$. Since the variance of the statistic is negligible relative to its mean under both hypotheses, the minimum separation required for the plug-in test \eqref{eq:plugin_test_l1} to distinguish them is driven by the largest mean under the null hypothesis and the smallest mean under the alternative hypothesis. Therefore, we must require that $\epsilon_1^2 \cdot (d\cdot \sigma)^{-1} \gtrsim \epsilon_0$, which leads to~\eqref{eq:interpolation_regime_l1}.

\subsection{Lower-bounds via moment-matching distributions}\label{sec:lower_bounds_l1}

To demonstrate the sharpness of the upper bounds on the critical separation in  Sections \ref{sec:simple_null_rates},\ref{sec:functional_estimation_rates} and \ref{sec:interpolation_rates}, we next
derive minimax lower bounds,
showing that the plug-in test
is
optimal up to logarithmic factors. In  Sections \ref{sec:simple_null_lb} and \ref{sec:functional_estimation_lb}, we derive lower bounds for the free and functional estimation regimes by constructing mixtures under the null and alternative that no test can reliably distinguish. In  Section \ref{sec:interpolation_lb}, we adapt this approach to establish lower bounds in the interpolation regime.

\subsubsection{Lower bound for the free tolerance regime}\label{sec:simple_null_lb}

We introduce a variant of Le Cam's two-point argument \citep{tsybakovIntroductionNonparametricEstimation2009}, and match the critical separation rate in the free tolerance regime \eqref{eq:free_tolerance_l1}. Henceforth, given a distribution $\pi$, we let $P_\pi$ be the corresponding mixture distribution: $P_\pi(B) = \int P_v(B) \ d\pi(v)$. Let $\pi_0^d$ and $\pi_1^d$ be corresponding product measures, with marginals $\pi_0$ and $\pi_1$, supported on the hypothesis sets: \begin{equation}\label{eq:hypotheses_sets}
V_0 = \{v : \norm{v}_1\leq \epsilon_0\} \textand V_1 = \{v : \norm{v}_1\geq \epsilon_1\}.
\end{equation} If the total variation distance between the induced mixtures $P_{\pi_0}$ and $P_{\pi_1}$ is small, then no test can reliably distinguish them. Therefore, the separation between the hypothesis sets yields a lower bound on the critical separation.

\begin{lemma}\label{lemma:exact_support_lb}
Consider distributions $\pi_0^d$ and $\pi_1^d$ supported on \eqref{eq:hypotheses_sets}, i.e. $\pi_{0}^d(V_0)=\pi_{1}^d(V_1) = 1$. If the total variation between their induced mixtures satisfies $V\left(P_{\pi_0}^d, P_{\pi_1}^d\right)\leq C_\alpha$ where $C_\alpha = 1-(\alpha+\beta)$. Then, the critical separation is lower-bounded by: \begin{equation}
\epsilon_1^*(\epsilon_0,\GS)-\epsilon_0 \geq  \epsilon_1 - \epsilon_0   \period
\end{equation}
\end{lemma}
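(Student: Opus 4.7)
The statement is a standard Bayesian reduction of the minimax testing problem (a variant of Le Cam's two-point lemma): once we have two priors, one on the null set and one on the alternative set, whose induced mixtures are indistinguishable in total variation, then no valid test can be powerful, and so the separation $\epsilon_1-\epsilon_0$ is a lower bound on the critical separation. The plan is to bound the sum of the two error types of an arbitrary valid test by a Bayes risk against the two priors, then invoke the classical total variation inequality.

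First, I would fix an arbitrary valid test $\psi\in\Psi(\Pnull)$. Since $\pi_0^d$ is supported on $V_0$, the mean null distribution $P_{\pi_0}$ belongs (in a Bayesian sense) to the null class, so the validity of $\psi$ gives
\begin{equation}
P_{\pi_0}(\psi(X)=1) \;=\; \int_{V_0} P_v(\psi(X)=1)\,d\pi_0^d(v) \;\leq\; \sup_{v\in V_0} P_v(\psi(X)=1) \;\leq\; \alpha.
\end{equation}
Analogously, since $\pi_1^d$ is supported on $V_1$, the type-II risk satisfies
\begin{equation}
R(\epsilon_0,\epsilon_1,\GS,\psi) \;=\; \sup_{v\in V_1} P_v(\psi(X)=0) \;\geq\; \int_{V_1} P_v(\psi(X)=0)\,d\pi_1^d(v) \;=\; P_{\pi_1}(\psi(X)=0).
\end{equation}

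Next, I would apply the standard two-point bound relating total variation to testing error: for any measurable event $A$ and any two distributions $P,Q$, one has $P(A) + Q(A^c) \geq 1 - V(P,Q)$. Applying this with $A=\{\psi(X)=1\}$, $P=P_{\pi_0}$, and $Q=P_{\pi_1}$, and combining with the two preceding displays, yields
\begin{equation}
\alpha + R(\epsilon_0,\epsilon_1,\GS,\psi) \;\geq\; P_{\pi_0}(\psi=1) + P_{\pi_1}(\psi=0) \;\geq\; 1 - V(P_{\pi_0}^d,P_{\pi_1}^d) \;\geq\; 1 - C_\alpha \;=\; \alpha+\beta,
\end{equation}
so $R(\epsilon_0,\epsilon_1,\GS,\psi)\geq \beta$. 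Since $\psi$ was an arbitrary valid test, taking an infimum gives $R_*(\epsilon_0,\epsilon_1,\GS)\geq \beta$.

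Finally, recalling the definition~\eqref{eq:critical_separation} of the critical separation as the infimum of $\epsilon_1'-\epsilon_0$ for which $R_*(\epsilon_0,\epsilon_1',\GS)<\beta$, the previous display implies that this infimum cannot be attained at any value strictly below $\epsilon_1-\epsilon_0$, giving the claimed bound. There is essentially no technical obstacle here beyond bookkeeping: the content of the lemma is really the classical equivalence between minimax testing error and total variation, and the only point that requires a moment of care is verifying that the mixtures are admissible surrogates for the supremum over the composite null and alternative, which is immediate from the support assumption on $\pi_0^d$ and $\pi_1^d$.
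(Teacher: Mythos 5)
Your proof is correct and follows essentially the same route as the paper, which proves the more general fuzzy-prior version (Lemma~\ref{lemma:twofuzzypriors}) by bounding the sum of the averaged type-I and type-II errors through the total variation between the two induced mixtures; your argument is the clean special case where $\pi_0^d,\pi_1^d$ are exactly supported on $V_0,V_1$, so the restriction/triangle-inequality corrections in the paper's version disappear. The only point worth making explicit is the monotonicity of $R_*(\epsilon_0,\cdot,\GS)$ in $\epsilon_1$, which you invoke implicitly in the final paragraph to pass from $R_*(\epsilon_0,\epsilon_1,\GS)\geq\beta$ to the infimum bound; this is immediate since shrinking $\epsilon_1$ enlarges the alternative class $V_1$ while leaving $\Psi(\Pnull)$ unchanged.
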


To construct a lower-bound for the free tolerance regime \eqref{eq:free_tolerance_l1}, consider the following symmetric mixing distributions: $
\pi_0 = \delta_0$ and $\pi_1 = \frac{1}{2}\left(\delta_{\epsilon/d}+\delta_{-\epsilon/ d}\right)$ where $\epsilon = C \sigma\cdot d^{3/4}$ and $C=\left[2\log\left(1+(C_\alpha)^2\right)\right]^{1/4}$. These distributions share their first moment, and their separation is given by $\epsilon$ since \begin{equation}
\norm{v}_1 \overset{a.s.}{=} 0 \text{ under } \pi_0\quad   \textand \quad \norm{v}_1\overset{a.s.}{=}\epsilon \text{ under } \pi_1,
\end{equation} where the equalities hold almost surely. A simple computation shows that $V\left(P_{\pi_1}^d,P_{\pi_0}^d\right)\leq C_\alpha$, see Theorem C.10 in Appendix \ref{sec:chi2_bound} of the supplementary material. Thus, by Lemma \ref{lemma:exact_support_lb}, it follows that $
\epsilon_1^*(0,\GS) \geq \epsilon$. Furthermore, note that we can expand the null set $V_0$ until it nearly touches $V_1$, up to constants, without invalidating the construction. Consequently, we have proved that the critical separation is lower-bounded by \begin{equation}
\epsilon_1^*(\epsilon_0,\GS)-\epsilon_0 \geq \frac{C}{2}\sigma \cdot d^{3/4} \quad \for  0 \leq \epsilon_0 \leq \frac{C}{2}\sigma \cdot d^{3/4}.
\end{equation} Since $d^{3/4} \geq d^{1/2}$, we match \eqref{eq:free_tolerance_l1}, which shows that the plug-in test is optimal in the free tolerance region. A detailed version of this argument can be found in Appendix \ref{lb_p_less_than_2_small_e0} of the supplementary material.

\subsubsection{Lower bound for the functional estimation regime}\label{sec:functional_estimation_lb}

The construction in the previous section relies on the fact that the mixing distributions are centered and share the first moment. This structure facilitates bounding their total variation. The following lemma extends this idea by providing general conditions under which the total variation of Gaussian mixtures with moment-matching mixing distributions can be bounded, a technique that has been employed to great success across the estimation and testing literature \citep{lepskiEstimationLrnormRegression1999,ingsterTestingHypothesisWhich2001,caiTestingCompositeHypotheses2011a,wuMinimaxRatesEntropy2016,hanEstimationL_Norms2020}. A proof of the statement can be found in Appendix \ref{sec:chi2_bound} of the supplementary material. 

\begin{lemma}[\cite{wuMinimaxRatesEntropy2016}]
There exists positive constant $C$ depending only on $\alpha$ and $\beta$ such that for any $\delta\geq 0$ and $L\geq 1$ that satisfy \begin{equation}\label{eq:chi2_condition}
\delta^2 \leq C \cdot \sigma^2 \cdot \frac{L}{d^{1/(L+1)}}\comma
\end{equation} it holds that for any $\pi_0$ and $\pi_1$ centered distributions supported on $[-\delta,\delta]$ that share the first $L$ moments, the total variation distance between the corresponding mixture distributions is bounded by $
V\left(P_{\pi_1}^d,P_{\pi_0}^d\right)\leq C_\alpha=1-(\alpha+\beta)$.
\end{lemma}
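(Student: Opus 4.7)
The plan is to reduce the $d$-dimensional total-variation bound to a univariate chi-squared computation that is unlocked by a Hermite polynomial expansion. Since each mixture $P_{\pi_i}^d$ is a $d$-fold product of the univariate mixture $P_{\pi_i}$, I would combine the Cauchy-Schwarz bound $V(P,Q)\leq \tfrac{1}{2}\sqrt{\chi^2(P\|Q)}$ with the product tensorization identity $1+\chi^2(P_{\pi_1}^d\|P_{\pi_0}^d)=(1+\chi^2(P_{\pi_1}\|P_{\pi_0}))^d$. This reduces the task to showing, in one dimension, that $\chi^2(P_{\pi_1}\|P_{\pi_0})\leq c/d$ for a constant $c$ depending only on $\alpha,\beta$; the inequality $(1+c/d)^d-1\leq 4C_\alpha^2$ then yields $V\leq C_\alpha$ in the $d$-dimensional problem, matching the claim.

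To control the univariate chi-squared, I would first analyze the simpler surrogate $\int (P_{\pi_1}-P_{\pi_0})^2/\phi_\sigma\,dx$ using the Hermite expansion $P_{\pi_i}(x)/\phi_\sigma(x)=\sum_{k\geq 0}m_k(\pi_i)/(k!\sigma^k)\,H_k(x/\sigma)$, where $H_k$ are the probabilists' Hermite polynomials. Orthogonality with respect to the standard Gaussian weight yields
\begin{equation*}
\int \frac{(P_{\pi_1}-P_{\pi_0})^2}{\phi_\sigma}\,dx = \sum_{k\geq 1} \frac{(m_k(\pi_1)-m_k(\pi_0))^2}{k!\sigma^{2k}} = \sum_{k\geq L+1} \frac{(m_k(\pi_1)-m_k(\pi_0))^2}{k!\sigma^{2k}},
\end{equation*}
where the truncation to $k\geq L+1$ is precisely the moment-matching hypothesis. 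Combining the crude support bound $|m_k(\pi_i)|\leq \delta^k$ with Stirling's estimate on $(L+1)!$ bounds this quantity by a constant multiple of $(e\delta^2/(\sigma^2(L+1)))^{L+1}$. To convert this $L^2(\phi_\sigma)$-type estimate into the chi-squared divergence against $P_{\pi_0}$ required by the tensorization step, I would use that $\pi_0$ is supported on $[-\delta,\delta]$ with $\delta\lesssim \sigma\sqrt{L}$ to obtain a pointwise lower bound $P_{\pi_0}(x)\geq c'\phi_\tau(x)$ on a region carrying most of the mass of $(P_{\pi_1}-P_{\pi_0})^2$, with $\tau$ slightly larger than $\sigma$, and handle the tails by a standard truncation argument.

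Chaining these estimates yields $\chi^2(P_{\pi_1}\|P_{\pi_0})\lesssim (\delta^2/(\sigma^2(L+1)))^{L+1}$, and requiring the right-hand side to be at most $c/d$ and extracting the $(L+1)$-st root recovers exactly the announced condition $\delta^2\leq C\sigma^2 L/d^{1/(L+1)}$, with $C$ depending on $\alpha,\beta$ through $C_\alpha$. The main obstacle is the passage from the $\phi_\sigma$-weighted $L^2$ quantity to the true chi-squared against $P_{\pi_0}$: preserving the sharp $(L+1)$-st power, as opposed to incurring spurious polynomial factors in $L$, requires careful joint bookkeeping of the Hermite coefficients and of the ratio $P_{\pi_0}/\phi_\sigma$ in the tails, which is nontrivial precisely because the support constraint only places $\delta^2/\sigma^2$ at scale $L$, not much smaller. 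An alternative route that sidesteps this conversion is to bound $V$ directly via the $d$-dimensional Cauchy-Schwarz inequality $V\leq \tfrac{1}{2}\sqrt{\int(P_{\pi_1}^d-P_{\pi_0}^d)^2/\phi_\sigma^d\,dx}$; product tensorization then expresses the integrand as the second difference $I_{11}^d-2I_{10}^d+I_{00}^d$ in the quantities $I_{ij}:=\sum_k m_k(\pi_i)m_k(\pi_j)/(k!\sigma^{2k})$, after which the challenge shifts to isolating the moment-matched factor $I_{11}+I_{00}-2I_{10}$ in this combinatorial expression via a mean-value-theorem argument.
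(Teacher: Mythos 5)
Your high-level architecture matches the paper's exactly: bound total variation by chi-squared via Cauchy--Schwarz, exploit the product structure $1+\chi^2(P_{\pi_1}^d\|P_{\pi_0}^d)=(1+\chi^2(P_{\pi_1}\|P_{\pi_0}))^d$, expand the univariate mixture densities in Hermite polynomials so that orthogonality under $\phi_\sigma$ collapses the moment-matched coefficients to indices $k\geq L+1$, bound $|\Delta_k|\lesssim \delta^k$, and use Stirling on $(L+1)!$ to extract the $(\delta^2/(\sigma^2 L))^{L+1}$ rate.

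However, the step you flag as the ``main obstacle'' --- converting the $\phi_\sigma$-weighted $L^2$ quantity into the true chi-squared with $P_{\pi_0}$ in the denominator --- is in fact the one place your plan is genuinely incomplete, and the paper resolves it with a single line you appear to have missed. Write
\begin{equation}
dP_{\pi_0}(x)=\frac{1}{\sigma}\phi\!\left(\tfrac{x}{\sigma}\right)\int e^{xv/\sigma^2 - v^2/(2\sigma^2)}\,d\pi_0(v),
\end{equation}
and apply Jensen's inequality to the convex function $e^{(\cdot)}$. Because $\pi_0$ is \emph{centered}, the linear-in-$x$ term $x\,m_1(\pi_0)/\sigma^2$ vanishes; because $\pi_0$ is supported on $[-\delta,\delta]$, the quadratic term satisfies $m_2(\pi_0)/(2\sigma^2)\leq \delta^2/(2\sigma^2)$. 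This gives the \emph{uniform} pointwise bound $dP_{\pi_0}(x)\geq \sigma^{-1}\phi(x/\sigma)\,e^{-\delta^2/(2\sigma^2)}$ for all $x\in\mathbb{R}$, with no region restriction, no auxiliary scale $\tau>\sigma$, and no tail truncation. The only cost is the multiplicative factor $e^{\delta^2/(2\sigma^2)}$ in front of the Hermite sum, and this is harmlessly absorbed: raising the condition $\delta^2\leq C\sigma^2 L/d^{1/(L+1)}$ to the $(L+1)$st power dominates $e^{\delta^2/\sigma^2}\cdot(\delta^2/(\sigma^2 L))^{L+1}\lesssim 1/d$. The bookkeeping you worried about (losing the sharp $(L+1)$-st power through a polynomial-in-$L$ loss) never arises precisely because the bound is uniform. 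Your proposed alternative via second differences and a mean-value argument is not the paper's route and is left as a sketch; you should instead just insert the Jensen step and the rest of your outline closes.
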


Therefore, if we are given two mixing distribution $\pi_0$ and $\pi_1$ that match $L$ moments, are supported on $[-\delta,\delta]$ and their respective product measures are supported on the sets \begin{equation}\label{eq:support_sets}
V_0 = \{v : \norm{v}_1\leq   E_{v\sim\pi^d_0}\norm{v}_1\} \textand V_1 = \{v : \norm{v}_1\geq E_{v\sim\pi^d_1}\norm{v}_1\}\comma
\end{equation} then by Lemma \ref{lemma:exact_support_lb}, the critical separation is lower-bounded by \begin{equation}
\epsilon_1^*(\epsilon_0,\GS)-\epsilon_0 \geq E_{v\sim\pi^d_1}\norm{v}_1-E_{v\sim\pi^d_0}\norm{v}_1\period
\end{equation} Hence, a sharp lower bound can be obtained if we optimize over all mixing distributions satisfying the constraints. Theorem \ref{thm:lower_bound_function_estimation_l1} formalizes this notion by lower-bounding the critical separation by the maximum separation among all moment-matching distributions.

Let $M_p(L)$ denote the maximum functional separation achieved among distributions that share their first $L$ moments \begin{align}\label{eq:Mp}
M_p(L) = &\sup_{\pi_0,\pi_1} E_{v\sim \pi_1}|v|^p-E_{v\sim \pi_0}|v|^p\\
&\st  m_l(\pi_0)=m_l(\pi_1) \for 1\leq l\leq L
\end{align} where the supremum is taken over all probability measures supported on $[-1,1]$. Then, Theorem \ref{thm:lower_bound_function_estimation_l1} states that the critical separation can be lower-bounded by the moment-matching problem insofar as the indistinguishability condition \eqref{eq:chi2_condition} is satisfied. The proof is deferred to Appendix \ref{sec:moment_matching_lower_bounds}  of the supplementary material. 

\begin{theorem}\label{thm:lower_bound_function_estimation_l1}
Choose $\delta\geq 0$ and $L\geq1$ satisfying \eqref{eq:chi2_condition}. If $M_1(L)>0$, the critical separation for hypotheses \eqref{eq:gaussian_testing_lp} with $p=1$ is lower bounded by \begin{equation}
\epsilon_1^*(\epsilon_0,\GS)-\epsilon_0  \gtrsim \epsilon_0
\quad\for
\epsilon_0 \asymp \delta\cdot d \cdot M_1(L) \textand  d^{1/2} \gtrsim M^{-1}_1(L).
\end{equation}\end{theorem}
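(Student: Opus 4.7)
The plan is to combine the Wu--Yang moment-matching bound with a concentration argument to produce two Gaussian mixtures that are statistically indistinguishable yet concentrated on well-separated $\ell_1$-balls, and then invoke \zcref{lemma:exact_support_lb}. First, let $(\pi_0^\ast, \pi_1^\ast)$ nearly attain the supremum defining $M_1(L)$ in~\eqref{eq:Mp}; after a common shift and rescaling (which preserves matching of the first $L$ moments and the gap in $E[|v|]$ up to a constant) we may take both to be centered on $[-1,1]$. Dilate by $\delta$ to obtain $\pi_i(B)=\pi_i^\ast(B/\delta)$, which are centered, supported on $[-\delta,\delta]$, and still share their first $L$ moments. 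The assumption~\eqref{eq:chi2_condition}, combined with the chi-squared lemma preceding the theorem, then yields $V(P_{\pi_0}^d, P_{\pi_1}^d) \leq C_\alpha$.

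Next, set $\mu_i = E_{v\sim\pi_i^d}[\norm{v}_1] = d\delta\cdot E_{\pi_i^\ast}[|v|]$, so that $\mu_1-\mu_0 \asymp d\delta\cdot M_1(L)$. Because $|v_j|\leq\delta$ almost surely under $\pi_i^d$, Hoeffding's inequality gives
\begin{equation}
\Pr_{v\sim\pi_i^d}\!\bigl(|\norm{v}_1-\mu_i|\geq t\bigr)\leq 2\exp\!\bigl(-t^2/(2d\delta^2)\bigr).
\end{equation}
Choosing $t=(\mu_1-\mu_0)/4$ turns the exponent into $-c\,d\,M_1(L)^2$, which is arbitrarily large under the standing hypothesis $d^{1/2}\gtrsim M_1^{-1}(L)$. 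Let $\tilde\pi_i^d$ denote $\pi_i^d$ conditioned on the concentration event $A_i=\{|\norm{v}_1-\mu_i|\leq t\}$, which has probability at least $1-\eta$ for any prescribed $\eta$. Using the standard bound $V(P_\pi,P_{\pi|_A/\pi(A)})\leq 1-\pi(A)$ together with the triangle inequality, $V(P_{\tilde\pi_0^d}, P_{\tilde\pi_1^d})\leq C_\alpha + 2\eta$, which remains below $1-(\alpha+\beta)$. By construction $\tilde\pi_0^d$ is supported on $\{\norm{v}_1\leq \mu_0+t\}$ and $\tilde\pi_1^d$ on $\{\norm{v}_1\geq \mu_1-t\}$; applying \zcref{lemma:exact_support_lb} for any $\epsilon_0\in[\mu_0+t,\mu_1-t]$ and $\epsilon_1=\mu_1-t$ yields
\begin{equation}
\epsilon_1^*(\epsilon_0,\GS)-\epsilon_0 \geq (\mu_1-t)-\epsilon_0,
\end{equation}
which is $\asymp d\delta\cdot M_1(L)\asymp \epsilon_0$ once $\epsilon_0\asymp d\delta\cdot M_1(L)$ is placed in the middle of this interval.

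The main obstacle is the interplay between the two approximations: the chi-squared bound already saturates at $C_\alpha$, so any additional total-variation cost incurred by the concentration conditioning must be strictly absorbable. This is precisely what the assumption $d^{1/2}\gtrsim M_1^{-1}(L)$ secures, since it drives the Hoeffding failure probability below any prescribed constant. A secondary subtlety is ensuring that the target $\epsilon_0\asymp d\delta\cdot M_1(L)$ falls inside the admissible interval $[\mu_0+t,\mu_1-t]$; since $\mu_1\geq d\delta\cdot M_1(L)$ and $t=(\mu_1-\mu_0)/4$, the interval has length $\asymp d\delta\cdot M_1(L)$, but one must also arrange that $\mu_0$ is not much larger than the target. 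This can be done by a mild perturbation of $(\pi_0^\ast,\pi_1^\ast)$ or by restricting the optimization in~\eqref{eq:Mp} to pairs with $E_{\pi_0^\ast}[|v|]$ of order $M_1(L)$.
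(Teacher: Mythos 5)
Your route is genuinely different from the paper's, although it lands on the same key objects. The paper proves this via \zcref{lemma:LB_via_opt}, which in turn invokes \zcref{lemma:lb_by_two_fuzzy_prios} (built on \zcref{lemma:twofuzzypriors}): it never conditions the priors on concentration events, but instead works with ``fuzzy'' priors that place mass $\geq 1-C_\alpha/4$ on $V_0$ and $V_1$, verified by Markov's inequality on the mean of $\pi_0^d$ and Chebyshev's inequality (using the variance bound $V_{\pi_1^d}\norm{v}_1 \leq d\delta^2$) on $\pi_1^d$. The total-variation budget in \zcref{lemma:twofuzzypriors} is deliberately partitioned at the outset: $V(P_{\pi_0}^d,P_{\pi_1}^d) < C_\alpha/2$ plus $C_\alpha/4 + C_\alpha/4$ for the escaping mass. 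You instead condition the priors onto Hoeffding concentration events and apply the exact-support lemma \zcref{lemma:exact_support_lb}; this is valid and has the stylistic advantage of producing a cleaner exponential tail, at the cost of an extra total-variation adjustment from the conditioning.

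That adjustment is where your write-up has a small slip. You take $V(P_{\pi_0}^d, P_{\pi_1}^d) \leq C_\alpha$ from the lemma preceding the theorem and then add $2\eta$ from conditioning, claiming the sum ``remains below $1-(\alpha+\beta)$.'' But $C_\alpha = 1-(\alpha+\beta)$ exactly, so $C_\alpha + 2\eta > C_\alpha$ for any $\eta > 0$ and the budget is already exhausted. The fix is routine: the universal constant $C$ in \eqref{eq:chi2_condition} is at your disposal (see \zcref{lemma:moment_matching_priors}, stated for arbitrary $\eta \in (0,1)$), so tighten it to get $V(P_{\pi_0}^d, P_{\pi_1}^d) \leq C_\alpha/2$ and then the conditioning cost is comfortably absorbed. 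This is precisely the slack that \zcref{lemma:twofuzzypriors} reserves in advance. Also, the Hoeffding exponent under $d^{1/2}\gtrsim M_1^{-1}(L)$ is only bounded away from $0$ by a constant—not ``arbitrarily large''—unless you let the implicit constant in $d^{1/2}\gtrsim M_1^{-1}(L)$ grow; state this as ``below a prescribed constant once the implicit constant is chosen large enough'' rather than ``arbitrarily large.''

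Your ``secondary subtlety''—that $\mu_0 = d\delta\,E_{\pi_0^\ast}[|v|]$ must be $\lesssim d\delta M_1(L)$ for the claimed $\epsilon_0$-range to be non-empty—is a real issue, and you are right to flag it. The paper's proof of the unconstrained case in \zcref{lemma:LB_via_opt} tracks the ratio $C = E_{\pi_0^d}\norm{v}_1 / E_{\pi_1^d}\norm{v}_1$ and effectively requires $C(1-C)^{-1}$ to be of constant order; it is not explicitly argued that the optimal $(\pi_0^\ast,\pi_1^\ast)$ can be taken with this property. Your suggested patch—restricting the optimization to pairs with $E_{\pi_0^\ast}[|v|] \lesssim M_1(L)$—is effectively passing to the constrained problem $M_1(\epsilon, L)$ with $\epsilon \asymp M_1(L)$, which is the machinery the paper develops in \zcref{thm:interpolation_lower_bound_l1} and \zcref{lemma:LB_via_opt}; so the patch is sound and is in fact closest to how the argument is made rigorous.
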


Moment-matching problems are challenging to solve directly. An effective approach is to study their duals, which is the well-researched best polynomial approximation problem.

\begin{lemma}[Duality of the moment-matching problem  \citep{wuMinimaxRatesEntropy2016}]\label{lemma:moment_matching_duality} Let $P_L$ be the set of all $L$-order polynomials supported on $[-1,1]$, then it holds that the moment-matching problem and the best polynomial approximation problem are equivalent up-to-constants: $M_p(L) = 2 \cdot A_p(L)$ where $A_p(L) =\inf_{f \in P_L}\sup_{|x|\leq 1}||x|^p-f(x)|.$
\end{lemma}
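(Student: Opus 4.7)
The plan is to recognize this as an instance of infinite-dimensional linear programming duality: $M_p(L)$ is a primal maximization over signed measures constrained by moment conditions, and $2 A_p(L)$ is the corresponding dual minimization over polynomials of degree at most $L$. The key reparametrization is to set $\mu = \pi_1 - \pi_0$. Since $\pi_0, \pi_1$ are probability measures agreeing on moments of orders $1, \ldots, L$ (and trivially on order $0$), we get $\int v^l d\mu = 0$ for $0 \leq l \leq L$, and $\|\mu\|_{TV} \leq \pi_0([-1,1]) + \pi_1([-1,1]) = 2$. Conversely, any signed measure $\mu$ with $\mu([-1,1]) = 0$ and $\|\mu\|_{TV} \leq 2$ can be realized as $\pi_1 - \pi_0$: take the Jordan decomposition $\mu = \mu_+ - \mu_-$, observe that $\mu_+([-1,1]) = \mu_-([-1,1]) \leq 1$, and then pad by a common probability measure. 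Thus
\begin{equation}
M_p(L) = \sup\left\{\int |v|^p d\mu : \mu \text{ signed on } [-1,1],\ \|\mu\|_{TV} \leq 2,\ \int v^l d\mu = 0 \text{ for } 0 \leq l \leq L\right\}.
\end{equation}

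For the inequality $M_p(L) \leq 2 A_p(L)$ (weak duality), I would pick any $f \in P_L$. The moment constraints force $\int f \, d\mu = 0$, so
\begin{equation}
\int |v|^p \, d\mu = \int \big(|v|^p - f(v)\big) \, d\mu \leq \|\mu\|_{TV} \cdot \sup_{|x| \leq 1} \big||x|^p - f(x)\big| \leq 2 \sup_{|x| \leq 1} \big||x|^p - f(x)\big|.
\end{equation}
Taking the supremum over feasible $\mu$ on the left and the infimum over $f \in P_L$ on the right yields $M_p(L) \leq 2 A_p(L)$.

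For the matching lower bound $M_p(L) \geq 2 A_p(L)$, I would invoke Chebyshev's equioscillation theorem applied to the best uniform approximation $f^\ast \in P_L$ of $|x|^p$ on $[-1,1]$: since the monomials form a Haar system and $|x|^p$ is continuous, there exist $L+2$ alternation points $-1 \leq x_0 < x_1 < \cdots < x_{L+1} \leq 1$ with $|x_i|^p - f^\ast(x_i) = \sigma \cdot (-1)^i A_p(L)$ for some global sign $\sigma \in \{\pm 1\}$. I then construct an extremal signed measure supported on these points, $\mu^\ast = \sum_{i=0}^{L+1} c_i \delta_{x_i}$, where the coefficients $c_i$ solve the homogeneous system $\sum_i c_i x_i^l = 0$ for $l = 0, \ldots, L$. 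This is a linear system of $L+1$ equations in $L+2$ unknowns, and a Vandermonde/Cramer's rule calculation (using $c_i \propto (-1)^i \prod_{j \neq i}(x_i - x_j)^{-1}$ up to an overall sign) shows the null space is one-dimensional and its nontrivial element has strictly alternating signs. Choosing the global sign so that $\mathrm{sign}(c_i) = \sigma \cdot (-1)^i$ and rescaling so that $\sum_i |c_i| = 2$, the measure $\mu^\ast$ is feasible, and
\begin{equation}
\int |v|^p \, d\mu^\ast = \sum_i c_i \big(|x_i|^p - f^\ast(x_i)\big) + \sum_i c_i f^\ast(x_i) = \sum_i |c_i| A_p(L) + 0 = 2 A_p(L).
\end{equation}

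The main obstacle is the verification that the null vector of the moment matrix has exactly the sign pattern demanded by the alternation of $|x|^p - f^\ast(x)$; this is the substantive content of strong duality here, and it is precisely what makes the Chebyshev points (as opposed to an arbitrary set of $L+2$ nodes) the right choice. Once this alignment is established, the rest reduces to bookkeeping for normalization and to the identity $\int f^\ast \, d\mu^\ast = 0$ that follows from the moment constraints.
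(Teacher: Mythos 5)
The paper states this lemma without proof, citing \citet{wuMinimaxRatesEntropy2016}, so there is no in-paper argument to compare against; your proposal supplies a complete, self-contained proof along the standard lines. The reformulation of $M_p(L)$ as a linear program over signed measures $\mu$ with vanishing moments of orders $0,\dots,L$ and $\|\mu\|_{\mathrm{TV}} \le 2$ is correct, and the Jordan-decomposition padding argument properly handles the conversion back to a pair of probability measures. Weak duality is clean, and the converse via the Chebyshev alternation set is the right strong-duality certificate for this problem. Two small points are worth tightening. First, the displayed null-vector formula $c_i \propto (-1)^i\prod_{j\neq i}(x_i-x_j)^{-1}$ is off by the factor $(-1)^i$: the divided-difference weights are $c_i \propto \prod_{j\neq i}(x_i-x_j)^{-1}$, and since $\prod_{j\neq i}(x_i-x_j)$ already has sign $(-1)^{L+1-i}$, these weights alternate on their own; with the extra $(-1)^i$ the signs would be constant, which cannot annihilate the constant moment $\sum_i c_i = 0$. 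Your stated conclusion (one-dimensional null space, strictly alternating signs) is what the argument actually uses, and it is correct. Second, you should acknowledge the degenerate case $p$ even with $L \ge p$: there $|x|^p \in P_L$, so $A_p(L)=0$ and the equioscillation set is not defined; but any feasible $\mu$ then has $\int |v|^p\,d\mu = 0$, so $M_p(L)=0$ and the identity holds trivially. With these two remarks folded in, the proof is complete.
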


Using this connection, we can derive a matching lower bound for the functional tolerance regime \eqref{eq:maximum_tolerance_l1}. \citet{bernsteinOrdreMeilleureApproximation1912} proved that the best polynomial approximation of the absolute value satisfies: \begin{equation}\label{eq:bernstein_best_poly_approx}
A_1(L) = \beta_1 \cdot (1+C_L)/L
\end{equation} where $\beta_1 \approx 0.28$ and $C_L\to0 \as L\to\infty$. Consequently, combining Theorem \ref{thm:lower_bound_function_estimation_l1}, Lemma \ref{lemma:moment_matching_duality}, and \eqref{eq:bernstein_best_poly_approx}, we obtain a sharp lower-bound by choosing $\delta \asymp \sigma \cdot \sqrt{L}$ and $L\asymp \log d$. This argument yields the following lower bound on the critical separation.
\begin{corollary} For hypotheses \eqref{eq:gaussian_testing_lp} with $p=1$, the critical separation is lower bounded by
\begin{equation}
\epsilon_1^*(\epsilon_0,\GS)-\epsilon_0 \gtrsim \epsilon_0 \quad\for \epsilon_0\asymp \sigma \cdot \frac{d}{\sqrt{\log d}}.
\end{equation}
\end{corollary}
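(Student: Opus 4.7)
The plan is to instantiate \zcref{thm:lower_bound_function_estimation_l1} with carefully calibrated parameters $L$ and $\delta$, convert the moment-matching functional $M_1(L)$ into a polynomial approximation quantity via \zcref{lemma:moment_matching_duality}, and then invoke Bernstein's classical bound \eqref{eq:bernstein_best_poly_approx} on the best polynomial approximation of $|x|$ on $[-1,1]$.

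I would first select the polynomial degree $L \asymp \log d$. This choice is motivated by the need to neutralize the factor $d^{1/(L+1)}$ appearing in the indistinguishability condition \eqref{eq:chi2_condition}: writing $d^{1/(L+1)} = \exp\bigl(\tfrac{\log d}{L+1}\bigr)$, the choice $L \asymp \log d$ forces $d^{1/(L+1)} = \Theta(1)$. The constraint \eqref{eq:chi2_condition} then reduces to $\delta^2 \lesssim \sigma^2 L$, so I would saturate it by setting $\delta \asymp \sigma\sqrt{L} \asymp \sigma\sqrt{\log d}$.

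By \zcref{lemma:moment_matching_duality} together with Bernstein's identity \eqref{eq:bernstein_best_poly_approx}, we have $M_1(L) = 2 A_1(L) \asymp 1/L$. Substituting these choices into the conclusion of \zcref{thm:lower_bound_function_estimation_l1}, the tolerance level reached by the construction is
\begin{equation*}
\epsilon_0 \;\asymp\; \delta \cdot d \cdot M_1(L) \;\asymp\; \sigma\sqrt{L}\cdot d \cdot \frac{1}{L} \;=\; \frac{\sigma\,d}{\sqrt{L}} \;\asymp\; \frac{\sigma\,d}{\sqrt{\log d}},
\end{equation*}
which matches the target rate and yields the claimed lower bound $\epsilon_1^*(\epsilon_0,\GS)-\epsilon_0 \gtrsim \epsilon_0$ at this scale. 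The ancillary hypothesis $d^{1/2} \gtrsim M_1(L)^{-1} \asymp \log d$ required by \zcref{thm:lower_bound_function_estimation_l1} holds for $d$ sufficiently large, completing the instantiation.

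The genuine difficulty—controlling the chi-squared divergence between moment-matched Gaussian mixtures—has already been absorbed into \zcref{thm:lower_bound_function_estimation_l1}, so the remaining work is a two-parameter optimization of $\delta \cdot d \cdot M_1(L)$ subject to \eqref{eq:chi2_condition}. The crux is recognizing that $L \asymp \log d$ is exactly the threshold at which the $d^{1/(L+1)}$ factor transitions from growing polynomially to being a constant: below this threshold, $\delta$ is forced too small by \eqref{eq:chi2_condition}; above it, $M_1(L) \asymp 1/L$ shrinks without compensating loosening of the constraint. Thus the logarithmic choice is simultaneously feasible and extremal.
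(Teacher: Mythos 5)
Your proposal is correct and takes essentially the same route as the paper: both instantiate \zcref{thm:lower_bound_function_estimation_l1} with $L\asymp\log d$ and $\delta\asymp\sigma\sqrt{L}$, then obtain $M_1(L)\asymp 1/L$ via the duality \zcref{lemma:moment_matching_duality} and Bernstein's bound \eqref{eq:bernstein_best_poly_approx}, yielding $\epsilon_0\asymp\delta\, d\, M_1(L)\asymp\sigma d/\sqrt{\log d}$. Your write-up is simply more explicit about why $L\asymp\log d$ neutralizes the $d^{1/(L+1)}$ factor in \eqref{eq:chi2_condition} and about verifying the ancillary condition $d^{1/2}\gtrsim M_1(L)^{-1}$, but there is no substantive difference.
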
 The above rate matches \eqref{eq:maximum_tolerance_l1} up to polylogarithmic factors. Therefore, the plug-in test \eqref{eq:plugin_test_l1} is optimal in the functional estimation regime.

\subsubsection{Lower bound for the interpolation regime}\label{sec:interpolation_lb}

The lower bounds in  Section \ref{sec:simple_null_lb} and  Section \ref{sec:functional_estimation_lb} are driven by moment-matching arguments. It is therefore natural to expect these arguments to extend to the interpolation setting. Note that in \eqref{eq:Mp}, we do not control how large $E_{v\sim \pi_0}|v|$ can be. In order to study the interpolation regime, we must guarantee that it does not exceed $\epsilon_0/(\delta \cdot d)$, while making sure that $E_{v\sim \pi_1}|v|$ is above that threshold. Let $M_p(\epsilon,L)$ denote the constrained moment-matching problem where we maximize the functional distance among moment-matching distributions while constraining their mean \begin{align}\label{eq:Mpepsilon}
M_p(\epsilon, L) &= \sup_{\pi_0,\pi_1} E_{v\sim \pi_1}|v|^p \\&\st  E_{v\sim \pi_0}|v|^p \leq \epsilon^p
\textand m_l(\pi_0)=m_l(\pi_1) \for 1\leq l\leq L
\end{align} where the supremum is taken over all probability measures supported on $[-1,1]$. Then, analogously to Theorem \ref{thm:lower_bound_function_estimation_l1}, the constrained moment-matching problem can be used to lower bound the critical separation. The proof is deferred to  Appendix \ref{sec:moment_matching_lower_bounds}  of the supplementary material. 

\begin{theorem}\label{thm:interpolation_lower_bound_l1}
Choose $\delta\geq 0$ and $L\geq1$ satisfying \eqref{eq:chi2_condition}. The critical separation for hypotheses \eqref{eq:gaussian_testing_lp} with $p=1$ is lower bounded by \begin{equation}
\epsilon_1^*(\epsilon_0,\GS)-\epsilon_0 \gtrsim b_1(\epsilon_0) \quad \where b_1(\epsilon_0)=d\cdot \delta\cdot M_1(\tilde{\epsilon}_0,L) \textand \tilde{\epsilon}_0 \asymp \frac{\epsilon_0}{d\delta}
\end{equation} for $
0 < \epsilon_0 \lesssim b_1(\epsilon_0)$ and $d^{1/2} \gtrsim M^{-1}_1(\tilde{\epsilon}_0,L)$.
\end{theorem}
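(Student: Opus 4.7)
The plan is to mimic the strategy used for Theorem~\ref{thm:lower_bound_function_estimation_l1} (functional estimation regime), but substitute the constrained moment-matching problem $M_1(\tilde\epsilon_0,L)$ for the unconstrained $M_1(L)$, so that the prior placed under the null respects the budget $\epsilon_0$ rather than saturating the full separation. Specifically, I would begin by taking a pair of near-optimizers $(\mu_0,\mu_1)$ for $M_1(\tilde\epsilon_0,L)$, which are probability measures on $[-1,1]$ sharing the first $L$ moments, with $E_{v\sim \mu_0}|v|\leq \tilde\epsilon_0$ and $E_{v\sim \mu_1}|v|\asymp M_1(\tilde\epsilon_0,L)$. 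After rescaling by $\delta$, the induced measures $\pi_0,\pi_1$ are supported on $[-\delta,\delta]$, still match $L$ moments, and produce per-coordinate expectations $\delta\cdot E_{\mu_i}|v|$.

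The core indistinguishability step is the moment-matching $\chi^2$ bound (\zcref{lemma:moment_matching_priors}): since $\pi_0,\pi_1$ share $L$ moments and condition~\eqref{eq:chi2_condition} is assumed, the total variation between the Gaussian mixtures $P_{\pi_0}^d$ and $P_{\pi_1}^d$ is at most $C_\alpha$. The expected $\ell_1$-norms under $\pi_i^d$ are $d\delta\cdot E_{\mu_0}|v|\lesssim \epsilon_0$ and $d\delta\cdot E_{\mu_1}|v|\asymp d\delta\cdot M_1(\tilde\epsilon_0,L)=b_1(\epsilon_0)$, and by the assumption $\epsilon_0\lesssim b_1(\epsilon_0)$, the mean gap is of order $b_1(\epsilon_0)$.

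To apply \zcref{lemma:exact_support_lb} verbatim, however, the product priors must be supported \emph{exactly} on $V_0$ and $V_1$. Rather than enforcing this by construction, I would condition each $\pi_i^d$ on the event that $\|v\|_1$ lies within a small fraction of $b_1(\epsilon_0)$ of its mean, and use a Bernstein-type inequality on the bounded summands $|v_j|\leq \delta$ (with variance at most $\delta^2$) to show the event has probability close to $1$. This is where the side condition $d^{1/2}\gtrsim M_1^{-1}(\tilde\epsilon_0,L)$ enters: it ensures the concentration radius $\delta\sqrt{d}$ is negligible compared to the mean gap $d\delta\cdot M_1(\tilde\epsilon_0,L)$, so that conditioning leaves the means essentially unchanged while forcing the priors onto $V_0$ and $V_1$. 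Since conditioning on a high-probability event perturbs total variation by a negligible amount, the bound $V(P_{\pi_0}^d,P_{\pi_1}^d)\leq C_\alpha$ is preserved up to constants, and \zcref{lemma:exact_support_lb} yields $\epsilon_1^*(\epsilon_0,\GS)-\epsilon_0\gtrsim b_1(\epsilon_0)$.

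The main obstacle is the truncation/concentration step: one must verify that after conditioning on the concentration event, both the chi-square-based indistinguishability bound and the moment-matching-induced separation of the expected $\ell_1$ norms remain intact, and that the side condition $d^{1/2}\gtrsim M_1^{-1}(\tilde\epsilon_0,L)$ is precisely the regime in which Bernstein concentration suffices. Everything else is a direct specialization of the unified \zcref{lemma:LB_via_opt} already invoked in the functional estimation regime, now driven by the constrained quantity $M_1(\tilde\epsilon_0,L)$ rather than $M_1(L)$.
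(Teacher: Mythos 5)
Your plan is essentially the paper's: pick near-optimizers of the constrained moment-matching problem, invoke the $\chi^2$ bound of \zcref{lemma:moment_matching_priors} for indistinguishability, and use concentration of $\|v\|_1$ to push the product priors into $V_0$ and $V_1$, with the side condition $d^{1/2}\gtrsim M_1^{-1}(\tilde\epsilon_0,L)$ playing exactly the role you identify (mean gap $d\delta M_1$ dominating the fluctuation $\delta\sqrt d$). The paper routes this through \zcref{lemma:LB_via_opt}, \zcref{lemma:lb_by_two_fuzzy_prios}, and \zcref{lemma:twofuzzypriors} rather than re-deriving a truncation argument from scratch: in particular \zcref{lemma:twofuzzypriors} already internalizes the conditioning you propose (it restricts $\pi_i^d$ to $V_i$ and absorbs the lost mass into the TV bound via triangle inequality and data processing), so there is no need to separately argue that conditioning ``perturbs total variation by a negligible amount'' and then appeal to \zcref{lemma:exact_support_lb}. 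Two small corrections to your sketch: (i) Chebyshev already suffices for the $\pi_1$ side given the crude bound $V_{\pi_1^d}\|v\|_1\leq \delta^2 d$, and this is precisely what the paper uses---Bernstein buys nothing because the side condition is dictated by the variance scale, not the tail; (ii) for $\pi_0$ the event you describe (concentration within a small fraction of $b_1(\epsilon_0)$ of its mean) does not place the support inside $V_0=\{\|v\|_1\leq\epsilon_0\}$ when $b_1(\epsilon_0)\gg\epsilon_0$; the paper instead uses Markov's inequality directly on the constraint $E_{\pi_0^d}\|v\|_1\leq C_0\epsilon_0$, which is both simpler and the correct event to control.
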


Thus, any lower bound on the constrained moment-matching problem \eqref{eq:Mpepsilon} leads to a lower bound of the critical separation in the interpolation regime. The major contribution from \cite{canonnePriceToleranceDistribution2021} is proving such a lower bound.

\begin{lemma}[\citet{canonnePriceToleranceDistribution2021}]{MOneEpsilon}\label{lemma:M1Epsilon} For $L$ large enough, it holds that
\begin{equation}
M_1(\epsilon,L) \gtrsim \sqrt{\frac{\epsilon}{L}} \quad \for 0<\epsilon \lesssim 1/L \period
\end{equation}
\end{lemma}

The proof is related to Lemma \ref{lemma:moment_matching_duality}. Rather than directly solving the optimization problem, \citet{canonnePriceToleranceDistribution2021} lower bounded the dual of $M_1(\epsilon,L)$, which is a constrained best polynomial approximation problem. A presentation of their proof can be found in Appendix \ref{sec:constrained_moment_matching} of the supplementary material. 

Finally, to match the upper bound on interpolation regime \eqref{eq:interpolation_regime_l1} up to constants and logarithmic factors, it is enough to combine Theorem \ref{thm:interpolation_lower_bound_l1}, Lemma \ref{lemma:M1Epsilon} and choose $L\asymp\log d$ and $\delta \asymp \sigma \cdot \sqrt{L}$. This leads to the desired result.
\begin{corollary}
Let $\lambda = \epsilon/\sigma$. For $d \gtrsim 1$ and hypotheses \eqref{eq:gaussian_testing_lp} with $p=1$, the critical separation is lower bounded by
\begin{equation}
\epsilon_1^*(\epsilon_0,\GS)-\epsilon_0 \gtrsim \sigma \cdot \sqrt{\frac{d}{\sqrt{\log d}} \cdot \lambda} \quad \for \sqrt{d} \lesssim \lambda \lesssim \frac{d}{\sqrt{\log d}}.
\end{equation}
\end{corollary}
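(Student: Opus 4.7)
The plan is to combine Theorem~\ref{thm:interpolation_lower_bound_l1} with Lemma~\ref{lemma:M1Epsilon}, essentially by tuning $L$ and $\delta$ to saturate the moment-matching side of the argument. This is the same recipe already used in the previous subsection to handle the functional-estimation regime, but now we feed in the sharper bound on the \emph{constrained} moment-matching functional $M_1(\epsilon,L)$ provided by Canonne et al.

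First I would fix $L \asymp \log d$ and $\delta \asymp \sigma\sqrt{L} \asymp \sigma\sqrt{\log d}$. With this choice, $d^{1/(L+1)} \asymp 1$, so the indistinguishability condition~\eqref{eq:chi2_condition} becomes $\sigma^2 \log d \lesssim \sigma^2 L$, which holds by construction. Thus Theorem~\ref{thm:interpolation_lower_bound_l1} is applicable, and produces the lower bound
\begin{equation}
\epsilon_1^*(\epsilon_0,\GS) - \epsilon_0 \;\gtrsim\; b_1(\epsilon_0) \;=\; d \cdot \delta \cdot M_1(\tilde\epsilon_0, L), \qquad \tilde\epsilon_0 \asymp \frac{\epsilon_0}{d\delta} \asymp \frac{\lambda}{d\sqrt{\log d}}.
\end{equation}

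Next I would invoke Lemma~\ref{lemma:M1Epsilon}. Its hypothesis $\tilde\epsilon_0 \lesssim 1/L$ becomes $\lambda/(d\sqrt{\log d}) \lesssim 1/\log d$, i.e.\ $\lambda \lesssim d/\sqrt{\log d}$, which is precisely the upper end of the range claimed in the corollary. Under this condition the lemma yields
\begin{equation}
M_1(\tilde\epsilon_0, L) \;\gtrsim\; \sqrt{\tilde\epsilon_0 / L} \;\asymp\; \sqrt{\frac{\lambda}{d\, (\log d)^{3/2}}},
\end{equation}
and substituting back gives
\begin{equation}
b_1(\epsilon_0) \;\asymp\; d \cdot \sigma \sqrt{\log d} \cdot \sqrt{\frac{\lambda}{d\, (\log d)^{3/2}}} \;=\; \sigma \cdot \sqrt{\frac{d}{\sqrt{\log d}} \cdot \lambda},
\end{equation}
which is exactly the claimed rate.

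The last thing to verify is that the two side conditions of Theorem~\ref{thm:interpolation_lower_bound_l1} hold on the stated range $\sqrt d \lesssim \lambda \lesssim d/\sqrt{\log d}$. The condition $\epsilon_0 \lesssim b_1(\epsilon_0)$ reduces, after squaring, to $\lambda \lesssim d/\sqrt{\log d}$ — the same upper bound as above. The condition $d^{1/2} \gtrsim M_1^{-1}(\tilde\epsilon_0, L)$ reduces to $\lambda \gtrsim (\log d)^{3/2}$, which is implied by $\lambda \gtrsim \sqrt d$ for $d$ large enough. There is no real obstacle in this proof; everything has been packaged into Theorem~\ref{thm:interpolation_lower_bound_l1} and Lemma~\ref{lemma:M1Epsilon}, and what remains is simply choosing the tuning parameters and checking ranges. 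The ``hard part,'' if any, is book-keeping the polylogarithmic factors so that the conditions $\tilde\epsilon_0 \lesssim 1/L$ and $\epsilon_0 \lesssim b_1(\epsilon_0)$ match up cleanly and determine the same upper endpoint $\lambda \lesssim d/\sqrt{\log d}$ of the interpolation regime.
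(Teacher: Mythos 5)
Your proof is correct and follows exactly the route the paper intends: combine \zcref{thm:interpolation_lower_bound_l1} with \zcref{lemma:M1Epsilon}, choosing $L \asymp \log d$ and $\delta \asymp \sigma\sqrt{L}$, and then verify the side conditions. The paper states this recipe without spelling out the book-keeping; you have simply filled in those algebraic checks, and they all work out as you describe.
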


\subsection{Testing under general smooth  \texorpdfstring{$\ell_p$}{lp} norms} \label{sec:smooth_lp_norm}

We next consider the tolerant testing problem~\eqref{eq:gaussian_testing_lp} under
the $\ell_p$ norms when $p$
is an even integer. As discussed in  Section \ref{sec:overview},
these norms are analytic, and in this case the testing problem is
closely related to the corresponding functional estimation problem for all choices of $\epsilon_0$.

In order to construct upper bounds, we consider a debiased plug-in statistic which was first introduced by \cite{ingsterTestingHypothesisWhich2001}. Concretely, let
$\tilde{T}_p = \norm{X}_p^p - E_{P_0}\norm{X}_p^p$,
and notice that for
all even $p>2$, its bias can be quantified exactly  as \begin{align}\label{eq:plugin_bias_at_even_p}
E_{P_v}\left[\tilde{T}_p\right]&=\norm{v}_p^p + E_{P_v}\left[r_{p}(X)\right]\\
\text{with }  r_{p}(X)&= \sum_{j=1}^{k-1}\ \binom{p}{2j} \cdot \mu_{p-2j} \cdot \sigma^{p} \cdot \sum_{i=1}^d H_{2j}\left(\frac{X_i}{\sigma}\right),
\end{align}
  where $k=\lfloor{p/2}\rfloor$, $\binom{p}{k} = p \cdot (p-1)  \dots  (p-k+1) / k!$, $\mu_{p}=E_{Z\sim \Normal(0,1)}|Z|^p$, and $H_{j}$ is the $j$-th probabilistic Hermite polynomial. Thus, we construct a debiased plug-in test by subtracting $r_p(X)$ from the plug-in test \begin{equation}
T_p(X) = \begin{cases}\label{eq:Tp}
\tilde{T}_p(X) &\textif p\leq 2\\
\tilde{T}_p(X) - r_{p}(X) &\otherwise
\end{cases},
\end{equation}and calibrating the resulting statistic \begin{equation}\label{eq:debiased_plugin_test_lp}
\psi_p(X,\epsilon_0) = I\left(T_p(X) \geq \sup_{\norm{v}_p \leq \epsilon_0} q_{1-\alpha}\left(T_p,P_v\right)\right).
\end{equation}

Since there is no bias–variance trade-off for any smooth $\ell_p$ norm, the debiased plug-in test \citep{ingsterTestingHypothesisWhich2001} achieves fast goodness-of-fit testing when $\epsilon_0 \lesssim \sigma \cdot d^{1/2p}$. For $\epsilon_0 \gtrsim \sigma \cdot d^{1/2p}$, even faster rates can be achieved. The proof of the following lemma appears in Appendix \ref{sec:upper_bound_lp_even}  of the supplementary material.

\begin{theorem}\label{lemma:upper_bound_lp_even}
Let $\Tolerance = \epsilon_0/\sigma$. For hypotheses \eqref{eq:gaussian_testing_lp} with fixed $p$ even integer,
there exist universal constants $C_1,C_2>0$ such that the debiased plug-in test \eqref{eq:debiased_plugin_test_lp} is powerful whenever \begin{equation}
\epsilon_1 - \epsilon_0 \gtrsim
\sigma \cdot
\left\{
\begin{array}{lll}
\displaystyle
d^{1/2p},
& \textif \Tolerance \leq C_1 \cdot d^{1/2p},
&\text{(Free tolerance regime)}
\\[2ex]
\displaystyle
d^{1/2}\cdot \lambda^{1-p}, &\textif C_1 \cdot d^{1/2p} \leq \Tolerance \leq C_2\cdot d^{1/p},
\\[2ex]
\displaystyle
d^{1/p - 1/2}, &\textif \Tolerance \geq C_2\cdot  d^{1/p}.
\end{array}
\right.
\end{equation}
\end{theorem}

Another pattern that can be observed above is that the dependence on the dimension diminishes as $p$ increases, resulting in faster testing rates. Intuitively, under $\ell_p$, large values of $p$ diminish the influence of the tail entries of $v$ because raising small values to the $p$-th power reduces their contribution. Consequently, only the largest entries of $v$ significantly impact the test, effectively lowering the problem’s dimensionality.

From a lower bound perspective, consider testing the simple null hypothesis $H_0: v = 0$ versus $H_1: \norm{v}_p \geq \epsilon$. Under the alternative hypothesis, an adversary allocates mass across the $d$ coordinates of $v$ to maximize $\norm{v}_p$ while staying close to the origin. Initially, the mass is spread uniformly. As $p$ increases, each entry decays rapidly at a $d^{-p}$ rate. To maintain some distance from the origin, the adversary must concentrate most of the mass in a few entries. This shift enables faster testing rates because the test can focus on deviations in the largest coordinates of $v$.

\paragraph*{Relationship to functional estimation} The rates observed in Lemma \ref{lemma:upper_bound_lp_even} are fundamentally linked to the performance of the debiased plug-in statistic \eqref{eq:Tp} as an estimator, meaning that estimation also becomes easier as the norm of the mean increases if measured under the appropriate scale. The proof is deferred to Appendix \ref{sec:EstimationLpEven} of the supplementary material.

\begin{proposition}\label{lemma:EstimationLpEven} Let $\lambda = \epsilon_0/\sigma$, $p$ be an even integer, and $T_p$ be the debiased plug-in statistic \eqref{eq:Tp}. It holds that \begin{equation}
\sup_{\norm{v}_p \leq \epsilon_0}E_{X\sim P_v}|T^{1/p}_p(X)-\norm{v}_p| \lesssim \sigma \cdot \begin{cases}
d^{1/2p}, &\textif \lambda \leq d^{1/2p},\\
d^{1/2}\cdot \lambda^{1-p}, &\textif     d^{1/2p} \leq \lambda \leq d^{1/p},\\
d^{1/p-1/2}, &\textif \lambda \geq d^{1/p}.
\end{cases}
\end{equation}
\end{proposition} Therefore, for smooth norms, tolerant testing and functional estimation coincide across the whole tolerance spectrum.

\paragraph*{Lower bound on the critical separation in the free tolerance regime} We recall that for the free tolerance regime, \citet{ingsterTestingHypothesisWhich2001} proved that the upper bound in Lemma \ref{lemma:upper_bound_lp_even} is sharp up to constants. While \citet{ingsterTestingHypothesisWhich2001} constructed lower bounds based on an explicit moment-matching distribution, we can use a similar argument as in  Section \ref{sec:functional_estimation_lb}, to exploit implicitly moment-matching distributions and verify the sharpness of the free tolerance region when $\epsilon_0 \asymp \sigma \cdot d^{1/2p}$.

First, we note that Theorem \ref{thm:lower_bound_function_estimation_l1} can be easily generalized for any $\ell_p$ norm. That is, any lower bound on the unconstrained moment-matching problem \eqref{eq:Mp} leads to a lower bound on the critical separation. The proof is deferred to Appendix \ref{sec:moment_matching_lower_bounds}  of the supplementary material. 

\begin{theorem}\label{thm:unconstrained_moment_matching_lowerbound}
Choose $\delta\geq 0$ and $L\geq1$ satisfying \eqref{eq:chi2_condition}. If $M_p(L)>0$, the critical separation for hypotheses \eqref{eq:gaussian_testing_lp} is lower bounded by  \begin{equation}
\epsilon_1^*(\epsilon_0,\GS)-\epsilon_0 \gtrsim \epsilon_0 \quad\textif \epsilon_0 \asymp \delta\cdot d^{1/p} \cdot M^{1/p}_p(L)  \textand d^{1/2} \gtrsim M^{-1}_p(L).
\end{equation}
\end{theorem}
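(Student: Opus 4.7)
The plan is to adapt the moment-matching lower bound of \zcref{thm:lower_bound_function_estimation_l1} from the $\ell_1$ case to general $\ell_p$ norms, using \zcref{lemma:exact_support_lb} together with the chi-squared bound \zcref{lemma:moment_matching_priors} for mixtures of Gaussians whose mixing distributions share $L$ moments. First, I would select a near-optimal pair $(\pi_0^*, \pi_1^*)$ of probability measures on $[-1, 1]$ for the moment-matching problem \eqref{eq:Mp}: they satisfy $m_l(\pi_0^*) = m_l(\pi_1^*)$ for $1 \leq l \leq L$ and $E_{\pi_1^*}|v|^p - E_{\pi_0^*}|v|^p \asymp M_p(L)$. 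Rescaling each by $\delta$ yields distributions supported on $[-\delta, \delta]$ that still share their first $L$ moments, and for which the $p$-th absolute moments become $\delta^p \cdot E_{\pi_j^*}|v|^p$.

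Next, I would turn these into priors supported on the hypothesis sets $V_0 = \{v : \|v\|_p \leq \epsilon_0\}$ and $V_1 = \{v : \|v\|_p \geq \epsilon_1\}$ via a concentration argument. Under the product measure $\pi_j^{*,d}$, the sum $\|v\|_p^p = \sum_{i=1}^d |v_i|^p$ is a sum of i.i.d.\ summands bounded in $[0, \delta^p]$, so Hoeffding's inequality gives $|\|v\|_p^p - d\delta^p E_{\pi_j^*}|v|^p| \lesssim \delta^p \sqrt{d}$ with probability at least $1-\eta$. I would therefore set $\epsilon_0^p$ equal to the upper fluctuation level under $\pi_0^{*,d}$ and $\epsilon_1^p$ equal to the lower fluctuation level under $\pi_1^{*,d}$, then condition each $\pi_j^{*,d}$ on its concentration event to obtain modified priors $\tilde\pi_j^d$ supported exactly on $V_j$. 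Since conditioning on an event of probability $\geq 1 - \eta$ perturbs a measure by at most $\eta$ in TV, combining \zcref{lemma:moment_matching_priors} (which controls $V(P_{\pi_0^*}^d, P_{\pi_1^*}^d)$ once the rescaled $\delta$ satisfies \eqref{eq:chi2_condition}) with the triangle inequality yields $V(P_{\tilde\pi_0}^d, P_{\tilde\pi_1}^d) \leq C_\alpha$, so \zcref{lemma:exact_support_lb} delivers $\epsilon_1^*(\epsilon_0,\GS) - \epsilon_0 \geq \epsilon_1 - \epsilon_0$.

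It then remains to convert the $p$-th power gap into a gap on $\epsilon_1 - \epsilon_0$. By construction, $\epsilon_1^p - \epsilon_0^p = d\delta^p M_p(L) - O(\delta^p \sqrt{d})$, and this is of order $d\delta^p M_p(L)$ precisely when $d^{1/2} \gtrsim M_p^{-1}(L)$, which is the hypothesis of the theorem. Under the chosen scaling $\epsilon_0 \asymp \delta d^{1/p} M_p^{1/p}(L)$, both $\epsilon_0$ and $\epsilon_1$ are of the same order, so factoring $\epsilon_1^p - \epsilon_0^p \asymp (\epsilon_1 - \epsilon_0) \cdot \epsilon_0^{p-1}$ and dividing through yields $\epsilon_1 - \epsilon_0 \gtrsim \delta d^{1/p} M_p^{1/p}(L) \asymp \epsilon_0$, which is the claimed lower bound.

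The main obstacle I expect is the bookkeeping around the conditioning step: the sets $V_0, V_1$ are defined through $\|v\|_p$, which is random under the prior, so one must verify that the conditioned mixtures retain the TV bound required by \zcref{lemma:exact_support_lb}, and that the constructed $\epsilon_0$ in fact matches the advertised scale $\delta d^{1/p} M_p^{1/p}(L)$ --- the latter requires controlling $E_{\pi_0^*}|v|^p$ relative to $M_p(L)$, possibly by symmetrizing or by mixing $\pi_0^*$ with a point mass at the origin (which preserves the shared-moment property) if its $p$-th moment is initially too large.
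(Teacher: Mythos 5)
Your strategy — take near-optimal moment-matching priors $(\pi_0^*,\pi_1^*)$, rescale to $[-\delta,\delta]$, use concentration of $\|v\|_p^p$ under the product priors, condition onto the hypothesis sets, and invoke the chi-squared bound plus the TV argument — is essentially the paper's route (via \zcref{lemma:lb_by_two_fuzzy_prios} and \zcref{lemma:LB_via_opt}), with two cosmetic substitutions: you apply Hoeffding where the paper uses Markov for $\pi_0$ and Chebyshev for $\pi_1$, and you pass through \zcref{lemma:exact_support_lb} with explicit conditioning rather than invoking the fuzzy-priors variant \zcref{lemma:twofuzzypriors}, which performs the conditioning internally. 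These give the same order of magnitude, so the bookkeeping in your first three paragraphs matches.

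The delicate point you flag in the last paragraph is a real one, but your proposed fix does not work. For the constructed $\epsilon_0^p \asymp d\delta^p E_{\pi_0^*}|v|^p + \delta^p\sqrt{d}$ to sit at the advertised scale $d\delta^p M_p(L)$, and for $\epsilon_1\asymp\epsilon_0$ so that the factorization $\epsilon_1^p-\epsilon_0^p\asymp(\epsilon_1-\epsilon_0)\epsilon_0^{p-1}$ yields $\epsilon_1-\epsilon_0\gtrsim\epsilon_0$ (rather than just $\epsilon_1\gtrsim\epsilon_0$), you genuinely need $E_{\pi_0^*}|v|^p\lesssim M_p(L)$, and this is not automatic from the definition of $M_p(L)$. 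Mixing $\pi_0^*$ with $\delta_0$ cannot repair this: to preserve the moment-matching constraint $m_l(\pi_0)=m_l(\pi_1)$ for $1\le l\le L$, you must mix \emph{both} priors with the same point mass, which rescales $E_{\pi_0}|v|^p$ and the gap $E_{\pi_1}|v|^p-E_{\pi_0}|v|^p$ by the identical factor $\lambda$, leaving the problematic ratio $E_{\pi_0}|v|^p/M_p(L)$ unchanged. (Symmetrizing, as in \zcref{lemma:centering}, also leaves the absolute $p$-th moment invariant, so it does not help either.) What is actually needed is to choose $(\pi_0^*,\pi_1^*)$ so that $E_{\pi_0^*}|v|^p$ is itself of order $M_p(L)$; the paper's downstream applications achieve this by using specific near-optimal pairs — e.g.\ the construction in \zcref{lemma:MpOdd} deliberately places most of $\pi_0$'s mass at the origin, so that $E_{\pi_0}|v|^p\lesssim L^{-p}\asymp M_p(L)$. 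If you replace the "mixing" suggestion with the requirement that the chosen moment-matching pair satisfy $E_{\pi_0^*}|v|^p\lesssim M_p(L)$ (verified case-by-case for the priors used to apply the theorem), the proof closes.
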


As noted, $\norm{v}_p^p$ can be estimated without bias when $p$ is even, which leads to faster testing rates. From the point of view of moment-matching distributions, this is reflected by the fact that we can construct two distributions that match at most $p-1$ moments rather than some multiple of $\log d$, as we did for the $\ell_1$ norm in  Section \ref{sec:functional_estimation_lb}. Meaning that under smooth norms, there is a fundamental limit regarding how close two indistinguishable distributions can be.

\begin{lemma}[\citet{newmanApproximationMonomialsLower1976}, Equation (1) of \citet{saibabaApproximatingMonomialsUsing2021a}]\label{lemma:MpEven}
Let $p$ be an even integer. For $1\leq L < p$, it holds that $(2e)^{-1}\cdot g(p,L)  \leq M_p(L)\leq 2 \cdot g(p,L)$ where $g(p,L)=2^{-(p-1)} \cdot \sum_{j=\lfloor{(p+L)/2}\rfloor}^p \binom{p}{j}$. Furthermore, for $L\geq p$, $M_p(L)=0$.
\end{lemma}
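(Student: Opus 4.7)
The starting point is the duality of Lemma~\ref{lemma:moment_matching_duality}, which reduces the statement to bounding the best polynomial approximation $A_p(L) = \inf_{f \in P_L} \sup_{|x|\leq 1}||x|^p - f(x)|$, since $M_p(L) = 2 A_p(L)$. The crucial simplification is that, because $p$ is even, $|x|^p = x^p$, so we are really approximating a monomial by a polynomial of strictly smaller degree. The case $L \geq p$ is then immediate: one can take $f(x) = x^p$ itself, giving $A_p(L) = 0$ and hence $M_p(L) = 0$. For the main case $1 \leq L < p$, I would prove the two-sided inequality $(4e)^{-1} g(p,L) \leq A_p(L) \leq g(p,L)$.

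For the upper bound, I would expand $x^p$ in the Chebyshev basis via the elementary identity $(\cos\theta)^p = 2^{-p} \sum_{k=0}^p \binom{p}{k} \cos((p-2k)\theta)$. Setting $x = \cos\theta$ and $T_j(x) = \cos(j\theta)$, this rearranges into the finite sum
\begin{equation}
x^p = 2^{1-p} \sum_{k=0}^{p/2}\!\!{}' \binom{p}{k} T_{p-2k}(x),
\end{equation}
where the prime halves the $k = p/2$ term. I would then take $f$ to be the partial sum keeping only those terms with $p - 2k \leq L$. Applying the triangle inequality and $\|T_j\|_\infty \leq 1$, the residual is bounded by the sum of the discarded coefficients, which after the substitution $l = p - k$ and use of $\binom{p}{k} = \binom{p}{p-k}$ is exactly $g(p,L)$, up to the endpoint convention in the floor.

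The harder direction is the matching lower bound $A_p(L) \gtrsim g(p,L)$. My plan is to invoke the classical argument of \citet{newmanApproximationMonomialsLower1976}, reproduced in a streamlined form by \citet{saibabaApproximatingMonomialsUsing2021a}. The strategy is dual to the upper bound: by a Hahn--Banach / Chebyshev-alternation argument, $A_p(L)$ equals the supremum of $\int x^p\, d\mu$ over signed measures $\mu$ on $[-1,1]$ with $\|\mu\|_{\mathrm{TV}} \leq 1$ that annihilate $P_L$ (i.e. $\int x^l \, d\mu = 0$ for $0 \leq l \leq L$). Placing a carefully weighted point mass at the Chebyshev nodes $\cos(j\pi/(p+1))$ produces such a measure whose integral against $x^p$ matches $g(p,L)$ up to the factor $(2e)^{-1}$; the constant arises from a standard Stirling/binomial tail estimate on the discarded coefficients. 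I would cite this construction rather than reproduce it.

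The main obstacle is the sharp constant in the lower bound: the Chebyshev-truncation argument for the upper bound is completely elementary, but the dual extremal measure requires delicate manipulation of binomial tails to avoid losing more than the factor of $2e$. Since this calculation is classical and available in the cited references, the proof would be essentially a deduction from the duality of Lemma~\ref{lemma:moment_matching_duality} together with \citet{newmanApproximationMonomialsLower1976}, with the $L\geq p$ case handled directly by the evenness of $p$.
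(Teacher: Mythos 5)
The paper does not prove this lemma; it is stated purely as a citation to Newman (1976) and Saibaba (2021), so there is no internal argument to compare against. Your reconstruction is correct and matches the argument underlying those references: \zcref{lemma:moment_matching_duality} together with $|x|^p = x^p$ for even $p$ reduces $M_p(L)$ to twice the best-uniform-approximation error $A_p(L)$ of the monomial $x^p$ by degree-$L$ polynomials on $[-1,1]$, and the case $L \geq p$ is immediate since $x^p$ itself is then feasible. For $1 \leq L < p$, truncating the Chebyshev expansion of $x^p$ (all of whose coefficients are nonnegative) to degree at most $L$ yields a uniform error exactly equal to the sum of the discarded coefficients, attained at $x=1$ where every Chebyshev polynomial equals one; this discarded sum is a subsum of $g(p,L)$---its starting index lies at or above $\floor{(p+L)/2}$ depending on the parity of $L$, always in the favorable direction---so $A_p(L) \leq g(p,L)$ and hence $M_p(L) \leq 2\,g(p,L)$. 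The $(2e)^{-1}$ lower bound, which is the delicate part, is deferred to Newman's dual extremal-measure construction, which is the same move the paper itself makes by citing rather than reproving; the proposal is therefore a faithful and correct sketch.
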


Using Theorem \ref{thm:unconstrained_moment_matching_lowerbound} and Lemma \ref{lemma:MpEven}, we can choose $\delta \asymp \sigma \cdot d^{-1/2p}$ and $L=p-1$, in which case $M_p(L)$ behaves like a constant for fixed $p$. Thus, the following bound holds for $d$ large enough.

\begin{corollary}
For $d \gtrsim 1$, and hypotheses \eqref{eq:gaussian_testing_lp} with fixed even $p$, the critical separation is lower bounded by \begin{equation}
\epsilon_1^*(\epsilon_0,\GS)-\epsilon_0 \gtrsim \epsilon_0 \quad \textif \epsilon_0 \asymp \sigma \cdot d^{1/2p}.\end{equation}\end{corollary}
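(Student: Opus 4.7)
The target statement is a direct consequence of Theorem \ref{thm:unconstrained_moment_matching_lowerbound} combined with Lemma \ref{lemma:MpEven}. The plan is to feed a carefully chosen pair $(\delta, L)$ into the moment-matching lower bound machinery developed for general $\ell_p$ norms, verify the chi-squared indistinguishability condition \eqref{eq:chi2_condition}, and check that the resulting $\epsilon_0$ lands at the claimed $\sigma \cdot d^{1/2p}$ scale.

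First I would choose $L = p-1$. For this choice, Lemma \ref{lemma:MpEven} yields
\begin{equation}
M_p(L) \asymp g(p, p-1) = 2^{-(p-1)} \sum_{j=\lfloor (2p-1)/2 \rfloor}^{p} \binom{p}{j},
\end{equation}
which for $p$ fixed is a strictly positive constant depending only on $p$. In particular $M_p(L) > 0$, and the requirement $d^{1/2} \gtrsim M_p^{-1}(L)$ in Theorem \ref{thm:unconstrained_moment_matching_lowerbound} reduces to the mild condition $d \gtrsim 1$, as in the statement.

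Next I would set $\delta \asymp \sigma \cdot d^{-1/(2p)}$ and verify the chi-squared condition \eqref{eq:chi2_condition}. With $L = p-1$ the right-hand side of \eqref{eq:chi2_condition} is of order $\sigma^2 \cdot (p-1) / d^{1/p}$, while the chosen $\delta$ gives $\delta^2 \asymp \sigma^2 \cdot d^{-1/p}$. Hence \eqref{eq:chi2_condition} is satisfied for $p \geq 2$ provided the absolute constant is chosen appropriately—this is where fixing $p$ is essential, as the feasibility constant depends on $p-1$.

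Having verified both conditions, Theorem \ref{thm:unconstrained_moment_matching_lowerbound} applies, and plugging in the chosen values gives the tolerance level at which the lower bound is active:
\begin{equation}
\delta \cdot d^{1/p} \cdot M_p^{1/p}(L) \asymp \sigma \cdot d^{-1/(2p)} \cdot d^{1/p} \asymp \sigma \cdot d^{1/(2p)},
\end{equation}
where the hidden constant depends only on $p$. Thus for $\epsilon_0 \asymp \sigma \cdot d^{1/(2p)}$ we conclude $\epsilon_1^*(\epsilon_0, \GS) - \epsilon_0 \gtrsim \epsilon_0$, as claimed. The main (though modest) obstacle is bookkeeping the $p$-dependent constants in Lemma \ref{lemma:MpEven} and in \eqref{eq:chi2_condition} so that the feasibility of $(\delta, L)$ and the scale-matching calculation are simultaneously consistent; since $p$ is fixed throughout, this is purely a matter of absorbing constants.
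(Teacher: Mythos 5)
Your proposal is correct and follows essentially the same route as the paper: apply Theorem~\ref{thm:unconstrained_moment_matching_lowerbound} together with Lemma~\ref{lemma:MpEven}, choosing $L = p-1$ and $\delta \asymp \sigma \cdot d^{-1/(2p)}$, so that $M_p(L)$ reduces to a positive $p$-dependent constant and the resulting tolerance scale is $\sigma \cdot d^{1/(2p)}$. The paper states the choice of $(\delta, L)$ without explicitly checking the chi-squared condition \eqref{eq:chi2_condition}; you make that check explicit, and your verification (that $\delta^2 \asymp \sigma^2 d^{-1/p}$ sits below $C \sigma^2 (p-1)/d^{1/p}$ once the implied constant in $\delta$ is taken small enough relative to $p$) is correct.
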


\subsection{Testing under general non-smooth  \texorpdfstring{$\ell_p$}{lp} norms}\label{sec:nonsmooth_lp_norm}

Let us finally comment on testing hypotheses \eqref{eq:gaussian_testing_lp}
when $p > 1$ is not an even integer. In this case, there is no unbiased estimator of $\norm{v}_p^p$, and the bias of the the  plug-in statistic \eqref{eq:debiased_plugin_test_lp} grows with the magnitude of $\norm{v}_p$. Thus, we can expect   the testing rates to follow the same pattern as for $\ell_1$ norm in Theorem \ref{thm:gaussian_testing_l1}.

The next result establishes that, for $p \in [1,2)$, the smallest separation between hypotheses at which the debiased plug-in test can reliably distinguish them interpolates between the known testing rates for a simple null hypothesis \citep{ingsterTestingHypothesisWhich2001} and the functional estimation rates \citep{lepskiEstimationLrnormRegression1999,hanEstimationL_Norms2020}. The proof appears in  Section \ref{sec:upper_bound_lp_odd_less_2}  of the supplementary material.

\begin{lemma}\label{lemma:upper_bound_lp_odd_less_2}
Let $\Tolerance = \epsilon_0/\sigma$. For hypotheses \eqref{eq:gaussian_testing_lp} with fixed $p\in(1,2)$, the debiased plug-in test \eqref{eq:debiased_plugin_test_lp} is powerful whenever: \begin{equation}
\epsilon_1-\epsilon_0 \gtrsim\sigma \cdot
\left\{
\begin{array}{lll}
\displaystyle
d^{1/p - 1/4},
& \textif \Tolerance \lesssim  d^{1/2p},
& \text{(Free tolerance regime)}
\\[2ex]
\displaystyle
\left(d^{1/p} \right)^{1 - p/2} \cdot \Tolerance^{p/2},
& \textif d^{1/2p} \lesssim \Tolerance\lesssim d^{1/p},
& \text{(Interpolation regime)}
\\[2ex]
\displaystyle
d^{1/p} \cdot \sigma,
& \textif \Tolerance \asymp d^{1/p}.
& \text{(Functional estimation regime)}
\end{array}
\right.
\end{equation}
\end{lemma}

However, for odd $p>2$, the rates must change. This can be foreseen by analyzing the mean of $T_p$ \eqref{eq:debiased_plugin_test_lp}. For $1\leq p<2$, $E_{P_v}\left[T_p\right]$ behaves like $\norm{v}_p^p$ as $\norm{v}_p\to 0$. However, for $p$ odd such that $2k<p<2(k+1)$, the  $E_{P_v}\left[T_p\right]$ behaves like $\norm{v}_{2k}^{2k} \cdot \sigma^{p-2k}$ as $\norm{v}_p\to 0$. Thus, for $p>2$, testing $H_0: \norm{v}_p \leq \epsilon_0$ for small values of $\epsilon_0$ should lead to a free tolerance regime that is similar to the one observed for smooth norms in  Lemma \ref{lemma:upper_bound_lp_even}. This observation leads to the following lemma, whose proof appears in Appendix \ref{sec:upper_bound_lp_odd_gtr_2} of the supplementary material.

\begin{lemma}\label{lemma:upper_bound_lp_odd_gtr_2} Let $\Tolerance = \epsilon_0/\sigma$.
For hypotheses \eqref{eq:gaussian_testing_lp} with fixed $p$ such that $2k < p < 2(k+1)$ where $k \in \Zplus$, the debiased plug-in test \eqref{eq:debiased_plugin_test_lp} is powerful whenever
\begin{equation}
\epsilon_1-\epsilon_0 \gtrsim \sigma \cdot
\left\{
\begin{array}{lll}
\displaystyle
d^{1/2p},
& \textif \Tolerance \lesssim d^{1/p - 1/4k},
& \text{(Free tolerance regime)}
\\[2ex]
\displaystyle
\left(d^{1/p} \right)^{1 - 2k/p} \cdot \Tolerance^{2k/p},
& \textif d^{1/p - 1/4k} \lesssim \Tolerance \lesssim d^{1/p},
& \text{(Interpolation regime)}
\\[2ex]
\displaystyle
d^{1/p},
& \textif \Tolerance \asymp d^{1/p}.
& \text{(Functional estimation regime)}
\end{array}
\right.
\end{equation}
\end{lemma}

Comparing the rates obtained in Lemma \ref{lemma:upper_bound_lp_odd_gtr_2} with those in Lemma \ref{lemma:upper_bound_lp_even} for smooth $\ell_p$ norms, we find that they are discontinuous (from the left) at every even $p>2$. This discontinuity, observed by \citet{lepskiMinimaxNonparametricHypothesis1999} and \citet{ingsterTestingHypothesisWhich2001}, arises because the analysis treats $p$ as fixed, and the debiased plug-in statistic only introduces an additional debiasing term, through $r_p(X)$ in \eqref{eq:plugin_bias_at_even_p}, at even values of $p$.

\paragraph*{Lower bound on the critical separation in the functional estimation regime} We recall that \cite{ingsterTestingHypothesisWhich2001} proved that the upper bounds in the free tolerance regime in Lemmas \ref{lemma:upper_bound_lp_odd_less_2} and \ref{lemma:upper_bound_lp_odd_gtr_2} are tight up to constants. Regarding the functional estimation regime of Lemmas \ref{lemma:upper_bound_lp_odd_less_2} and \ref{lemma:upper_bound_lp_odd_gtr_2}, they can be matched up to logarithmic factors using the duality between moment-matching and polynomial approximation as done in  Section \ref{sec:functional_estimation_rates}.

\citet{hanEstimationL_Norms2020} analyzed the minimax estimation of the $L^p$ norm in the Gaussian white noise model using moment-matching distributions. From their results, it is possible to conclude that $M_p^{1/p}(L)$ must decay linearly with the number of matched moments. The proof is deferred to Appendix \ref{sec:unconstrained_moment_matching} of the supplementary material. 

\begin{lemma}\label{lemma:MpOdd} For fixed $p\geq 1$ non-even and $L \geq 1$, it holds that $M_p^{1/p}(L) \asymp L^{-1}$.\end{lemma}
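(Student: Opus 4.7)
The plan is to invoke the duality between moment matching and best polynomial approximation stated in \zcref{lemma:moment_matching_duality}, which gives $M_p(L) = 2 A_p(L)$ with $A_p(L) = \inf_{f \in P_L} \sup_{|x|\leq 1}||x|^p - f(x)|$. Taking $p$-th roots, the lemma reduces to the two-sided estimate $A_p(L) \asymp L^{-p}$ for any fixed non-even $p \geq 1$. This is a classical question in approximation theory about the rate at which polynomials of degree $L$ can approximate the weakly-singular function $|x|^p$ on $[-1,1]$; Bernstein's bound~\eqref{eq:bernstein_best_poly_approx} already records it for $p = 1$, and the task is to extend it to arbitrary non-even exponents.

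For the upper bound $A_p(L) \lesssim L^{-p}$, I would invoke Jackson-type estimates: the function $|x|^p$ has $\lfloor p \rfloor$ classical derivatives, and its $\lfloor p \rfloor$-th derivative is H\"older continuous of order $p - \lfloor p \rfloor$ on $[-1,1]$. Standard approximation-theoretic results then yield $A_p(L) \lesssim L^{-p}$. If a self-contained argument is preferred, one can exhibit an explicit approximant by truncating the Chebyshev expansion of $|x|^p$, estimating the tail contribution via the known decay of the Chebyshev coefficients of $|x|^p$, which reflects the $p$-order smoothness of the function.

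For the lower bound $A_p(L) \gtrsim L^{-p}$ the strategy is to extract it from \citet{hanEstimationL_Norms2020}. In their analysis of minimax estimation of $L^p$ norms in the Gaussian white noise model, they construct explicit pairs of moment-matching distributions, supported on $[-1,1]$, whose functional separation in $E|v|^p$ scales like $L^{-p}$. By the duality \zcref{lemma:moment_matching_duality}, this functional separation equals $M_p(L)$ up to constants, and hence translates directly into the polynomial-approximation lower bound $A_p(L) \gtrsim L^{-p}$. Combining the two bounds yields $M_p^{1/p}(L) \asymp L^{-1}$.

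The main obstacle is the lower bound, since it requires carefully encoding the singularity of $|x|^p$ at the origin into a distribution-level construction. Bernstein's original argument for $p = 1$ relies on explicit Chebyshev-type test points and does not transparently generalize; leveraging the moment-matching construction of \citet{hanEstimationL_Norms2020} through the duality framework already developed in this paper sidesteps this difficulty, which is why I would take that route rather than producing a first-principles Bernstein-style proof for arbitrary non-even $p$.
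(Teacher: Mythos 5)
Your overall route matches the paper's: invoke the moment-matching/polynomial-approximation duality plus a Bernstein/Jackson-type estimate for the upper bound $M_p(L)\lesssim L^{-p}$, and extract the lower bound from \citet{hanEstimationL_Norms2020}. Two points are worth flagging. First, the duality lemma plays no role in the lower bound: $M_p(L)$ is \emph{defined} as a supremum of functional separations over moment-matching pairs, so any valid construction directly lower bounds $M_p(L)$ without passing through $A_p(L)$; invoking duality there is a small logical detour. Second, and more substantively, Lemma~5.6 of \citet{hanEstimationL_Norms2020} does not hand you symmetric probability measures on $[-1,1]$ separated in $E|v|^p$. It gives non-negative (generally unnormalized) measures supported on a shifted positive interval $[c/n^2,1]$ whose separation is measured in a \emph{tilted} functional $x^{-q+p/2}$ with $q=\lceil p/2\rceil$. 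The paper's proof has to multiply by a factor $\propto (c/(L^2 x))^q$ and add a point mass at zero to obtain genuine probability measures, check that the requisite number of matched moments survives the tilt, and then symmetrize and rescale via $X\mapsto \epsilon\sqrt{X}$ to land on $[-1,1]$ with all odd moments vanishing, all while tracking the powers of $L$. That adaptation is the actual content of the lower-bound step; citing Han et al.\ alone, as your proposal does, leaves it open.
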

Using Theorem \ref{thm:unconstrained_moment_matching_lowerbound} and Lemma \ref{lemma:MpOdd}, we can choose $L\asymp \log d$ and $\delta = \sigma \cdot \sqrt{L}$, and match the functional estimation regime up to polylogarithmic factors in Lemmas \ref{lemma:upper_bound_lp_odd_less_2} and \ref{lemma:upper_bound_lp_odd_gtr_2} for $d$ large enough.

\begin{corollary}
For $d \gtrsim1 $, the critical separation for hypotheses \eqref{eq:gaussian_testing_lp} with fixed odd $p\geq 1$  is lower-bounded by \begin{equation}
\epsilon_1^*(\epsilon_0,\GS)-\epsilon_0 \gtrsim \epsilon_0 \quad \textif \epsilon_0 \asymp \sigma \cdot \frac{d^{1/p}}{\sqrt{\log d}}.
\end{equation}\end{corollary}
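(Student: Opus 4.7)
The plan is to instantiate Theorem \zcref{thm:unconstrained_moment_matching_lowerbound} with the choices $L \asymp \log d$ and $\delta \asymp \sigma \sqrt{\log d}$, and then apply Lemma \zcref{lemma:MpOdd} to compute the resulting bound on the critical separation. This mirrors the strategy already used in \zcref{sec:functional_estimation_lb} for the $\ell_1$ norm, the only difference being that here we use the general non-even polynomial-approximation estimate $M_p^{1/p}(L) \asymp L^{-1}$ rather than the specific bound $A_1(L) \asymp 1/L$ of Bernstein.

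First, I would verify the indistinguishability condition \eqref{eq:chi2_condition}. With $L = c_1 \log d$ for a suitable small constant $c_1$ and $\delta = c_2 \sigma \sqrt{L}$, we have
\begin{equation}
\delta^2 = c_2^2 \sigma^2 L, \quad \text{while} \quad C \sigma^2 \cdot \frac{L}{d^{1/(L+1)}} \asymp C \sigma^2 L,
\end{equation}
since $d^{1/(L+1)} = \exp(\log d / (c_1 \log d + 1)) \to e^{1/c_1}$ is bounded as $d \to \infty$. Hence for $c_2$ sufficiently small relative to $C$ and $c_1$, condition \eqref{eq:chi2_condition} is satisfied for all $d$ large enough.

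Next, by \zcref{lemma:MpOdd}, we have $M_p^{1/p}(L) \asymp 1/L \asymp 1/\log d$, and in particular $M_p(L) > 0$. This gives
\begin{equation}
\delta \cdot d^{1/p} \cdot M_p^{1/p}(L) \asymp \sigma \sqrt{\log d} \cdot d^{1/p} \cdot \frac{1}{\log d} = \sigma \cdot \frac{d^{1/p}}{\sqrt{\log d}},
\end{equation}
which matches the prescribed scaling for $\epsilon_0$. The remaining hypothesis $d^{1/2} \gtrsim M_p^{-1}(L) \asymp (\log d)^p$ also holds for all $d$ large enough, since $p$ is fixed.

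With all conditions verified, Theorem \zcref{thm:unconstrained_moment_matching_lowerbound} directly yields
\begin{equation}
\epsilon_1^*(\epsilon_0, \GS) - \epsilon_0 \gtrsim \epsilon_0 \quad \text{when} \quad \epsilon_0 \asymp \sigma \cdot \frac{d^{1/p}}{\sqrt{\log d}},
\end{equation}
as claimed. The only subtle point is the choice of constant $c_1$ in $L = c_1 \log d$: it must be taken small enough that $d^{1/(L+1)}$ remains bounded, but since any finite choice suffices and the resulting $\epsilon_0$ only changes by constants, there is no genuine obstacle here. The substantive content of the result is entirely contained in the polynomial-approximation estimate of \zcref{lemma:MpOdd}.
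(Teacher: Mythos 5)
Your proof is correct and follows exactly the route indicated in the paper: instantiating \zcref{thm:unconstrained_moment_matching_lowerbound} with $L \asymp \log d$, $\delta \asymp \sigma\sqrt{L}$, and using \zcref{lemma:MpOdd} to supply $M_p^{1/p}(L) \asymp L^{-1}$. The verification of \eqref{eq:chi2_condition} and of the side condition $d^{1/2}\gtrsim M_p^{-1}(L)$ are exactly the bookkeeping the paper leaves implicit.
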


Finally, we conjecture that the interpolation rates in Lemmas \ref{lemma:upper_bound_lp_odd_less_2} and \ref{lemma:upper_bound_lp_odd_gtr_2} are also tight up to polylogarithmic factors, but defer further comments to Appendix \ref{sec:interpolation_lb_lp} of the supplementary material.

\section{The smooth Gaussian white noise model}\label{sec:gaussian_white_noise}

Let $(X(t))_{t\in [0,1]}$ denote a
realization from the
the   Gaussian white noise  model
\begin{equation}\label{eq:gaussian_white_noise}
dX(t) = f(t)\ dt + \sigma \cdot dW(t) \quad \for t \in [0,1],
\end{equation} with respect to a signal $f \in L_2[0,1]$. In this section, our goal is to characterize the minimax tolerant
 testing problem under deviations of $f$ under the $L_p[0,1]$
 norms: $$\|f\|_p = \left(\int_0^1 |f(t)|^pdt\right)^{\frac 1 p}
,\quad 1 \leq p < \infty.$$

Ideally, we would like to test the $L_p$ norm of the signal wihtout additional assumptions: $H_0: \norm{f}_p\leq \epsilon_0$ versus $H_1:\norm{f}_p \geq \epsilon_1$. However, without further restrictions under the alternative, distinguishing these hypotheses becomes impossible since the set of distribution under the null hypothesis is dense in the set of distribution under the alternative \citep{janssenGlobalPowerFunctions2000}. In other words, the critical separation remains constant; see Theorem 2.3 of \citet{ingsterAsymptoticallyMinimaxHypothesisI1993}.

We will do so under the assumption that $f$ lies in a Besov body with smoothness parameter $s > 0$ and integrability parameters $1 \leq p,q \leq \infty$; we recall the definition of the   Besov norm $\norm{\cdot}_{s,p,q}$, in  Appendix \ref{sec:besov_norm} of the supplementary material. Our goal is to distinguish between a small signal under the null hypothesis, versus a large smooth signal under the alternative: \begin{equation}\label{eq:testing_gaussian_white_noise}
H_0: f \in \GWnull \vs H_1: f \in \GWalt
\end{equation} where the null set is given by those signals that are close to the origin, and the alternative set contains smooth signals that are far from the origin: \begin{align}
\GWnull &= \left\{f \in L_2[0,1]: \norm{f}_p \leq \epsilon_0\right\},\\
\textand \GWalt &= \left\{f \in  L_2[0,1]: \norm{f}_p \geq \epsilon_1 \textand \norm{f}_{s,p,q} \leq L\right\},
\end{align}
for a fixed radius $L > 0$. Henceforth, let $P_f$ denote the probability measure corresponding to the Gaussian white noise model \eqref{eq:gaussian_white_noise},
and fix
$\GW = \left\{P_f : f \in L_p[0,1]\cap L_2[0,1]\right\}$ is the collection of all such processes. The next theorem shows that we can use results for testing the Gaussian sequence model, to derive the critical separation for the smooth Gaussian white noise model. The proof appears in Appendix \ref{sec:GaussianWhiteNoiseEquivalence}  of the supplementary material.

\begin{theorem}[Equivalence between Gaussian white noise and sequence models]\label{lemma:GaussianWhiteNoiseEquivalence} Let $\epsilon_1^*(\epsilon_0,\GW)$ denote the critical separation for hypotheses \eqref{eq:testing_gaussian_white_noise}, and $\epsilon_1^*(\tilde{\epsilon}_0,\GS)$ the critical separation for hypotheses \eqref{eq:gaussian_testing_lp}. It holds that \begin{equation}
\epsilon_1^*(\epsilon_0,\GW)-\epsilon_0 \asymp \left(\epsilon_1^*(\tilde{\epsilon}_0,\GS)-\tilde{\epsilon}_0\right) \cdot d^{1/2-1/p} \where \tilde{\epsilon}_0 \asymp \epsilon_0 \cdot  d^{1/p-1/2}
\end{equation} and $d^{-s} \asymp
\epsilon_1^*(\tilde{\epsilon}_0,\GS) \cdot d^{1/2-1/p}$, where the hidden constants depend on $L$, $p$ and $q$. \end{theorem}

The proof of the theorem leverages the fact that, under sufficient smoothness, a Gaussian white noise model can be appropriately discretized into a Gaussian sequence model without losing valuable information. The original argument for the free tolerance region goes back to \citet{ingsterTestingHypothesisWhich2001}.

We illustrate Lemma \ref{lemma:GaussianWhiteNoiseEquivalence} by considering the $p=1$. Henceforth, assume that $s > 0$ and $\sigma =n^{-1/2}$.  Lemma \ref{lemma:GaussianWhiteNoiseEquivalence} implies that the critical separation continuously interpolates between known rates for testing simple null hypotheses \citep{ingsterMinimaxNonparametricDetection1982,ingsterMinimaxDetectionSignal1994} and functional estimation \citep{lepskiEstimationLrnormRegression1999}.

\begin{corollary} The critical separation for hypotheses \eqref{eq:gaussian_white_noise} with $p=1$ is characterized up to polylogarithmic factors by
\begin{equation}
\epsilon_1^*(\epsilon_0,\GW)-\epsilon_0 \asymp \begin{cases}
n^{-\frac{2s}{4s+1}}, &\textif 0\leq \epsilon_0 \lesssim n^{-1/2},\\
\left[n^{-1/2}\cdot \epsilon_0\right]^{\frac{2s}{4s+1}}, &\textif n^{-1/2} \lesssim \epsilon_0 \lesssim  n^{-\frac{s}{2s+1}}, \\
n^{-\frac{s}{2s+1}}, &\textif \epsilon_0 \asymp n^{-\frac{s}{2s+1}}.
\end{cases}\end{equation}
\end{corollary}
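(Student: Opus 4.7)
The plan is to invoke \zcref{lemma:GaussianWhiteNoiseEquivalence} with $p=1$ and feed in the three regimes of \zcref{thm:gaussian_testing_l1}. With $p=1$ the scale factor $d^{1/2-1/p}$ reduces to $d^{-1/2}$, and the equivalence collapses to the system
\begin{align}
\epsilon_1^*(\epsilon_0,\GW) - \epsilon_0 &\asymp \bigl(\epsilon_1^*(\tilde{\epsilon}_0,\GS) - \tilde{\epsilon}_0\bigr)\cdot d^{-1/2},\\
\tilde{\epsilon}_0 &\asymp \epsilon_0\cdot d^{1/2},\\
d^{-s} &\asymp \epsilon_1^*(\tilde{\epsilon}_0,\GS)\cdot d^{-1/2}.
\end{align}
The third equation implicitly pins down the discretization level $d$ once $\epsilon_1^*(\tilde{\epsilon}_0,\GS)$ is replaced by its value from \zcref{thm:gaussian_testing_l1} in each regime. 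One then reads off $\epsilon_1^*(\epsilon_0,\GW)-\epsilon_0$ from the first equation and translates the regime boundaries from $\tilde{\epsilon}_0$ back to $\epsilon_0$ via the second.

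In the \emph{free tolerance regime} $\tilde{\epsilon}_0 \lesssim \sqrt{d/n}$, the separation $d^{3/4}/\sqrt{n}$ dominates $\tilde{\epsilon}_0$ (since $d\geq 1$), so $\epsilon_1^*(\tilde{\epsilon}_0,\GS) \asymp d^{3/4}/\sqrt{n}$. Solving $d^{-s} \asymp d^{1/4}/\sqrt{n}$ yields $d \asymp n^{2/(4s+1)}$, hence $\epsilon_1^*(\epsilon_0,\GW) - \epsilon_0 \asymp d^{1/4}/\sqrt{n} \asymp n^{-2s/(4s+1)}$; the boundary $\tilde{\epsilon}_0 \asymp \sqrt{d/n}$ translates to $\epsilon_0 \asymp n^{-1/2}$. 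In the \emph{interpolation regime} $\sqrt{d/n}\lesssim \tilde{\epsilon}_0 \lesssim d/\sqrt{n}$, the separation $\sqrt{d\,\tilde{\epsilon}_0}/n^{1/4}$ again dominates $\tilde{\epsilon}_0$ throughout, so $\epsilon_1^*(\tilde{\epsilon}_0,\GS) \asymp \sqrt{d\,\tilde{\epsilon}_0}/n^{1/4}$; substituting $\tilde{\epsilon}_0 \asymp \epsilon_0 d^{1/2}$ into the third equation gives $d^{-s} \asymp \sqrt{\epsilon_0}\,d^{1/4}/n^{1/4}$, from which $d \asymp (n/\epsilon_0^2)^{1/(4s+1)}$, and the first equation yields (after routine algebra) $\epsilon_1^*(\epsilon_0,\GW) - \epsilon_0 \asymp (n^{-1/2}\epsilon_0)^{2s/(4s+1)}$, valid for $n^{-1/2}\lesssim \epsilon_0 \lesssim n^{-s/(2s+1)}$. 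Finally, in the \emph{functional estimation regime} $\tilde{\epsilon}_0 \asymp d/\sqrt{n}$, both summands of $\epsilon_1^*(\tilde{\epsilon}_0,\GS)$ are of the same order $d/\sqrt{n}$; the third equation gives $d \asymp n^{1/(2s+1)}$, while the first gives $\epsilon_1^*(\epsilon_0,\GW) - \epsilon_0 \asymp \sqrt{d/n} \asymp n^{-s/(2s+1)}$, attained at $\epsilon_0 \asymp n^{-s/(2s+1)}$.

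The main subtlety, and the only step that requires care, is tracking that \zcref{lemma:GaussianWhiteNoiseEquivalence} involves the critical \emph{value} $\epsilon_1^*(\tilde{\epsilon}_0,\GS)$ in the equation defining $d$, whereas the first relation uses the critical \emph{separation} $\epsilon_1^*(\tilde{\epsilon}_0,\GS) - \tilde{\epsilon}_0$. In each regime I must check which of the two summands in $\epsilon_1^*(\tilde{\epsilon}_0,\GS) \asymp \tilde{\epsilon}_0 + (\mathrm{sep})$ dominates, as done above. A useful consistency check is that the three rates glue continuously at $\epsilon_0 \asymp n^{-1/2}$ and $\epsilon_0 \asymp n^{-s/(2s+1)}$, and all equivalences hold up to polylogarithmic factors inherited from \zcref{thm:gaussian_testing_l1}.
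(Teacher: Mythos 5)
Your derivation is correct and follows exactly the route the paper intends: invoke \zcref{lemma:GaussianWhiteNoiseEquivalence} with $p=1$ (so $d^{1/2-1/p}=d^{-1/2}$ and $\tilde\epsilon_0\asymp\epsilon_0 d^{1/2}$), plug the three-regime Gaussian-sequence rate from \zcref{thm:gaussian_testing_l1} (together with the free-tolerance rate) into the constraint $d^{-s}\asymp\epsilon_1^*(\tilde\epsilon_0,\GS)\,d^{-1/2}$ to solve for the discretization level $d$, and then read off the separation. You also correctly identify the one nontrivial bookkeeping point — that the $d$-defining equation uses the critical value $\epsilon_1^*(\tilde\epsilon_0,\GS)$ rather than the separation $\epsilon_1^*(\tilde\epsilon_0,\GS)-\tilde\epsilon_0$, and that the separation term dominates $\tilde\epsilon_0$ in the first two regimes while the two are comparable in the third — so the algebra goes through without issue.
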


\section{The smooth density model}\label{sec:testing_densities}

Consider the case where we have access to $n$ observation from an unknown density $f$: \begin{equation}\label{eq:density_model}
X_1,\dots,X_n \iid f \in \mathcal{D}= \left\{f \in L_2[0,1]: \int f = 1 \textand f \geq 0 \text{ almost everywhere } \right\}.
\end{equation} We aim to test whether $f$ is close to a reference density $g \in \mathcal{D}$ under the $L_p$ norm. As in  Section \ref{sec:gaussian_white_noise}, testing requires smoothness assumptions under the alternative to ensure a non-trivial critical separation. Given a reference density $g$, define the null set as those distributions that are close to $g$ \begin{equation}
\Dnull = \left\{f \in \mathcal{D}: \norm{f-g}_p \leq \epsilon_0\right\},
\end{equation} and the alternative set, as those densities that are far from $g$ but which differ in a smooth way \begin{equation}
\Dalt = \left\{f \in \mathcal{D}:   \norm{f-g}_p \geq \epsilon_1 \textand \norm{f-g}_{s,p,q} \leq L\right\}
\end{equation} where $\norm{\cdot}_{s,p,q}$ denotes the Besov norm, defined in Appendix \ref{sec:besov_norm} of the supplementary material. Then, we aim to test: \begin{equation}\label{eq:density_testing}
H_0: f \in \Dnull \vs H_1 : f \in \Dalt.
\end{equation}

Analogously to Lemma \ref{lemma:GaussianWhiteNoiseEquivalence}, density testing \eqref{eq:density_testing} can be reduced to Multinomial testing \eqref{eq:tolerant_testing_multinomial_l1} by using discretization arguments introduced by \cite{ingsterTestingHypothesisWhich2001} and \cite{arias-castroRememberCurseDimensionality2018}, see also \cite{,BalakrishnanWasserman2019}. Henceforth, let $P_F$ denote the $d$-dimensional Multinomial distribution whose probability mass function is given by \begin{equation}\label{eq:simplex}
F \in \Delta^d = \left\{G \in [0,1]^d: \sum_{i=1}^d G_i=1 \textand \min_{1\leq i\leq d}G_i \geq 0 \right\},
\end{equation} Given observations $X_1,\dots,X_n\sim P_F$ and reference distribution $P_G$ where $F\in \Delta^d$, we define the tolerant testing for Multinomials under $\ell_p$ as: \begin{equation}\label{eq:testing_mutinomials_lp}
H_0: \norm{F-G}_p \leq \epsilon_0 \vs H_1 : \norm{F-G}_p \geq \epsilon_1.
\end{equation} The next theorem proves the equivalence between \eqref{eq:density_testing} and \eqref{eq:testing_mutinomials_lp}. The proof can be found in Appendix \ref{sec:DensityEquivalence} of the supplementary material. 

\begin{theorem}[Equivalence between Density and Multinomial models]\label{lemma:DensityEquivalence}
Let $\epsilon_1^*(\tilde{\epsilon}_0,\D)$ denote the the critical separation for hypotheses \eqref{eq:density_testing}, and let $\epsilon_1^*(\epsilon_0,\Multinomial_d)$ denote the critical separation for hypotheses \eqref{eq:testing_mutinomials_lp}. Then, the following equivalence holds \begin{equation}
\epsilon_1^*(\epsilon_0,\D)-\epsilon_0 \asymp \left(\epsilon_1^*(\tilde{\epsilon}_0,\Multinomial_d)-\tilde{\epsilon}_0\right) \cdot d^{1-1/p} \where \tilde{\epsilon}_0 \asymp \epsilon_0 \cdot d^{1/p-1}
\end{equation} and $d^{-s} \asymp
\epsilon_1^*(\tilde{\epsilon}_0,\Multinomial_d) \cdot d^{1-1/p}$, where the hidden constants depend on $L$, $p$ and $q$.
\end{theorem}

To illustrate Lemma \ref{lemma:DensityEquivalence}, we characterize the critical separation for hypotheses \eqref{eq:density_testing} with $p=1$. Note that rate-wise, the following equivalence between the critical separation under the Gaussian sequence, see Theorem \ref{thm:gaussian_testing_l1}, and the Multinomial, see \eqref{eq:critical_separation_multinomial_l1}, holds: \begin{equation}
\epsilon_1^*(\epsilon_0 \cdot d^{-1/2},\M_d) \cdot d^{1/2} \asymp \epsilon_1^*(\epsilon_0,\GS)\quad \for \sigma=n^{-1/2} \textand 0\leq \epsilon_0 \lesssim d\cdot \sigma.
\end{equation} Together with Lemma \ref{lemma:DensityEquivalence}, this implies that tolerant testing under the $L_1$ norm is equally hard in the density and Gaussian white noise models.

\begin{corollary} The critical separation for hypotheses \eqref{eq:density_testing} with $p=1$ is characterized, up to polylogarithmic factors, by
\begin{equation}\label{eq:equivalence_s_1}
\epsilon_1^*(\epsilon_0,\D) \asymp \epsilon_1^*(\epsilon_0,\GW)\quad \for \sigma=n^{-1/2} \textand 0\leq \epsilon_0 \lesssim n^{-\frac{s}{2s+1}}.
\end{equation}
\end{corollary}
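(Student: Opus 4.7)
My plan is to chain three equivalences that are already in hand: (i) the density--Multinomial reduction \zcref{lemma:DensityEquivalence}, (ii) the rate-wise Multinomial--Gaussian sequence equivalence $\epsilon_1^*(\epsilon_0 \cdot d^{-1/2}, \M_d) \cdot d^{1/2} \asymp \epsilon_1^*(\epsilon_0, \GS)$ stated immediately above the corollary, and (iii) the Gaussian white noise--Gaussian sequence reduction \zcref{lemma:GaussianWhiteNoiseEquivalence}. Each is specialized to $p=1$ and then composed.

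Specializing \zcref{lemma:DensityEquivalence} to $p=1$ kills the factors $d^{1-1/p}$ and $d^{1/p-1}$, so $\tilde{\epsilon}_0 \asymp \epsilon_0$ and
\begin{equation}
\epsilon_1^*(\epsilon_0, \D) - \epsilon_0 \asymp \epsilon_1^*(\epsilon_0, \M_d) - \epsilon_0,
\end{equation}
where the discretization dimension $d$ is determined by $d^{-s} \asymp \epsilon_1^*(\epsilon_0, \M_d)$. Substituting $\epsilon_0 \to \epsilon_0 \cdot d^{1/2}$ in the stated rate-wise equivalence rearranges to
\begin{equation}
\epsilon_1^*(\epsilon_0, \M_d) \asymp \epsilon_1^*(\epsilon_0 \cdot d^{1/2}, \GS) \cdot d^{-1/2},
\end{equation}
valid throughout the declared regime. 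Finally, specializing \zcref{lemma:GaussianWhiteNoiseEquivalence} to $p=1$ gives $\tilde{\epsilon}_0 \asymp \epsilon_0 \cdot d^{1/2}$ and
\begin{equation}
\epsilon_1^*(\epsilon_0, \GW) - \epsilon_0 \asymp \bigl[\epsilon_1^*(\epsilon_0 \cdot d^{1/2}, \GS) - \epsilon_0 \cdot d^{1/2}\bigr] \cdot d^{-1/2},
\end{equation}
with discretization dimension calibrated by $d^{-s} \asymp \epsilon_1^*(\epsilon_0 \cdot d^{1/2}, \GS) \cdot d^{-1/2}$.

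The one nontrivial point—the step I expect to require the most care—is verifying that the discretization dimensions selected on the density side and on the white noise side agree up to constants. This is immediate from the Multinomial--Gaussian sequence equivalence: plugging the second display into the density-side calibration $d^{-s} \asymp \epsilon_1^*(\epsilon_0, \M_d)$ reproduces exactly the white noise-side calibration $d^{-s} \asymp \epsilon_1^*(\epsilon_0 \cdot d^{1/2}, \GS) \cdot d^{-1/2}$. Hence a single $d$ can be used throughout.

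Chaining the three displays, and using $\epsilon_0 \cdot d^{1/2} \cdot d^{-1/2} = \epsilon_0$, yields
\begin{equation}
\epsilon_1^*(\epsilon_0, \D) - \epsilon_0 \asymp \epsilon_1^*(\epsilon_0, \GW) - \epsilon_0
\end{equation}
up to polylogarithmic factors. Since $\epsilon_1^*(\epsilon_0, \cdot) \geq \epsilon_0$ in both problems and the gaps agree in rate, the critical separations themselves agree in rate, giving $\epsilon_1^*(\epsilon_0, \D) \asymp \epsilon_1^*(\epsilon_0, \GW)$ in the stated regime.
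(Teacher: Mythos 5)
Your proof is correct and follows essentially the same path as the paper: specialize \zcref{lemma:DensityEquivalence} and \zcref{lemma:GaussianWhiteNoiseEquivalence} to $p=1$, link them through the displayed rate-wise Multinomial--Gaussian-sequence equivalence (with the substitution $\epsilon_0 \mapsto \epsilon_0 \cdot d^{1/2}$), and observe that the discretization dimensions on both sides coincide. The only point worth double-checking — passing from equality of gaps $\epsilon_1^* - \epsilon_0$ to equality of $\epsilon_1^*$ — is handled correctly, since throughout the stated regime $\epsilon_0 \lesssim d\sigma$ the gap is always $\gtrsim \epsilon_0$, so the two quantities are interchangeable up to constants.
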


Usually, equivalence between density and Gaussian white noise models is established by proving that the Le Cam deficiency between the models vanishes asymptotically under the assumption that the density is uniformly lower bounded \citep{nussbaumAsymptoticEquivalenceDensity1996,carterDeficiencyDistanceMultinomial2002,mariucciCamTheoryComparison2016,rayCamDistanceDensity2018a}. However, the equivalence \eqref{eq:equivalence_s_1} does not require a lower bound on the density. This is because, proving that the Le Cam deficiency vanishes is a stronger requirement that implies the indistinguishability of the models under any divergence measure, while \eqref{eq:equivalence_s_1} only makes a claim regarding distinguishability under the $L_1$ distance.

\section{Systematic uncertainties in high-energy physics}\label{sec:null_approximation}

As mentioned in the introduction, scientific applications routinely give rise to goodness-of-fit testing problems with imprecise null hypotheses~\citep{gerber2023kernel,liuRobustnessKernelGoodnessFit2024,bailloAlmostGoodnessfitTests2024}. In this section, we explore implications of the tolerant testing framework for goodness-of-fit testing in high-energy physics.

In collider experiments, such as the Large Hadron Collider (LHC;~\cite{aad2008atlas}), data often arise from an
inhomogeneous Poisson point process with an unknown intensity function~\citep{van2014role,kuusela2015statistical}. A canonical problem in these applications is that of assessing whether data are consistent with the {\it Standard Model} (SM)---a theory describing the elementary particles and the forces which act upon them.  To test this goodness-of-fit hypothesis, physicists typically discretize the observed Poisson point process into a Poisson sequence model, and test whether the observed rates match those predicted by the SM. From a minimax perspective, the Poisson sequence model and the Multinomial model are equivalent: tests developed in one setting can be adapted to the other---we refer to Appendix \ref{sec:PoissonSequenceEquivalence} of the supplementary material for a review of this well-known result \citep{neykovMinimaxOptimalConditional2021,canonneTopicsTechniquesDistribution2022,kimConditionalIndependenceTesting2023}.
We can therefore think of an LHC experiment as giving rise to independent observations $X_1,\dots,X_n$ sampled from an unknown discrete distribution $F$ supported on $d$ points. The goal is to assess if the data follows the Standard Model, which defines a discrete distribution $B$: \begin{equation}\label{eq:physics_target}
H_0: F=B \vs H_1: V(F,B)\geq \epsilon.
\end{equation} In practice, however, the distribution $B$
predicted by the SM is only available through Monte Carlo simulation, and is only known up to nuisance parameters. Some of these nuisance parameters are fully unspecified, while others are estimated from past studies, or approximated
mathematically, and thus carry statistical or systematic uncertainties~\citep{heinrich2007systematic}. Taking these various sources of imprecision into account, physicists
define a plausible set of discrete distributions $\mathcal{B}$, which is assumed to contain the true SM distribution $B$. With this set in hand, their goal is to test the following composite analogue of the hypotheses~\eqref{eq:physics_target}:
\begin{equation}\label{eq:physics_target_composite}
H_0: F\in \mathcal{B} \vs H_1: \inf_{\widetilde B\in \mathcal{B}} V(F,\widetilde B)\geq \epsilon.
\end{equation} We argue that the tolerant testing framework
provides a rigorous framework for determining efficient testing procedures in composite hypothesis testing problems of this type.
As an example, suppose that $\mathcal{B}$ is a total variation ball, centered at a prior estimate $\hat B$ of the SM distribution: \begin{align} \label{eq:calB_r}
\mathcal{B} = \big\{ \widetilde B : V(\widetilde B, \hat B) \leq r\big\},
\end{align} where $r > 0$ is an approximation error. In this case, problem~\eqref{eq:physics_target_composite} coincides with the tolerant testing problem for discrete distributions~\citep{canonnePriceToleranceDistribution2021}, discussed in  Section \ref{sec:overview}. The optimal test that attains
the rate in equation~\eqref{eq:critical_separation_multinomial_l1} is valid and powerful for testing hypotheses \eqref{eq:physics_target_composite}
whenever \begin{equation}\label{eq:separation_with_null_approximation}
\epsilon \gtrsim \max\left(\frac{d^{1/4}}{\sqrt{n}}\ ,\ \frac{d^{1/4}}{n^{1/4}}\cdot \sqrt{r}\right)  \quad \for 0 \leq r \lesssim \sqrt{d/n}\period
\end{equation} We can immediately conclude that if the uncertainty ball $\mathcal{B}$ shrinks at the parametric rate, i.e. $r \lesssim 1/\sqrt{n}$, replacing $B$ by its approximation does not imply any loss of power. This is expected, as no test can distinguish $\hat{B}$ from $B$ under that condition. However, whenever $r \gtrsim 1/\sqrt{n}$, the smallest detectable deviation from the Standard Model grows with $\sqrt{r}$.

One can also imagine variants of this problem in which it is difficult to perfectly specify the uncertainty set $\mathcal{B}$, in which case the parameter $r$ in equation~\eqref{eq:calB_r} is unknown. Even in such contexts, the tolerant testing framework provides a way to quantify the tolerance of a goodness-of-fit when naively testing the null hypothesis $H_0: F=\hat{B}$. In particular, we argue that inverting tolerant tests helps assess the sensitivity of
goodness-of-fit p-values to  the accuracy of $\hat{B}$.

To elaborate, let $\psi_{\epsilon}^\alpha$ be a level-$\alpha$ test for $H_0(\epsilon): d(F,\hat{B}) \leq \epsilon$ where $d$ is some distance. A p-value is the smallest $\alpha$ at which we  reject the null hypothesis $\alpha_*(X,\epsilon) = \inf\left\{\alpha: \psi_{\epsilon}^\alpha(X) = 1\right\}$.
Consider the case where we reject the simple null hypothesis $\psi_0^\alpha(X)=1$, and we want to understand how sensitive our conclusion is with respect to $\hat{B}$. The natural idea is to find the largest null hypothesis $H_0(\epsilon)$ that we can reject: \begin{equation}\label{eq:tolerance_factor}
\epsilon_*(X,\alpha) = \sup\left\{\epsilon : \psi_{\epsilon}^\alpha(X) = 1\right\}.
\end{equation} We call $\epsilon_*(X,\alpha)$ the tolerance factor, but note that it has appeared in related works under other names, see the \textit{minimum distance} in \citet{bailloAlmostGoodnessfitTests2024} and the \textit{minimum contamination level} in \citet{barrioApproximateValidationModels2020}.
In fact, under mild assumptions, the tolerance factor serves as a lower confidence bound for the distance between $B$ and $\hat{B}$ \citep{liuBuildingUsingSemiparametric2009}. A larger tolerance factor suggests less concern about the accuracy of $\hat{B}$, i.e., the observed deviations from $\hat{B}$ are large. Conversely, if $\epsilon_*(X)=0$ the results are highly dependent on $\hat{B}$. Thus, one might use
the p-value to make a decision and $\epsilon_*(X,\alpha)$ to analyze the sensitivity of such a decision.

\section{Discussion}\label{sec:discussion}

In this work, we study tolerant testing in the Gaussian sequence model. We characterize the critical separation under the $\ell_1$ norm up to polylogarithmic factors, building on lower bound techniques from \citet{ingsterTestingHypothesisWhich2001} and \citet{canonnePriceToleranceDistribution2021}. We provide partial results for general smooth and non-smooth $\ell_p$ norms that reveal a trade-off between the size of the null hypothesis and the difficulty of estimating the underlying norm. We also explore extensions to smooth Gaussian white noise and density models.

We conclude by noting that tolerant testing opens the possibility of generically testing composite hypotheses \citep{balakrishnanHypothesisTestingHighdimensional2018}. Consider the problem of deciding whether $P$ belongs to a class of distributions $\mathcal{C}$, that is, testing the null $H_0: P \in \mathcal{C}$. A practical testing strategy splits the sample: use the first half to choose a candidate, denoted by $\hat{P}_0$, under the null hypothesis, then use the second half of the data to test the candidate's proximity to the data distribution via tolerant testing $H_0: d(P,\hat{P}_0)\leq \epsilon_0$, where $\epsilon_0$ accounts for the cost of choosing the candidate. A naive implementation offers no advantage over directly estimating the distance of $P$ to the class, denoted by $d(P,\mathcal{C})$. However, in structured settings where the $\mathcal{C}$ is not too large, this approach has yielded tests that require fewer observations to make a reliable decision than estimating $d(P,\mathcal{C})$ \citep{acharyaOptimalTestingProperties2015a,canonneTestingShapeRestrictions2018}. This motivates a central question: how large can $\mathcal{C}$ be so that this practical sample splitting procedure outperforms directly estimating $d(P,\mathcal{C})$? Our work gives a prerequisite for this approach to be successful in a few canonical models: the cost of choosing a candidate in $\mathcal{C}$ should not exceed the cost of estimating $d(P,\mathcal{C})$.

\paragraph*{Acknowledgments} The authors thank Alexandra Carpentier, Ryan Tibshirani, Edward H.~Kennedy, Kai Hormann, Arun Kumar Kuchibhotla, Dominik Rothenh\"ausler, Mikael Kuusela, Kenta Takatsu, and Siddhaarth Sarkar for useful comments and discussions during the development of this work. The authors gratefully acknowledge funding from the National Science Foundation (DMS-2310632). TM gratefully
acknowledges the support
of a Norbert Wiener fellowship.

\pagebreak

\appendix

\begin{center}

{\large\bf SUPPLEMENTARY MATERIAL}

\end{center}

\addcontentsline{toc}{section}{Supplementary material}
\etocdepthtag.toc{mtappendix}
\etocsettagdepth{mtchapter}{none}
\etocsettagdepth{mtappendix}{subsection}
\etocsettagdepth{mtreferences}{section}
\renewcommand{\contentsname}{Supplementary material}
{
\normalsize
\parskip=0em
\renewcommand{\contentsname}{\normalsize Table of contents}
\tableofcontents
}

\pagebreak

\renewcommand{\theequation}{\thesection.\arabic{equation}}
\renewcommand{\thetheorem}{\thesection.\arabic{theorem}}
\renewcommand{\thelemma}{\thesection.\arabic{lemma}}
\renewcommand{\theproposition}{\thesection.\arabic{proposition}}
\renewcommand{\thecorollary}{\thesection.\arabic{corollary}}
\setcounter{equation}{0}

\section{Connection between tolerant and equivalence testing}\label{sec:equivalence_testing}

Bioequivalence (or equivalence) studies aim to show that two drugs yield similar effects \citep{wellekTestingStatisticalHypotheses2002}. The standard design is a two-phase crossover trial: participants are randomly assigned to two groups. In phase one, one group receives the standard treatment, and the other receives an alternative treatment. After a washout period, the groups switch the assigned treatment. This design produces bivariate observations that capture each individual's response across both phases:
\begin{equation}
X_g=(\ X_{g,1}\ ,\ X_{g,2}\ )\ |\ G=g \sim P_g\comma
\end{equation} where $P_1$ and $P_2$ are the response distributions for the two groups. Bioequivalence is established if $P_1$ and $P_2$ are  indistinguishable under the alternative: \begin{equation}\label{eq:equivalence_testing}
H_0: d(P_1,P_2)\geq \epsilon_0 \vs H_1: d(P_1,P_2) \leq \epsilon_1.
\end{equation} Many studies assume $P_1$ and $P_2$ are Gaussian  \citep{dragalinKullbackLeiblerDivergence2003,chowBioavailabilityBioequivalenceDrug2014} and choose distances based on comparing means, variances, or combinations of both \citep{chowBioavailabilityBioequivalenceDrug2014}. Others propose tests based on asymptotic normality \citep{romanoOptimalTestingEquivalence2005}. In practice, however, normality often fails, and nonparametric approaches are of interest~\citep{freitagNonparametricTestSimilarity2007a}.  For instance, \citet{freitagNonparametricTestSimilarity2007a} consider the 2-Wasserstein distance since it approximates metrics recommended by regulatory guidelines \citep{guidance2001statistical} under Gaussianity and extends to arbitrary distributions. 

Rather than tackling equivalence testing \eqref{eq:equivalence_testing} directly, we can connect it to tolerant testing and leverage known results. To simplify the discussion, assume one of the distributions in \eqref{eq:equivalence_testing} is known. The next lemma shows that the critical separation in equivalence and tolerant testing is the same up to constants. 

\begin{proposition}[Correspondence between tolerant and equivalence testing]\label{prop:connection_equivalence_tolerant_testing}
Define the hypotheses sets $H_0(\epsilon_0) = \left\{P : d(P,P_0)\leq \epsilon_0\right\}$ and $H_1(\epsilon_1) = \left\{P : d(P,P_0)\geq \epsilon_1\right\}$. Furthermore, define a set of valid tests $\psi_\alpha(H) = \left\{\psi : \sup_{P \in H} P(\psi(X)=1)\leq \alpha\right\}$. For tolerant testing, the closest detectable alternative is
\begin{equation}
\epsilon_1(\epsilon_0,\alpha,\beta) = \inf\left\{\epsilon_1 : \epsilon_1 \geq \epsilon_0 \textand \inf_{\psi \in \Psi_\alpha(H_0(\epsilon_0))}\sup_{P \in H_1(\epsilon_1)}P(\psi(X)=0)\leq \beta \right\}\comma
\end{equation} and the critical separation is defined as $s(\epsilon_0,T) = \epsilon_1(\epsilon_0,\alpha,\beta) - \epsilon_0$. Analogously, for equivalence testing, the closest detectable alternative is \begin{equation}
\epsilon_0(\epsilon_1,\alpha,\beta) = \sup\left\{\epsilon_0 : \epsilon_1 \geq \epsilon_0 \textand \inf_{\psi \in \Psi_\alpha(H_1(\epsilon_1))}\sup_{P \in H_0(\epsilon_0)}P(\psi(X)=0)\leq \beta \right\}\comma
\end{equation} and the critical separation is defined as : $s(\epsilon_1,E) = \epsilon_1 - \epsilon_0(\epsilon_1,\alpha,\beta)$.

It follows that for any $\epsilon_1^*>0$, there exists $\epsilon_0^*\leq \epsilon_1^*$ such that $s(\epsilon_1^*,E) \lesssim s(\epsilon_0^*,T)$. Additionally, for any $\epsilon_0^*>0$, there exists $\epsilon_1^*\geq \epsilon_0^*$ such that $s(\epsilon_0^*,T) \lesssim s(\epsilon_1^*,E)$.
\end{proposition}
\begin{proof}
We prove only the first claim, since the second claim can be proved analogously. Let $\psi^\alpha_{\epsilon_0}$ be an optimal tolerant test, that is: \begin{equation}
\sup_{P \in H_0(\epsilon_0)} P(\psi(X)=1)\leq \alpha \textand \sup_{P \in H_1(\epsilon_1(\epsilon_0,\alpha,\beta))} P(\psi(X)=0)\leq \beta.
\end{equation} Define the test \begin{equation}
\tilde{\psi}_{\alpha}^{\epsilon_1^*} = 1 - \psi_{\beta}^{\epsilon_0^*} \where \epsilon_0^* = \sup\{\epsilon_0 : \epsilon_1^*=\epsilon_1(\epsilon_0,\beta,\alpha)\}.
\end{equation} Then, the test satisfies: \begin{equation}
\sup_{P \in H_1(\epsilon_1^*)} P(\psi(X)=1)\leq \alpha \textand \sup_{P \in H_0(\epsilon_0^*)} P(\psi(X)=0)\leq \beta.
\end{equation} Consequently, we have derived an upper-bound for the equivalence testing critical separation: \begin{equation}
s(\epsilon_1^*,E) \leq \epsilon_1^*-\epsilon_0^* = \epsilon_1(\epsilon_0^*,\beta,\alpha)-\epsilon_0^* \asymp \epsilon_1(\epsilon_0^*,\alpha,\beta)-\epsilon_0^* = s(\epsilon_0^*,T).
\end{equation}
The claim follows.
\end{proof}

To apply Proposition \ref{prop:connection_equivalence_tolerant_testing}, one must express $\epsilon_1$ as a function of $\epsilon_0$ or vice versa to convert a test for tolerant testing into one for equivalence testing. Since these functions are typically known only up to constants, a more practical approach uses a test statistic that is optimal for tolerant testing and recalibrates it for equivalence testing.

\begin{proposition}\label{prop:equivalence_testing_chebyshev_upper_bound} Let $V_0$ and $V_1$ be two sets of distributions. Consider a statistic $T(X)$ such that $E_{X\sim P}\left[\ T(X)\ -\ \mu(P)\ \right]^2\ \leq\ \phi(P)$. If the following inequality is satisfied \begin{equation}
\sup_{P \in V_0} \mu(P) + \sqrt{\frac{\phi(P)}{\alpha}} \leq \inf_{P \in V_1} \mu(P) - \sqrt{\frac{\phi(P)}{\alpha}}.
\end{equation} Then the test \begin{equation}
\psi(X) = I(T(X) \geq \sup_{P \in V_0} \mu(P) + \sqrt{\frac{\phi(P)}{\alpha}})
\end{equation} controls the type-I and type-II errors by $\alpha$ for $H_0: P \in V_0$ versus $H_1: P \in V_1$. Furthermore, the test \begin{equation}
\psi(X) = I(T(X) \leq \inf_{P \in V_1} \mu(P) - \sqrt{\frac{\phi(P)}{\alpha}})
\end{equation} controls the type-I and type-II errors by $\alpha$ for $H_0: P \in V_1$ versus $H_1: P \in V_0$.

\end{proposition}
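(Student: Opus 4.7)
The plan is to prove both tests are valid by a direct two-sided application of Chebyshev's inequality, where the separation condition between $V_0$ and $V_1$ guarantees that the rejection threshold sits a full Chebyshev radius away from the mean of $T(X)$ under either hypothesis.

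First I would handle the upper-threshold test $\psi(X) = I(T(X) \geq c)$ with $c = \sup_{P \in V_0}\bigl[\mu(P) + \sqrt{\phi(P)/\alpha}\bigr]$. For any $P \in V_0$, by definition of $c$ we have $c \geq \mu(P) + \sqrt{\phi(P)/\alpha}$, so the event $\{T(X) \geq c\}$ implies $\{T(X) - \mu(P) \geq \sqrt{\phi(P)/\alpha}\}$. Applying Chebyshev's inequality (really, Markov applied to $(T(X) - \mu(P))^2$) together with the bound $E_P[T(X) - \mu(P)]^2 \leq \phi(P)$ yields
\begin{equation}
P\bigl(T(X) \geq c\bigr) \leq P\!\left(|T(X) - \mu(P)| \geq \sqrt{\phi(P)/\alpha}\right) \leq \frac{\phi(P)}{\phi(P)/\alpha} = \alpha,
\end{equation}
controlling the type-I error uniformly over $V_0$. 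For the type-II error, fix $P \in V_1$. The separation hypothesis gives $c \leq \inf_{P \in V_1}[\mu(P) - \sqrt{\phi(P)/\alpha}] \leq \mu(P) - \sqrt{\phi(P)/\alpha}$, so $\{T(X) < c\}$ implies $\{\mu(P) - T(X) > \sqrt{\phi(P)/\alpha}\}$, and Chebyshev's inequality again bounds this probability by $\alpha$.

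The lower-threshold test $\psi(X) = I(T(X) \leq c')$ with $c' = \inf_{P \in V_1}\bigl[\mu(P) - \sqrt{\phi(P)/\alpha}\bigr]$ is handled by the symmetric argument: for $P \in V_1$, $c' \leq \mu(P) - \sqrt{\phi(P)/\alpha}$ forces $\{T(X) \leq c'\} \subseteq \{T(X) - \mu(P) \leq -\sqrt{\phi(P)/\alpha}\}$, bounded by $\alpha$ via Chebyshev; and for $P \in V_0$, the separation condition implies $c' \geq \mu(P) + \sqrt{\phi(P)/\alpha}$, so $\{T(X) > c'\} \subseteq \{T(X) - \mu(P) > \sqrt{\phi(P)/\alpha}\}$, which is also at most $\alpha$.

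There is no substantive obstacle here: the proposition is essentially a packaging of Chebyshev's inequality. The only mild subtlety is keeping the direction of the inequalities straight between the two tests and between the roles of the null and alternative sets, since in the second test $V_1$ plays the null role and $V_0$ plays the alternative role. Once this bookkeeping is fixed, the Chebyshev radii $\sqrt{\phi(P)/\alpha}$ on both sides of the gap produce exactly the $\alpha$-level control claimed.
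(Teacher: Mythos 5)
Your proof is correct and takes essentially the same approach as the paper: the paper defers to the proof of its Lemma on Chebyshev-style upper bounds (``analogous to that of Lemma~\ref{lemma:cheby_bound}''), which is precisely the Markov-applied-to-$(T(X)-\mu(P))^2$ argument you carry out, with the separation condition ensuring the threshold lies a full $\sqrt{\phi(P)/\alpha}$ away from $\mu(P)$ on both sides.
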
\begin{proof}
The proof is analogous to that of Lemma \ref{lemma:cheby_bound}.
\end{proof}

\section{General arguments for upper bounding the critical separation}\label{sec:general_upperbounds}

In this section, we recap a series of results used for upper-bounding the critical separation.

\begin{lemma}\label{lemma:cheby_bound} Let the hypotheses be \begin{equation}
H_0: P \in V_0 \vs H_1: P \in V_1\period
\end{equation} Consider the statistic $T$ such that \begin{equation}\label{eq:approx_cond}
E_{X\sim P}\left[\ T(X)\ -\ \mu(P)\ \right]^2\ \leq\ \phi(P)\period
\end{equation} Then the test $\psi(X) = I(T(X) > t) $ where \begin{equation}\label{eq:threshold_req}
t \geq t_* = \sup_{P\in V_0} q_{1-\alpha}\left(T,P\right)
\end{equation} controls the type-I error by $\alpha$. In particular, any $t$ such that \begin{equation}\label{eq:threshold_def}
t \geq t_{\sup}=\sup_{P \in V_0} \mu(P) + \sqrt{\frac{\phi(P)}{\alpha}}
\end{equation} satisfies \eqref{eq:threshold_req}. Furthermore, if \begin{equation}\label{eq:power_condition_for_threshold}
t \leq t_{\inf}=\inf_{P \in V_1} \mu(P) - \sqrt{\frac{\phi(P)}{\beta}}
\end{equation} then the test controls type-II error by $\beta$.
\end{lemma}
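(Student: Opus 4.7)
The plan is to verify the three claims of the lemma in sequence, using only the Cantelli-type quantile bound already recorded in \zcref{prop:quantile_bound}. All three steps are standard, so the main task is to be careful about quantifiers (suprema over the composite classes $V_0,V_1$) rather than to find a clever argument.

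First, I would handle type-I error control for any $t\geq t_*$. By monotonicity of $\psi$ in $t$, for each $P\in V_0$ we have $P(T(X)>t)\leq P(T(X)>t_*)$. Since $t_*$ is defined as the supremum of the $(1-\alpha)$-quantiles of $T$ under $P$ ranging over $V_0$, we have $t_*\geq q_{1-\alpha}(T,P)$ for every $P\in V_0$, so $P(T(X)>t_*)\leq \alpha$ by the definition of a quantile. Taking the supremum over $P\in V_0$ yields the claimed type-I control.

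Second, to show $t_{\sup}\geq t_*$, I would invoke \zcref{prop:quantile_bound} to write
\[
q_{1-\alpha}(T,P)\ \leq\ \mu(P)+\sqrt{\tfrac{1-\alpha}{\alpha}\,\phi(P)}\ \leq\ \mu(P)+\sqrt{\phi(P)/\alpha},
\]
for every $P\in V_0$, where in the last inequality I use $1-\alpha\leq 1$. Taking the supremum over $P\in V_0$ on both sides gives $t_*\leq t_{\sup}$, so thresholds above $t_{\sup}$ automatically satisfy the requirement \eqref{eq:threshold_req}.

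Third, for the type-II bound under $t\leq t_{\inf}$, I would again appeal to Cantelli in the lower-tail direction (the second half of the proof of \zcref{prop:quantile_bound}): for any $P\in V_1$,
\[
P\!\left(T(X)\leq \mu(P)-\sqrt{\phi(P)/\beta}\,\right)\ \leq\ \beta.
\]
Since $t\leq t_{\inf}\leq \mu(P)-\sqrt{\phi(P)/\beta}$ for every $P\in V_1$ by definition of $t_{\inf}$, monotonicity gives $P(T(X)\leq t)\leq \beta$, and taking the supremum over $P\in V_1$ yields the type-II control.

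No step presents a genuine obstacle; the only pitfalls are (i) making sure the suprema/infima over $V_0$ and $V_1$ are pushed through the monotone inequalities in the correct direction, and (ii) writing $\sqrt{(1-\alpha)/\alpha}\leq \sqrt{1/\alpha}$ to convert the Cantelli constant into the cleaner bound stated in \eqref{eq:threshold_def} and \eqref{eq:power_condition_for_threshold}. With those bookkeeping items handled, the full proof is a compact three-paragraph argument mirroring the structure above.
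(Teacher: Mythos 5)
There is a genuine gap in your step showing $t_{\sup}\geq t_*$ (and the symmetric step for the type-II bound). You invoke \zcref{prop:quantile_bound}, which is a Cantelli-type bound on $q_{1-\alpha}(T,P)$ in terms of $E_{X\sim P}[T(X)]$ and $V_{X\sim P}[T(X)]$, i.e., the \emph{mean} and \emph{variance} of the statistic. But the lemma's hypothesis~\eqref{eq:approx_cond} is a bound on $E_{X\sim P}[T(X)-\mu(P)]^2$, where $\mu(P)$ is an arbitrary centering functional (in the downstream applications it is the target $\norm{v}_1$, not $E_P[T]$). In general $T$ is \emph{biased} for $\mu(P)$, so $\mu(P)\neq E_P[T]$ and $\phi(P)\neq V_P[T]$, and your substitution of $\mu(P)$ for $E_P[T]$ and $\phi(P)$ for $V_P[T]$ in the Cantelli bound is not justified. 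If you patch this by splitting off the bias via $|E_P[T]-\mu(P)|\leq\sqrt{\phi(P)}$, you obtain the threshold $\mu(P)+(1+\sqrt{(1-\alpha)/\alpha})\sqrt{\phi(P)}$, whose constant exceeds $\sqrt{1/\alpha}$, so you would not recover the stated $t_{\sup}$.

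The paper avoids this by never passing through Cantelli or the mean of $T$: it applies a two-sided Markov bound centered directly at $\mu(P)$, namely
\begin{equation}
P\bigl(T(X)>t\bigr)\ \leq\ P\bigl(|T(X)-\mu(P)|\geq t-\mu(P)\bigr)\ \leq\ \frac{E_{X\sim P}[T(X)-\mu(P)]^2}{(t-\mu(P))^2}\ \leq\ \frac{\phi(P)}{(t-\mu(P))^2},
\end{equation}
which is $\leq\alpha$ exactly when $t\geq\mu(P)+\sqrt{\phi(P)/\alpha}$, and symmetrically for the type-II side. Your first step (type-I control from $t\geq t_*$ by the definition of the quantile) is correct and matches the paper; it is only the replacement of the direct Markov/Chebyshev argument around $\mu(P)$ by a Cantelli argument around $E_P[T]$ that breaks down in the biased case the lemma is designed to cover.
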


\begin{corollary}\label{cor:chebyshev_upperbound} Let the hypotheses be \begin{equation}
H_0: P \in V_0 \vs H_1: P \in V_1\period
\end{equation} Then the test $\psi(X) = I(T(X) > t) $ where \begin{equation}
t \geq t_* = \sup_{P\in V_0} q_{1-\alpha}\left(T,P\right)
\end{equation} controls the type-I error by $\alpha$. In particular, the above condition is satisfied by any $t$ such that \begin{equation}
t \geq t_{\sup}=\sup_{P \in V_0} E_{X\sim P}[T(X)] + \sqrt{\frac{V_{X\sim P}[T(X)]}{\alpha}}.
\end{equation} Furthermore, if \begin{equation}
t \leq t_{\inf}=\inf_{P \in V_1} E_{X\sim P}[T(X)] - \sqrt{\frac{V_{X\sim P}[T(X)]}{\beta}}
\end{equation} then the test controls the type-II error by $\beta$.
\end{corollary}

\begin{corollary}[Testing by learning]\label{lemma:testing_by_learning} Let the hypotheses be \begin{equation}
H_0: P \in V_0 \vs H_1: P \in V_1\period
\end{equation} Furthermore, let $V = V_0 \cup V_1$ and consider a statistic $T$ such that \begin{equation}
\sup_{P \in V}E_{X\sim P}[\ T(X)-\mu(P)\ ]^2 \leq \phi
\end{equation} then the test \begin{equation}
\psi(X) = I(T(X) > t)  \where t = \sup_{P\in V_0}\mu(P) + \sqrt{\frac{\phi}{\alpha}}
\end{equation} controls the type-I error by $\alpha$. Furthermore, it controls the type-II error by $\beta$ whenever \begin{equation}
\inf_{P \in V_1}\mu(P) - \sup_{P \in V_0}\mu(P) \geq C\cdot \phi^{1/2}
\end{equation} where $C = \alpha^{-1/2} + \beta^{-1/2}$.
\end{corollary}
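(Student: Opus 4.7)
The plan is to apply \zcref{lemma:cheby_bound} directly, since the hypothesis of the corollary is just the uniform version of condition \eqref{eq:approx_cond} with $\phi(P) \leq \phi$ for every $P \in V = V_0 \cup V_1$. There is essentially no ``main obstacle'' here; the work is purely in unpacking the thresholds stated in the lemma.

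First I would verify type-I control. By \zcref{lemma:cheby_bound}, the test $\psi(X) = I(T(X) > t)$ has type-I error at most $\alpha$ as long as
$$t \geq t_{\sup} = \sup_{P \in V_0}\left\{\mu(P) + \sqrt{\phi(P)/\alpha}\right\}.$$
Since $\phi(P) \leq \phi$ uniformly on $V$, and in particular on $V_0$, the chosen threshold $t = \sup_{P \in V_0}\mu(P) + \sqrt{\phi/\alpha}$ satisfies $t \geq t_{\sup}$, so validity follows.

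Next I would verify type-II control. Again by \zcref{lemma:cheby_bound}, the type-II error is at most $\beta$ whenever $t \leq t_{\inf} = \inf_{P \in V_1}\mu(P) - \sqrt{\phi/\beta}$ (using the uniform bound $\phi(P) \leq \phi$ on $V_1$ in the other direction). Substituting the chosen $t$, this condition reads
$$\sup_{P \in V_0}\mu(P) + \sqrt{\phi/\alpha} \;\leq\; \inf_{P \in V_1}\mu(P) - \sqrt{\phi/\beta},$$
which rearranges to
$$\inf_{P \in V_1}\mu(P) - \sup_{P \in V_0}\mu(P) \;\geq\; \left(\alpha^{-1/2} + \beta^{-1/2}\right)\sqrt{\phi} \;=\; C\sqrt{\phi},$$
exactly the stated separation assumption. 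This completes the reduction and yields the corollary.
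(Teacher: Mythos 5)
Your proposal is correct and takes exactly the route the paper intends: the corollary is stated without a separate proof precisely because it follows from \zcref{lemma:cheby_bound} by substituting the uniform bound $\phi(P)\leq\phi$ into $t_{\sup}$ and $t_{\inf}$ and rearranging $t_{\sup}\leq t_{\inf}$ into the stated separation condition.
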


\begin{proof}[Proof of Lemma \ref{lemma:cheby_bound}]

By definition of $t_*$, it holds that for any $t \geq t_*$ the type-I error is bounded by $\alpha$: \begin{equation}
\sup_{P \in V_0} P(T(X) > t)\leq \sup_{P \in V_0} P(T(X) > t_*)\leq \alpha.
\end{equation}

By Markov's inequality, for any $t \geq t_{\sup}$, it holds that the type-I error is bounded by $\alpha$:\begin{align}
\sup_{P \in V_0} P(T(X) > t) &=  \sup_{P \in V_0} P(T(X)-\mu(P) > t-\mu(P))\\
&\leq \sup_{P \in V_0} P(|T(X)-\mu(P)| \geq t-\mu(P))\\
&\leq \sup_{P \in V_0}\frac{E\left[T(X)-\mu(P)\right]^2}{\left[t-\mu(P)\right]^2} &&\since t \geq \sup_{P \in V_0}\mu(P)\\
&\leq \sup_{P \in V_0}\frac{\phi(P)}{\left[t-\mu(P)\right]^2} &&\by \eqref{eq:approx_cond}\\
&\leq \alpha &&\text{by \eqref{eq:threshold_def}}
\end{align} Therefore, it must hold that \begin{equation}
t \geq t_{\sup} \geq t_*.
\end{equation}

Finally, by Markov's inequality and condition \eqref{eq:power_condition_for_threshold}, the type-II error is bounded by $\beta$:
\begin{align}
\sup_{P \in V_1} P(T(X) \leq t) &=  \sup_{P \in V_1} P(T(X)-\mu(P) \leq t-\mu(P))\\
&\leq \sup_{P \in V_1} P(|T(X)-\mu(P)| \geq \mu(P)-t)\\
&\leq \sup_{P \in V_1}\frac{E\left[T(X)-\mu(P)\right]^2}{\left[t-\mu(P)\right]^2} &&\since \inf_{P \in V_1}\mu(P) \geq t\\
&\leq \sup_{P \in V_1}\frac{\phi(P)}{\left[t-\mu(P)\right]^2}&&\by \eqref{eq:approx_cond}\\
&\leq \beta &&\by \eqref{eq:power_condition_for_threshold}
\end{align}

\end{proof}

\section{Critical separation for the Gaussian sequence model}

\subsection{Suboptimality of the chi-squared test for testing hypotheses separated under the \texorpdfstring{$\ell_1$}{l1} norm}\label{sec:failure_l2_test}

In the following, we provide a proof of Lemma \ref{lemma:chi_squared_test_under_l1}. Next, we provide a proof of the auxiliary result Proposition \ref{prop:quantile_bound}.

\begin{proof}[Proof of Lemma \ref{lemma:chi_squared_test_under_l1}]

\textbf{Existence of a valid and powerful chi-squared test.} Under the null hypothesis: $H_0: \norm{v}_1\leq \epsilon_0$, it holds that $\norm{v}_2 \leq \epsilon_0$ by norm monotonicity. Thus, the chi-squared test $\psi_2(X,\epsilon_0)$ \eqref{eq:debiased_plugin_test_lp} is valid under the null hypothesis. By \eqref{eq:free_tolerance_lp_even} in the proof of Lemma \ref{lemma:upper_bound_lp_even}, it holds that the chi-squared test rejects with probability at least $1-\beta$ whenever \begin{equation}\label{eq:upper_bound_lp_even_alt}
\norm{v}_2 - \epsilon_0 \geq \frac{C_1}{2}\cdot \sigma \cdot
d^{1/4} \quad\textif \epsilon_0 \leq \frac{C_1}{2}\cdot \sigma\cdot d^{1/4}
\end{equation} where $C_1$ is a positive constant that depends on $\alpha$ and $\beta$. Under the alternative hypothesis $H_1: \norm{v}_1\geq \epsilon_1$, it holds that $\norm{v}_2\geq d^{-1/2}\cdot \epsilon_1$. Thus, by \eqref{eq:upper_bound_lp_even_alt}, under the alternative hypothesis, the chi-squared test rejects with probability at least $1-\beta$ whenever \begin{equation}\label{eq:upper_bound_lp_even_alt2}
\epsilon_1 - \epsilon_0 \geq (\sqrt{d}-1)\cdot \epsilon_0+\frac{C_1}{2}\cdot \sigma \cdot
d^{3/4} \quad\textif \epsilon_0 \leq \frac{C_1}{2}\cdot \sigma\cdot d^{1/4}
\end{equation} Consequently, under the alternative hypothesis, the test rejects with probability at least $1-\beta$ whenever \begin{equation}
\epsilon_1 - \epsilon_0 \geq C_1\cdot \sigma \cdot
d^{3/4} \quad\textif \epsilon_0 \leq \frac{C_1}{2}\cdot \sigma\cdot d^{1/4}.
\end{equation}

\textbf{Limit on the power of any chi-squared test.} In the following, we prove that for hypotheses \begin{equation}
H_0: \norm{v}_1 \leq \epsilon_0 \vs H_1: \norm{v}_1 \geq \epsilon_1 \asymp d^{3/4} \cdot \sigma \quad \where d^{1/4}\cdot \sigma \lesssim \epsilon_0 \leq \epsilon_1,
\end{equation} no valid chi-squared test uniformly controls the type-II error under the alternative hypothesis.

Henceforth, let $\epsilon_0$ and $\epsilon_1$ satisfy \begin{equation}\label{eq:e0_condition}
C_0 \cdot d^{1/4}\cdot \sigma \leq \epsilon_0 \leq \frac{\epsilon_1}{2} \textand \epsilon_1 =  C_1 \cdot d^{3/4} \cdot \sigma
\end{equation} where $C_0$ and $C_1$ are positive constants that will be defined later. Consider any valid chi-squared test \begin{equation}\label{eq:generic_chi2_test}
\psi(X) = \left(T_2(X)\geq t\right).
\end{equation} where $t$ is called the decision threshold. Since the test is valid, it holds that \begin{equation}
t \geq \sup_{\norm{v}_1\leq \epsilon_0} q_{1-\alpha}\left(T_2,P_v\right)
\end{equation} where $q_{1-\alpha}\left(T_2,P_v\right)$ is the $1-\alpha$ quantile of $T_2(X)$ when $X\sim P_v = \Normal(v,\sigma^2\cdot I_d)$. Furthermore, by Proposition \ref{prop:quantile_bound}, it holds that \begin{equation}
q_{1-\alpha}\left(T_2,P_v\right) \geq E_{P_v}\left[T_2\right] - \sqrt{\frac{1-\alpha}{\alpha}}\cdot \sqrt{V_{P_v}\left[T_2\right]}.
\end{equation} Thus, we have the following lower bound on the decision threshold \begin{equation}
t \geq \sup_{C_0 \cdot d^{1/4}\cdot \sigma \leq \norm{v}_1\leq \epsilon_0} E_{P_v}\left[T_2\right] - \sqrt{\frac{1-\alpha}{\alpha}}\cdot \sqrt{V_{P_v}\left[T_2\right]}.
\end{equation} For any $v\in \R^d$, it holds that \begin{equation}
E_{P_v}\left[T_2\right] = \norm{v}_2^2 \textand V_{P_v}\left[T_2\right] = 2\cdot d\cdot \sigma^4 + 4 \cdot \norm{v}_2^2 \cdot \sigma^2.
\end{equation} Consequently, the following lower bound on the decision threshold holds \begin{equation}\label{eq:threshold_lb}
t \geq \sup_{\leq \norm{v}_1\leq \epsilon_0} \frac{\norm{v}_2^2}{2} = C \cdot \epsilon_0^2 \where C=2^{-1}\quad \textif C_0 \geq 4 \left(\sqrt{\frac{1-\alpha}{\alpha}}\lor \sqrt{2}\right).
\end{equation} Henceforth, assume that $C_0$ satisfies the inequality above.

Under the alternative hypothesis, let \begin{equation}
v_i = \frac{\epsilon_1}{d} \quad \for 1\leq i \leq d,
\end{equation} so that $\norm{v}_1=\epsilon_1$ and  $\norm{v}_2=\epsilon_1^2/d$. Our goal is to bound the probability that the chi-squared test \eqref{eq:generic_chi2_test} rejects under the alternative hypothesis by an arbitrary constant $\beta$. The probability of rejecting is: \begin{equation}\label{eq:rejeciton_under_h1}
P_v\left(T_2(X)\geq t\right)=P_v\left(T_2(X)-\norm{v}_2^2\geq t-\norm{v}_2^2\right)=P_v\left(T_2(X)-\norm{v}_2^2\geq t-\frac{\epsilon_1^2}{d}\right)
\end{equation} Henceforth, we require that \begin{equation}
\frac{\epsilon_1^2}{d} \leq \frac{t}{2}
\end{equation} which, by \eqref{eq:e0_condition} and \eqref{eq:threshold_lb} is satisfied whenever \begin{equation}
C_1 \leq (C/2)^{-1/2}\cdot C_0.
\end{equation} By Chebyshev's inequality and \eqref{eq:rejeciton_under_h1}, it follows that the probability of rejecting under the alternative hypothesis is bounded by \begin{equation}
P_v\left(T_2(X)\geq t\right) \leq \frac{V_{P_v}T_2(X)}{(t/2)^2} \leq \beta
\end{equation} whenever \begin{equation}\label{eq:threshold_lb2}
2\beta^{-1/2} \cdot \sqrt{V_{P_v}T_2(X)} \leq t.
\end{equation} By \eqref{eq:threshold_lb}, condition \eqref{eq:threshold_lb2} is implied by \begin{equation}
C^{-1}2\cdot \beta^{-1/2} \cdot \sqrt{2\cdot d\cdot \sigma^4 + 4 \cdot \frac{\epsilon_1^2}{d} \cdot \sigma^2} \leq \epsilon_0^2,
\end{equation} which, by \eqref{eq:e0_condition}, is satisfied whenever \begin{equation}
C_1 \leq \sqrt{\frac{(C^2\cdot \beta/4)\cdot C_0^4-2}{4}}\cdot d^{1/2} \textand C_0 > \left[\frac{8}{C^2\cdot \beta}\right]^{1/4}.
\end{equation} In summary, for any $C_1$ and $C_0$ that satisfy \begin{align}
&0 < C_1 \leq \left(\sqrt{\frac{(C^2\cdot \beta/4)\cdot C_0^4-2}{4}}\cdot d^{1/2}\right) \wedge \left((C/2)^{-1/2}\cdot C_0\right)\\
&\textand C_0 > \left[\frac{8}{C^2\cdot \beta}\right]^{1/4} \lor 4 \left(\sqrt{\frac{1-\alpha}{\alpha}}\lor \sqrt{2}\right),
\end{align} it holds that the probability that any chi-squared test \eqref{eq:generic_chi2_test} rejects under the alternative hypothesis is upper bounded by $\beta$.

\end{proof}

We conclude the section by proving a proof of Proposition \ref{prop:quantile_bound}, which is a consequence of Cantelli's inequality.
\begin{proposition}\label{prop:quantile_bound}
Let $P$ be a probability distribution on $\mathbb{R}$, and
let $q_{1-\alpha}(T,P)$ be the $1-\alpha$ quantile of $T(X)$ when $X\sim P$. Then, it holds that \begin{equation}
\left|\ q_{1-\alpha}(T,P)-E_{X\sim P}[T(X)]\ \right| \leq \sqrt{\frac{1-\alpha}{\alpha}}\cdot \sqrt{V_{X\sim P}[T(X)]}
\end{equation}
\end{proposition}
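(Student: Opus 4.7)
The plan is to derive both inequalities $q_{1-\alpha}(T,P) - \mu \leq k_*$ and $\mu - q_{1-\alpha}(T,P) \leq k_*$, where $\mu = E_{X\sim P}[T(X)]$, $\sigma^2 = V_{X\sim P}[T(X)]$, and $k_* = \sigma\sqrt{(1-\alpha)/\alpha}$, by applying the one-sided Cantelli inequality to each tail of $T(X)$ in turn. Recall Cantelli states that, for any real-valued random variable $Y$ with mean $\mu$ and variance $\sigma^2$, and any $t > 0$, one has $P(Y - \mu \geq t) \leq \sigma^2/(\sigma^2 + t^2)$, and by applying the same inequality to $-Y$, one also has $P(Y - \mu \leq -t) \leq \sigma^2/(\sigma^2 + t^2)$. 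A direct algebraic check shows that plugging in $t = k_*$ yields $\sigma^2/(\sigma^2 + k_*^2) = \alpha$, which is the calibration that makes the argument work.

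For the upper direction, I would apply Cantelli to the upper tail of $T(X)$ at $t = k_*$ to obtain $P(T(X) > \mu + k_*) \leq \alpha$, and hence $F_T(\mu + k_*) \geq 1-\alpha$, where $F_T$ is the CDF of $T(X)$. Since $q_{1-\alpha}(T,P) = \inf\{x : F_T(x) \geq 1-\alpha\}$, this inequality immediately gives $q_{1-\alpha}(T,P) \leq \mu + k_*$.

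For the lower direction, I would apply Cantelli to the lower tail at $t = k_*$ to get $P(T(X) \leq \mu - k_*) \leq \alpha$. Assuming $\alpha \leq 1/2$ (as is standard in the testing context, where $\alpha$ is a significance level), $\alpha \leq 1 - \alpha$, so $P(T(X) < \mu - k_*) \leq 1 - \alpha$; by the characterization $q_{1-\alpha}(T,P) \geq a \iff P(T(X) < a) \leq 1-\alpha$, this gives $q_{1-\alpha}(T,P) \geq \mu - k_*$. Combining both directions yields the claimed inequality.

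The only subtlety—which I would flag briefly but not belabor—is translating tail-probability bounds into quantile bounds through the right definitions (infimum, right-continuity of the CDF, and strict versus non-strict inequalities at atoms); this is routine. The rest is a one-line application of Cantelli plus the algebraic identity $\sigma^2/(\sigma^2 + k_*^2) = \alpha$, so I do not expect any genuine obstacle.
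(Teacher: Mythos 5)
Your proof takes the same route as the paper's: apply the one-sided Cantelli inequality to the upper and lower tails of $T(X)$ at the calibrated threshold $t=\sqrt{(1-\alpha)/\alpha}\,\sqrt{V[T]}$ (so that $\sigma^2/(\sigma^2+t^2)=\alpha$), and then translate the two tail bounds into the two quantile inequalities. Your handling of the lower-tail step is in fact slightly more careful than the paper's, since you flag the implicit $\alpha\le 1/2$ requirement and the atom/right-continuity subtleties, which the paper glosses over.
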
\begin{proof}
Cantelli's inequality states that \begin{equation}
P\left(T \geq E_{X\sim P}[T(X)] + \sqrt{\frac{1-\alpha}{\alpha}}\cdot \sqrt{V_{X\sim P}[T(X)]} \right) \leq \alpha.
\end{equation} Consequently, it must holds that \begin{equation}
q_{1-\alpha}(T,P) \leq E_{X\sim P}[T(X)] + \sqrt{\frac{1-\alpha}{\alpha}}\cdot \sqrt{V_{X\sim P}[T(X)]}.
\end{equation} Analogously, Cantelli's inequality states that \begin{equation}
P\left(T \leq E_{X\sim P_{v}}[T(X)] - \sqrt{\frac{1-\alpha}{\alpha}}\cdot \sqrt{V_{X\sim P}[T(X)]} \right) \leq \alpha.
\end{equation} Thus, it must hold that \begin{equation}
q_{1-\alpha}(T,P) \geq E_{X\sim P}[T(X)] - \sqrt{\frac{1-\alpha}{\alpha}}\cdot \sqrt{V_{X\sim P}[T(X)]}.
\end{equation}
The claim follows.
\end{proof}

\subsection{Upper bound on the critical separation under \texorpdfstring{$\ell_p$}{lp} for \texorpdfstring{$p\in[1,2)$}{p in [1,2)}}\label{sec:upper_bound_lp_odd_less_2}

In the following, we provide a proof of Lemma \ref{lemma:upper_bound_lp_odd_less_2}. After it, we provide a proof of the auxiliary result Corollary \ref{cor:bound_p_less_2}.

\begin{proof}[Proof of Lemma \ref{lemma:upper_bound_lp_odd_less_2}]

\textbf{Free tolerance and interpolation regimes}

It follows from Corollary \ref{cor:chebyshev_upperbound} that the debiased plug-in test \eqref{eq:debiased_plugin_test_lp} controls the type-II by $\beta$ error if \begin{equation}\label{eq:chebyshev_control_p}
\sup_{\norm{v}_p \leq \epsilon_0}E[T_p] + \sqrt{\frac{V[T_p]}{\alpha}} \leq \inf_{\norm{v}_p \geq \epsilon_1} E[T_p] - \sqrt{\frac{V[T_p]}{\beta}}.
\end{equation} Using the bounds on the expectation and variance in Corollary \ref{cor:bound_p_less_2}, \eqref{eq:chebyshev_control_p} is implied by  \begin{equation}
C_1 \cdot \left( \frac{\sigma^{p-2}}{d^{2/p-1}}\cdot \epsilon_1^2 \wedge \epsilon_1^p \right) \geq \epsilon_0^p + C_2 \cdot  \left[\
\sigma^p \cdot d^{1/2}\ +\ \sigma\cdot d^{1/p-1/2} \cdot \left(\epsilon_1^{p-1} +  \epsilon_0^{p-1}\right) \cdot I(p>1)\ \right]
\end{equation} where $C_1 \in(0,1/8)$ and $C_2$ is a positive constant that depends on $p$, $\alpha$ and $\beta$. There are two cases, depending on which term on the LHS attains the minimum.

\textbf{Case I.} The following condition must be satisfied \begin{align}
C_1 \cdot \epsilon_1^p  \geq\ \epsilon_0^p + C_2 \cdot  \left[\
\sigma^p \cdot d^{1/2}\ +\ \sigma\cdot d^{1/p-1/2} \cdot \left(\epsilon_1^{p-1} +  \epsilon_0^{p-1}\right) \cdot I(p>1)\ \right]
\end{align} which is implied by \begin{equation}
\epsilon_1^p \geq C \cdot \left( \epsilon_0^p + \sigma^p\cdot d^{1/2} +  \sigma^p\cdot d^{1-p/2} \cdot I(p>1) \right)
\end{equation} where $C> 1$. Note that for $p\geq1$, it holds that $1-p/2 \leq 1/2$. Thus, the above condition reduces to \begin{equation}
\epsilon_1^p \geq C \cdot \left(\epsilon_0^p + \sigma^p \cdot d^{1/2} \right)
\end{equation}

\textbf{Case II.} The following condition must be satisfied \begin{align}
C_1 \cdot \frac{\sigma^{p-2}}{d^{2/p-1}}\cdot \epsilon_1^2   \geq\ \epsilon_0^p + C_2 \cdot  \left[\
\sigma^p \cdot d^{1/2}\ +\ \sigma\cdot d^{1/p-1/2} \cdot \left(\epsilon_1^{p-1} +  \epsilon_0^{p-1}\right) \cdot I(p>1)\ \right]
\end{align} which is implied by \begin{equation}
\epsilon_1^p  \geq C \left(  \left[\left[\sigma^p\cdot d\right]^{2/p-1}\ \cdot \epsilon_0^p\right]^{p/2} + \sigma^p \cdot d^{1-p/4} + \sigma^p \cdot d^{\frac{3p}{3-p}\cdot(1/p-1/2)} \cdot I(1<p)\right)
\end{equation} where $C> 1$. Note that $1 - p/4 \geq (3p/(3 - p)) (1/p - 1/2)$ for $1<p\leq2$; hence the above equation is implied by \begin{equation}
\epsilon_1^p  \geq   \left[\left[\sigma^p\cdot d\right]^{2/p-1}\ \cdot \epsilon_0^p\right]^{p/2} + \sigma^p \cdot d^{1-p/4}
\end{equation}

\textbf{Case I and II.} Consider cases I and II together, \eqref{eq:chebyshev_control_p} is satisfied whenever \begin{equation}
\epsilon_1^p \geq 2C\left(\epsilon_0^p + \sigma^p \cdot d^{1/2} +   \left[\left[\sigma^p \cdot d\right]^{2/p-1}\ \cdot \epsilon_0^p\right]^{p/2} + \sigma^p \cdot d^{1-p/4}\right).
\end{equation} Noting that $1-p/4\geq 1/2$ for $p\leq2$, the analysis above equation reduces to \begin{equation}\label{eq:testing_regimes}
\epsilon_1^p \geq 2C\left(\epsilon_0^p + \sigma^p \cdot d^{1/2} +  \left[\left[\sigma^p \cdot d\right]^{2/p-1}\ \cdot \epsilon_0^p\right]^{p/2}\right).
\end{equation}

\textbf{Summary.} By \eqref{eq:testing_regimes}, we have that the debiased plug-in test rejects with probability at least $1-\beta$ whenever \begin{equation}
\epsilon_1^p - \epsilon_0^p \gtrsim  \begin{cases}
\sigma^p\cdot d^{1-p/4} &\textif \epsilon_0^p \lesssim \sigma^p\cdot d^{1/2}\\[1ex]
\left[\left[\sigma^p\cdot d^{1/p}\right]^{2-p}\ \cdot \epsilon_0^p\right]^{p/2} &\textif \sigma^p\cdot d^{1/2}  \lesssim \epsilon_0^p \lesssim \sigma^p\cdot d \\[1ex]
\sigma^p\cdot d &\textif \epsilon_0^p \asymp \sigma^p\cdot d
\end{cases}
\end{equation} The claim of Lemma \ref{lemma:upper_bound_lp_odd_less_2} follows by noting that
\begin{equation}
(\epsilon_1^p-\epsilon_0^p)^{1/p} \asymp \epsilon_1 \asymp \epsilon_1 - \epsilon_0 \quad \for  \epsilon_0\leq \frac{\epsilon_1}{2}.
\end{equation}

\end{proof}

We conclude the section by recaping Lemma 3.1 of \citet{ingsterTestingHypothesisWhich2001}, and proving the corresponding Corollary \ref{cor:bound_p_less_2} used in the proof of Lemma \ref{lemma:upper_bound_lp_odd_less_2}. They allows us to control the mean and variance of the debiased plug-in statistic \eqref{eq:debiased_plugin_test_lp} for $p\in(0,2]$. Before providing a proof, we state some useful auxiliary results.

\begin{proposition}[$C_r$ inequality, see theorem 2 of \cite{clarksonUniformlyConvexSpaces1936}]\label{prop:trig_ineq} For $p>0$
\begin{equation}
|a+b|^p \leq A_p \cdot \left(|a|^p+|b|^p\right) \quad \where A_p=2^{(p-1)_+}
\end{equation}
\end{proposition}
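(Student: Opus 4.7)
The plan is to reduce to the case of nonnegative arguments via the triangle inequality $|a+b|\leq |a|+|b|$, so that it suffices to prove $(x+y)^p \leq A_p(x^p+y^p)$ for $x,y\geq 0$, and then to split into two regimes according to whether $p\leq 1$ or $p>1$. These two regimes correspond exactly to the two values taken by $(p-1)_+$, so the case split is forced by the definition of $A_p$.

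In the regime $0<p\leq 1$, for which $A_p=1$, the statement is the subadditivity of the concave power function $t\mapsto t^p$ on $[0,\infty)$. First I would dispose of the trivial case $x+y=0$ and otherwise normalize by setting $u=x/(x+y)$ and $v=y/(x+y)$, so that $u,v\in[0,1]$ and $u+v=1$. Since $p\leq 1$ implies $t^p\geq t$ for $t\in[0,1]$, I would conclude $u^p+v^p\geq u+v=1$; multiplying back through by $(x+y)^p$ yields $(x+y)^p\leq x^p+y^p$, as required.

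In the regime $p>1$, for which $A_p=2^{p-1}$, I would appeal instead to the convexity of $t\mapsto t^p$ on $[0,\infty)$. Jensen's inequality at the midpoint gives $\bigl((x+y)/2\bigr)^p\leq (x^p+y^p)/2$, which after multiplying by $2^p$ becomes $(x+y)^p\leq 2^{p-1}(x^p+y^p)$. Combined with the preliminary triangle-inequality step this completes the argument.

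The main obstacle, such as it is, is really only bookkeeping across the two regimes and verifying that the two formulas for $A_p$ agree at the boundary $p=1$, where both specializations collapse to $A_1=1$ and the statement reduces to the ordinary triangle inequality. No deeper machinery is needed; the inequality is essentially a direct consequence of concavity (for $p\leq 1$) and convexity (for $p>1$) of power functions on the nonnegative half-line.
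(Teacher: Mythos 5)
Your proof is correct. The paper states this proposition as a citation to Clarkson's theorem and does not reproduce a proof, so there is nothing to compare against; your argument is the standard one — reduce to nonnegative arguments via the monotonicity of $t\mapsto t^p$ and the triangle inequality, then use concavity (subadditivity) when $0<p\leq 1$ and convexity (Jensen at the midpoint) when $p>1$ — and both cases are handled cleanly, including the normalization step and the boundary agreement at $p=1$.
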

\begin{corollary}\label{cor:cr_lb}
Let $\tilde{a}=a+b$ and $\tilde{b}=-b$, it follows that $
|a+b|^p \geq \frac{|a|^p}{A_p} -  |b|^p$.
\end{corollary}
\begin{corollary}\label{cor:cr_up}
\begin{equation}
\left||Z+u|^p - |u|^p\right| \leq \begin{cases}
|Z|^p &\textif 0<p\leq 1\\
p\cdot A_{p-1} \cdot \left[|Z|^p + |Z| \cdot |u|^{p-1}\right] &\textif 1<p
\end{cases}
\end{equation}
\end{corollary}\begin{proof}
For $p \leq 1$, the bound follows from Proposition \ref{prop:trig_ineq} \begin{equation}
|Z+u|^p - |u|^p \leq (A_p-1) \cdot |u|^p + A_p \cdot |Z|^p
\end{equation} For $p > 1$, by Taylor expansion we have that \begin{align}
\left||Z+u|^p-|u|^p\right| &= p \cdot |\lambda Z+ u|^{p-1} \cdot |Z| &&\where 0 \leq \lambda  \leq 1\\
&\leq p \cdot A_{p-1} \cdot \left(\lambda^p |Z|^p + |Z| \cdot |u|^{p-1}\right)
 &&\text{by Proposition \ref{prop:trig_ineq}}\\
&\leq p \cdot A_{p-1} \cdot \left(|Z|^p + |Z| \cdot |u|^{p-1}\right) &&\since  \lambda \leq 1.
\end{align}
\end{proof}

Using the above auxiliary results, we prove Lemma 3.1 of \citet{ingsterTestingHypothesisWhich2001} and Corollary \ref{cor:bound_p_less_2}.

\begin{lemma}[Lemma 3.1 of \citet{ingsterTestingHypothesisWhich2001}]\label{lemma:Ingsterlemma3.1} Let $0<p\leq 2$, let \begin{equation}
f_p(u) = E|Z+u|^p \textand g_p(u) = |u|^p - f_p(0)
\end{equation} and consider the following expectation and variance \begin{equation}
h(u)=E[g_p(Z+u)] \textand H(u)=V[g_p(Z+u)] \where Z \sim \mathcal{N}(0,1)
\end{equation} It holds that \begin{equation}
C_1 \cdot \left(\ |u|^p \wedge u^2\ \right) \leq h(u)\leq C_p \cdot |u|^p
\end{equation} and
\begin{equation}
H(u) \leq  \begin{cases}
\mu_{2p} &\textif 0<p\leq 1\\
2p \cdot \left[\mu_{2p} + \mu_2\cdot|u|^{2(p-1)}\right] &\textif 1<p\leq 2
\end{cases}
\end{equation} where \begin{equation}
\mu_p = E|Z|^p = \sqrt{\frac{2^p}{\pi}} \textand A_p=2^{(p-1)_+} \textand 0<C_1\leq\frac{1}{2A_p} \textand 0 < C_p \leq 1.
\end{equation} 
\end{lemma}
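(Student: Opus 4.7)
The quantities $h(u) = E|Z+u|^p - \mu_p$ and $H(u) = V|Z+u|^p$ can both be controlled by a systematic application of the $C_r$-inequalities supplied just above the statement (Proposition \ref{prop:trig_ineq} and Corollaries \ref{cr_lb} and \ref{cr_up}). I would proceed in three steps, treating the variance first, then the upper and lower bounds on the mean.

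\textbf{Step 1: Variance bound.} The starting observation is that for any constant $c$ we have $V[X] \leq E(X-c)^2$, so choosing $c = |u|^p$ gives
\begin{equation}
H(u) = V|Z+u|^p \leq E\bigl(|Z+u|^p - |u|^p\bigr)^2.
\end{equation}
Applying Corollary \ref{cr_up} pointwise inside the expectation and integrating then yields the two stated cases directly. For $p \in (0,1]$, Corollary \ref{cr_up} gives $||Z+u|^p - |u|^p|^2 \leq |Z|^{2p}$ so that $H(u) \leq \mu_{2p}$. For $p \in (1,2]$, Corollary \ref{cr_up} together with the elementary inequality $(a+b)^2 \leq 2(a^2+b^2)$ produces a pointwise bound of the form $C_p(|Z|^{2p} + Z^2 |u|^{2(p-1)})$; taking expectations and checking the constant absorbs into $2p$ on $(1,2]$ gives the stated bound.

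\textbf{Step 2: Upper bound on $h(u)$.} The function $f_p$ is even and has a global minimum at $0$ (strictly for $p > 0$), so $h(u) \geq 0$, and the growth claim $h(u) \leq C_p |u|^p$ is handled by splitting at $|u| = 1$. For $|u| \geq 1$, Proposition \ref{prop:trig_ineq} applied to $|Z + u|^p \leq A_p(|Z|^p + |u|^p)$ gives $f_p(u) \leq A_p(\mu_p + |u|^p)$, and since $\mu_p \leq |u|^p$ in this range we obtain $h(u) \leq 2A_p |u|^p$. For $|u| \leq 1$, I would use the representation $h(u) = \int_0^u f_p'(t)\,dt$, where $f_p'(t) = p\,E[\operatorname{sgn}(Z+t)|Z+t|^{p-1}]$ vanishes at $t=0$ by symmetry; Corollary \ref{cr_up} applied to $||Z+t|^{p-1} - |t|^{p-1}|$ bounds $|f_p'(t)|$ up to lower-order terms and, after integration from $0$ to $u$, yields $h(u) = O(|u|^p)$.

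\textbf{Step 3: Lower bound on $h(u)$.} Split again at $|u| = 1$. For $|u| \geq 1$, Corollary \ref{cr_lb} (applied with the roles of $a,b$ swapped, i.e.\ $a = u$, $b = Z$) gives $|Z+u|^p \geq |u|^p/A_p - |Z|^p$, hence $f_p(u) \geq |u|^p/A_p - \mu_p$, so $h(u) \geq |u|^p/A_p - 2\mu_p$; for $|u|^p \geq 4 A_p \mu_p$ this already delivers $h(u) \geq |u|^p/(2A_p)$, and the intermediate range is absorbed by adjusting $C_1$ through the strict positivity and continuity of $h$ on the compact set $|u| \in [1, (4A_p\mu_p)^{1/p}]$. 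For $|u| \leq 1$, where the target reduces to $h(u) \geq C_1 u^2$, use the symmetrization
\begin{equation}
h(u) = \tfrac{1}{2}\,E\bigl[|Z+u|^p + |Z-u|^p - 2|Z|^p\bigr],
\end{equation}
and bound the integrand from below via Taylor expansion around $Z$ on the event $\{|Z| \geq |u|\}$, on which $|Z+u|^p + |Z-u|^p - 2|Z|^p \gtrsim |Z|^{p-2} u^2$; integrating against the Gaussian density restricted to this event yields a constant multiple of $u^2$ for $|u| \leq 1$, which is exactly $|u|^p \wedge u^2$ in this range.

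\textbf{Main obstacle.} The delicate point is the small-$|u|$ lower bound when $p \in (0,1)$, since $|x|^p$ is not convex, so the symmetrized integrand $|Z+u|^p + |Z-u|^p - 2|Z|^p$ is negative near $Z = 0$, and the Taylor factor $|Z|^{p-2}$ is non-integrable at the origin. The fix is to separate the contributions from $\{|Z| \leq |u|\}$ and $\{|Z| > |u|\}$: on the inner region one uses a crude $L^\infty$ bound (whose total contribution is $O(u^{p+1})$ and hence negligible compared to $u^2$ for $|u| \leq 1$), and on the outer region the Taylor expansion becomes rigorous, producing the desired $u^2$ lower bound.
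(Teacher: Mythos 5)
Your Step~1 and the large-$|u|$ halves of Steps~2 and~3 are essentially the paper's proof. But the small-$|u|$ lower bound in Step~3 has a genuine gap for $p\in(0,1]$, precisely the range your ``Main obstacle'' paragraph tries to patch, and the proposed patch does not work. Two problems: (i) for $p<1$ the inner contribution from $\{|Z|\le|u|\}$ is of order $u^{p+1}$ (a quick substitution $Z=ut$ gives $\int_{|Z|\le u}(|Z+u|^p+|Z-u|^p-2|Z|^p)\phi(Z)\,dZ\sim c_p\,u^{p+1}$ with $c_p>0$), and since $p+1<2$ this \emph{dominates} $u^2$ rather than being negligible; the comparison in your obstacle paragraph is backwards. (ii) On the outer region $\{|Z|>|u|\}$ the pointwise Taylor expansion gives $|Z+u|^p+|Z-u|^p-2|Z|^p\approx p(p-1)|Z|^{p-2}u^2$, which is \emph{negative} for $p<1$ (and identically zero for $p=1$); integrating $|Z|^{p-2}$ over $\{|Z|>u\}$ also diverges like $u^{p-1}$, so this piece is $-\Theta(u^{p+1})$, not $+\Theta(u^2)$. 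The two $\Theta(u^{p+1})$ terms cancel to leading order, leaving the $\Theta(u^2)$ remainder $h''(0)u^2/2$ with $h''(0)=p\mu_p>0$. A region-by-region bound as proposed cannot see this cancellation and cannot produce the required $u^2$ lower bound for $p\le 1$.

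The paper sidesteps the whole issue by never working at the level of the integrand: since $h(u)=\int|y|^p\phi(y-u)\,dy-\mu_p$ is infinitely differentiable (the Gaussian, not $|\cdot|^p$, carries the derivatives), it is even, has $h(0)=h'(0)=0$ and $h''(0)=p\mu_p>0$, so the mean-value form of Taylor's theorem, $h(u)=\tfrac12 h''(\lambda u)u^2$, plus continuity of $h''$ near $0$ and convexity of $h$ gives the quadratic lower bound on any compact set; combining with the $C_r$ bound for large $|u|$ then finishes. This ``smooth-out first, expand second'' idea is the missing step in your proposal. Similarly, in Step~2 for $1<p\le 2$ you could avoid fighting the sign function in $f_p'(t)=pE[\operatorname{sgn}(Z+t)|Z+t|^{p-1}]$ by the same Taylor device: $h(u)\le\bigl[\max_{|u|\le1}h''(u)/2\bigr]u^2\le C|u|^p$ for $|u|\le1$ is immediate once $h''$ is known to be continuous.
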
\begin{proof}

\textbf{Lower bound on the expectation.} $h$ is infinitely differentiable at zero, it is convex, and it achieves a minimum at zero. Thus, $h(0)=0, h'(0)=0 \textand h''(0)>0$. Therefore, by Taylor expansion we get that \begin{equation}\label{eq:taylor}
h(u) =  \frac{h''(\lambda \cdot u)}{2} \cdot u^2 \quad \where 0\leq\lambda \leq1
\end{equation} Since $h''$ is continuous and $h''(0)>0$, for any $b$, there exists $a\leq b$ such that $$\min_{|u|\leq a} \frac{h''(u)}{2} >0$$ and consequently, \begin{equation}
h(u) \geq C_{a} \cdot u^2  \quad \for |u|\leq a \where C_{a} = \left[\min_{|u|\leq a} \frac{h''(u)}{2}\right].
\end{equation} Using the convexity of $h$, it follows that \begin{equation}
h(u) \geq \frac{b}{a}\cdot h\left(\frac{a}{b}\cdot u\right) \geq C_b\cdot u^2 \quad \for |u|\leq b \where C_b = C_{a} \cdot \frac{a}{b}
\end{equation} Furthermore, by Corollary \ref{cor:cr_lb}, it holds that \begin{equation}
h(u) \geq \frac{|u|^p}{A_p} - 2 \mu_p \geq \left[\frac{1}{A_p}-\frac{2\mu_p}{b^p}\right] \cdot |u|^p \for |u| \geq b \textand b > \left[2\mu_p A_p\right]^{1/p}
\end{equation} Consequently, choosing $b_\gamma = \gamma \cdot \left[2\mu_p A_p\right]^{1/p}$ for $\gamma > 1$, one of the two lower-bounds must always hold \begin{equation}
h(u) \geq C_{b_\gamma} \cdot u^2 \wedge \frac{1}{A_p}\cdot \left[1-\frac{1}{\gamma}\right]\cdot |u|^p \quad \for \gamma > 1
\end{equation} Note that $C_{b_\gamma}\to0$ and $1-\frac{1}{\gamma}\to 1$ as $\gamma \to \infty$. Hence, choosing $\gamma=2$, we get \begin{equation}
h(u) \geq  \left[\frac{1}{2A_p} \wedge C_{b_2} \right]\cdot \left(\ |u|^p \wedge u^2\ \right)
\end{equation}

\textbf{Upper bound on the expectation.} By Proposition \ref{prop:trig_ineq}, it holds that \begin{align}
h(u) &\leq (A_p-1) \cdot \mu_p + A_p \cdot |u|^p \\
&\leq \begin{cases}
|u|^p &\textif 0<p\leq 1\\
2p\cdot |u|^p  &\textif 1<p<2 \textand |u| \geq [(1-A_p^{-1})\mu_p]^{1/p}
\end{cases} \\
&\leq \begin{cases}
|u|^p &\textif 0<p\leq 1\\
2p\cdot |u|^p  &\textif 1<p<2 \textand |u| \geq 1
\end{cases}
\end{align} since $(1-1/A_p)\mu_p \leq 1 \for 1<p<2$. Additionally, for $1<p<2$, by the Taylor expansion in \eqref{eq:taylor} it holds that \begin{equation}
h(u) \leq \left[\max_{|u|\leq 1} \frac{h''(u)}{2}\right] \cdot u^2 \leq C \cdot |u|^p \quad \for |u|\leq 1
\end{equation} where $C = \max_{|u|\leq 1} \frac{h''(u)}{2} \geq \frac{h''(0)}{2} >0$. Thus, it holds that \begin{equation}
h(u) \leq C_p \cdot |u|^p \where C_p \geq 2p \lor C \textand 1 < p < 2.
\end{equation}

\textbf{Upper bound on the variance.} For the variance, note that \begin{align}
H(u) = V\left[|Z+u|^p\right] = V\left[|Z + u|^p - |u|^p\right].
\end{align} By Corollary \ref{cor:cr_up}, the following upper-bound holds \begin{align}
H(u) &\leq  E\left[|Z + u|^p - |u|^p\right]^2\\
&\leq \begin{cases}
\mu_{2p} &\textif 0<p\leq 1\\
2p \cdot \left[\mu_{2p} + \mu_2\cdot|u|^{2(p-1)}\right] &\textif 1<p\leq 2
\end{cases}
\end{align}

\end{proof}

\begin{corollary}\label{cor:bound_p_less_2} For $0<p\leq 2$, it holds that the debiased plug-in statistic \eqref{eq:debiased_plugin_test_lp} satisfies \begin{equation}
T_p(X) = \sigma^p \cdot \sum_{i=1}^d g_p\left(\sigma^{-1} \cdot X_i\right) = \sum_{i=1}^d |X_i|^p - \sigma^p \cdot d\cdot\mu_p
\end{equation} where $X_i \sim \mathcal{N}(v_i,\sigma^2)$ and $\mu_p = E|Z|^p$ such that $Z \sim \mathcal{N}(0,1)$. The expectation of $T_p$ is bounded as follows \begin{equation}\label{eq:Tp_expectation_bound_p_less_than_2}
L_p \leq E[T_p(X)]  \leq \norm{v}_p^p
\end{equation} where \begin{align}
L_p = \begin{cases}
C \cdot \left( \frac{\sigma^{p-2}}{d^{2/p-1}}\cdot \norm{v}_p^2 \wedge \norm{v}_p^p \right) &\textif 0<p<2\\
\norm{v}_2^2 &\textif p=2
\end{cases}\end{align} and $C \in(0,1/8)$. Furthermore, the variance of $T_p$ is bounded by \begin{equation}\label{eq:Tp_variance_bound_p_less_than_2}
V\left[T_p(X)\right] \leq \begin{dcases}
d\cdot \mu_{2p} \cdot \sigma^{2p} &\textif 0<p\leq 1\\
2p \cdot \left[d\cdot \mu_{2p} \cdot \sigma^{2p} + \mu_2\cdot d^{\frac{2}{p}-1} \cdot\norm{v}_{p}^{2(p-1)} \cdot \sigma^2 \right] &\textif 1<p< 2\\
2 \cdot d \cdot \sigma^4 + 4 \cdot \norm{v}_2^2 \cdot \sigma^2 &\textif p=2
\end{dcases}.\end{equation} Finally, for $1 \leq p \leq 2$, the bias is bounded:
\begin{equation}\label{eq:Tp_bias_bound_p_less_than_2}
\left|\ \norm{v}_p^p - E[T_p(X)]\ \right| \leq \norm{v}_p^p\ \wedge\ d \cdot \sigma^p \cdot \mu_p\period
\end{equation}
\end{corollary}
\begin{proof}[Proof of Corollary \ref{cor:bound_p_less_2}]

It holds that \begin{equation}
E[T_p] = \sigma^p \cdot \sum_{i=1}^d h(\sigma^{-p}\cdot v_i) \textand V[T_p]= \sigma^{2p} \cdot \sum_{i=1}^d H(\sigma^{-p}\cdot v_i)
\end{equation} From Lemma \ref{lemma:Ingsterlemma3.1}, the upper bounds on the expectation and variance follow immediately. Regarding, the lower bound on the expectation, by Lemma \ref{lemma:Ingsterlemma3.1}, we have that \begin{equation}
E[T_p] \geq C_1 \cdot \left(\sum_{i\not\in I} |v_i|^p + \sigma^{p-2} \cdot \sum_{i\in I} v_i^2\right)
\end{equation} where $I = \left\{i : |v_i|\leq \sigma \right\}$. Since \begin{equation}
\norm{v}_p^p = \sum_{i\not\in I} |v_i|^p + \sum_{i\in I} |v_i|^p\comma
\end{equation} it must hold that \begin{equation}
(A)\quad \sum_{i\in I} |v_i|^p \geq \frac{\norm{v}_p^p}{2} \quad \textor\quad (B)\quad \sum_{i\not\in I} |v_i|^p \geq \frac{\norm{v}_p^p}{2}
\end{equation} Under (B), we have that \begin{equation}\label{eq:mean_lb_1}
E[T_p] \geq C_1 \sum_{i\not\in I} |v_i|^p \geq \frac{C_1}{2}\cdot \norm{v}_p^p\period
\end{equation} Alternatively, under (A), it follows that  \begin{equation}\label{eq:mean_lb_2}
E[T_p] \geq C_1\sigma^{p-2} \sum_{i\in I} v_i^2 \geq C_1 \cdot \frac{\sigma^{p-2}}{|I|^{2/p-1}}\cdot \left(\sum_{i\in I} v_i^p\right)^{2/p} \geq \frac{C_1}{4^{1/p}} \cdot \frac{\sigma^{p-2}}{d^{2/p-1}}\cdot \norm{v}_p^2.
\end{equation} Hence by \eqref{eq:mean_lb_1} and \eqref{eq:mean_lb_2}, it holds that \begin{equation}\label{eq:lowerbound_vp}
E[T_p] \geq C \cdot \left( \frac{\sigma^{p-2}}{d^{2/p-1}}\cdot \norm{v}_p^2 \wedge \norm{v}_p^p \right)
\end{equation} where $C = C_1 \cdot \left(\frac{1}{2}\wedge\frac{1}{4^{1/p}}\right)$. Since $0 < C_1 \leq \frac{1}{2A_p}$ where $A_p=2^{(p-1)\lor0}$, it follows that $0 < C < 1/8$.

\textbf{Bias.} For $1 \leq p \leq 2$, $u\to|u|^p$ is convex. Thus, by Jensen's inequality, we have that \begin{equation}
\norm{v}_p^p - d\sigma^p\mu_p = \sum_{i=1}^d|EX_i|- d\sigma^p\mu_p\leq E[T]\period
\end{equation} Together with the fact that $E[T]\leq \norm{v}_p^p$, it holds that the bias of $T_p$ is capped \begin{equation}
|\norm{v}_p^p - E[T]| \leq d\sigma^p\mu_p
\end{equation} Using \eqref{eq:lowerbound_vp}, it also holds that \begin{equation}
|\norm{v}_p^p - E[T]| \leq \norm{v}_p^p - C \cdot \left( \frac{\sigma^{p-2}}{d^{2/p-1}}\cdot \norm{v}_p^2 \wedge \norm{v}_p^p \right) \leq (1-C)\cdot \norm{v}_p^p
\end{equation} Thus, \eqref{eq:Tp_bias_bound_p_less_than_2} follows.
\end{proof}

\subsection{Upper bound on the critical separation under \texorpdfstring{$\ell_p$}{lp} for p even}\label{sec:upper_bound_lp_even}

In the following, we provide a proof of Lemma \ref{lemma:upper_bound_lp_even}.

\begin{proof}[Proof of Lemma \ref{lemma:upper_bound_lp_even}]
It follows from Corollary \ref{cor:chebyshev_upperbound} and Corollary \ref{cor:bound_p_gtr_2} that the debiased plug-in test \eqref{eq:debiased_plugin_test_lp} controls the   type-II by $\beta$ error whenever \begin{equation} \label{eq:chebyshev_cond_lp_even}
\sup_{\norm{v}_p \leq \epsilon_0}E[T_p] + \sqrt{\frac{V[T_p]}{\alpha}} \leq \inf_{\norm{v}_p \geq \epsilon_1} E[T_p] - \sqrt{\frac{V[T_p]}{\beta}}.
\end{equation} where \begin{equation}
E_{P_v}[T_p]=\norm{v}_p^p \textand V_{P_v}[T_p]\leq C\cdot \left(d \cdot \sigma^{2p} + d^{2/p-1} \cdot \sigma^{2} \cdot \norm{v}_p^{2(p-1)}\right)
\end{equation} for some positive constant $C$ that depends only on $p$. The RHS of \eqref{eq:chebyshev_cond_lp_even} is non-negative whenever \begin{equation}\label{eq:non_neg_cond}
\epsilon_1 \geq C_0 \cdot d^{1/2p}\cdot \sigma
\end{equation} where $C_1$ is a positive constant. Henceforth, assume that the above condition holds. Then, it holds that \eqref{eq:chebyshev_cond_lp_even} is implied by \begin{equation}\label{eq:chebyshev_cond_lp_even_1}
\epsilon_1^p - \epsilon_0^p \geq C \cdot \left[d^{1/2}\cdot \sigma^p + d^{1/p-1/2}\cdot \sigma \cdot \left(\epsilon_0^{p-1}+\epsilon_1^{p-1}\right)\right]
\end{equation} where $C>0$, and it depends only on $p$, $\alpha$ and $\beta$.

\textbf{Free tolerance regime.} Note that \eqref{eq:non_neg_cond} and \eqref{eq:chebyshev_cond_lp_even_1} hold under the following conditions \begin{equation}
\epsilon_1\geq C_1 \cdot d^{1/2p} \cdot \sigma \textand \epsilon_0 \leq \frac{\epsilon_1}{2}
\end{equation} for $C_1$ large enough that, where $C_1$ is a positive constant that depends on $p$, $\alpha$ and $\beta$. Thus, it also holds for \begin{equation}\label{eq:free_tolerance_lp_even}
\epsilon_1-\epsilon_0 \geq \frac{C_1}{2}\cdot d^{1/2p} \cdot \sigma  \textand \epsilon_0 \leq \frac{C_1}{2}\cdot d^{1/2p}
\end{equation}

\textbf{Beyond the free tolerance regime.} Let $\epsilon_0>0$. Using the fact that \begin{equation}
\frac{\epsilon_1^p-\epsilon_0^p}{\epsilon_1^{p-1}+\epsilon_0^{p-1}}=(\epsilon_1-\epsilon_0)\cdot \frac{\epsilon_1^{p-1}+\epsilon_1^{p-2}\epsilon_0+\dots+\epsilon_1\epsilon_0^{p-2}+\epsilon_0^{p-1}}{\epsilon_1^{p-1}+\epsilon_0^{p-1}} \geq \epsilon_1-\epsilon_0,
\end{equation} it follows that \eqref{eq:non_neg_cond} and \eqref{eq:chebyshev_cond_lp_even_1} are implied by \begin{equation}
\epsilon_1 \geq C_0 \cdot d^{1/2p}\cdot \sigma \textand \epsilon_1 - \epsilon_0 \geq 2C\cdot \sigma \cdot \max\left( d^{1/2}\cdot \frac{\sigma^{p-1}}{\epsilon_0^{p-1}}\ ,\ d^{1/p-1/2} \right)
\end{equation} from which the last two regimes of the lemma follow.

\end{proof}

\subsection{Upper bound on the estimation error under \texorpdfstring{$\ell_p$}{lp} for p even}\label{sec:EstimationLpEven}

In this section, we prove Lemma \ref{lemma:EstimationLpEven}, and after it, we provide a proof of the auxiliary result Proposition \ref{prop:lpgeometry}.

\begin{proof}[Proof of Lemma \ref{lemma:EstimationLpEven}]

Let $$
\gamma_p = (d^{1/2} \cdot \sigma^{p}) \lor (d^{1/p-1/2} \cdot \sigma \cdot \norm{v}_p^{p-1}).$$ By Corollary \ref{cor:bound_p_gtr_2}, it holds that \begin{equation}
E_{X\sim P_v}|T_p(X)-\norm{v}_p^p| \leq \sqrt{V_{X\sim P_v}\left[T_p(X)\right]}\lesssim  \gamma_p.
\end{equation} By Proposition \ref{prop:lpgeometry}, it holds that $$
|a^{1/p}-b^{1/p}|\leq |a-b|^{1/p} \quad \since |a-b|\leq |a^p-b^p|^{1/p}.
$$ Hence, the following bound on the estimation error holds \begin{equation}
E_{X\sim P_v}|T^{1/p}_p(X)-\norm{v}_p| \leq E_{X\sim P_v}|T_p(X)-\norm{v}_p^p|^{1/p} \leq \left(E_{X\sim P_v}|T_p(X)-\norm{v}_p^p|\right)^{1/p}
\end{equation} where we used Jensen's inequality in the last step. Additionally, by the difference of powers formula, we have the following relationship \begin{equation}
a - b = (a^{1/p})^p - (b^{1/p})^{p} \geq (a^{1/p}-b^{1/p})\cdot (a^{(p-1)/p}+b^{(p-1)/p}).
\end{equation} Thus, the following bound on the estimation error holds \begin{equation}
E_{X\sim P_v}|T^{1/p}_p(X)-\norm{v}_p| \leq E_{X\sim P_v}\left|\frac{T_p(X)-\norm{v}_p^p}{T_p^{(p-1)/p}(X)+\norm{v}_p^{p-1}}\right|
\end{equation} In summary, the following bound holds on the estimation error \begin{equation}
E_{X\sim P_v}|T^{1/p}_p(X)-\norm{v}_p| \lesssim \frac{\gamma_p}{\norm{v}_p^{p-1}} \wedge \gamma_p^{1/p} = \begin{cases}
\gamma_p^{1/p}, &\textif \norm{v}_p^p \leq \gamma_p\\
\frac{\gamma_p}{\norm{v}_p^{p-1}}, &\otherwise
\end{cases}
\end{equation} The claim follows by a case-by-case analysis. \end{proof}

We proceed to prove Proposition \ref{prop:lpgeometry}.

\begin{proposition}\label{prop:lpgeometry} Let $p$ be an integer, and  $a,b\geq 0$. It holds that
$$
|a-b|\leq |a^p-b^p|^{1/p}
$$
\end{proposition}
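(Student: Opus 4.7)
The plan is to reduce to the case $a \geq b \geq 0$ by symmetry of both sides in $a$ and $b$. In that case, the target inequality $|a-b|\leq |a^p-b^p|^{1/p}$ becomes $a-b \leq (a^p-b^p)^{1/p}$, which, upon raising both sides to the $p$-th power (both sides are non-negative), is equivalent to the purely algebraic statement
\begin{equation}
(a-b)^p + b^p \leq a^p.
\end{equation}

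The key step is to recognize that this is a superadditivity inequality for the monomial $t \mapsto t^p$ on $[0,\infty)$. I would prove it by writing $a = (a-b) + b$ and applying the binomial theorem, which is available because $p$ is assumed to be an integer (and necessarily positive for the claim to make sense):
\begin{equation}
a^p = \bigl((a-b) + b\bigr)^p = \sum_{k=0}^{p} \binom{p}{k} (a-b)^k b^{\,p-k}.
\end{equation}
Since $a-b \geq 0$ and $b \geq 0$, every term in this expansion is non-negative. Retaining only the two extreme terms $k=p$ and $k=0$ yields $a^p \geq (a-b)^p + b^p$, which is exactly what was needed.

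There is essentially no obstacle: the argument is elementary and uses only the binomial theorem together with non-negativity. The only points worth noting are that one must handle the degenerate cases $a=0$ or $b=0$ (both are immediate, with equality when $b=0$), and that the symmetry reduction is justified because both $|a-b|$ and $|a^p-b^p|$ are symmetric in $(a,b)$. Taking $p$-th roots at the end preserves the inequality since both sides are non-negative and $t \mapsto t^{1/p}$ is monotone on $[0,\infty)$.
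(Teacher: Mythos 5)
Your proof is correct and is essentially the same argument as the paper's: both reduce to $a\geq b$ (the paper writes $m=a\wedge b$, $u=|a-b|$), expand $(u+m)^p$ by the binomial theorem, and retain the extreme term(s) using non-negativity. The paper keeps only the $k=p$ term to conclude $|a^p-b^p|\geq u^p$ directly, while you keep both $k=0$ and $k=p$ to get the (equivalent) superadditivity form $a^p\geq(a-b)^p+b^p$; this is a cosmetic difference, not a different route.
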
\begin{proof}
Let $m=a \wedge b$ and $u=|a-b|$. By the Binomial theorem , it follows that \begin{equation}
|a^p-b^p|=(m+u)^p-m^p = \sum_{k=1}^p \binom{p}{k} m^{p-k}\cdot u^k\geq u^p=|a-b|^p
\end{equation}
\end{proof}

\subsection{Upper bound on the critical separation under \texorpdfstring{$\ell_p$}{lp} for \texorpdfstring{$2k< p < 2(k+1)$}{2k< p < 2(k+1)} where \texorpdfstring{$k \in \mathbb{Z}_+$}{k in Z+}}\label{sec:upper_bound_lp_odd_gtr_2}

In this section, we prove Lemma \ref{lemma:upper_bound_lp_odd_gtr_2}. After it, we provide a proof of the auxiliary result Corollary \ref{cor:bound_p_gtr_2}.

\begin{proof}[Proof of Lemma \ref{lemma:upper_bound_lp_odd_gtr_2}]

It follows from Corollary \ref{cor:chebyshev_upperbound} and Corollary \ref{cor:bound_p_gtr_2} that the debiased plug-in test \eqref{eq:debiased_plugin_test_lp} controls the type-II by $\beta$ error whenever

\begin{equation}
\epsilon_1^p \geq C\cdot \left( \epsilon_0^p + \sigma^{p-2k}\cdot d^{1-2k/p}\cdot \epsilon_0^{2k} + \sigma^p\cdot d^{1/2} + \sigma\cdot d^{1/p-1/2} \cdot \left[\epsilon_0^{(p-1)}+\epsilon_1^{(p-1)}\right]\right),
\end{equation} where $C\geq 1$ and it depends on $p$, $\alpha$ and $\beta$. Using the fact that $\epsilon_1 \geq \epsilon_0$, the above equation is implied by \begin{equation}
\left(\epsilon_1^p-\epsilon_0^p\right)^{1/p} \gtrsim \epsilon_0 + \sigma^{1-2k/p}\cdot d^{1/p-2k/p^2}\cdot \epsilon_0^{2k/p} + \sigma\cdot d^{1/2p} + \sigma\cdot d^{1/p-1/2}.
\end{equation} Note that $1/p-1/2 \leq 1/2p$ for $p\geq 2$; Hence, \begin{equation}
\left(\epsilon_1^p-\epsilon_0^p\right)^{1/p}  \gtrsim  \epsilon_0 + \sigma^{1-2k/p}\cdot d^{1/p-2k/p^2}\cdot \epsilon_0^{2k/p} + \sigma\cdot d^{1/2p},
\end{equation} The claim of  Lemma \ref{lemma:upper_bound_lp_odd_gtr_2} follows by noting that \begin{equation}
(\epsilon_1^p-\epsilon_0^p)^{1/p} \asymp \epsilon_1 \asymp \epsilon_1 - \epsilon_0 \quad \for  \epsilon_0\leq \frac{\epsilon_1}{2}.\end{equation}
\end{proof}

In the following, we recap Lemma 3.2 of \citet{ingsterTestingHypothesisWhich2001} and prove Corollary \ref{cor:bound_p_gtr_2}, which allows us to control the mean and variance of the debiased plug-in statistic \eqref{eq:debiased_plugin_test_lp}.

\begin{lemma}[Lemma 3.2 of \citet{ingsterTestingHypothesisWhich2001}]\label{Ingsterlemma3.2} Let $2k\leq p< 2(k+1) \st k \in \mathbb{Z}_+$ , and let \begin{equation}
f_p(u) = E|Z+u|^p \textand g_p(u) = |u|^p - \sum_{j=0}^{k-1}\ \frac{f^{(2j)}_p(0)}{(2j)!} \cdot H_{2j}(u)
\end{equation} where $f^{(2j)}_p(0)$ denotes the $2j$th deriative of $f_p$ evaluated at zero, and $H_{2j}$ is the $2j$th Hermite polynomial. Consider the following expectation and variance \begin{equation}
h(u)=E[g_p(Z+u)] \textand H(u)=V[g_p(Z+u)] \where Z \sim \mathcal{N}(0,1).
\end{equation} It holds that \begin{equation}
C_1 \cdot |u|^p \leq h(u)\leq C_2 \cdot \left(\ |u|^p + u^{2k}\ \right)  \textand H(u) \leq  C_3 \cdot (1+|u|^{2(p-1)})
\end{equation} where $C_1$ and $C_3$ are positive constants that depends only on $p$, and \begin{equation}
C_2 = 2\cdot \frac{d_{2k}}{(2k)!}\cdot A_{p-2k} \cdot (\mu_{p-2k} + 2^{p-2k}) \st d_j = p \cdot (p-1)  \dots  (p-j+1).
\end{equation}
For the particular case of $p$ even, it holds that $h(u) = u^{p}$.

\end{lemma}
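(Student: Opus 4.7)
The plan begins with one structural identity: for $Z\sim\mathcal{N}(0,1)$, the probabilists' Hermite polynomials satisfy $E[H_j(Z+u)]=u^j$, which follows from the generating function $e^{tu-t^2/2}=\sum_{j\geq 0}H_j(u)\,t^j/j!$ after substituting $Z+u$ for $u$ and taking expectations. Applying this termwise, I would collapse $h(u)$ to a Taylor remainder:
\begin{equation}
h(u) = f_p(u) - \sum_{j=0}^{k-1}\frac{f_p^{(2j)}(0)}{(2j)!}\,u^{2j}.
\end{equation}
Since $f_p$ is even (as $Z\stackrel{d}{=}-Z$), the odd-order derivatives vanish at zero, so $h$ is exactly $f_p$ minus its full Taylor polynomial of order $2k-1$.

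Next, I would verify that $f_p\in C^{2k}(\mathbb{R})$ with strictly positive $2k$-th derivative. Because $p>2k-1$, the pointwise derivatives $p(p-1)\cdots(p-j+1)|v|^{p-j}(\operatorname{sgn} v)^j$ of $|v|^p$ are integrable against the Gaussian density for $j\leq 2k$, so differentiation under the expectation is valid and yields
\begin{equation}
f_p^{(2k)}(u) = d_{2k}\cdot f_{p-2k}(u), \qquad d_{2k}=p(p-1)\cdots(p-2k+1)>0.
\end{equation}
In particular $h^{(2k)}(u)\geq d_{2k}\mu_{p-2k}>0$ everywhere, while $h^{(j)}(0)=0$ for $j=0,\ldots,2k-1$ by construction. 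For the degenerate case $p=2k$, $|v|^{2k}$ is itself a polynomial and its Hermite expansion terminates, so a direct computation yields $h(u)=u^{2k}$ identically.

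The bounds on $h$ will then follow from a small-$u$/large-$u$ dichotomy. For $|u|\leq 1$, Taylor's theorem with integral remainder combined with the uniform positivity and local boundedness of $h^{(2k)}$ delivers $c_1 u^{2k}\leq h(u)\leq c_2 u^{2k}$. For $|u|\geq 1$, the $C_r$ inequality (\zcref{prop:trig_ineq}) yields $A_p^{-1}|u|^p-\mu_p\leq f_p(u)\leq A_p(|u|^p+\mu_p)$, while the subtracted Taylor polynomial is $O(|u|^{2k-2})=o(|u|^p)$, so $h(u)\asymp|u|^p$. Patching these regimes, and using $u^{2k}\geq|u|^p$ on $[-1,1]$ together with $|u|^p$ dominating on its complement, yields the stated bounds $C_1|u|^p\leq h(u)\leq C_2(|u|^p+u^{2k})$. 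For the variance, I would use $H(u)\leq E[g_p(Z+u)-g_p(u)]^2$ together with \zcref{cr_up} applied to $|Z+u|^p-|u|^p$ and the Hermite shift identity $H_{2j}(Z+u)-H_{2j}(u)=\sum_{i=1}^{2j}\binom{2j}{i}H_{2j-i}(u)Z^i$; both contribute at most $|u|^{2(p-1)}$-order terms after squaring and taking expectation, since $2(2k-3)\leq 2(p-1)$.

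The hard part will be the lower bound $h(u)\geq C_1|u|^p$ in the intermediate region $1\leq|u|\leq M$: the asymptotics at zero and infinity already match the claim, but a uniform positive constant requires ruling out any interior zero of $h$. This follows from an iterated-integration argument: since $h^{(2k)}>0$ pointwise and $h^{(j)}(0)=0$ for $j=0,\ldots,2k-1$, integrating back $2k$ times from the origin shows that $h(u)>0$ for all $u\neq 0$. Continuity and compactness on any bounded annulus then provide a uniform positive infimum, which can be converted into a constant multiple of $|u|^p$ on that region.
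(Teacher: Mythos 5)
Your proof is correct, and it takes essentially the same starting point as the paper (the identity $E[H_j(Z+u)]=u^j$, the collapse of $h$ to a Taylor remainder, and $f_p^{(2k)}(u)=d_{2k}E|Z+u|^{p-2k}>0$ via dominated convergence), but diverges in two places worth flagging. For the lower bound $h(u)\gtrsim|u|^p$, the paper exploits the exact Lagrange form of the remainder, $h(u)=\frac{h^{(2k)}(\lambda u)}{(2k)!}\,u^{2k}\geq\frac{d_{2k}\mu_{p-2k}}{(2k)!}\,u^{2k}$ \emph{globally}, then bridges small and large $|u|$ by convexity of $h$ (using $bh(u/b)\leq h(u)$), and closes the tail with the $C_r$ inequality. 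You instead cover $[-1,1]$ and a neighborhood of infinity separately and patch the intermediate annulus via iterated integration (positivity of $h^{(2k)}$ plus vanishing lower-order derivatives at $0$ imply $h>0$ on $\mathbb{R}\setminus\{0\}$) combined with continuity and compactness. Both arguments hinge on the same fact ($h^{(2k)}>0$ everywhere), and your compactness route is perfectly sound; the paper's convexity route is slightly slicker because it avoids the non-constructive compactness step and yields explicit constants, though it asserts convexity without spelling out the iterated-integration reasoning that justifies it (which you make explicit). For the variance bound, the paper centers at $|u|^p$ and bounds $E[q_{k-1}(Z+u)]^2$ directly through moments of $Z+u$, whereas you center at $g_p(u)$ and invoke the Hermite addition formula $H_{2j}(Z+u)-H_{2j}(u)=\sum_{i\geq1}\binom{2j}{i}H_{2j-i}(u)Z^i$; your degree count $2(2k-3)\leq 2(p-1)$ is right (it reduces to $p\geq 2k-2$, which holds), so this also works, with the paper's centering being marginally more economical.
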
\begin{proof}

\textbf{Upper and lower bound on the expectation.} By propagating the expectation, we have that \begin{equation}
h(u) = f_p(u) - r_{k-1}(u) \where r_{k-1}(u) = \sum_{j=0}^{k-1}\ \frac{f^{(2j)}_p(0)}{(2j)!} \cdot u^{2j}.
\end{equation} Note that $h(u)$ is an even infinite differentiable function, and it is convex. It follows that $
h^{(j)}(0)=0 \for j \in\{0,\dots,2k-1\}
$ by definition and evenness. Additionally, $h$ is convex.

By Taylor expansion, it holds that \begin{equation}
h(u) = \frac{h^{(2k)}(\lambda \cdot u)}{(2k)!} \cdot u^{2k} = \frac{f_p^{(2k)}(\lambda \cdot u)}{(2k)!} \cdot u^{2k}  \where 0 \leq \lambda \leq 1
\end{equation} Since $|Z+u|^p$ is integrable on $\R$ for $|u|\leq \epsilon $ with $\epsilon>0$, $\pdv[2k]{|Z+u|^p}{u}$ exists on $\R\times[-\epsilon,\epsilon]$, and $|\pdv[2k]{|Z+u|^p}{u}|$ is upper-bounded by the integrable function $d_{2k} \cdot A_{p-2k} \cdot (|Z|^{p-2k}+|u|^{p-2k})$, by the Lebesgue dominated convergence theorem \citep{bartleElementsIntegrationLebesgue1995}[Corollary 5.9], it follows that \begin{equation}
f_p^{(2k)}(u) = E\left[\pdv[2k]{|Z+u|^p}{u}\right] = d_{2k} \cdot E|Z+u|^{p-2k}
\end{equation}  Furthermore $f_p^{(2k+1)}(0)=0$ since $f_p$ is an even function. Consequently \begin{equation}
h(u) = \frac{d_{2k}}{(2k)!} \cdot E|Z+\lambda u|^{p-2k} \cdot u^{2k}
\end{equation} For $|u|\leq b$ with $b>0$, it follows that \begin{equation}
\frac{d_{2k}}{(2k)!}  \cdot \mu_{p-2k} \cdot u^{2k} \leq h(u) \leq \frac{d_{2k}}{(2k)!} \cdot A_{p-2k} \cdot (\mu_{p-2k} + b^{p-2k}) \cdot u^{2k}
\end{equation} Note that from the lower-bound, it follows that \begin{equation}
\frac{d_{2k}}{(2k)!}  \cdot \mu_{p-2k} \cdot |u|^{p} \leq h(u) \quad \for |u| \leq 1
\end{equation} using the convexity of $h$, we get for arbitrary $b$ that  \begin{equation}
\frac{d_{2k}}{(2k)!}  \cdot \mu_{p-2k} \cdot b^{1-p} \cdot |u|^{p} \leq b\cdot h\left(\frac{u}{b}\right) \leq h(u) \quad \for |u| \leq b.
\end{equation} For $|u|\geq b$ and $b\geq2$, by Corollary \ref{cor:cr_lb}, it follows that \begin{equation}
h(u) \geq \frac{|u|^p}{A_{p-1}} - \mu_p - |r_{k-1}(u)| \geq C_b \cdot \left(|u|^p - 1 - u^{2(k-1)}\right) \geq C_b \cdot (1-2^{-p}-2^{2(k-1)-p}) \cdot |u|^p,
\end{equation} and for the upper-bound, we have that \begin{equation}
h(u) \leq \frac{d_{2k}\cdot A_{p-2k}}{(2k)!}  \cdot (\mu_{p-2k} + |u|^{p-2k}) \cdot u^{2k} \leq \frac{2\cdot d_{2k}\cdot A_{p-2k}}{(2k)!} \cdot |u|^p.
\end{equation} In summary, choosing $b=2$, it holds that \begin{equation}
C \cdot |u|^p \leq h(u) \leq \frac{2\cdot d_{2k}\cdot A_{p-2k}}{(2k)!} \cdot |u|^p + \frac{d_{2k} \cdot A_{p-2k}}{(2k)!}  \cdot (\mu_{p-2k} + 2^{p-2k}) \cdot u^{2k}
\end{equation} where $C>0$.

\textbf{Upper bound on the variance.} Note that the following chain of inequalities holds \begin{align}
H(u) &= V[|Z+u|^p - q_{k-1}(Z+u)^2] \quad \where  q_{k-1} \text{ is a $k-1$ polynomial}\\
&= V[|Z+u|^p - |u|^p - q_{k-1}(Z+u)]\\
&\leq 2\cdot E\left[|Z+u|^p - |u|^p\right]^2 + 2\cdot E\left[q_{k-1}(Z+u)^2\right]^2\\
&\leq 2\cdot E\left[|Z+u|^p - |u|^p\right]^2 + C \cdot E[Z+u]^{4(k-1)}\\
&\leq C_p \cdot \left(1 + |u|^{2(p-1)}\right) + C'_p \cdot \left(1 + |u|^{4(k-1)}\right)\\
&\leq C_p'' \cdot (1 + |u|^{2(p-1)} + |u|^{4(k-1)})
\end{align} For $|u|\leq 1$ the first term dominates, while for $|u|\geq 1$ the second term dominates since $2(p-1)>4(k-1)$. Thus, \begin{equation}
H(u) \leq C_p \cdot (1 + |u|^{2(p-1)}).
\end{equation}

\textbf{Expectation when $p$ is an even integer.} By the Binomial theorem, the following equality holds $$
E[Z+u]^{p} = E\left[\sum_{j=0}^{p} \binom{p}{j}\cdot Z^{p-2j} \cdot u^{j}\right].
$$ Then, using the fact that the odd moments of a standard Gaussian vanish, it follows that \begin{align}
E[Z+u]^{p} &= \sum_{j=0}^{p/2} \binom{p}{2j}\cdot \mu_{p-2j} \cdot u^{2j} \\
&= u^{p} + \sum_{j=0}^{p/2-1} \frac{d_{2j}}{(2j)!}\cdot \mu_{p-2j} \cdot u^{2j} &&\since \frac{d_{j}}{j!}=\binom{p}{j}\\
&= u^{p} + \sum_{j=0}^{p/2-1} \frac{f_p^{(2j)}(0)}{(2j)!} \cdot u^{2j} &&\since f_p^{(2j)}(0)=d_{2j} \cdot \mu_{p-2j}.
\end{align} Hence,  \begin{equation}
h(u) = E[Z+u]^{p} - \sum_{j=0}^{p/2-1} \frac{f_p^{(2j)}(0)}{(2j)!} \cdot u^{2j} = u^p.
\end{equation}

\end{proof}
\begin{corollary}\label{cor:bound_p_gtr_2} For $2k\leq p< 2(k+1) \st k \in \mathbb{Z}_+$, the debiased plug-in statistic satisfies \begin{equation}
T_p(X) = \sigma^{p} \cdot \sum_{i=1}^d g_p\left(\sigma^{-1} \cdot X_i\right)  \where X_i \sim \mathcal{N}(v_i,\sigma^2).
\end{equation} Hence it follows that \begin{align}
E[T_p] = \sigma^{p} \cdot \sum_{i=1}^d h(\sigma^{-1}v_i) \textand V[T_p] = \sigma^{2p} \cdot \sum_{i=1}^d H(\sigma^{-1}v_i).
\end{align} Consequently, the following bounds on the expectation and variance hold: \begin{align}
C_1 \cdot \norm{v}_p^p &\leq E[T_p]  \leq C_2 \cdot \left(\norm{v}_p^p + d^{1-2k/p}\cdot \sigma^{p-2k} \cdot \norm{v}_{p}^{2k}\right)\\
\textand V[T_p] &\leq C_3 \cdot \left(d \cdot \sigma^{2p} + d^{2/p-1} \cdot \sigma^{2} \cdot \norm{v}_p^{2(p-1)} \right)
\end{align} Furthermore, for $p$ even, it holds that $E[T_p] = \norm{v}_p^p$.

\end{corollary}

\subsection{Bounding the chi-squared distance by moment differences}\label{sec:chi2_bound}

In this section, we recap known results for bounding the chi-squared distance between Gaussian mixtures by moment differences between their mixing distributions.

\begin{lemma}[\citet{wuMinimaxRatesEntropy2016}]\label{lemma:chi2bound}
Let $\pi_0$ be a centered distribution, i.e. $m_1(\pi_0) = 0$, supported on $[-\delta,\delta]$ that matches $L$ moments of $\pi_1$ \begin{equation}
m_l(\pi_0) = m_l(\pi_1) \for 1\leq l \leq L\period
\end{equation}  The chi-squared distance between their mixture distributions is bounded by the moment difference of the mixing distributions \begin{equation}
\chi^2\left(P_{\pi_1},P_{\pi_0}\right) \leq e^{\frac{\delta^2}{2\sigma^2}} \cdot \sum_{j\geq L+1} \frac{\Delta_j^2}{j!\sigma^{2j}}
\end{equation} where $\Delta_j = m_j(\pi_1)-m_j(\pi_0)$.
\end{lemma}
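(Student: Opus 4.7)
The plan is to pick $\phi_\sigma$, the density of $\Normal(0,\sigma^2)$, as a common reference measure and combine two complementary ingredients: an exact series expansion of $\int (P_{\pi_1}-P_{\pi_0})^2/\phi_\sigma$ in terms of the moment differences $\Delta_j$, and a uniform pointwise comparison $P_{\pi_0}(x)\geq e^{-\delta^2/(2\sigma^2)}\phi_\sigma(x)$ that is exactly where the centering of $\pi_0$ enters. The final bound then follows by writing $\chi^2(P_{\pi_1},P_{\pi_0}) = \int(P_{\pi_1}-P_{\pi_0})^2/P_{\pi_0}\,dx$ and substituting both.

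For the series expansion, the workhorse identity is $\int \phi_\sigma(x-v)\phi_\sigma(x-v')/\phi_\sigma(x)\,dx = e^{vv'/\sigma^2}$, which is a one-line completion of the square. Expanding the square $(P_{\pi_1}-P_{\pi_0})^2$ and invoking Fubini---legitimate since $\pi_0,\pi_1$ are supported on $[-\delta,\delta]$, hence $e^{vv'/\sigma^2}\leq e^{\delta^2/\sigma^2}$---yields
\begin{equation}
\int \frac{(P_{\pi_1}-P_{\pi_0})^2}{\phi_\sigma}\,dx = \iint e^{vv'/\sigma^2}\,d(\pi_1-\pi_0)(v)\,d(\pi_1-\pi_0)(v') = \sum_{j\geq 0}\frac{\Delta_j^2}{j!\,\sigma^{2j}},
\end{equation}
where the last step expands $e^{vv'/\sigma^2}$ in powers of $vv'$ and separates the resulting double integral into $(\int v^j\,d(\pi_1-\pi_0))^2 = \Delta_j^2$. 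Since $\Delta_0=0$ (both $\pi_i$ are probability measures) and $\Delta_j=0$ for $1\leq j\leq L$ by the moment-matching hypothesis, only the $j\geq L+1$ terms survive.

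To pass from $\phi_\sigma$ to $P_{\pi_0}$ in the denominator, factor out $\phi_\sigma(x)$ and bound
\begin{equation}
\frac{P_{\pi_0}(x)}{\phi_\sigma(x)} = \int e^{xv/\sigma^2 - v^2/(2\sigma^2)}\,d\pi_0(v) \geq e^{-\delta^2/(2\sigma^2)}\int e^{xv/\sigma^2}\,d\pi_0(v) \geq e^{-\delta^2/(2\sigma^2)},
\end{equation}
where the first inequality uses $v^2\leq\delta^2$ on $\supp(\pi_0)$ and the second applies Jensen's inequality to the convex function $v\mapsto e^{xv/\sigma^2}$ together with $m_1(\pi_0)=0$. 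Substituting this pointwise bound into $\chi^2(P_{\pi_1},P_{\pi_0}) = \int(P_{\pi_1}-P_{\pi_0})^2/P_{\pi_0}\,dx$ and combining with the series identity yields the stated bound. Conceptually the only nontrivial step is the Jensen-based lower bound on $P_{\pi_0}/\phi_\sigma$, which is the main obstacle and the sole place where the centering of $\pi_0$ is used; everything else is routine Gaussian algebra.
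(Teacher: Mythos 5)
Your proof is correct. It follows the same high-level strategy as the paper---lower-bound $P_{\pi_0}$ pointwise by $e^{-\delta^2/(2\sigma^2)}\phi_\sigma$ via Jensen's inequality and the centering of $\pi_0$, then compute $\int(P_{\pi_1}-P_{\pi_0})^2/\phi_\sigma\,dx$ exactly as a series in the moment differences $\Delta_j$---but executes the series step through a different device. The paper expands the mixture densities in Hermite polynomials using the generating function $\sum_j H_j(x/\sigma)\,u^j/(j!\sigma^j)=\phi((x-u)/\sigma)/\phi(x/\sigma)$ and then invokes Hermite orthogonality to evaluate the $L^2(\phi_\sigma)$-norm of the difference. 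You instead complete the square to obtain the closed-form Gaussian overlap integral $\int\phi_\sigma(x-v)\phi_\sigma(x-v')/\phi_\sigma(x)\,dx=e^{vv'/\sigma^2}$, apply Fubini, and expand the exponential in powers of $vv'$. These two devices are formally dual (Hermite orthogonality encodes precisely the power-series coefficients of $e^{vv'/\sigma^2}$), so the computation is the same at bottom, but your version is a bit more self-contained since it requires only elementary Gaussian algebra and needs no facts about Hermite polynomials. The Jensen step, which you correctly identify as the only place centering is used, is identical in both proofs.
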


\begin{proof}[Proof of Lemma \ref{lemma:chi2bound}]

Let $\phi$ be the density of $\mathcal{N}(0,1)$\begin{equation} \phi(x) = \sqrt{\frac{1}{2\pi}} e^{-\frac{x^2}{2}},
\end{equation} and let $H_k$ be the $k$-th probabilist Hermite polynomial, it holds that \begin{equation}
E_{X \sim N(0,\sigma^2)}\left[H_{k}(X/\sigma)H_{j}(X/\sigma)\right]=\int_{-\infty}^\infty H_{k}(x/\sigma)H_{j}(x/\sigma) \phi(\frac{x}{\sigma})\frac{1}{\sigma}\ dx = k! \delta_{k,j}
\end{equation} and \begin{equation}
\sum_{j\geq0} H_j(x/\sigma) \cdot \frac{u^j}{j!\sigma^j} = e^{-\frac{u^2}{2\sigma^2}+\frac{u x}{\sigma^2}} = \frac{\phi(\frac{x-u}{\sigma})}{\phi(\frac{x}{\sigma})}.
\end{equation} Thus \begin{equation}
dP_{\pi}(x) = \int \frac{1}{\sigma}\phi(\frac{x-v}{\sigma})\ d\pi(v) = \frac{1}{\sigma}\phi(\frac{x}{\sigma}) \cdot \sum_{j\geq 0} H_j(x/\sigma) \cdot \frac{m_j(\pi)}{j!\sigma^j}
\end{equation} Finally, if a mixing distribution is supported on $[-\delta,\delta]$ and is centered, its density is bounded away from zero \begin{align}
dP_{\pi}(x) &= \int \frac{1}{\sigma}\phi(\frac{x-v}{\sigma})\ d\pi(v) \\
&=  \frac{1}{\sigma}\phi(\frac{x}{\sigma}) \cdot \int e^{-\frac{v^2}{2\sigma^2}+\frac{xv}{\sigma^2}}\ d\pi(v)\\
&\geq  \frac{1}{\sigma}\phi(\frac{x}{\sigma}) \cdot e^{E_{v \sim \pi}\left[-\frac{v^2}{2\sigma^2}+\frac{xv}{\sigma^2}\right]} &&\text{ by Jensen's inequality}\\
&= \frac{1}{\sigma}\phi(\frac{x}{\sigma}) \cdot e^{-\frac{\delta^2}{2\sigma^2}} &&\since m_1(\pi)=0 \textand \supp(\pi)\subseteq[-\delta,\delta]
\end{align} It follows that
\begin{align}
\chi^2\left(P_{\pi_1},P_{\pi_0}\right) &= \int \frac{\left(dP_{\pi_1}(x)-dP_{\pi_0}(x)\right)^2}{dP_{\pi_0}(x)}\\
&= \int \frac{1}{\sigma^2}\phi^2(\frac{x}{\sigma}) \frac{\left(\sum_{j\geq 0} H_j(x/\sigma) \cdot \frac{\Delta_j}{j!\sigma^j} \right)^2}{dP_{\pi_0}(x)}\\
&\leq e^{\frac{\delta^2}{2\sigma^2}} \cdot \int \frac{1}{\sigma}\phi(\frac{x}{\sigma}) \left(\sum_{j\geq 0} H_j(x/\sigma) \cdot \frac{\Delta_j}{j!\sigma^j} \right)^2\\
&= e^{\frac{\delta^2}{2\sigma^2}} \cdot \sum_{j\geq 0} \frac{\Delta_j^2}{j!\sigma^{2j}} &&\text{by orthogonality}\\
&= e^{\frac{\delta^2}{2\sigma^2}} \cdot \sum_{j\geq L+1} \frac{\Delta_j^2}{j!\sigma^{2j}} &&\text{matching L moments}
\end{align}
\end{proof}

\begin{corollary}\label{cor:chi2bound_for_simple_null}
Let $\pi_0 = \delta_0$, and $\pi_1$ be a symmetric mixing distribution supported on $[-\delta,\delta]$. Then it follows that \begin{equation}
\chi^2\left(P^d_{\pi_1},P^d_{0}\right) \leq \exp\left\{\frac{d}{2}\cdot \left(\frac{\delta}{\sigma}\right)^4\right\}-1\period
\end{equation}
\end{corollary}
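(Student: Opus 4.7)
The plan is to reduce to a one-dimensional chi-squared calculation via tensorization, and then exploit the symmetry of $\pi_1$ together with the support constraint $\supp(\pi_1)\subseteq[-\delta,\delta]$ directly, rather than invoking \zcref{lemma:chi2bound} as a black box (which carries an extra factor $e^{\delta^2/(2\sigma^2)}$ that would yield a weaker bound).

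First I would observe that because $\pi_1^d$ is a product measure with identical marginals $\pi_1$, and $P_v=\mathcal{N}(v,\sigma^2 I_d)$ factorizes across coordinates, the mixture $P_{\pi_1}^d$ equals the product $(P_{\pi_1})^{\otimes d}$; similarly $P_0^d=(P_0)^{\otimes d}$ with $P_0=\mathcal{N}(0,\sigma^2)$. Then the standard tensorization identity for chi-squared divergence
\begin{equation}
1+\chi^2\bigl((P_{\pi_1})^{\otimes d},(P_0)^{\otimes d}\bigr)=\bigl(1+\chi^2(P_{\pi_1},P_0)\bigr)^d
\end{equation}
reduces the problem to bounding the one-dimensional $\chi^2(P_{\pi_1},P_0)$ by $e^{\delta^4/(2\sigma^4)}-1$.

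Second, I would compute $1+\chi^2(P_{\pi_1},P_0)$ directly. Writing $P_{\pi_1}(x)=\int\sigma^{-1}\phi((x-v)/\sigma)\,d\pi_1(v)$, Fubini yields
\begin{equation}
1+\chi^2(P_{\pi_1},P_0)=\int\!\!\int E_{v_1,v_2\sim\pi_1}\!\left[\frac{\phi((x-v_1)/\sigma)\phi((x-v_2)/\sigma)}{\sigma\,\phi(x/\sigma)}\right]dx=E_{v_1,v_2\iid\pi_1}\bigl[e^{v_1 v_2/\sigma^2}\bigr],
\end{equation}
using the elementary Gaussian identity $\int \phi((x-a)/\sigma)\phi((x-b)/\sigma)/(\sigma\phi(x/\sigma))\,dx=e^{ab/\sigma^2}$ (complete the square in $x$).

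Third, I would expand the exponential and use independence and symmetry of $\pi_1$:
\begin{equation}
E[e^{v_1 v_2/\sigma^2}]=\sum_{k\geq 0}\frac{E[v_1^k]\,E[v_2^k]}{k!\,\sigma^{2k}}=\sum_{k\geq 0}\frac{m_k(\pi_1)^2}{k!\,\sigma^{2k}}=\sum_{k\geq 0}\frac{m_{2k}(\pi_1)^2}{(2k)!\,\sigma^{4k}},
\end{equation}
since odd moments of the symmetric $\pi_1$ vanish. Then $|m_{2k}(\pi_1)|\leq\delta^{2k}$ gives an upper bound of $\cosh(\delta^2/\sigma^2)$, which is at most $e^{\delta^4/(2\sigma^4)}$ by the elementary comparison $(2k)!\geq 2^k k!$ applied termwise. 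Combining with the tensorization identity from step one yields the claim. The only real care point is the Gaussian convolution identity in step two; everything else is a termwise comparison of power series.
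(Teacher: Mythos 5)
Your proof is correct, but your stated reason for bypassing \zcref{lemma:chi2bound} is based on a misreading. In that lemma, the $\delta$ appearing in the prefactor $e^{\delta^2/(2\sigma^2)}$ is the support radius of $\pi_0$, not of $\pi_1$; this is visible in the lemma's proof, where the exponential arises from lower-bounding $dP_{\pi_0}(x)$ via Jensen's inequality. Since here $\pi_0=\delta_0$, one may take $\delta=0$ for the lemma's $\delta$, so the prefactor is $e^{0}=1$ and no weakening occurs. This is exactly how the paper's proof proceeds: it invokes \zcref{lemma:chi2bound}, writes $e^{\frac{0}{2\sigma^2}}\cdot\sum_{j\geq 2} m_j(\pi_1)^2/(j!\sigma^{2j})$, drops odd terms by symmetry, bounds $m_{2j}(\pi_1)^2\leq\delta^{4j}$, recognizes $\cosh(\delta^2/\sigma^2)-1$, and concludes with $\cosh x\leq e^{x^2/2}$. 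Your argument re-derives the bound from scratch — the tensorization identity, the Gaussian convolution identity $\int P_{v_1}P_{v_2}/P_0=e^{v_1 v_2/\sigma^2}$, the moment expansion by independence, and the same $\cosh$ comparison — and is perfectly valid and somewhat more self-contained, since it avoids quoting the lemma. But it does not buy a sharper constant: applied correctly, the lemma gives the identical bound, and the paper's shorter proof is the intended one.
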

\begin{proof}[Proof of Corollary \ref{cor:chi2bound_for_simple_null}]
\begin{align}
\chi^2\left(P_{\pi_1},P_{0}\right) &\leq e^{\frac{0}{2\sigma^2}} \cdot \sum_{j\geq 2} \frac{m_j(\pi_1)^2}{\sigma^{2j} \cdot j!} &&\text{by lemma \ref{lemma:chi2bound}}\\
&= \sum_{j\geq 1} \frac{m_{2j}(\pi_1)^2}{\sigma^{4j} \cdot (2j)!} &&\text{by symmetry, all odd moments vanish}\\
&\leq \sum_{j\geq 1} \frac{(\delta^2/\sigma^2)^{2j}}{(2j)!} &&\since \supp(\pi_1) \subseteq [-\delta,\delta]\\
&= \cosh\left(\delta^2/\sigma^2\right)-1\\
&\leq e^{\left(\frac{\delta}{\sigma}\right)^4\cdot \frac{1}{2}}-1.
\end{align} By tensorization, it follows that
\begin{equation}
\chi^2\left(P^d_{\pi_1},P^d_{0}\right) = \left(1+\chi^2\left(P_{\pi_1},P_{0}\right)\right)^d-1 \leq e^{\frac{d}{2}\cdot \left(\frac{\delta}{\sigma}\right)^4}-1.
\end{equation}
\end{proof}

\begin{lemma}\label{lemma:moment_matching_priors}
For $0 < \eta < 1$, and any $\delta\geq 0$ and $L\geq 1$ that satisfy \begin{equation}
\delta^2 \leq C_\eta \cdot \sigma^2 \cdot \frac{L}{d^{1/(L+1)}},
\end{equation} where $C_\eta$ is a universal positive constant that depends only on $\eta$. If  $\pi_0$ and $\pi_1$ are centered distributions supported on $[-\delta,\delta]$ that share the first $L$ moments, then the chi-squared distance between their corresponding mixtures is bounded $
\chi^2\left(P_{\pi_1}^d,P_{\pi_0}^d\right)\leq \eta$.
\end{lemma}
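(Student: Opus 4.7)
The plan is to apply Lemma~\ref{lemma:chi2bound} to reduce the problem to controlling the tail of a power series of moment differences, bound that tail crudely using the support assumption, then tensorize. Since both mixing measures are centered and supported on $[-\delta,\delta]$, the hypothesis of Lemma~\ref{lemma:chi2bound} is satisfied, and the support bound gives $|\Delta_j|=|m_j(\pi_1)-m_j(\pi_0)|\leq 2\delta^j$, so $\Delta_j^2\leq 4\delta^{2j}$. Substituting yields
\begin{equation}
\chi^2(P_{\pi_1},P_{\pi_0})\;\leq\; 4\,e^{\delta^2/(2\sigma^2)}\sum_{j\geq L+1}\frac{(\delta^2/\sigma^2)^j}{j!}.
\end{equation}

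Next I would control this tail by Taylor's remainder formula for $e^x$, which gives $\sum_{j\geq L+1}y^j/j!\leq e^{y}\, y^{L+1}/(L+1)!$ for $y=\delta^2/\sigma^2\geq 0$, and combine it with Stirling's bound $(L+1)!\geq ((L+1)/e)^{L+1}$ to obtain
\begin{equation}
\chi^2(P_{\pi_1},P_{\pi_0})\;\leq\; 4\,e^{3\delta^2/(2\sigma^2)}\left(\frac{e\,\delta^2}{\sigma^2(L+1)}\right)^{L+1}.
\end{equation}
Tensorization then proceeds from the identity $1+\chi^2(P_{\pi_1}^d,P_{\pi_0}^d)=(1+\chi^2(P_{\pi_1},P_{\pi_0}))^d$ together with $(1+t)^d\leq e^{dt}$, so it is enough to guarantee $d\,\chi^2(P_{\pi_1},P_{\pi_0})\leq \log(1+\eta)$, and since $\log(1+\eta)\geq\eta/2$ for $\eta\in(0,1)$, it suffices to have
\begin{equation}
4 d\,e^{3\delta^2/(2\sigma^2)}\left(\tfrac{e\,\delta^2}{\sigma^2(L+1)}\right)^{L+1}\;\leq\;\tfrac{\eta}{2}.
\end{equation}

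Finally, I would take the $(L+1)$-th root of both sides and solve for $\delta^2/\sigma^2$. Using $(\eta/(8d))^{1/(L+1)}\geq (\eta/8)^{1/2}\cdot d^{-1/(L+1)}$ for $L\geq 1$ and $\eta\in(0,1)$, and $L+1\leq 2L$, the sufficient condition collapses to $\delta^2\leq C_\eta\,\sigma^2 L/d^{1/(L+1)}$ for a suitable $C_\eta$ depending only on $\eta$, matching the stated hypothesis. The main subtlety will be the prefactor $e^{3\delta^2/(2\sigma^2)}$, which is not $O(1)$ in the regime where $L/d^{1/(L+1)}$ is large (e.g. $L\asymp\log d$, where $\delta^2/\sigma^2$ grows with $L$); I would absorb it by picking $C_\eta$ small enough so that $e^{3C_\eta L/(2 d^{1/(L+1)})}$ contributes at most a multiplicative factor which, after the $(L+1)$-th root, is dominated by taking $C_\eta$ smaller. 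This produces the claimed $C_\eta$ as an explicit function of $\eta$, completing the proof.
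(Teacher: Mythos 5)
Your proposal is correct and follows essentially the same route as the paper's proof: apply \zcref{lemma:chi2bound}, bound $\Delta_j$ via the support, control the power-series tail by Taylor's remainder and Stirling, tensorize via $\chi^2(P_{\pi_1}^d,P_{\pi_0}^d)=(1+\chi^2(P_{\pi_1},P_{\pi_0}))^d-1\leq e^{d\chi^2(P_{\pi_1},P_{\pi_0})}-1$, and take the $(L+1)$-th root to solve for $\delta^2/\sigma^2$. One small remark: the worry about the prefactor $e^{3\delta^2/(2\sigma^2)}$ is resolved more cleanly than your phrasing suggests — after the $(L+1)$-th root it becomes $e^{3\delta^2/(2(L+1)\sigma^2)}$, and since the hypothesis gives $\delta^2/(\sigma^2(L+1))\leq C_\eta L/((L+1)d^{1/(L+1)})\leq C_\eta$, this factor is at most $e^{3C_\eta/2}$, a constant once $C_\eta$ is fixed, so the final condition $eC_\eta\,e^{3C_\eta/2}\leq(\eta/8)^{1/2}$ admits a solution $C_\eta>0$ depending only on $\eta$.
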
\begin{proof}[Proof of Lemma \ref{lemma:moment_matching_priors}]
Note that
\begin{align}
\chi^2\left(P_{\pi_1},P_{\pi_0}\right) &\leq e^{\frac{\delta^2}{2}\cdot \sigma^{-2}} \cdot \sum_{j\geq L+1} \frac{\Delta_j^2\cdot \sigma^{-2j}}{j!} &&\by Lemma \ref{lemma:chi2bound}\\
&\leq e^{\frac{\delta^2}{2}\cdot \sigma^{-2}} \cdot \sum_{j\geq L+1} \frac{(4\delta^2\sigma^{-2})^j}{j!} &&\since \supp(\pi_1)\cup\supp(\pi_0) \subseteq [-\delta,\delta]\\
&\leq e^{\frac{\delta^2}{2}\cdot \sigma^{-2}} \cdot e^{4\delta^2\cdot \sigma^{-2}} \cdot \frac{(4\delta^2\sigma^{-2})^{L+1}}{(L+1)!} 
\\
&\leq C \cdot e^{\frac{\delta^2}{\sigma^{2}}} \cdot \left(\frac{\delta^2}{L\sigma^{2}}\right)^{L+1}
\end{align} for some universal positive constant $C\geq 1$. Recall that \begin{equation}
\chi^2\left(P^d_{\pi_1},P^d_{\pi_0}\right) =(1+\chi^2\left(P_{\pi_1},P_{\pi_0}\right))^d-1 \leq \exp\left\{\chi^2\left(P_{\pi_1},P_{\pi_0}\right) \cdot d\right\}-1.
\end{equation} Thus, $\chi^2\left(P^d_{\pi_1},P^d_{\pi_0}\right) \leq \eta$ whenever \begin{equation}
\chi^2\left(P_{\pi_1},P_{\pi_0}\right) \leq \frac{\tilde{\eta}}{d},
\end{equation} where $\tilde{\eta} = \log(1+\eta)$. It is sufficient to choose $\delta$ and $L$ such that \begin{equation}
C\cdot e^{\frac{\delta^2}{\sigma^2} } \cdot \left(\frac{\delta^2}{\sigma^2L}\right)^{L+1} \leq \frac{\tilde{\eta}}{d}
\end{equation} which is implied by \begin{equation}
e^{\frac{\delta^2}{\sigma^2L}} \cdot \frac{\delta^2}{\sigma^2L} \leq \left(\frac{\tilde{\eta}}{C}\right)^{1/(L+1)} \cdot d^{-1/(L+1)}.
\end{equation} That condition is satisfied whenever \begin{equation}
\frac{\delta^2}{\sigma^2L} \leq 1 \wedge \left(\frac{\tilde{\eta}}{C}\right)^{1/(L+1)} \cdot d^{-1/(L+1)},
\end{equation} which is satisfied by the assumptions since $\left(\frac{\tilde{\eta}}{C}\right)^{1/(L+1)} \cdot d^{-1/(L+1)}  \leq 1$.
\end{proof}

\subsection{Lower bound on the critical separation under \texorpdfstring{$\ell_p$}{lp} norms for \texorpdfstring{$p \in (0,2]$}{p in (0,2]} in the free tolerance regime}\label{lb_p_less_than_2_small_e0}

In this section, we present a simple lower bound on the critical separation in the free-tolerance regime.

\begin{proposition}\label{prop:free_tolerance_lb} For $0 < p \leq 2$, the critical separation for hypotheses \eqref{eq:gaussian_testing_lp} is lower-bounded by \begin{equation}
\epsilon_1^*(\epsilon_0,\GS)-\epsilon_0 \gtrsim d^{1/p-1/4} \cdot \sigma \quad \for  0 \leq \epsilon_0 \lesssim d^{1/2p} \cdot \sigma.
\end{equation} 
\end{proposition}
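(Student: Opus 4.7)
The plan is to apply the two-point construction of \zcref{sec:simple_null_lb} with a symmetric, scalar $\pm a$ mixing distribution whose scale $a$ is tuned so that the induced Gaussian product mixtures are $\chi^2$-indistinguishable, and then to observe that because $\|v\|_p$ is a constant function on the supports of both priors, the null hypothesis set can be inflated up to (a constant fraction of) the alternative threshold without breaking the construction.

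Concretely, I would take $\pi_0 = \delta_0$ and $\pi_1 = \tfrac{1}{2}(\delta_a + \delta_{-a})$, with $a = C \cdot \sigma \cdot d^{-1/4}$ for a small constant $C>0$ to be fixed. By construction, $\pi_0$ and $\pi_1$ are centered and share the first moment, and $\pi_1$ is symmetric and supported on $[-a,a]$, so all odd moments vanish. Under the product priors we have the deterministic identities
\begin{equation}
\|v\|_p \stackrel{a.s.}{=} 0 \text{ under } \pi_0^d, \qquad \|v\|_p \stackrel{a.s.}{=} a \cdot d^{1/p} = C \cdot \sigma \cdot d^{1/p - 1/4} =: \epsilon_1^\circ \text{ under } \pi_1^d.
\end{equation}
Thus $\pi_0^d$ is supported in $V_0 = \{\|v\|_p \leq \epsilon_0\}$ for any $\epsilon_0 \geq 0$, and $\pi_1^d$ is supported in $V_1 = \{\|v\|_p \geq \epsilon_1^\circ\}$.

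Next I would control the $\chi^2$-distance between the induced Gaussian mixtures via \zcref{cor:chi2bound_for_simple_null}, which applies since $\pi_1$ is symmetric and supported in $[-a,a]$:
\begin{equation}
\chi^2\bigl(P_{\pi_1}^d, P_{0}^d\bigr) \;\leq\; \exp\!\Bigl(\tfrac{d}{2}\cdot (a/\sigma)^4\Bigr) - 1.
\end{equation}
With $a = C\sigma d^{-1/4}$, the exponent equals $C^4/2$, so by choosing $C$ small enough (depending only on $\alpha$ and $\beta$) and using $V \leq \tfrac{1}{2}\sqrt{\chi^2}$, we obtain $V(P_{\pi_1}^d, P_0^d) \leq 1 - (\alpha + \beta)$. \zcref{lemma:exact_support_lb} then gives $\epsilon_1^*(0,\GS) \gtrsim \epsilon_1^\circ \asymp \sigma \cdot d^{1/p - 1/4}$, which already covers the case $\epsilon_0 = 0$.

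Finally, I would extend the argument to $\epsilon_0>0$ by exploiting the fact that $\|v\|_p$ is constant on each support. As in the argument following \zcref{cor:chi2bound_for_simple_null}, for any $\epsilon_0 \leq \tfrac{1}{2}\epsilon_1^\circ$ we still have $\pi_0^d(V_0) = \pi_1^d(V_1') = 1$ where $V_1' = \{\|v\|_p \geq \epsilon_1^\circ\}$, while $\epsilon_1^\circ - \epsilon_0 \geq \tfrac{1}{2}\epsilon_1^\circ$. Hence \zcref{lemma:exact_support_lb} yields
\begin{equation}
\epsilon_1^*(\epsilon_0,\GS) - \epsilon_0 \;\gtrsim\; \sigma \cdot d^{1/p - 1/4} \qquad \text{for all } 0 \leq \epsilon_0 \leq \tfrac{C}{2}\sigma \cdot d^{1/p - 1/4}.
\end{equation}
Because $1/(2p) \leq 1/p - 1/4$ for $0 < p \leq 2$, the hypothesis $\epsilon_0 \lesssim \sigma d^{1/2p}$ lies inside this range, which yields the claim. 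The only nontrivial step is the $\chi^2$ calculation, but this is already packaged in \zcref{cor:chi2bound_for_simple_null}; the rest is bookkeeping around the fact that $\|v\|_p$ is $\pi_i^d$-almost surely constant.
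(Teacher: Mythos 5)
Your proposal is correct and follows essentially the same route as the paper's proof: the same two-point symmetric priors $\pi_0=\delta_0$, $\pi_1=\tfrac12(\delta_a+\delta_{-a})$ with $a\asymp\sigma d^{-1/4}$, the same $\chi^2$-bound from \zcref{cor:chi2bound_for_simple_null}, and the same observation that $d^{1/2p}\leq d^{1/p-1/4}$ for $p\leq 2$. The only cosmetic difference is that you invoke the exact-support lower bound \zcref{lemma:exact_support_lb} whereas the paper invokes the fuzzy-priors version \zcref{lemma:twofuzzypriors}; since $\|v\|_p$ is almost surely constant under both product priors, either lemma applies and the conclusion is the same.
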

\begin{proof}[Proof of Proposition \ref{prop:free_tolerance_lb}]
Consider the following symmetric mixing distributions \begin{equation}
\pi_0 = \delta_0 \textand \pi_1 = \frac{1}{2}\left(\delta_{\epsilon \cdot d^{-1/p}}+\delta_{-\epsilon \cdot d^{-1/p}}\right)
\end{equation} Note that they share their first moment, and their separation is given by $\epsilon$ \begin{equation}
\norm{X}_p^p = 0 \text{ a.s. under } \pi_0\quad   \textand \quad \norm{X}_p^p=\epsilon^p \text{ a.s. under } \pi_1.
\end{equation} By Corollary \ref{cor:chi2bound_for_simple_null}, it holds that \begin{equation}
\chi^2\left(P_{\pi_1}^d,P_{\pi_0}^d\right)\leq \exp\left[\frac{1}{2} \cdot \left(\frac{\epsilon}{\sigma \cdot d^{1/p-1/4}}\right)^{4}\right]-1.
\end{equation} Thus, choosing \begin{equation}
\epsilon^p = C_p \cdot  d^{1-p/4} \cdot \sigma^p \where C_p = \left[2\log\left(1+(C_\alpha/2)^2\right)\right]^{p/4}\comma
\end{equation} the chi-squared distance is bounded: $
\chi^2\left(P_{\pi_1}^d,P_{\pi_0}^d\right)\leq (C_\alpha/2)^2$. Hence by Lemma \ref{lemma:twofuzzypriors}, it follows that \begin{equation}
\epsilon_1^*(\epsilon,\GS)-\epsilon_0 \gtrsim  d^{1/p-1/4} \cdot \sigma \quad \for  0 \leq \epsilon_0 \lesssim d^{1/p-1/4} \cdot \sigma^p\comma
\end{equation} which implies the statement of Proposition \ref{prop:free_tolerance_lb} since $d^{1/p-1/4}\geq d^{1/2p}$ for $p \leq 2$.
\end{proof}

\subsection{Conjecture for the lower bound on the critical separation under non-smooth \texorpdfstring{$\ell_p$}{lp} norms in the interpolation regime}\label{sec:interpolation_lb_lp}

Theorem \ref{thm:unconstrained_moment_matching_lowerbound} can be generalized such that any lower bound on the constrained moment-matching problem \eqref{eq:Mpepsilon} leads to a lower bound on the critical separation. We defer the proof to Theorem \ref{thm:LB_via_opt} in Section \ref{sec:moment_matching_lower_bounds}.

\begin{theorem}\label{thm:constrained_moment_matching_lowerbound}
Choose $\delta\geq 0$ and $L\geq1$ satisfying \eqref{eq:chi2_condition}. The critical separation for hypotheses \eqref{eq:gaussian_testing_lp} is lower bounded by \begin{equation}
\epsilon_1^*(\epsilon_0,\GS)-\epsilon_0 \gtrsim b_p(\epsilon_0) \quad \where b_p(\epsilon_0) = d^{1/p}\delta\cdot M^{1/p}_p(\tilde{\epsilon}_0,L) \textand \tilde{\epsilon}_0 \asymp \frac{\epsilon_0}{d^{1/p}\delta}
\end{equation} for $0 < \epsilon_0 \lesssim b_p(\epsilon_0) \textand  d^{1/2} \gtrsim M^{-1}_p(\tilde{\epsilon}_0,L)$.
\end{theorem}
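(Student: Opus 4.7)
The plan is to combine a rescaled near-optimal pair for the constrained moment-matching program $M_p(\tilde\epsilon_0,L)$ with the chi-square control of \zcref{lemma:moment_matching_priors} and the reduction in \zcref{lemma:exact_support_lb}, in the same spirit as the lower-bound arguments of \zcref{sec:simple_null_lb} and \zcref{sec:functional_estimation_lb}.

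\textbf{Step 1: construct and rescale the base priors.} Pick a near-optimal pair $\pi_0,\pi_1$ on $[-1,1]$ for $M_p(\tilde\epsilon_0,L)$. Since $v\mapsto |v|^p$ depends only on $|v|$ and the first-moment constraint forces $m_1(\pi_0)=m_1(\pi_1)$, I may symmetrize and take both priors even about $0$, hence centered and matching all odd moments automatically. Now set $\tilde\pi_i$ equal to the law of $\delta V$ for $V\sim\pi_i$; these are centered, supported on $[-\delta,\delta]$, still match their first $L$ moments, and satisfy
\[
E_{v\sim\tilde\pi_0}|v|^p \le \delta^p\,\tilde\epsilon_0^p,\qquad E_{v\sim\tilde\pi_1}|v|^p \gtrsim \delta^p\,M_p(\tilde\epsilon_0,L).
\]
The choice $\tilde\epsilon_0\asymp \epsilon_0/(d^{1/p}\delta)$ is calibrated so that on $\R^d$ the product priors have $d\cdot E_{\tilde\pi_0}|v|^p\asymp \epsilon_0^p$ and $d\cdot E_{\tilde\pi_1}|v|^p\asymp b_p(\epsilon_0)^p$.

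\textbf{Step 2: concentrate and truncate into $V_0,V_1$.} Consider the product priors $\tilde\pi_0^{\otimes d},\tilde\pi_1^{\otimes d}$ and note that $|v_j|^p\in[0,\delta^p]$ i.i.d.\ under each, so Bernstein's inequality gives
\[
\tilde\pi_i^{\otimes d}\!\left(\left|\,\norm{v}_p^p - d\,E_{\tilde\pi_i}|v|^p\right|\ge t\right)\le 2\exp\!\left(-c\,t^2/(d\,\delta^{2p})\right).
\]
Hence $\norm{v}_p^p$ concentrates on the scale $\sqrt d\,\delta^p$. The assumption $d^{1/2}\gtrsim M_p^{-1}(\tilde\epsilon_0,L)$ says exactly $\sqrt d\,\delta^p \ll d\,\delta^p\,M_p(\tilde\epsilon_0,L)=b_p(\epsilon_0)^p$, so with probability $1-o(1)$ I have $\norm{v}_p\le (1+o(1))\epsilon_0$ under $\tilde\pi_0^{\otimes d}$ and $\norm{v}_p\ge (1-o(1))\,b_p(\epsilon_0)$ under $\tilde\pi_1^{\otimes d}$. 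Let $\pi_0^{\dagger},\pi_1^{\dagger}$ denote the corresponding conditional laws; these are exactly supported on the hypothesis sets $V_0,V_1$ of \eqref{eq:hypotheses_sets} with separation $\epsilon_1-\epsilon_0\asymp b_p(\epsilon_0)$.

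\textbf{Step 3: bound the mixture total variation and conclude.} Apply \zcref{lemma:moment_matching_priors} to the unconditioned pair $\tilde\pi_0,\tilde\pi_1$: the hypothesis \eqref{eq:chi2_condition} on $(\delta,L)$ yields $V(P_{\tilde\pi_1}^d,P_{\tilde\pi_0}^d)\le C_\alpha/2$. Since conditioning onto an event of probability $1-o(1)$ changes the law in total variation by $o(1)$, the triangle inequality transfers this to $V(P_{\pi_1^{\dagger}}^d,P_{\pi_0^{\dagger}}^d)\le C_\alpha$. Invoking \zcref{lemma:exact_support_lb} with the priors $\pi_0^{\dagger},\pi_1^{\dagger}$ produces the desired bound $\epsilon_1^*(\epsilon_0,\GS)-\epsilon_0\gtrsim b_p(\epsilon_0)$.

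\textbf{Main obstacle.} The moment-matching chi-square calculation is a black-boxed application of \zcref{lemma:moment_matching_priors}; the real work is in the truncation step. One must show that conditioning onto $V_0$ and $V_1$ (i) preserves the closeness of the mixtures and (ii) leaves the expected $\ell_p$ norms separated by $\Theta(b_p(\epsilon_0))$. Both requirements are governed by the same ratio $\sqrt d\,\delta^p \big/ d\,\delta^p M_p(\tilde\epsilon_0,L)$, and controlling this ratio is precisely the content of the assumption $d^{1/2}\gtrsim M_p^{-1}(\tilde\epsilon_0,L)$. For non-integer $p$, $|v|^p$ is still bounded and sub-exponential on $[-\delta,\delta]$, so Bernstein applies without modification; the argument therefore extends uniformly in $p\ge 1$.
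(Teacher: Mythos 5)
Your overall route tracks the paper's: rescale a near-optimizer of the constrained moment-matching program to $[-\delta,\delta]$, verify that the product priors concentrate in (or near) the hypothesis sets, invoke \zcref{lemma:moment_matching_priors} for indistinguishability of the mixtures, and close with a two-priors reduction. The paper executes this through \zcref{lemma:LB_via_opt} and \zcref{lemma:lb_by_two_fuzzy_prios}, which tolerates priors only mostly supported on $V_0,V_1$, whereas you truncate and invoke \zcref{lemma:exact_support_lb}; these are cosmetically different wrappers around the same conditioning-and-triangle-inequality argument (indeed the proof of \zcref{lemma:twofuzzypriors} is exactly your Step~3).

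There is, however, a genuine gap in Step~2 in how you control the null prior. Under $\tilde\pi_0^{\otimes d}$ the mean of $\norm{v}_p^p$ is $\asymp\epsilon_0^p$ and the Bernstein fluctuation scale is $\sqrt d\,\delta^p$. You assert $\sqrt d\,\delta^p\ll b_p(\epsilon_0)^p$ and conclude $\norm{v}_p\le(1+o(1))\epsilon_0$ with high probability. But the hypothesis $d^{1/2}\gtrsim M_p^{-1}(\tilde\epsilon_0,L)$ only yields $\sqrt d\,\delta^p\lesssim b_p(\epsilon_0)^p$ (up to constants, not asymptotically negligibly), and it says nothing about the comparison of $\sqrt d\,\delta^p$ to $\epsilon_0^p$: since the theorem only requires $\epsilon_0\lesssim b_p(\epsilon_0)$, you can well be in a regime where $\sqrt d\,\delta^p\gg\epsilon_0^p$, in which case two-sided Bernstein concentration gives no nontrivial upper-tail bound at the threshold $\epsilon_0^p$. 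The correct tool for the null prior is the one-sided Markov bound used in \zcref{lemma:lb_by_two_fuzzy_prios}: pick the constant in $\tilde\epsilon_0\asymp\epsilon_0/(d^{1/p}\delta)$ so that $E_{\tilde\pi_0^{\otimes d}}\norm{v}_p^p\le (C_\alpha/4)\,\epsilon_0^p$, and then $\tilde\pi_0^{\otimes d}\bigl(\norm{v}_p^p>\epsilon_0^p\bigr)\le C_\alpha/4$ regardless of the variance. Bernstein or Chebyshev is needed only for $\pi_1$, where the mean $\asymp b_p(\epsilon_0)^p$ dominates the fluctuation $\sqrt d\,\delta^p$ precisely because $d^{1/2}\gtrsim M_p^{-1}$. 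Relatedly, the $o(1)$ and $\ll$ language throughout Steps~2--3 is not available at this level of generality; the inequalities hold only up to constants, so the statements must be phrased as constant-probability bounds (e.g.\ each truncation event has probability at least $1-C_\alpha/4$), with those constants feeding explicitly into the total-variation budget $C_\alpha$.
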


We conjecture that the constrained moment-matching problem satisfies the following lower bound whenever $\epsilon$ is not too large.

\begin{conjecture} For $0<\epsilon \lesssim 1/L$, it holds that:
\begin{equation}
M_p^{1/p}(\epsilon,L) \gtrsim \begin{dcases}
\frac{\epsilon^{p/2}}{L^{1-p/2}} &\textif 1 \leq p < 2\\
\frac{\epsilon^{2k/p}}{L^{1-2k/p}} &\textif 2k < p < 2(k+1) \text{ with } k \in \mathbb{Z}_+
\end{dcases}
\end{equation}
\end{conjecture}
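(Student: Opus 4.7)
The plan is to generalize the approach of \citet{canonnePriceToleranceDistribution2021} that established the $p=1$ case in \zcref{lemma:M1Epsilon}. Their proof proceeds by passing to the dual polynomial approximation problem (via \zcref{lemma:moment_matching_duality}) and then exhibiting an explicit Chebyshev-type signed measure that witnesses the lower bound. I would first establish a constrained analogue of \zcref{lemma:moment_matching_duality}: by standard convex duality for linear programs on the cone of signed measures, $M_p(\epsilon,L)$ is equal, up to constants, to a constrained best polynomial approximation problem in which one approximates $|x|^p$ on $[-1,1]$ by a degree-$L$ polynomial $f$ plus a nonnegative weight $\rho\cdot|x|^p$, where the Lagrange multiplier $\rho \geq 0$ encodes the constraint $E_{\pi_0}|v|^p \leq \epsilon^p$. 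Lower bounds on $M_p(\epsilon,L)$ then reduce to lower bounds on the dual approximation problem.

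\textbf{Case $p \in [1,2)$.} Here I would directly generalize Canonne et al.'s Chebyshev construction. Place signed atoms at the roots of $T_L$ rescaled to a subinterval $[-t,t]$, distributed between $\pi_0$ and $\pi_1$ so that all moments of order $\leq L$ match. For $p=1$, the separation $\int|v|\,d(\pi_1-\pi_0)$ was shown to be of order $t/L$; for general $p \in [1,2)$, the analogous separation $\int|v|^p\,d(\pi_1-\pi_0)$ should be of order $t^p/L^p$ by orthogonality of Chebyshev polynomials against $|x|^p$ on the scaled interval, while the constraint $\int|v|^p\,d\pi_0 \lesssim \epsilon^p$ forces $t$ to scale polynomially in $\epsilon$ and $L$. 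Optimizing over $t$ should yield the claimed rate $\epsilon^{p/2}/L^{1-p/2}$ for $M_p^{1/p}(\epsilon,L)$. The computations parallel those of \citet{canonnePriceToleranceDistribution2021}, but the identities for Chebyshev integrals against $|x|^p$ are more intricate than for $|x|$.

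\textbf{Case $p \in (2k,2(k+1))$ and main obstacle.} For $k \geq 1$, the function $|x|^p$ has $2k$ vanishing Taylor coefficients at $0$, so the naive single-scale Chebyshev construction is no longer sharp. The heuristic that motivates the bound is that near the origin $|x|^p$ behaves like $x^{2k}\cdot|x|^{p-2k}$: distributions of size $t \asymp \epsilon^{1/p}$ saturate $E|v|^p \lesssim \epsilon^p$, their $2k$-th moment is of order $t^{2k}=\epsilon^{2k/p}$, and the residual non-smooth factor $|x|^{p-2k}$ contributes an additional approximation gap of order $(t/L)^{p-2k}$, giving a separation of order $t^{2k}\cdot(t/L)^{p-2k} = \epsilon^{2k/p}\cdot L^{-(p-2k)}$ which, taken to the $1/p$-th power, matches the conjecture. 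Making this precise is the principal obstacle: one must build multi-scale signed measures that (i) match all $L$ moments between $\pi_0$ and $\pi_1$, (ii) saturate the $\epsilon^p$ constraint through the low-order moments only, and (iii) carry sufficient mass at a finer scale to capture the $|x|^{p-2k}$ behavior. Alternatively, one may attempt to work directly in the dual, establishing constrained polynomial approximation lower bounds for $|x|^p$ via weighted Markov--Bernstein inequalities tailored to intermediate-smoothness functions; this route would bypass explicit constructions but introduces its own technical difficulties for non-even $p > 2$, and is the reason the conjecture has been stated rather than proved.
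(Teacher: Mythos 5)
This statement is a \emph{conjecture}: the paper explicitly leaves it unproven (\zcref{sec:interpolation_lb_lp}), so there is no paper proof for your proposal to be compared against. You recognize this, and your writeup is accordingly a research plan rather than an argument — which is the honest posture. Your overall framing (pass to the dual via a constrained analogue of \zcref{lemma:moment_matching_duality}, then obtain lower bounds on constrained polynomial approximation of $|x|^p$) is the natural one and matches the route the paper took for $p=1$. One misdescription worth correcting: you propose, for $p\in[1,2)$, to ``directly generalize Canonne et al.'s Chebyshev construction,'' but as the paper recounts it, the proof of \zcref{lemma:M1Epsilon} in \zcref{sec:constrained_moment_matching} is \emph{dual}-based: Canonne et al.\ bound the restricted dual LP \eqref{eq:restricted_dual} via Markov--Bernstein inequalities (\zcref{lemma:derivative_bound}, \zcref{lemma:linear_upperbound}), rather than exhibiting an explicit signed Chebyshev measure on the primal side. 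The two routes are exchangeable in principle by LP duality, but your claim that your computations would ``parallel'' an explicit Chebyshev calculation does not reflect the argument actually given for $p=1$.

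The arithmetic in your heuristic for $p\in(2k,2(k+1))$ also does not close. If $\pi_0$ is supported at scale $t$, then $E_{\pi_0}|v|^p \lesssim t^p$, so to enforce the constraint $E_{\pi_0}|v|^p \leq \epsilon^p$ at full occupancy one needs $t \lesssim \epsilon$, not $t\asymp \epsilon^{1/p}$ as you state; with $t\asymp\epsilon^{1/p}$ the constraint is only satisfied up to $\epsilon$, not $\epsilon^p$. And the separation you write down, $t^{2k}\cdot (t/L)^{p-2k}$, equals $t^p/L^{p-2k}$, which at $t\asymp\epsilon^{1/p}$ is $\epsilon/L^{p-2k}$, not $\epsilon^{2k/p}/L^{p-2k}$ as claimed; taking the $1/p$-th power then gives $\epsilon^{1/p}/L^{1-2k/p}$ rather than the conjectured $\epsilon^{2k/p}/L^{1-2k/p}$. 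The discrepancy in the $\epsilon$-exponent is precisely the hard part: the conjectured rate requires carefully accounting for how the $\epsilon^p$ constraint on $\pi_0$ interacts with the separate scale $\delta$ of the support and with the $2k$ vanishing Taylor coefficients of $|x|^p$, along the lines of the two-scale reasoning sketched before \zcref{lemma:upper_bound_lp_odd_gtr_2}. Making this rigorous — or establishing the matching constrained Markov--Bernstein bound on the dual side — is exactly the open gap, and a proof along your lines would need a more careful bookkeeping of scales than the back-of-the-envelope you offer.
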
 Under the above conjecture, the following lower-bound on the critical separation would hold via  Theorem \ref{thm:constrained_moment_matching_lowerbound}: \begin{equation}
\epsilon_1^*(\epsilon_0,\GS)-\epsilon_0\gtrsim \begin{dcases}
\left(d^{1/p} \cdot \frac{\delta}{L}\right)^{1-p/2} \cdot \epsilon_0^{p/2} &\textif 1 \leq p < 2\\
\left(d^{1/p} \cdot \frac{\delta}{L}\right)^{1-2k/p} \cdot \epsilon_0^{2k/p} &\textif 2k < p < 2(k+1) \text{ with } k \in \mathbb{Z}_+
\end{dcases}
\end{equation} Thus, matching rates for the interpolation regimes in Lemma \ref{lemma:upper_bound_lp_odd_less_2} and Lemma \ref{lemma:upper_bound_lp_odd_gtr_2}, would follow for any $d$ large enough by choosing
$L \asymp \log d$ and $\delta \asymp \sigma \cdot \sqrt{L}$.

\section{General arguments for lower bounding the critical separation}

\subsection{Two fuzzy hypotheses}

The following is a variant of the well-known fuzzy hypotheses techniques \citep{tsybakovIntroductionNonparametricEstimation2009}.

\begin{lemma}[Indistinguishable fuzzy hypotheses \citep{tsybakovIntroductionNonparametricEstimation2009}.]\label{lemma:twofuzzypriors}
Let the hypothesis sets be \begin{equation}
V_0 = \{v : \norm{v}_p\leq \epsilon_0\} \textand V_1 = \{v : \norm{v}_p\geq \epsilon_1\}
\end{equation} and $C_\alpha = 1-(\alpha+\beta)$. Consider mixing distributions concentrated in such sets \begin{equation}
\pi_{0}^d(V_0) \geq 1 - \frac{C_\alpha}{4} \textand \pi_{1}^d(V_1) \geq 1 - \frac{C_\alpha}{4}
\end{equation} whose
induced mixtures are close in total variation or chi-squared distance \begin{equation}
V(P_{\pi_0}^d,P_{\pi_1}^d)<\frac{C_\alpha}{2} \textor \chi^2(P_{\pi_0}^d,P_{\pi_1}^d)<\left[\frac{C_\alpha}{2}\right]^2\period
\end{equation} Then, the minimax risk \eqref{eq:minimax_risk} is lower bounded by $\beta$: \begin{equation}
R_*(\epsilon_0,\epsilon_1,\GS)> \beta.
\end{equation}
\end{lemma}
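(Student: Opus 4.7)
I would argue by contradiction: suppose some valid test $\psi \in \Psi(\Pnull)$ achieves risk $R(\epsilon_0,\epsilon_1,\GS,\psi)\leq \beta$, so that $\sup_{v\in V_1} P_v(\psi=0)\leq \beta$ and $\sup_{v\in V_0} P_v(\psi=1)\leq \alpha$. The goal is to reach a contradiction with the closeness assumption on the mixtures $P^d_{\pi_0}$ and $P^d_{\pi_1}$. To do so, I would introduce the Bayes-averaged errors
\begin{equation*}
a_0 = \int P_v(\psi=1)\, d\pi_0^d(v), \qquad a_1 = \int P_v(\psi=0)\, d\pi_1^d(v),
\end{equation*}
which are the type-I and type-II errors of $\psi$ when viewed as a test between the two mixtures $P_{\pi_0}^d$ and $P_{\pi_1}^d$.

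Next, I would upper bound $a_0 + a_1$ by exploiting the near-concentration of $\pi_0^d$ on $V_0$ and $\pi_1^d$ on $V_1$. Splitting each integral into the part over $V_i$ and its complement, using the validity of $\psi$ on $V_0$, the risk assumption on $V_1$, and the trivial bound $P_v(\psi\in\{0,1\})\leq 1$ on the complementary pieces, yields
\begin{equation*}
a_0 \leq \alpha \cdot \pi_0^d(V_0) + \pi_0^d(V_0^c) \leq \alpha + C_\alpha/4, \qquad a_1 \leq \beta + C_\alpha/4,
\end{equation*}
and hence $a_0+a_1 \leq \alpha+\beta+C_\alpha/2 = 1 - C_\alpha/2$.

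In parallel, I would invoke the standard minimax lower bound for testing two distributions: for any test $\psi$,
\begin{equation*}
P_{\pi_0^d}(\psi=1) + P_{\pi_1^d}(\psi=0) \geq 1 - V\bigl(P_{\pi_0}^d, P_{\pi_1}^d\bigr),
\end{equation*}
which follows from $|P(A)-Q(A)| \leq V(P,Q)$ applied with $A=\{\psi=1\}$. Under the assumption $V(P_{\pi_0}^d,P_{\pi_1}^d) < C_\alpha/2$, this gives $a_0+a_1 > 1-C_\alpha/2$, contradicting the upper bound derived above. The chi-squared variant reduces to the total variation case via the Cauchy-Schwarz inequality $V \leq \tfrac{1}{2}\sqrt{\chi^2}$: if $\chi^2 < (C_\alpha/2)^2$, then $V < C_\alpha/4 < C_\alpha/2$, and the same argument applies.

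The only point requiring any care is the handling of the mass that $\pi_0^d$ and $\pi_1^d$ place outside $V_0$ and $V_1$, since the validity/power guarantees of $\psi$ are only available on these sets. The assumption $\pi_i^d(V_i) \geq 1 - C_\alpha/4$ is precisely calibrated to make the spillover contribution at most $C_\alpha/4$ in each integral, leaving exactly the $C_\alpha/2$ slack needed to strictly beat the testing lower bound on $a_0+a_1$. No other obstacle should arise, since this is essentially the classical two-fuzzy-hypotheses construction of \citet{tsybakovIntroductionNonparametricEstimation2009} tailored to the tolerant testing setup of \zcref{sec:minimax_framework}.
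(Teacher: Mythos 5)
Your proof is correct and reaches the same bound, but via a genuinely different accounting strategy than the paper. The paper's proof conditions the priors to their hypothesis sets, defining $\tilde{\pi}_i(B)=\pi_i^d(B\cap V_i)/\pi_i^d(V_i)$, then lower bounds the worst-case risk directly by the Bayes risk of the conditioned mixtures $P_{\tilde\pi_0}^d, P_{\tilde\pi_1}^d$, and finally uses the triangle inequality together with the data-processing inequality to replace $V(P_{\tilde\pi_0}^d,P_{\tilde\pi_1}^d)$ with $V(P_{\pi_0}^d,P_{\pi_1}^d)+C_\alpha/2$. You instead keep the unconditioned priors, form the Bayes errors $a_0,a_1$ of $\psi$ as a test between $P_{\pi_0}^d$ and $P_{\pi_1}^d$, and absorb the "spillover" mass on $V_i^c$ directly in the integral decomposition, with no need for conditioning or data processing. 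Your route is slightly more elementary and arguably more transparent about where the $C_\alpha/4$ budgets are spent; the paper's route follows the more standard textbook fuzzy-hypotheses reduction. The chi-squared clause is handled identically in both (via $V\leq\tfrac12\sqrt{\chi^2}$).

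One small remark on framing: you argue by contradiction, assuming $R(\psi)\leq\beta$ and showing this is impossible, which only yields $R_*\geq\beta$ after taking the infimum. To get the claimed strict inequality $R_*>\beta$, unfold the same computation into a direct, $\psi$-free lower bound: from $a_0+a_1\geq 1-V(P_{\pi_0}^d,P_{\pi_1}^d)$ and $a_0\leq\alpha+C_\alpha/4$, $a_1\leq R(\psi)+C_\alpha/4$, you get $R(\psi)\geq 1-V(P_{\pi_0}^d,P_{\pi_1}^d)-\alpha-C_\alpha/2$ for every valid $\psi$, and the right-hand side is a constant strictly above $\beta$ whenever $V<C_\alpha/2$. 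This is exactly the form of the paper's final line, and it passes through the infimum to give $R_*>\beta$.
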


\begin{proof}[Proof of lemma \ref{lemma:twofuzzypriors}]

Let $\beta_0=\beta_1=C_\alpha/4$. Consider the restriction of the priors to their hypothesis sets \begin{equation}
\tilde{\pi}_{i}(B) = \frac{\pi_i(B \cap V_i)}{\pi_i(V_i)}.
\end{equation}

The risk of any valid test is at least \begin{align}
R(\epsilon_0,\epsilon_1,\psi,\GS) &= \sup_{v \in V_1} E_{P^d_v}[1-\psi]\\
&\geq \sup_{v \in V_0}E_{P^d_v}[\psi] + \sup_{v \in V_1} E_{P^d_v}[1-\psi] - \alpha\\
&\geq \int E_{P^d_v}[\psi]\ d\tilde{\pi}_0^d + \int E_{P^d_v}[1-\psi]\ d\tilde{\pi}_1^d- \alpha\\
&= \int \psi\ dP_{\tilde{\pi}_0}^d + \int [1-\psi]\ dP_{\tilde{\pi}_1}^d- \alpha\\
&\geq \int [\psi+1-\psi]\ d(P_{\tilde{\pi}_0}^d \wedge P_{\tilde{\pi}_1}^d)- \alpha\\
&= 1 - V(P_{\tilde{\pi}_0}^d,P_{\tilde{\pi}_1}^d) - \alpha
\end{align} where the last equality is by Sheffe's theorem. Furthermore, by the triangle inequality and the data processing inequality, it follows by the triangle inequality
\begin{align}
V(P_{\tilde{\pi}_0}^d,P_{\tilde{\pi}_1}^d) \leq V(P_{\tilde{\pi}_0}^d,P_{\pi_0}^d) + V(P_{\tilde{\pi}_1}^d,P_{\pi_1}^d) + V(P_{\pi_0}^d,P_{\pi_1}^d) .\end{align} Furthermore, by the data processing inequality, we have that
\begin{align}
V(P_{\tilde{\pi}_0}^d,P_{\tilde{\pi}_1}^d)  &\leq V(\tilde{\pi}_0^d,\pi_0^d) + V(\tilde{\pi}_1^d,\pi_1^d) + V(P_{\pi_0}^d,P_{\pi_1}^d) \\
&\leq V(P_{\pi_0}^d,P_{\pi_1}^d) + \beta_0 + \beta_1,
\end{align} where in the last step, we used the fact that \begin{equation}
V(\tilde{\pi}_0^d,\pi_0^d) = \pi_0^d(V_0^c)=1-\pi_0^d(V_0) \leq \beta_0.
\end{equation} Consequently, the risk of any valid test is lower-bounded by $\beta$ \begin{equation}
R_*(\epsilon_0,\epsilon_1,\GS) = \inf_{\psi} R(\epsilon_0,\epsilon_1,\psi,\GS) \geq 1 - V(P_{\pi_0}^d,P_{\pi_1}^d) - (\beta_0+\beta_1)- \alpha>\beta.
\end{equation} Thus, the statement of the lemma follows.
\end{proof}

\begin{lemma}[Difference in means lower bounds critical separation]\label{lemma:lb_by_two_fuzzy_prios}

Let $s^p_p(\epsilon_0,\GS)$ denote a critical separation for hypotheses \eqref{eq:gaussian_testing_lp} \begin{equation}\label{eq:spGS}
s^p_p(\epsilon_0,\GS) = \inf\left\{\epsilon_1^p-\epsilon_0^p : \epsilon_1\geq\epsilon_0 \textand R_*(\epsilon_0,\epsilon,\GS)<\beta\right\}.
\end{equation}
Let $\pi_0$ be a distribution that satisfies \begin{equation}\label{eq:cond_pi0}
E_{v\sim\pi^d_0}\norm{v}_p^p\leq C_0 \cdot \epsilon_0^p,
\end{equation} where $C_0=C_\alpha/4$ and $C_\alpha=1-(\alpha+\beta)$. Furthermore, let $\pi_1$ be a distribution that satisfies \begin{equation}\label{eq:cond_pi1}
E_{v\sim \pi_1^d}\norm{v}_p^p \geq  C_0^{-1/2}(1-C_1)^{-1} \cdot \sqrt{V_{v\sim \pi_1^d}\norm{v}_p^p}
\end{equation} for some $C_1 \in (0,1)$ and whose mixture measure $P_{\pi_1}$ is close in total variation or chi-squared distance to $P_{\pi_0}$: $
V(P_{\pi_0}^d,P_{\pi_1}^d)<C_\alpha/2$ or $\chi^2(P_{\pi_0}^d,P_{\pi_1}^d)<\left[C_\alpha/2\right]^2$.

Define the hypotheses \begin{equation}
H_0: v \in V_0 \vs H_1: v \in V_1
\end{equation} where \begin{equation}
V_0 = \{v : \norm{v}_p \leq \epsilon_0\},\ V_1 = \{v : \norm{v}_p\geq \epsilon_1\} \textand \epsilon_1^p = C_1 \cdot E_{\pi_1^d}\norm{v}_p^p
\end{equation} It follows that $s^p_p(\epsilon_0,\GS)$ is lower bounded \begin{equation}
s^p_p(\epsilon_0,\GS)\geq \epsilon_1^p-\epsilon_0^p\geq (C_1-\tilde{C}_1)\cdot E_{v\sim \pi_1^d} \norm{v}_p^p \geq  (C_1-\tilde{C}_1)\cdot \left(E_{v\sim \pi_1^d}\norm{v}_p^p-E_{v\sim \pi_0^d}\norm{v}_p^p\right)
\end{equation} for $
C_0^{-1}\cdot E_{v\sim\pi^d_0}\norm{v}_p^p \leq \epsilon_0^p \leq \tilde{C}_1\cdot E_{v\sim\pi_1^d}\norm{v}_p^p
$ where $0 \leq \tilde{C}_1 < C_1$.

\end{lemma}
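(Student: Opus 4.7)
The plan is to apply the fuzzy hypotheses lower bound \zcref{lemma:twofuzzypriors} with the priors $\pi_0^d$ and $\pi_1^d$ and the hypothesis sets $V_0,V_1$ already defined in the statement. The closeness-of-mixtures hypothesis in \zcref{lemma:twofuzzypriors} is assumed verbatim, so the only substantive check is that each product prior puts mass at least $1-C_\alpha/4$ on its designated set. Once that is done, \zcref{lemma:twofuzzypriors} gives $R_*(\epsilon_0,\epsilon_1,\GS)>\beta$ and, by the definition \eqref{eq:spGS} of $s_p^p(\epsilon_0,\GS)$, we obtain $s_p^p(\epsilon_0,\GS)\geq \epsilon_1^p-\epsilon_0^p$.

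For the null prior I would invoke Markov's inequality: under assumption \eqref{eq:cond_pi0},
\[
\pi_0^d(V_0^c)=\Pr_{v\sim\pi_0^d}\!\bigl(\|v\|_p^p>\epsilon_0^p\bigr)\leq \frac{E_{\pi_0^d}\|v\|_p^p}{\epsilon_0^p}\leq C_0=\frac{C_\alpha}{4}.
\]
For the alternative prior, write $m=E_{\pi_1^d}\|v\|_p^p$ and $\sigma^2=V_{\pi_1^d}\|v\|_p^p$. Since $\epsilon_1^p=C_1 m\leq m$, Chebyshev's inequality gives
\[
\pi_1^d(V_1^c)=\Pr_{\pi_1^d}\!\bigl(\|v\|_p^p<C_1 m\bigr)\leq \Pr_{\pi_1^d}\!\bigl(|\|v\|_p^p-m|>(1-C_1)m\bigr)\leq \frac{\sigma^2}{(1-C_1)^2 m^2}.
\]
The assumption \eqref{eq:cond_pi1}, $m\geq C_0^{-1/2}(1-C_1)^{-1}\sigma$, rearranges to exactly $\sigma^2/[(1-C_1)^2 m^2]\leq C_0=C_\alpha/4$, which is the desired concentration on $V_1$.

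The remaining inequalities in the conclusion are arithmetic. Combining the upper bound $\epsilon_0^p\leq \tilde C_1\cdot E_{\pi_1^d}\|v\|_p^p$ with $\epsilon_1^p=C_1\cdot E_{\pi_1^d}\|v\|_p^p$ gives
\[
\epsilon_1^p-\epsilon_0^p\geq (C_1-\tilde C_1)\cdot E_{\pi_1^d}\|v\|_p^p,
\]
and the assumption $\tilde C_1<C_1$ keeps this factor positive; the last inequality in the statement follows because $E_{\pi_0^d}\|v\|_p^p\geq 0$, so $E_{\pi_1^d}\|v\|_p^p\geq E_{\pi_1^d}\|v\|_p^p-E_{\pi_0^d}\|v\|_p^p$. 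There is no real obstacle here: the only delicacy is recognizing that the numerical assumptions on $\pi_0,\pi_1$ are engineered precisely so that Markov on the null and Chebyshev on the alternative each deliver the $C_\alpha/4$ concentration required by \zcref{lemma:twofuzzypriors}.
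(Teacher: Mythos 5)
Your argument is correct and coincides with the paper's own proof: Markov on the null prior to get $\pi_0^d(V_0^c)\leq C_\alpha/4$, Chebyshev (Markov applied to the squared deviation) on the alternative prior to get $\pi_1^d(V_1^c)\leq C_\alpha/4$, then apply \zcref{lemma:twofuzzypriors} and finish with the arithmetic bound $\epsilon_1^p-\epsilon_0^p\geq(C_1-\tilde C_1)\,E_{\pi_1^d}\|v\|_p^p$. You actually spell out the final arithmetic step more explicitly than the paper, which simply says the claim follows from \zcref{lemma:twofuzzypriors}.
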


\begin{proof}

The probability that $\pi_0$ is supported on $V_0$ is bounded from below:
\begin{align}
\pi_0^d(V_0) &= \pi_0^d(\norm{v}_p \leq \epsilon_0 )\\
&= \pi_0^d(\norm{v}_p^p \leq \epsilon_0^p )\\
&\geq 1 - \frac{E_{\pi_0^d}\norm{v}_p^p}{\epsilon_0^p} &&\text{By Markov's inequality}\\
&\geq 1 - C_0 &&\by \eqref{eq:cond_pi0}\\
&\geq 1 - \frac{C_\alpha}{4}.
\end{align}

Additionally, the probability that $\pi_1$ is supported on $V_1$ is also bounded from below
\begin{align}
\pi_1^d(V_1) &= \pi_1^d(\norm{v}_p \geq \epsilon_1) \\
&= \pi_1^d(\norm{v}_p^p \geq \epsilon_1^p) \\
&= 1 - \pi_1^d(\norm{v}_p^p \leq \epsilon_1^p )\\
&= 1 - \pi_1^d(\norm{v}_p^p - E_{\pi^d}\norm{v}_p^p  \leq \epsilon_1^p - E_{\pi_1^d}\norm{v}_p^p )\\
&\geq  1 - \pi_1^d(|\norm{v}_p^p - E_{\pi_1^d}\norm{v}_p^p|  \geq  E_{\pi_1^d}\norm{v}_p^p - \epsilon_1^p )\\
&= 1 - \pi_1^d(|\norm{v}_p^p - E_{\pi_1^d}\norm{v}_p^p|  \geq  (1-C_1)\cdot E_{\pi_1^d}\norm{v}_p^p) &&\since \epsilon_1^p = C_1 \cdot E_{\pi_1^d}\norm{v}_p^p\\
&\geq 1 - \frac{V_{\pi_1^d}\norm{v}_p^p}{\left((1-C_1)\cdot E_{\pi_1^d}\norm{v}_p^p\right)^2} &&\text{by Markov's inequality}\\
&\geq 1 - \frac{C_\alpha}{4}  &&\by \eqref{eq:cond_pi1}
\end{align} The claim of Lemma \ref{lemma:lb_by_two_fuzzy_prios} follows by Lemma \ref{lemma:twofuzzypriors}.
\end{proof}

\subsection{Moment-matching lower-bounds}\label{sec:moment_matching_lower_bounds}

The following theorem enables using a lower bound for a moment-matching problem to derive a lower bound on the critical separation. The theorem depends on Lemma \ref{lemma:centering} and Corollary \ref{cor:scaling}, which are stated and proved at the end of the section.

\begin{theorem}[Moment-matching problems lower bound critical separation]\label{thm:LB_via_opt}
Let $s^p_p(\epsilon_0,\GS)$ denote a critical separation for hypotheses \eqref{eq:gaussian_testing_lp}, see \eqref{eq:spGS} for a definition.

Choose $\delta\geq 0$ and $L\geq1$ such that for any centered distributions $\pi_0$ and $\pi_1$ supported on $[-\delta,\delta]$ that share the first $L$ moments, the total variation between their mixtures is bounded $V(P_{\pi_0}^d,P_{\pi_1}^d)\leq C_\alpha$. It holds that \begin{equation}\label{eq:lb1}
s_p^p(\epsilon_0,\GS) \gtrsim d\delta^p\cdot M_p(\tilde{\epsilon}_0,L) \quad \where \tilde{\epsilon}_0 \asymp \frac{\epsilon_0}{d^{1/p}\delta}
\end{equation} for $0 < \epsilon_0^p \lesssim d\delta^p\cdot M_p(\tilde{\epsilon}_0,L)$ and $d^{1/2} \gtrsim M^{-1}_p(\tilde{\epsilon}_0,L)$. Furthermore, if $M_p(L)>0$, then \begin{equation}\label{eq:lb2}
s_p^p(\epsilon_0,\GS) \gtrsim \delta^p d \cdot M_p(L)
\end{equation} for $\epsilon_0^p \asymp \delta^p\cdot d \cdot M_p(L)$ and $d^{1/2} \gtrsim M^{-1}_p(L)$. Finally, we remark that for hypotheses \eqref{eq:gaussian_testing_lp} it holds that \begin{equation}
s_p^p(\epsilon_0,\GS) \asymp \left(\epsilon_1^*(\epsilon_0,\GS)\right)^p \asymp \left(\epsilon_1^*(\epsilon_0,\GS)-\epsilon_0\right)^p \quad \for \epsilon_0 \leq \frac{\epsilon_1}{2}.
\end{equation} Therefore we can replace $s_p^p(\epsilon_0,\GS)$ for $\left(\epsilon_1^*(\epsilon_0,\GS)-\epsilon_0\right)^p$ in \eqref{eq:lb1} and \eqref{eq:lb2}.
\end{theorem}
\begin{proof}

\textbf{Constrained moment-matching.} Let $\pi_0$ and $\pi_1$ be the solutions of  $\tilde{M}_p(\tilde{\epsilon}_0,L,\delta)$ \eqref{eq:MpEpsilonCentredScaled} where $\tilde{\epsilon}_0=\epsilon_0 \cdot C_0^{1/p}$. Let $\tilde{C}_1=C_1/2$ for some $0 < C_1 < 1$. Using the facts that $\tilde{M}_p(\tilde{\epsilon}_0,L,\delta)= E_{\pi_1^d}\norm{v}_p^p$ and $V_{\pi_1^d}\norm{v}_p^p \leq \delta^{2p}\cdot d$, Lemma \ref{lemma:lb_by_two_fuzzy_prios} implies \begin{equation}
s_p^p(\epsilon_0,\GS) \geq \frac{C_1}{2} \cdot \tilde{M}_p(\tilde{\epsilon_0},L,\delta)
\end{equation} for \begin{equation}
\epsilon_0^p \leq \frac{C_1}{2} \cdot \tilde{M}_p(\tilde{\epsilon}_0,L,\delta)
\end{equation} and \begin{equation}
\tilde{M}_p(\tilde{\epsilon_0},L,\delta) \geq  C_0^{-1/2}(1-C_1)^{-1} \cdot d^{1/2}\delta^p
\end{equation} The statement follows by Corollary \ref{cor:scaling}.

\textbf{Unconstrained moment-matching.} Let $\pi_0$ and $\pi_1$ be the solutions of  $\tilde{M}_p(L,\delta)$ \eqref{eq:MpCentredScaled}. If $\tilde{M}_p(L,\delta)>0$, since $\tilde{M}_p(L,\delta)=E_{\pi_1^d}\norm{v}_p^p-E_{\pi_0^d}\norm{v}_p^p$, it follows that \begin{equation}
\exists\ 0<C<1 \st E_{\pi_0^d}\norm{v}_p^p = C \cdot E_{\pi_1^d}\norm{v}_p^p\period
\end{equation} Let $\tilde{C}_1=C$ and $C_1=C + \gamma$ for some $0 < \gamma < 1-C$. Using the facts that $\tilde{M}_p(L,\delta)\leq E_{\pi_1^d}\norm{v}_p^p$ and $V_{\pi_1^d}\norm{v}_p^p \leq \delta^{2p}\cdot d$, Lemma \ref{lemma:lb_by_two_fuzzy_prios} implies \begin{equation}
s_p^p(\epsilon_0,\GS) \geq \gamma \cdot \tilde{M}_p(L,\delta)
\end{equation} for \begin{equation}
\epsilon_0^p = C_0^{-1} \cdot E_{\pi_0^d}\norm{v}_p^p = C_0^{-1} \cdot C\cdot (1-C)^{-1} \cdot \tilde{M}_p(L,\delta)
\end{equation} and \begin{equation}
\tilde{M}_p(L,\delta) \geq  C_0^{-1/2}(1-C_1)^{-1} \cdot d^{1/2}\delta^p.
\end{equation} The statement follows by Corollary \ref{cor:scaling}.

\end{proof}

We finish the section by proving the auxiliary Lemma \ref{lemma:centering} and Corollary \ref{cor:scaling}.

\begin{lemma}[Centered moment-matching problems]\label{lemma:centering}

Define the centered versions of problem \eqref{eq:Mpepsilon} \begin{align}\label{eq:Mpepsiloncentred}
\tilde{M}_p(\epsilon,L) =\quad \sup_{\pi_0,\pi_1} E_{\pi_1}|v|^p \st & E_{\pi_0}|v|^p \leq \epsilon^p \\
& m_1(\pi_0)=m_1(\pi_1)=0\\
& m_l(\pi_0)=m_l(\pi_1) \for 1\leq l\leq L\\
& \pi_0 \textand \pi_1 \text{ are prob. measures supp. on } [-1,1]
\end{align} and \eqref{eq:Mp} \begin{align}\label{eq:Mpcentred}
\tilde{M}_p(L) =\quad \sup_{\pi_0,\pi_1} E_{\pi_1}|v|^p-E_{\pi_0}|v|^p \st & m_1(\pi_0)=m_1(\pi_1)=0\\
& m_l(\pi_0)=m_l(\pi_1) \for 1\leq l\leq L\\
& \pi_0 \textand \pi_1 \text{ are prob. measures supp. on } [-1,1]
\end{align} It holds that \begin{equation}
\tilde{M}_p(\epsilon,L) = M_p(\epsilon,L) \textand \tilde{M}_p(L) = M_p(L)
\end{equation}
\end{lemma}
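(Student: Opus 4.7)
The plan is to prove both equalities by a single symmetrization argument. One inequality is trivial: $\tilde M_p(\epsilon,L)\leq M_p(\epsilon,L)$ (and $\tilde M_p(L)\leq M_p(L)$) because the centered problems impose strictly more constraints than the original ones, so any feasible pair for the centered problem is feasible for the uncentered problem with the same objective value.

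The substance is the reverse inequality. Given any pair $(\pi_0,\pi_1)$ feasible for $M_p(\epsilon,L)$ (resp.\ $M_p(L)$), I would build a new pair $(\bar\pi_0,\bar\pi_1)$ feasible for the centered problem that achieves the same objective value. The construction is the standard reflection symmetrization: letting $\pi_i^-$ denote the pushforward of $\pi_i$ under $v\mapsto -v$, set
\begin{equation}
\bar\pi_i \;=\; \tfrac{1}{2}\bigl(\pi_i + \pi_i^-\bigr),\qquad i\in\{0,1\}.
\end{equation}
Since $\pi_i$ is supported on $[-1,1]$ and this interval is symmetric about zero, $\bar\pi_i$ is again supported on $[-1,1]$. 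By construction, $\bar\pi_i$ is invariant under $v\mapsto -v$, so all of its odd moments vanish; in particular $m_1(\bar\pi_0)=m_1(\bar\pi_1)=0$, which is the centering constraint.

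I then need to verify that the remaining constraints transfer and that the objective value is preserved. For every integer $l\geq 1$, the even moment $m_{2l}(\bar\pi_i)=\tfrac12(m_{2l}(\pi_i)+m_{2l}(\pi_i^-))=m_{2l}(\pi_i)$ since $v\mapsto v^{2l}$ is even, so the moment-matching hypothesis $m_l(\pi_0)=m_l(\pi_1)$ for $1\leq l\leq L$ upgrades to $m_l(\bar\pi_0)=m_l(\bar\pi_1)$ for $1\leq l\leq L$: even-indexed moments match because the original ones did, odd-indexed moments match because both are zero. Similarly, since $|v|^p$ is an even function,
\begin{equation}
E_{\bar\pi_i}|v|^p \;=\; \tfrac{1}{2}\bigl(E_{\pi_i}|v|^p + E_{\pi_i^-}|v|^p\bigr) \;=\; E_{\pi_i}|v|^p,
\end{equation}
so the constraint $E_{\pi_0}|v|^p\leq\epsilon^p$ transfers to $E_{\bar\pi_0}|v|^p\leq \epsilon^p$, and the objectives $E_{\pi_1}|v|^p$ (for \eqref{eq:Mpepsilon}) and $E_{\pi_1}|v|^p-E_{\pi_0}|v|^p$ (for \eqref{eq:Mp}) are unchanged. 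Taking the supremum over all feasible $(\pi_0,\pi_1)$ yields $M_p(\epsilon,L)\leq\tilde M_p(\epsilon,L)$ and $M_p(L)\leq\tilde M_p(L)$, completing the proof.

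There is no real obstacle here beyond bookkeeping; the only point worth double-checking is that the interval $[-1,1]$ is symmetric about the origin (so the support constraint survives the reflection) and that every relevant functional in the optimization is even in $v$, which is what makes reflection symmetrization cost-free for the objective and the $|v|^p$-constraint while automatically enforcing centering.
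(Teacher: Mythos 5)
Your proof is correct and uses essentially the same argument as the paper: the paper also observes the trivial inequality from constraint removal, then symmetrizes each $\pi_i$ to the law of $X_i\,\epsilon_i$ with $\epsilon_i\sim\mathrm{Unif}\{-1,1\}$ independent of $X_i\sim\pi_i$, which is exactly your $\bar\pi_i=\tfrac12(\pi_i+\pi_i^-)$. The only difference is notational; your write-up spells out the even/odd moment bookkeeping a bit more explicitly than the paper does.
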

\begin{proof}[Proof of Lemma \ref{lemma:centering}]

We do the proof for $M_p(\epsilon, L)$, since the proof for $M_p(L)$ is analogous. The solution of \eqref{eq:Mpepsiloncentred} is upper bounded by \eqref{eq:Mpepsilon} \begin{equation}
\tilde{M}_p(\epsilon,L) \leq M_p(\epsilon,L)
\end{equation} since we are removing a constraint.

Let $(\pi_0,\pi_1)$ be any feasible solution of \eqref{eq:Mpepsilon}, we can define the symmetrized measures $(\tilde{\pi}_0,\tilde{\pi}_1)$ as \begin{equation}
X_i \cdot \epsilon_i \sim \tilde{\pi}_i \where X_i \sim \pi_i \textand \epsilon_i \sim \text{Unif}\{-1,1\}
\end{equation} Note that due to the symmetrization, $(\tilde{\pi}_0,\tilde{\pi}_1)$ are centred, and consequently, all their odd moments vanish. Furthermore, all the absolute and even moments remain untouched \begin{equation}
E_{\pi_i}X^{2k}=E_{\tilde{\pi}_i}X^{2k} \for k \in \Zplus \textand E_{\pi_i}|X|^p=E_{\tilde{\pi}_i}|X|^p
\end{equation} Thus, any feasible solution of \eqref{eq:Mpepsilon} can be transformed into a feasible solution of \eqref{eq:Mpepsiloncentred}. Ergo, \begin{equation}
M_p(\epsilon,L)  \leq \tilde{M}_p(\epsilon,L)
\end{equation} and the lemma is proved.
\end{proof}

\begin{corollary}\label{cor:scaling}

Define the high-dimensional, centered, and scaled versions of problems \eqref{eq:Mpepsilon} and \eqref{eq:Mp}.

\begin{align}\label{eq:MpEpsilonCentredScaled}
\tilde{M}_p(\epsilon,L,\delta) =\quad \sup_{\pi_0,\pi_1}& E_{\pi_1^d}\norm{v}_p^p\\
\st & E_{\pi_0^d}\norm{v}_p^p \leq \epsilon^p \\
& m_1(\pi_0)=m_1(\pi_1)=0\\
& m_l(\pi_0)=m_l(\pi_1) \for 1\leq l\leq L\\
& \pi_0 \textand \pi_1 \text{ are probability measures supported on } [-\delta,\delta]
\end{align}

\begin{align}\label{eq:MpCentredScaled}
\tilde{M}_p(L,\delta) =\quad \sup_{\pi_0,\pi_1}& E_{\pi_1^d}\norm{v}_p^p-E_{\pi_0^d}\norm{v}_p^p\\
\st & m_1(\pi_0)=m_1(\pi_1)=0\\
& m_l(\pi_0)=m_l(\pi_1) \for 1\leq l\leq L\\
& \pi_0 \textand \pi_1 \text{ are probability measures supported on } [-\delta,\delta]
\end{align}

It holds that \begin{equation}
\tilde{M}_p(\epsilon,L,\delta) = d\cdot\delta^p\cdot M_p\left(\frac{\epsilon}{d^{1/p}\delta},L\right) \textand \tilde{M}_p\left(L,\delta\right) = d\cdot\delta^p\cdot M_p(L)
\end{equation}
\end{corollary}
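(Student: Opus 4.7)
The plan is to reduce the high-dimensional, scaled, centered problems $\tilde{M}_p(\epsilon,L,\delta)$ and $\tilde{M}_p(L,\delta)$ to the one-dimensional, unscaled, centered problems $\tilde{M}_p(\epsilon,L)$ and $\tilde{M}_p(L)$ by two bijections: (i) tensorization, which leverages the product structure, and (ii) rescaling, which compresses the support from $[-\delta,\delta]$ to $[-1,1]$. Once we are back in the one-dimensional centered setting, \zcref{lemma:centering} immediately converts the objective value into $M_p(\epsilon,L)$ (respectively $M_p(L)$), which gives the stated formula.

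First, I would exploit the product structure. Since $\pi_0^d$ and $\pi_1^d$ are i.i.d.\ product measures with marginals $\pi_0,\pi_1$, the identity
\begin{equation}
E_{\pi^d}\norm{v}_p^p \;=\; d \cdot E_\pi |v|^p
\end{equation}
holds for every marginal $\pi$. Consequently, the objective in \eqref{eq:MpEpsilonCentredScaled} becomes $d \cdot E_{\pi_1}|v|^p$, the constraint $E_{\pi_0^d}\norm{v}_p^p \leq \epsilon^p$ becomes $E_{\pi_0}|v|^p \leq \epsilon^p/d$, and the moment-matching, centering, and support constraints on $\pi_i^d$ reduce to the corresponding constraints on $\pi_i$. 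This step is essentially the observation that, under product measures, the supremum factorizes over coordinates and the coordinate-wise optimum is achieved by the same marginal.

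Next, I would apply the rescaling map $v \mapsto v/\delta$. This defines a bijection between probability measures supported on $[-\delta,\delta]$ and probability measures supported on $[-1,1]$; writing $\tilde{\pi}_i$ for the pushforward of $\pi_i$, one has $m_l(\tilde{\pi}_i) = \delta^{-l} m_l(\pi_i)$ and $E_{\tilde{\pi}_i}|v|^p = \delta^{-p} E_{\pi_i}|v|^p$. The centering and moment-matching constraints are preserved because they are homogeneous identities, and the support constraint matches by construction. Plugging this in, the objective reads $d \cdot \delta^p \cdot E_{\tilde{\pi}_1}|v|^p$ while the tolerance constraint becomes $E_{\tilde{\pi}_0}|v|^p \leq (\epsilon/(d^{1/p}\delta))^p$. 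Therefore
\begin{equation}
\tilde{M}_p(\epsilon,L,\delta) = d\cdot\delta^p \cdot \tilde{M}_p\!\left(\tfrac{\epsilon}{d^{1/p}\delta},L\right),
\qquad
\tilde{M}_p(L,\delta) = d\cdot\delta^p \cdot \tilde{M}_p(L),
\end{equation}
and \zcref{lemma:centering} replaces $\tilde{M}_p$ by $M_p$ on the right-hand sides, yielding the corollary.

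Both steps are bijective, so the equalities (not merely inequalities) follow, and no genuine obstacle arises: the only point requiring a small amount of care is verifying that the rescaling map does not distort any of the constraints in \eqref{eq:MpEpsilonCentredScaled}–\eqref{eq:MpCentredScaled}, which is the main bookkeeping step but is mechanical given the homogeneity of moments under dilation.
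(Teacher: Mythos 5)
Your proposal is correct and matches the paper's own (very terse) argument: the paper simply states that the result "follows by re-scaling the measures in \zcref{lemma:centering} and using the fact that every dimension has the same distribution," which is exactly the tensorization-plus-dilation argument you spell out. Your bookkeeping of how the objective, tolerance constraint, moment constraints, and support rescale under $v \mapsto v/\delta$ is accurate and fills in the details the paper elides.
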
\begin{proof}
The statement follows by re-scaling the measures in Lemma \ref{lemma:centering} and using the fact that every dimension has the same distribution.
\end{proof}

\subsubsection{Unconstrained moment-matching bounds}\label{sec:unconstrained_moment_matching}

In the following, we prove Lemma \ref{lemma:MpOdd}, which follows from section 5.3.2 of \cite{hanEstimationL_Norms2020}.

\begin{lemma*}[Restated Lemma \ref{lemma:MpOdd}] Let $p\geq 1$ non-even and $L \geq 1$, then \begin{equation}
C_p \cdot L^{-p} \leq M_p(L) \leq \tilde{C}_p \cdot L^{-p}
\end{equation} where $C_p$ and $\tilde{C}_p$  are positive constants that depends only on $p$.
\end{lemma*}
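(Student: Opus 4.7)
The plan is to reduce the moment-matching problem to a best polynomial approximation problem via the duality lemma, and then invoke classical approximation-theoretic estimates for $|x|^p$.

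\textbf{Step 1: Reduction to polynomial approximation.} By the duality of moment-matching (the earlier Lemma due to \cite{wuMinimaxRatesEntropy2016}), we have $M_p(L) = 2 \cdot A_p(L)$, where
\begin{equation}
A_p(L) = \inf_{f \in P_L} \sup_{|x| \leq 1} \big| |x|^p - f(x) \big|.
\end{equation}
It therefore suffices to show $A_p(L) \asymp L^{-p}$ for non-even $p \geq 1$, with constants depending only on $p$.

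\textbf{Step 2: Upper bound.} For the bound $A_p(L) \lesssim L^{-p}$, I would construct an explicit polynomial approximant. One route is to apply the Jackson-type theorem that bounds the error of best approximation in terms of a modulus of smoothness of order $r > p$: since $|x|^p$ satisfies $\omega_r(|x|^p, \delta) \asymp \delta^p$ on $[-1,1]$ (the singularity at $0$ is the only obstruction to smoothness), one obtains $A_p(L) \lesssim L^{-p}$. Alternatively, a direct construction based on truncating an appropriate Chebyshev or Jacobi expansion of $|x|^p$ yields the same rate; this is the route taken in Section~5.3.2 of \cite{hanEstimationL_Norms2020}.

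\textbf{Step 3: Lower bound.} For $A_p(L) \gtrsim L^{-p}$, the essential point is that $|x|^p$ is not smooth at the origin when $p$ is not an even integer: writing $p = 2k + q$ with $k = \floor{p/2}$ and $q \in [0,2)\setminus\{0\}$ (with $q \neq 0$ since $p$ is non-even), the function $|x|^p$ fails to be $\lceil p \rceil$-times differentiable at zero. Any polynomial $f$ of degree $L$ is real-analytic, so if $|x|^p$ were approximated uniformly to within $\eta$ by $f$ on $[-1,1]$, then a Markov--Bernstein inequality applied to $f$ on a small interval around $0$ would force $\eta \gtrsim L^{-p}$. Concretely, one compares $|x|^p - f(x)$ at $O(L)$ Chebyshev nodes clustering near the origin, and uses the local behavior of $|x|^p$ together with Markov's inequality to control the derivatives of $f$. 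This is the argument executed in Section~5.3.2 of \cite{hanEstimationL_Norms2020}, which we adapt.

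\textbf{Main obstacle.} The upper bound is a routine application of Jackson-type theorems; the substantive work is the lower bound, where one must carefully exploit the algebraic singularity at $x=0$. The precise constants in the Markov-type inequality (and their dependence on $p$) are the main technical nuisance, but they only affect the hidden constants, not the rate.
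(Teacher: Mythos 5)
Your overall strategy is sound: the duality $M_p(L) = 2A_p(L)$ is an exact equality, so it suffices to establish $A_p(L) \asymp L^{-p}$, and both sides of this bound are classical facts in approximation theory. For the upper bound you take essentially the same route as the paper, which simply cites Bernstein and Varga for $A_p(L) \lesssim L^{-p}$. For the lower bound, however, you and the paper diverge: you propose to lower-bound $A_p(L)$ directly via a Markov--Bernstein argument on the dual side, whereas the paper works entirely on the primal side. It invokes Lemma~5.6 of \citet{hanEstimationL_Norms2020} to produce two \emph{non-negative measures} $(\nu_0,\nu_1)$ supported on $[c/L^2,1]$ that match $\lceil L\rceil + \lceil p/2\rceil$ moments and are separated under $x \mapsto x^{-\lceil p/2\rceil + p/2}$, then reweights (multiplying by $(c/(L^2 x))^{\lceil p/2\rceil}$ and adding mass at $0$) to get probability measures on $[0,1]$, and finally symmetrizes and takes a square root to land on $[-1,1]$. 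That sequence of normalizations is the actual technical content of the paper's lower bound; your proposal replaces it with an appeal to a classical approximation-theory lower bound.

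Two things are worth flagging. First, your citation of Section~5.3.2 of \citet{hanEstimationL_Norms2020} for the Markov--Bernstein-at-Chebyshev-nodes argument is misplaced: the construction that section (and the present paper) uses is the explicit moment-matching measure construction just described, not a lower bound on polynomial approximation error. If you want to run the dual argument, the correct references are the classical lower bounds on $A_p(L)$ for non-even $p$ (Bernstein for $p=1$, and e.g. Timan or Ganzburg for general non-even $p$), or you would need to carry out the Markov--Bernstein argument in full. Second, your Markov--Bernstein sketch as written is not quite a proof: bounding $|f|$ on $[-1/L,1/L]$ by $\eta + L^{-p}$ and then invoking Markov for $f'$ does not directly produce a contradiction, since nothing yet forces $f$ to have a large derivative. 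The standard lower bound either uses an equioscillation/counting argument or a more careful exploitation of the fractional-order singularity of $|x|^p$ at the origin; the details matter here, and the rate $L^{-p}$ (rather than, say, $L^{-2}$) comes precisely from getting those details right. Since the result is classical you may cite it, but the sketch as given would not stand on its own.

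In short: same duality, same upper bound; genuinely different lower-bound route (dual versus primal), both valid in principle, but your dual sketch needs a correct reference or a complete argument, and the reference you gave actually supports the paper's primal route rather than yours.
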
\begin{proof}[Proof of Lemma \ref{lemma:MpOdd}]

\textbf{Upper bound.} The upper bound follows directly as a corollary of the duality between moment matching and polynomial approximation in Lemma \ref{lemma:moment_matching_duality} together with the following result on polynomial approximation. \begin{lemma*}[\cite{bernsteinOrdreMeilleureApproximation1912},\cite{vargaBernsteinConjectureApproximation1985}]
Let $p > 0$ and $L \geq 1$, it holds that
$
A_p(L) \leq \beta_p \cdot L^{-p}$, where $\beta_p$ is a positive constant that depends only on $p$.
\end{lemma*}

\textbf{Lower bound.} By Lemma 5.6 of \citet{hanEstimationL_Norms2020}, there exists $(\nu_0,\nu_1)$ non-negative measures  that match $\ceil{L}+q$ moments, where $q=\ceil{p/2}$, are supported on $[c/n^2,1]$, and they satisfy \begin{equation}
\int f(x)\ d\nu_1(x)-\int f(x)\ d\nu_0(x) \geq  c' \cdot L^{2q-p} \quad \for f(x)=x^{-q+r/2},
\end{equation} where $c \in (0,1)$ and $c'$ are positive constants that depend only on $(q,p)$.

Define the measures  \begin{equation}
\tilde{v}_i(dx) = \left(1 - E_{v_i}\left[\frac{c}{L^2 \cdot X}\right]^q\right) \cdot \delta_0(dx) +  \left[\frac{c}{L^2 \cdot x}\right]^q \cdot v_i(dx)
\end{equation} $(\tilde{v}_0,\tilde{v}_1)$ are probability measures supported on $[0,1]$ since \begin{equation}
E_{v_i}\left[\frac{c}{L^2 \cdot X}\right]^q \leq 1
\end{equation} due to $X \geq c/\ceil{L}^2$ almost surely for $X \sim v_i$. Furthermore, the distributions match moments \begin{align}
m_l(\tilde{v}_1) = \frac{c^{q}}{L^{2q}} \cdot E_{X \sim v_i}[X^{l-q}] = m_l(\tilde{v}_0) \for 1 \leq l \leq \ceil{L}+q.
\end{align}  They are far away when measured by $f$ \begin{equation}
E_{\tilde{v}_1}[X^{p/2}]-E_{\tilde{v}_0}[X^{p/2}] = \frac{c^{q}}{L^{2q}} \cdot \left(E_{v_1}[f(X)]-E_{v_0}[f(X)]\right) \geq \frac{c^{q}c'}{L^{p}},
\end{equation} and they have bounded q-moments \begin{equation}
m_{q}(\tilde{v}_i) = \frac{c^{q}}{L^{2q}} \cdot m_{0}(v_i) = \frac{c^{q}}{L^{2q}}.
\end{equation} We symmetrize and scale the measures. Let $\mu_i$ be such that \begin{equation}
\epsilon_i \cdot X_i^{1/2} \sim \mu_i \where X_i \sim \tilde{v}_i \textand \epsilon_i \sim \text{Unif}\{-1,1\}
\end{equation} Then $(\mu_1,\mu_2)$ are symmetric probability distributions supported on the $[-1,1]$ interval. It follows that all their odd moments vanish, and their even moments match up to $\ceil{L}+q$  \begin{equation}
m_l(\mu_1) =  m_{l/2}(\tilde{v}_i) = m_l(\mu_0) \for 1\leq l/2 \leq \ceil{L}+q \textand l \text{ even}.
\end{equation} They are far away when measured by $x\to|x|^p$ \begin{equation}
E_{\mu_1}|X|^p - E_{\mu_0}|X|^p =  \left(E_{\tilde{v}_1}X^{p/2} - E_{\tilde{v}_0}X^{p/2}\right) \geq c^{q}c' \cdot L^{-p}\period
\end{equation} Thus, the statement of Lemma \ref{lemma:MpOdd} follows.

\end{proof}

\subsubsection{Constrained moment-matching bounds}\label{sec:constrained_moment_matching}

In this section, we provide a simplified proof of \cite{canonnePriceToleranceDistribution2021}'s main contribution.

\begin{proof}[Proof of Lemma \ref{lemma:M1Epsilon}]

\textbf{The dual of $M_1(\epsilon_0,L)$.} Consider the dual of $M_1(\epsilon_0,L)$ \eqref{eq:Mpepsilon} \begin{align}\label{eq:dual}
\inf_{\alpha,z_1,z_2,,\lambda} \epsilon_0 \cdot \alpha + z_1 + z_2 \st & z_1 + \sum_{l=1}^L \lambda_l x^l \geq |x| &&\for |x|\leq 1\\
&\alpha |x| \geq \sum_{l=1}^L \lambda_l x^l-z_2  &&\for |x|\leq 1\\
& \alpha \geq 0
\end{align} where the $\lambda$'s correspond the moment-matching restrictions, the $z$'s guarantee that the $(\pi_0,\pi_1)$ integrate to one, and $\alpha$ corresponds to the upper-bound on $\pi_0$. Lemma 3.8 of \cite{canonnePriceToleranceDistribution2021} states that the above dual and the primal have the same value. We rewrite the dual for simplicity as \begin{align}
\inf_{\alpha\geq 0,z_1,z_2,\lambda} \epsilon_0 \cdot \alpha + z_1 + z_2 \st &\alpha |x| \geq \sum_{l=1}^L \lambda_l x^l-z_2 \geq |x| - (z_1+z_2)  &&\for |x|\leq 1.
\end{align} The condition of the dual indicates that $z_1+z_2\geq 0$. We can further lower-bound the problem by considering all polynomials of order $L$. \begin{align}\label{eq:simplified_dual}
\inf_{\alpha\geq 0,z\geq 0,p \in P_L} \epsilon_0 \cdot \alpha + z \st &\alpha |x| \geq p(x) \geq |x| - z  &&\for |x|\leq 1.
\end{align} Let $(\alpha_*,z_*,p_*)$ denote the optimal solution of the above optimization. Note that \begin{equation}
(\tilde{\alpha},\tilde{z},p_*) \where \tilde{\alpha}\epsilon_0 = \tilde{z} = \alpha_*\epsilon_0 \lor z_*
\end{equation} is a feasible solution whose achieved value is at most $2$ times the optimal solution of \eqref{eq:simplified_dual} \begin{equation}
\epsilon_0 \cdot \tilde{\alpha} + \tilde{z} = 2 \cdot \tilde{z} \leq 2\cdot\left[\epsilon_0 \cdot \alpha_* + z_*\right].
\end{equation} Consequently, the optimal solution of the following restricted optimization
\begin{align}\label{eq:restricted_dual}
\inf_{\alpha\geq 0,p \in P_L(\alpha)} \alpha\epsilon_0 \where
P_L(\alpha)=\left\{p \in P_L : |x| - \alpha\epsilon_0 \leq p(x) \leq \alpha|x|\quad  \for |x|\leq 1\right\}
\end{align} is at most $2$ times the optimal solution of \eqref{eq:simplified_dual} since  \begin{align}
\alpha'\epsilon_0
\leq \tilde{\alpha}\epsilon_0
\end{align} where $(\alpha',p')$ be the optimal solution of \eqref{eq:restricted_dual}. Putting it all together, the restricted dual problem \eqref{eq:restricted_dual} is at most $2$ times the value of the solution to the original primal problem $M_1(\epsilon_0,L)$ \eqref{eq:Mpepsilon} .

\textbf{$\alpha \geq 1 + C/L$ for $C$ small enough.} Let $(\alpha,p)$ be the solution of \eqref{eq:restricted_dual}. Note that under the assumption $
\epsilon_0 \leq C/L$ it must hold that $\alpha$ is strictly positive since by the constraints of the dual we have that \begin{equation}
\alpha \geq \sup_{|x|\leq 1}\frac{|x|}{|x| + \epsilon_0} = \frac{1}{1+\epsilon_0} \geq \frac{1}{1+C/L}
\end{equation} Furthermore, consider $L \geq C$, then evaluating the ratio at $x = \epsilon_0 \cdot \frac{L-C}{C}$, we get \begin{equation}\label{eq:alpha_lb}
\alpha \geq 1 - C/L
\end{equation} Consider any $0 < C \leq \bar{\beta}/(1+\bar{\beta}) $ where \begin{align}
\bar{\beta} &= \inf\left\{\beta : \inf_{p \in P_L}\sup_{|x|\leq 1}|p(x)-|x|| \leq \frac{\beta}{L} \right\},
\end{align} it follows that $\alpha \geq 1 + \bar{\beta}/L$. We argue by contradiction, that $\alpha \not\in (1-\bar{\beta}/L,1 + \bar{\beta}/L)$. If there exists $\alpha \in (1-\bar{\beta}/L,1 + \bar{\beta}/L) $ optimal solution, then there exists \begin{align}
p \in P_L(\alpha) &\implies \sup_{|x|\leq 1}|p(x)-|x||\leq \alpha\epsilon_0 \lor |\alpha-1| < \left(1+\frac{\bar{\beta}}{L}\right)\epsilon_0 \lor \frac{\bar{\beta}}{L}\\
&\implies  \sup_{|x|\leq 1}|p(x)-|x|| < \frac{\bar{\beta}}{L} \label{eq:low_poly_error}
\end{align} where in the last line we used the fact that $
\epsilon_0 \leq \frac{\bar{\beta}}{1+\bar{\beta}/L}\cdot \frac{1}{L} $. However, \eqref{eq:low_poly_error} is absurd since no polynomial of order $L$ can achieve an error smaller than $\frac{\bar{\beta}}{L}$.

\textbf{\cite{canonnePriceToleranceDistribution2021}'s proof.}  Let $(\alpha,p)$ be the solution of \eqref{eq:restricted_dual}. It's easy to see that $p(\pm2\alpha\epsilon_0)\geq\alpha\epsilon_0$. Thus, if we find $c$ such that \begin{equation}
|x| \leq c \implies p(x)<\alpha\epsilon_0,
\end{equation} that would imply that $2\alpha\epsilon_0 \geq c$. In order to get that result, one does a local expansion of $p$ around zero. In the following, we cancel the zeroth and first derivatives of $p$ in order to get the right rate.

From the restrictions of $p$, it follows that $- \epsilon_0 \leq \lambda_0 \leq 0$ and \begin{equation}
|x|-\alpha\epsilon_0-\lambda_1 \cdot x \leq |x|-\alpha\epsilon_0-\lambda_0-\lambda_1 \cdot x \leq g(x) \leq \alpha|x| +\alpha\epsilon_0-\lambda_1 \cdot x \quad \for |x|\leq 1
\end{equation} where $g(x) = \sum_{l=2}^L \lambda_l \cdot x^l$. Consequently, $
g(\pm 2\alpha\epsilon_0) \geq \alpha\epsilon_0 \mp \lambda_1 \cdot 2\alpha\epsilon_0
$, that is,\begin{align}\label{eq:set}
2\alpha\epsilon_0 \in \{|x| : g(x) \geq \ell \} \where \ell = \alpha\epsilon_0 \cdot (1+2|\lambda_1|).
\end{align} We proceed to find $c$ such that \begin{equation}
|x|\leq c \implies g(x) < \ell.
\end{equation} Since the first two derivatives of $g$ are zero, we expect $|g(x)|\asymp x^2$ when $x$ is close to zero. The following argument exploits this property.

Let's upper bound $|g|$ \begin{equation}
|g(x)| \leq R \cdot |x| + \alpha\epsilon_0 \quad\where R = \alpha\lor1+|\lambda_1|.
\end{equation} Hence, we have that \begin{equation}
|g(x)| \leq R \cdot |x| + \alpha\epsilon_0 \comma g \in P_L \textand g(0)=g'(0)=0.
\end{equation} By Lemma \ref{lemma:linear_upperbound}, we get \begin{equation}
g(x) \leq |g(x)| \leq C \cdot R \cdot |x| \quad\for |x|\leq \frac{C}{L} \textand \epsilon_0 < \frac{R}{\alpha} \cdot \frac{C}{L}.
\end{equation} Furthermore, by Lemma \ref{lemma:derivative_bound}, it follows that \begin{equation}
g(x) \leq |g(x)| \leq C \cdot R \cdot L \cdot x^2 \quad\for |x|\leq \frac{C}{L} \textand \epsilon_0 < \frac{\alpha}{R} \cdot \frac{C}{L}.
\end{equation} Thus, \begin{equation}
g(x) < \ell \quad \for |x| \leq C \cdot \sqrt{ \frac{\alpha(1+2|\lambda_1|)}{R} \cdot \frac{\epsilon_0}{L}} \textand \epsilon_0 \leq  \frac{R}{\alpha}\cdot \frac{C}{L}.
\end{equation} Using the fact that $\alpha \geq 1-C/L$, see \eqref{eq:alpha_lb}, we get that  \begin{equation}
\frac{\alpha(1+2|\lambda_1|)}{R} = \frac{\alpha(1+2|\lambda_1|)}{\alpha\lor1+|\lambda_1|} \geq 1-C/L \textand \frac{R}{\alpha}=\frac{\alpha\lor1+|\lambda_1|}{\alpha} \geq 1.\end{equation} Furthermore, the above condition reduces to \begin{equation}
g(x) < \ell \quad \for |x| \leq C \cdot \sqrt{ \frac{\epsilon_0}{L}} \textand \epsilon_0 \leq  \frac{C}{L}.
\end{equation} Consequently by \eqref{eq:set} \begin{equation}
\alpha\epsilon_0 \geq C' \cdot \sqrt{ \frac{\epsilon_0}{L}} \quad \for \epsilon_0 < C/L
\end{equation} This implies that the solutions of the primal problem $M_1(\epsilon_0,L)$ \eqref{eq:Mpepsilon} satisfy \begin{equation}
E_{\pi_1}|X| \geq C' \cdot \sqrt{\frac{\epsilon_0}{L}} \geq C' \cdot \epsilon_0 \geq E_{\pi_0}|X| \quad \textif \epsilon_0 < C''/L
\end{equation} in addition to match $L$ moments and be supported on $[-1,1]$.

\end{proof}

We now prove the auxiliary results needed for Lemma \ref{lemma:M1Epsilon}.

\begin{theorem}[Generalized Bernstein's inequality. Theorem 8.1 of \cite{totikReflectionsTheoremAndrievskii2022}. Equation 37 of \cite{kalmykovBernsteinMarkovtypeInequalities2021}]\label{thm:bernstein_ineq}

Let $g \in P_L$ be a $L$th order polynomial, and let $\norm{g}_C$ denote its maximum value over the $C$ set: \begin{equation}
\norm{g}_C = \sup_{x \in C}|g(x)|.
\end{equation} It follows that it $k$th derivative is bounded by. \begin{equation}
|g^{(k)}(x)| \leq C_L(x)\cdot \left[\frac{L}{1-x^2}\right]^k \cdot \norm{g}_{[-1,1]} \quad \for x \in (-1,1)
\end{equation} where \begin{equation}
\lim_{L\to\infty}\sup_{x \in (-1,1)} C_{L}(x) = 1.
\end{equation} Consequently \begin{equation}
\norm{g^{(k)}}_{[-\delta/\sqrt{2},\delta/\sqrt{2}]} \leq (1+o(1)) \cdot 4^k \cdot \left[\frac{L}{\delta}\right]^k \cdot \norm{g}_{[-\delta,\delta]}.
\end{equation}

\end{theorem}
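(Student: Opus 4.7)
The plan is to reduce the polynomial statement to the classical Bernstein inequality for trigonometric polynomials, which asserts that $\norm{T^{(k)}}_\infty \leq L^k \norm{T}_\infty$ for every real trigonometric polynomial $T$ of degree at most $L$. Given $g \in P_L$, I substitute $x = \cos\theta$ with $\theta \in [0,\pi]$ and set $T(\theta) := g(\cos\theta)$. Because $T_j(\cos\theta) = \cos(j\theta)$ for the Chebyshev polynomial $T_j$, the expansion $g = \sum_{j=0}^L a_j T_j$ makes $T$ an even trigonometric polynomial of degree $L$ with $\norm{T}_\infty = \norm{g}_{[-1,1]}$. Trigonometric Bernstein then yields $\norm{T^{(k)}}_{L^\infty[0,\pi]} \leq L^k \norm{g}_{[-1,1]}$.

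The next step is to express $g^{(k)}$ in terms of $T^{(k)}$. Differentiating $T(\theta) = g(\cos\theta)$ repeatedly via Fa\`a di Bruno's formula gives
\begin{equation}
T^{(k)}(\theta) = (-\sin\theta)^k\, g^{(k)}(\cos\theta) + R_k(\theta),
\end{equation}
where $R_k$ is a linear combination of $g^{(j)}(\cos\theta)$ for $0 \leq j < k$ with coefficients that are fixed polynomials in $\sin\theta, \cos\theta$ independent of $L$. Solving for $g^{(k)}(x)$ at $x = \cos\theta$ using $\sin\theta = \sqrt{1-x^2}$ and controlling $R_k$ inductively by the analogous bounds already established for lower-order derivatives of $g$ yields
\begin{equation}
(1-x^2)^{k/2}\,|g^{(k)}(x)| \leq L^k \norm{g}_{[-1,1]} + o(L^k)\,\norm{g}_{[-1,1]},
\end{equation}
uniformly on compact subsets of $(-1,1)$. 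Absorbing the correction into a factor $C_L(x)$ with $\sup_{x\in(-1,1)} C_L(x) \to 1$ produces $|g^{(k)}(x)| \leq C_L(x) \cdot L^k/(1-x^2)^{k/2}\cdot \norm{g}_{[-1,1]}$, and the elementary inequality $(1-x^2)^{k/2} \geq (1-x^2)^k$ on $[-1,1]$ yields the looser form stated with $[L/(1-x^2)]^k$.

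The consequent scaled bound follows by dilation. Setting $\tilde g(y) := g(\delta y)$, we have $\tilde g \in P_L$ with $\norm{\tilde g}_{[-1,1]} = \norm{g}_{[-\delta,\delta]}$ and $\tilde g^{(k)}(y) = \delta^k g^{(k)}(\delta y)$. Applying the pointwise bound to $\tilde g$ on $[-1/\sqrt{2}, 1/\sqrt{2}]$, where $1/(1-y^2) \leq 2$, gives $|\tilde g^{(k)}(y)| \leq (1+o(1))\cdot (2L)^k \norm{g}_{[-\delta,\delta]}$. Dividing by $\delta^k$ yields $\norm{g^{(k)}}_{[-\delta/\sqrt{2},\delta/\sqrt{2}]} \leq (1+o(1))\cdot 2^k (L/\delta)^k \norm{g}_{[-\delta,\delta]}$, which is stronger than and therefore implies the stated $4^k(L/\delta)^k$ bound.

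The main obstacle is cleanly establishing $C_L(x) \to 1$ uniformly on compacta from the Fa\`a di Bruno induction, since the lower-order correction terms compound across recursion steps. A cleaner alternative, which I would ultimately prefer to write out, is to invoke directly the sharp Bernstein--Szeg\H{o} asymptotic $|p^{(k)}(x)|(1-x^2)^{k/2} \leq L^k (1 + o_L(1))\norm{p}_{[-1,1]}$ from the theory of Chebyshev-weighted orthogonal polynomials, exactly as developed in the references by Totik and Kalmykov cited in the statement; this packages the asymptotic behavior uniformly on interior compacta and bypasses the bookkeeping of lower-order correction terms entirely.
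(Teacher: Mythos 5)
The paper gives no proof of this statement: it is quoted verbatim from the cited references of Totik and Kalmykov, with the rescaled consequence following by the affine change of variables $y\mapsto \delta y$. Your proposal is therefore a genuine from-scratch alternative: the reduction to trigonometric Bernstein via $x=\cos\theta$ together with Fa\`a di Bruno inversion of $T^{(k)}(\theta)=(-\sin\theta)^k g^{(k)}(\cos\theta)+R_k(\theta)$ is the standard way such inequalities are actually proved, and your rescaling step is correct --- it in fact yields $2^k$ rather than the paper's slack $4^k$.

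The gap is in the uniformity claim. You establish $(1-x^2)^{k/2}|g^{(k)}(x)|\leq (1+o(1))L^k\norm{g}_{[-1,1]}$ only ``uniformly on compact subsets of $(-1,1)$'' --- and at the end you again describe the Bernstein--Szeg\H{o} bound as uniform ``on interior compacta.'' That is strictly weaker than what the theorem asserts, namely $\sup_{x\in(-1,1)}C_L(x)\to 1$, where the supremum runs over the entire open interval including the endpoint-degenerate region where you divide by $(\sin\theta)^k\to 0$ and the lower-order Fa\`a di Bruno coefficients (e.g. $B_{2,1}(\theta)=-\cos\theta$, which does \emph{not} vanish at $\theta=0$) can dominate. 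The jump from compact uniformity to your stated $\sup_{x\in(-1,1)}C_L(x)\to 1$ is unproved. What actually rescues the argument, and what you should exploit, is that the theorem's denominator is $(1-x^2)^k$ rather than the sharp $(1-x^2)^{k/2}$: the extra factor $(1-x^2)^{k/2}$ of slack absorbs the endpoint blow-up. For instance for $k=2$, from $T''(\theta)=\sin^2\theta\,g''(\cos\theta)-\cos\theta\,g'(\cos\theta)$, the trigonometric Bernstein bound $|T''|\leq L^2\norm{g}_{[-1,1]}$, and the $k=1$ pointwise bound $|g'(x)|\leq L(1-x^2)^{-1/2}\norm{g}_{[-1,1]}$, one gets
\begin{equation}
\frac{|g''(x)|(1-x^2)^2}{L^2\norm{g}_{[-1,1]}} \leq (1-x^2) + \frac{(1-x^2)^{1/2}}{L} \leq 1 + \frac{1}{L} \quad \text{for all } x\in(-1,1),
\end{equation}
so the supremum over the whole interval does go to $1$. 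A complete self-contained proof along your lines must carry out this inductive bookkeeping of the $\sin\theta$ powers in $R_k$ for general $k$; absent that, the right move is exactly what you suggest in your last sentence and exactly what the paper does --- cite Totik or Kalmykov directly.
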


\begin{lemma}[\citet{canonnePriceToleranceDistribution2021}]\label{lemma:derivative_bound}
Let $g \in P_{L}$ be a $L$th order polynomial such that \begin{align}
	g(0) = g'(0) = 0 \textand |g(x)| \leq \alpha |x| \quad \for |x| \leq \delta.
\end{align} It holds that \begin{equation}
|g(x)| \leq C \cdot \alpha \cdot \frac{L}{\delta} \cdot x^2 \quad \for |x|\leq C \cdot \frac{\delta}{L}
\end{equation}
\end{lemma}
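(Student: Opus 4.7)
The plan is to reduce the claim to a Markov--Bernstein-type derivative estimate by factoring out the common linear factor from $g$. Since $g(0)=0$ and $g$ is a polynomial of degree at most $L$, the quotient
\[
h(x) = \frac{g(x)}{x}
\]
is a polynomial of degree at most $L-1$. The hypothesis $|g(x)| \leq \alpha |x|$ on $[-\delta,\delta]$ translates into the sup-norm bound $\|h\|_{[-\delta,\delta]} \leq \alpha$, and the hypothesis $g'(0)=0$ translates into $h(0) = g'(0) = 0$. These three facts are the only structural inputs I need.

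Next, I would apply the generalized Bernstein inequality of \zcref{thm:bernstein_ineq} to $h$. Since $h$ has degree at most $L-1 \leq L$, that theorem yields, for $L$ larger than a fixed threshold,
\[
\|h'\|_{[-\delta/\sqrt{2},\, \delta/\sqrt{2}]} \;\leq\; 4(1+o(1)) \cdot \frac{L}{\delta} \cdot \|h\|_{[-\delta,\delta]} \;\leq\; C \cdot \alpha \cdot \frac{L}{\delta}.
\]
Combining this with $h(0)=0$, the mean value theorem gives, for every $|x|\leq \delta/\sqrt{2}$,
\[
|h(x)| \;=\; |h(x)-h(0)| \;\leq\; \|h'\|_{[-\delta/\sqrt{2},\,\delta/\sqrt{2}]} \cdot |x| \;\leq\; C\cdot \alpha \cdot \frac{L}{\delta} \cdot |x|.
\]
Multiplying by $|x|$ and using $g(x)=x\,h(x)$ recovers
\[
|g(x)| \;\leq\; C \cdot \alpha \cdot \frac{L}{\delta} \cdot x^2 \qquad \text{for all } |x|\leq \delta/\sqrt{2},
\]
which is in fact stronger than what is claimed: the stated window $|x|\leq C\delta/L$ is contained in $[-\delta/\sqrt{2}, \delta/\sqrt{2}]$ once $L$ is large enough (the constant $C$ in the window can be absorbed into the other constants).

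The step that requires the most care is the application of \zcref{thm:bernstein_ineq}: one must verify that the $(1+o(1))$ prefactor supplied by that theorem can be absorbed into a universal constant, and that the degree bound $L-1$ for $h$ is compatible with the statement (which it is, since the Bernstein inequality is monotone in the degree). Beyond this bookkeeping, the argument is essentially the observation that $g$ has a double zero at the origin, so dividing out one factor of $x$ and running a linear Bernstein bound on the quotient automatically yields a quadratic bound on $g$.
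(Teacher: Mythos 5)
Your proof is correct and gives a modest but genuine simplification of the paper's argument. Both proofs start from the same factorization $g(x) = x\,h(x)$, with $h \in P_{L-1}$, $h(0)=0$, and $\|h\|_{[-\delta,\delta]} \le \alpha$, and both invoke the generalized Bernstein inequality (\zcref{thm:bernstein_ineq}). The difference is in how that input is converted into the quadratic bound. The paper applies Taylor's theorem with a second-order remainder to $g$ itself, which forces it to estimate $g'' = 2h' + x h''$, and hence to control both $h'$ and $h''$ via Bernstein; the $x\,h''$ term is of size $|x|\cdot (L/\delta)^2 \alpha$, and to make it comparable to the $h'$ term one must restrict to $|x|\lesssim \delta/L$, which is exactly where the window $|x|\le C\delta/L$ in the lemma statement comes from. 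You instead apply the mean value theorem directly to $h$, using only the first-derivative Bernstein bound $\|h'\|_{[-\delta/\sqrt 2,\,\delta/\sqrt 2]} \lesssim \alpha L/\delta$ together with $h(0)=0$, so the $h''$ term never appears and the conclusion $|g(x)|\lesssim \alpha(L/\delta)x^2$ holds on the larger window $|x|\le\delta/\sqrt 2$. This is a cleaner derivation that strictly contains the paper's claim. Your caveat about the $(1+o(1))$ prefactor in \zcref{thm:bernstein_ineq} is also the right one to flag: as stated that theorem only controls the constant for $L$ large, which is consistent with the ``$L$ large enough'' hypothesis in \zcref{lemma:M1Epsilon} where this lemma is ultimately used. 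For the downstream application in \zcref{lemma:linear_upperbound} the quadratic bound is only needed on $|x|\lesssim\delta/L$, so your stronger window is not exploited, but it costs nothing.
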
 \begin{proof}
Since $g(0) = g'(0) = 0$, $g$ admits the following representation \begin{equation}
g(x) = x \cdot h(x) \where h(x)=\sum_{l=1}^{L-1} \lambda_l \cdot x^l
\end{equation} where $h \in P_{L-1}$ , $h(0) = 0$ and \begin{align}
\max_{|x|\leq\delta}|h(x)|&\leq\max_{|c|\leq\delta,x\not=0}|h(x)| &&\since h(0)=0\\
&=\max_{|x|\leq\delta,x\not=0}|g(x)/x|\\
&\leq\alpha &&\for |x|\leq\delta \since |g(x)|\leq\alpha|x|  \label{eq:max_upperbound}
\end{align} By Taylor expansion, it holds
\begin{align}
|g(x)| &\leq x^2 \cdot \max_{|c|\leq|x|}|g^{(2)}(c)|   && \since g(0)=g'(0)=0
\end{align} Note that \begin{equation}
|g^{2}(x)|=|x\cdot h^{(2)}(x) + 2h^{(1)}(x)|
\end{equation} Thus for $|x|\leq \delta/\sqrt{2}$ and Bernstein's inequality (Theorem \ref{thm:bernstein_ineq}) \begin{equation}
\max_{|x|\leq \delta/\sqrt{2}}|g^{2}(x)| \leq C \cdot \left[|x| \cdot \frac{L^2}{\delta^2} + \frac{L}{\delta} \right] \cdot \max_{|x|\leq \delta}|h(x)| \leq C \cdot \frac{L}{\delta} \cdot \alpha \quad \for |x|\leq C \cdot \left[\frac{\delta}{\sqrt{2}}\wedge\frac{\delta}{L}\right]
\end{equation} Consequently \begin{equation}
|g(x)| \leq C \cdot x^2 \cdot \frac{L}{\delta} \cdot \alpha \quad \for |x|\leq C' \cdot \delta/L \textand L > C''.
\end{equation}

\end{proof}

\begin{lemma}[\citet{canonnePriceToleranceDistribution2021}]\label{lemma:linear_upperbound}
Let $g \in P_{L}$ and $\alpha>1$ be a $L$th order polynomial such that \begin{align}
g(0) = g'(0) = 0 \textand
|g(x)| \leq \alpha|x|+\beta \quad \for |x| \leq \delta
\end{align} It holds that \begin{equation}
|g(x)| \leq 2\alpha|x| \quad\for |x|\leq \delta/L \quad \textif \beta < C \cdot \alpha \cdot \frac{\delta}{L}
\end{equation}
\end{lemma}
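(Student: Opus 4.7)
The plan is to mirror the strategy of the earlier \zcref{lemma:derivative_bound}, which produced a quadratic bound $|g(x)| \lesssim \alpha (L/\delta) x^2$ from the hypothesis $|g(x)| \leq \alpha|x|$. In our setting the hypothesis is relaxed by an additive term $\beta$, and the target conclusion is linear in $|x|$ on the smaller scale $|x|\leq \delta/L$. First I would record the factorization $g(x)=x h(x)$ with $h\in P_{L-1}$, where $h(0)=0$ follows from $g'(0)=0$. On the annulus $\delta/L \leq |x|\leq \delta$ the hypothesis immediately yields
\[
|h(x)| \;=\; \frac{|g(x)|}{|x|} \;\leq\; \alpha + \frac{\beta}{|x|} \;\leq\; \alpha + \frac{\beta L}{\delta} \;\leq\; (1+C)\alpha,
\]
once $\beta \leq C\alpha\delta/L$ with $C$ small.

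The next (and main) step is to upgrade this to a uniform bound $\|h\|_{[-\delta,\delta]} \lesssim \alpha$. The naive pointwise estimate $|h(x)|\leq \alpha + \beta/|x|$ degrades to $+\infty$ at the origin, so one must use that $h$ is a polynomial of degree at most $L-1$ with $h(0)=0$ to rule out spikes inside $(-\delta/L,\delta/L)$. I would do this by further factoring $h(x)=x h_1(x)$ with $h_1\in P_{L-2}$ and combining the annulus bound with Bernstein/Markov-type estimates on $h_1$, exactly so that polynomial growth controls its value on the inner interval by its value at the annulus boundary. Equivalently, one can apply \zcref{thm:bernstein_ineq} directly to $g$ using the global estimate $\|g\|_{[-\delta,\delta]}\leq \alpha\delta+\beta\leq 2\alpha\delta$ and then exploit $h=g/x$ together with $h(0)=0$.

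Once $\|h\|_{[-\delta,\delta]}\lesssim \alpha$ is in hand, the rest parallels \zcref{lemma:derivative_bound}. Bernstein's inequality delivers $\|h'\|_{[-\delta/\sqrt 2,\delta/\sqrt 2]}\lesssim (L/\delta)\alpha$ and $\|h''\|_{[-\delta/2,\delta/2]}\lesssim (L/\delta)^{2}\alpha$. Since $g''(x)=x h''(x)+2 h'(x)$, for $|x|\leq \delta/L$ the first term is bounded by $(L/\delta)\alpha$ and so $\|g''\|_{[-\delta/L,\delta/L]}\lesssim (L/\delta)\alpha$. Taylor's theorem with $g(0)=g'(0)=0$ then gives
\[
|g(x)| \;\leq\; \tfrac{x^{2}}{2}\,\|g''\|_{[-|x|,|x|]} \;\lesssim\; \frac{L\alpha}{\delta}\, x^{2} \;=\; \alpha |x|\cdot \frac{L|x|}{\delta} \;\leq\; C_{*}\,\alpha |x| \qquad \text{for } |x|\leq \delta/L,
\]
for some universal constant $C_{*}$. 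Finally, shrinking the constant $C$ in the hypothesis $\beta < C\alpha\delta/L$ (and if necessary restricting to $L$ large enough that the $o(1)$ terms in \zcref{thm:bernstein_ineq} are under control) forces $C_{*}\leq 2$, yielding the desired $|g(x)|\leq 2\alpha|x|$.

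The main obstacle is the uniform bound $\|h\|_{[-\delta,\delta]}\lesssim \alpha$: the pointwise hypothesis alone only controls $h$ on the annulus $|x|\in[\delta/L,\delta]$, whereas Bernstein's inequality in the subsequent step needs control on the full interval. Resolving it cleanly requires leveraging the combination of the vanishing condition $h(0)=0$ and the polynomial degree constraint, which together prevent oscillations of $h$ near the origin beyond its behavior on the annulus. Everything else is a direct adaptation of the Bernstein-plus-Taylor machinery already deployed in \zcref{lemma:derivative_bound}.
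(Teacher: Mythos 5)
Your outline takes a genuinely different route from the paper, and it contains a gap that you flag yourself but do not close. The post-obstacle chain is fine: once $\|h\|_{[-\delta,\delta]}\lesssim\alpha$ is in hand, Bernstein gives $\|h'\|\lesssim(L/\delta)\alpha$ and $\|h''\|\lesssim(L/\delta)^2\alpha$, so $g''=xh''+2h'$ satisfies $\|g''\|_{[-\delta/L,\delta/L]}\lesssim(L/\delta)\alpha$, and Taylor with $g(0)=g'(0)=0$ yields $|g(x)|\lesssim(L/\delta)\alpha\,x^2\leq\alpha|x|$ on $|x|\leq\delta/L$. The problem is the input $\|h\|_{[-\delta,\delta]}\lesssim\alpha$, and neither of the mechanisms you propose actually produces it. Applying \zcref{thm:bernstein_ineq} directly to $g$ with $\|g\|_{[-\delta,\delta]}\leq2\alpha\delta$ gives $\|g''\|\lesssim(L/\delta)^2\cdot2\alpha\delta=\alpha L^2/\delta$, which is a full factor of $L$ worse than what the Taylor step needs; at $|x|=\delta/L$ this yields only $|g(x)|\lesssim\alpha\delta$ rather than $\alpha\delta/L$. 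Factoring further, $h=xh_1$, does not help either: on the annulus you only get $|h_1|\lesssim\alpha L/\delta$, which is again off by a factor of $L$. The fact you would actually need is a Remez/Chebyshev-type statement that a degree-$L$ polynomial bounded by $M$ on the symmetric annulus $\{\delta/L\leq|x|\leq\delta\}$ is bounded by $O(M)$ on the whole of $[-\delta,\delta]$ (provable by a parity decomposition $h=q_e(x^2)+x\,q_o(x^2)$ and the one-interval Chebyshev growth bound $T_{L/2}(1+2/L^2)=O(1)$). That inequality is never invoked or proved in your sketch, and, contrary to what you suggest, it does not hinge on $h(0)=0$.

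The paper avoids this machinery entirely with a short rescaling-and-contradiction argument. It first observes that the conclusion is \emph{already immediate} for $|x|>\beta/\alpha$, since there $|g(x)|\leq\alpha|x|+\beta<2\alpha|x|$; the only interval in question is $|x|\leq\beta/\alpha$. It then sets $r=\max_{|x|\leq\beta/\alpha}|g(x)|/(2\alpha|x|)$, assumes $r>1$ for contradiction, and considers $\tilde g=g/r$. Crucially, $|\tilde g(x)|\leq2\alpha|x|$ then holds on the inner interval by definition of $r$ \emph{and} on the rest of $[-\delta,\delta]$ by the trivial-case observation (since $r>1$ can only shrink $g$), so \zcref{lemma:derivative_bound} applies with the full radius $\delta$ and gives $|\tilde g(x)|\lesssim\alpha(L/\delta)x^2<2\alpha|x|$ on the small interval, contradicting the attainment of $r$. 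The decisive trick you are missing is precisely this observation that the annulus case is trivial, which lets the argument transfer the $2\alpha|x|$-control from $|x|\leq\beta/\alpha$ to all of $[-\delta,\delta]$ for free, without any polynomial-extension estimate.
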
 \begin{proof}
The claim is trivial if $|x| > \frac{\beta}{\alpha}$. The claim is also true for $x=0$ since $g(0)=0$. Henceforth, we implicitly assume that $x\not=0$ to simplify the notation. For $|x| \leq \frac{\beta}{\alpha}$, assume that the claim is not true. Then \begin{equation}
\exists y \st |y| \leq \frac{\beta}{\alpha} \textand |g(y)| > 2\alpha|y|
\end{equation} Define \begin{align}
r = \max_{|x|\leq\frac{\beta}{\alpha}} \frac{|g(x)|}{2\alpha|x|} \textand \tilde{g}(x)=\frac{g(x)}{r}
\end{align} It holds that $r>1$ and $\tilde{g} \in P_L$, additionally \begin{align}
|\tilde{g}(x)| &= \frac{|g(x)|}{2\alpha|x|} \cdot \frac{1}{r} \cdot 2\alpha|x| \leq 2\alpha|x| \for |x|\leq\frac{\beta}{\alpha} &&\since \frac{|g(x)|}{2\alpha|x|} \cdot \frac{1}{r} \leq 1 \for |x|\leq\frac{\beta}{\alpha}
\end{align} Furthermore, since $\frac{|g(x)|}{2\alpha|x|}$ is a continuous function, it achieves a maximum in the closed interval $|x|\leq\frac{\alpha}{\beta}$. Thus, \begin{equation}
\exists z \st |z|\leq\frac{\alpha}{\beta} \textand |\tilde{g}(z)| = 2\alpha|z|
\end{equation} However, we show that such $z$ cannot exists leading to a contradiction.

We have established that \begin{equation}
\tilde{g}(x) \in P_L \textand \tilde{g}(0)=\tilde{g}'(0)=0 \textand \tilde{g}(x)\leq2\alpha|x| \for |x|\leq\frac{\beta}{\alpha}
\end{equation} By assumption $\frac{\beta}{\alpha} < C \cdot \frac{\delta}{2}$ and lemma \ref{lemma:derivative_bound}, it follow that \begin{equation}
|\tilde{g}(x)|\leq
\frac{4\sqrt{2}\alpha L}{\delta} \cdot x^2 \quad \for |x|\leq \delta/L
\end{equation} The desired contraction immediately follows \begin{equation}
\for |x|\leq\frac{\beta}{\alpha}\quad |\tilde{g}(x)|<2\alpha|x|
\impliedby C \cdot \frac{L}{\delta}\cdot|x|<1
\impliedby \frac{\beta}{\alpha} < C \cdot \frac{\delta}{L}
\end{equation}

\end{proof}

\begin{corollary}[Of Lemma \ref{lemma:linear_upperbound} and Lemma \ref{lemma:derivative_bound}]\label{polynomial_bound}

Let $g \in P_{L}$ and $\alpha>1$ such that \begin{align}
g(0) = g'(0) = 0 \textand
|g(x)| \leq \alpha\cdot |x|+\beta \quad \for |x| \leq \delta
\end{align} It holds that \begin{equation}
|g(x)| \leq 4\sqrt{2} \cdot \alpha \cdot \frac{L}{\delta} \cdot x^2 \quad\for |x|\leq \frac{\delta}{\sqrt{2}}\quad \textand \beta <  \frac{\alpha}{2\sqrt{2}} \cdot \frac{\delta}{L}.
\end{equation}
\end{corollary}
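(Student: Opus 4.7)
The plan is to chain the two lemmas cited in the statement. The hypothesis $\beta < \frac{\alpha}{2\sqrt{2}}\cdot\frac{\delta}{L}$ is chosen precisely to meet the smallness requirement $\beta < C\alpha\delta/L$ of Lemma \ref{lemma:linear_upperbound} with an explicit constant $C=1/(2\sqrt{2})$. Thus, together with $g(0)=g'(0)=0$ and $|g(x)|\leq \alpha|x|+\beta$ on $|x|\leq \delta$, Lemma \ref{lemma:linear_upperbound} applies and upgrades the affine bound to a purely linear one, $|g(x)|\leq 2\alpha|x|$, on the relevant interval. I would note in passing that on the range $|x|\geq \beta/\alpha$ this upgrade is automatic from the hypothesis $\alpha|x|+\beta\leq 2\alpha|x|$, so the content of Lemma \ref{lemma:linear_upperbound} is really to rule out $g$ being too large in the neighborhood $|x|\leq \beta/\alpha$ of the origin, which is exactly where $\beta$ would otherwise dominate.

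With $|g(x)|\leq 2\alpha|x|$ in hand on the appropriate interval, together with $g(0)=g'(0)=0$, the hypotheses of Lemma \ref{lemma:derivative_bound} are met with $\tilde\alpha = 2\alpha$ substituted for $\alpha$ and $\tilde\delta=\delta$ for $\delta$. Applying Lemma \ref{lemma:derivative_bound} then yields a quadratic bound of the form $|g(x)|\leq C\cdot 2\alpha\cdot \frac{L}{\delta}\cdot x^2$, and carrying through the explicit constant $C=2\sqrt{2}$ from the Bernstein step inside that lemma produces the claimed $|g(x)|\leq 4\sqrt{2}\,\alpha\,\frac{L}{\delta}\,x^2$, valid on the range inherited from Lemma \ref{lemma:derivative_bound}, namely $|x|\leq \delta/\sqrt{2}$.

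The main obstacle is purely bookkeeping: one must check that the interval on which Lemma \ref{lemma:linear_upperbound} delivers the linear bound is large enough to feed into Lemma \ref{lemma:derivative_bound} with the parameter $\tilde\delta=\delta$ (rather than some smaller value), so that it is $L/\delta$, and not $L/\tilde\delta$ with $\tilde\delta<\delta$, that appears in the final constant. This is where the quantitative strength of the hypothesis $\beta < \frac{\alpha}{2\sqrt{2}}\cdot\frac{\delta}{L}$ is used: it guarantees that the exceptional region $|x|\leq \beta/\alpha$ handled separately in the proof of Lemma \ref{lemma:linear_upperbound} sits well inside $|x|\leq \delta/L$, and hence that the linear bound $|g(x)|\leq 2\alpha|x|$ patches together over the whole interval $|x|\leq \delta$ — either from the direct computation $\alpha|x|+\beta\leq 2\alpha|x|$ for $|x|\geq \beta/\alpha$, or from Lemma \ref{lemma:linear_upperbound} for $|x|\leq \beta/\alpha$. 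Once this is confirmed, the corollary follows mechanically.
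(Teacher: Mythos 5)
The high-level plan — feed the affine hypothesis into Lemma~\ref{lemma:linear_upperbound} to upgrade to a linear bound $|g(x)|\leq 2\alpha|x|$, then apply Lemma~\ref{lemma:derivative_bound} with $\alpha$ replaced by $2\alpha$ — is indeed what the paper's ``Of Lemma~\ref{lemma:linear_upperbound} and Lemma~\ref{lemma:derivative_bound}'' signals, and the first step is handled correctly (including the observation that the linear bound holds on all of $|x|\leq\delta$, since for $|x|>\beta/\alpha$ it is immediate from $\alpha|x|+\beta\leq 2\alpha|x|$). However, the second step contains a real gap: you assert that the quadratic bound is ``valid on the range inherited from Lemma~\ref{lemma:derivative_bound}, namely $|x|\leq\delta/\sqrt{2}$,'' but that is not the range Lemma~\ref{lemma:derivative_bound} delivers. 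Both its statement and its proof give the bound $|g(x)|\lesssim \alpha\,\frac{L}{\delta}\,x^2$ only for $|x|\leq C\,\delta/L$. The reason is that its proof writes
\begin{equation}
|g(x)|\leq x^2\max_{|c|\leq|x|}|g''(c)|,\qquad g''(c)=c\,h''(c)+2h'(c),
\end{equation}
and Bernstein's inequality makes the $c\,h''(c)$ contribution of order $|x|\,L^2/\delta^2$, which dominates $L/\delta$ once $|x|\gg\delta/L$. So simply plugging $2\alpha$ into Lemma~\ref{lemma:derivative_bound} yields the corollary's conclusion only on $|x|\leq C\,\delta/L$, strictly smaller than the claimed $|x|\leq\delta/\sqrt{2}$ when $L$ is large.

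To actually reach the range $|x|\leq\delta/\sqrt{2}$ one must bypass the second-order Taylor expansion of $g$ and instead use the factorization $g(x)=x\,h(x)$ with $h\in P_{L-1}$, $h(0)=0$, directly: then
\begin{equation}
|g(x)|=|x|\,|h(x)|\leq x^2\max_{|c|\leq|x|}|h'(c)|,
\end{equation}
and Bernstein on $h'$ alone (no $h''$ term enters) gives $\max_{|c|\leq\delta/\sqrt{2}}|h'(c)|\lesssim(L/\delta)\,\|h\|_{[-\delta,\delta]}\lesssim\alpha L/\delta$ on the whole interval $|x|\leq\delta/\sqrt{2}$. This is a genuine modification of Lemma~\ref{lemma:derivative_bound}, not a consequence of it as stated. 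Relatedly, your proposal does not actually derive the numerical constant $4\sqrt{2}$; it gestures at ``carrying through the explicit constant $C=2\sqrt{2}$ from the Bernstein step,'' but the Bernstein theorem the paper cites (Theorem~\ref{thm:bernstein_ineq}) gives the factor $4^k$, and tracking that together with the $2\alpha$ from Lemma~\ref{lemma:linear_upperbound} gives a constant closer to $8$; getting $4\sqrt{2}$ requires using the sharper $(L/(\delta(1-y^2)))^k$ form of Bernstein rather than the rounded $4^k(L/\delta)^k$. So the argument as written would need both the factorization fix and a more careful constant chase before it establishes the corollary exactly as stated.
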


\section{Critical separation for the Gaussian white noise model}\label{sec:GaussianWhiteNoiseEquivalence}

In the following we prove Lemma \ref{lemma:GaussianWhiteNoiseEquivalence}, which provides lower and upper bounds on the critical separation for the Gaussian white noise model. The arguments are based on sections 4.1 and 4.2 of \citet{ingsterTestingHypothesisWhich2001}. Finally, we note that the lower bound depends on Lemma \ref{lemma:discretization_lb}, which is stated and proved at the end of this section.

\begin{proof}[Proof of Lemma \ref{lemma:GaussianWhiteNoiseEquivalence}]

\textbf{Upper bound.} Assume that we observe a realization of a Gaussian white noise model \eqref{eq:gaussian_white_noise} \begin{equation}
dX(t) = f(t)\ dt + \sigma \cdot dW(t) \quad \for t \in [0,1].
\end{equation} Our goal is to test the hypotheses \eqref{eq:testing_gaussian_white_noise} \begin{equation}
H_0: f \in \GWnull \vs H_1 : f \in \GWalt,
\end{equation} by using a tolerant test designed fora Gaussian sequence model \eqref{eq:gaussian_testing_lp}.

We note that observing the Gaussian white noise model \eqref{eq:gaussian_white_noise} corresponds to having access to any projection in $L_2[0,1]$: \begin{equation}\label{eq:gaussian_white_noise_projection}
X(g) = v(g) + \sigma \cdot W(g) \quad \forall\ g \in L_2[0,1]
\end{equation} where \begin{equation}
v(g) = \int_{0}^1 g(t)\cdot f(t)\ dt \textand W(g) = \int_{0}^1 g(t)\ dW(t) \sim \mathcal{N}(0,\norm{g}_2^2).
\end{equation}

Let $\Delta_d$ be a uniform partition of $[0,1]$ into $d$ intervals of length $1/d$ \begin{equation}
I_i = \left[\frac{i-1}{d},\frac{i}{d}\right] \for 1\leq i \leq d
\end{equation} and define \begin{equation}
\varphi_i = d^{1/2} \cdot \delta_{I_i}\period
\end{equation} It follows that $\varphi=\{\varphi_i\}_{i=1}^d$ is an orthonormal basis in in $L_2[0,1]$: \begin{equation}
\inner{\varphi_i}{\varphi_j}_{L_2[0,1]} = d \int \delta_{I_i} \cdot \delta_{I_j} =  \delta_{i=j}\period
\end{equation} Define the following $d$ observation by projecting onto the orthonormal basis: \begin{equation}
X_i = X(\varphi_i) \sim \mathcal{N}(v_i,\sigma^2) \quad \for 1\leq i \leq d.
\end{equation} For any $f \in L_2[0,1]$, let $f_{\varphi}$ be its projection onto the above basis: \begin{equation}
f_\varphi = \sum_{i=1}^d \varphi_i \cdot v_i  \where v_i = \inner{f}{\varphi_i}_{L_2[0,1]}.
\end{equation} Since $\varphi=\{\varphi_i\}_{i=1}^d$ is a basis, under the null hypothesis, $H_0: \norm{f}_p \leq \epsilon_0$, it holds that \begin{equation}
\epsilon_0 \geq \norm{f}_p \geq \norm{f_\varphi}_p = h(0,p) \cdot \norm{v}_p
\end{equation} where $h(0,p)=d^{1/2-1/p}$. Consequently, \begin{equation}
\norm{v}_p \leq \tilde{\epsilon}_0 \where \tilde{\epsilon}_0  = h(0,p)^{-1} \cdot \epsilon_0.
\end{equation}

Under the alternative hypothesis $H_1: \epsilon_1 \leq \norm{f}_p$ and $\norm{f}_{s,p,q}\leq L$, it follows by Lemma \ref{lemma:projection_error} that \begin{equation}
\norm{f_\varphi}_p \geq C_2 \cdot \epsilon_1 - C_3 \cdot C \cdot d^{-s} = \frac{C_2}{2} \epsilon_1 \quad \for  d = \left[\frac{C_3C}{C_2/2}\right]^{1/s} \cdot \epsilon_1^{-1/s}.
\end{equation} Finally, by Equation 4.2 of \citet{ingsterTestingHypothesisWhich2001}, it holds that \begin{equation}
\norm{v}_p \geq \tilde{\epsilon}_1  \where \tilde{\epsilon}_1 = \frac{C_0C_2}{2} \cdot h(0,p)^{-1} \cdot \epsilon_1.
\end{equation}

Consequently, we have reduced the Gaussian white noise problem to the following Gaussian sequence:
\begin{align}\label{eq:reduces_gs_model}
&\Given X_i \sim \mathcal{N}(v_i,\sigma^2) \for 1\leq i \leq d\\
&\Test H_0: \norm{v}_p \leq \tilde{\epsilon}_0 \vs H_1: \norm{v}_p \geq \tilde{\epsilon}_1.
\end{align}

Let $\psi$ be the minimax test statistic for \eqref{eq:reduces_gs_model}. We define the following test for hypotheses \eqref{eq:testing_gaussian_white_noise} \begin{equation}
\psi(X,\epsilon_0) = \psi(\tilde{X},\tilde{\epsilon}_0) \where \tilde{X} = \left\{X_i\right\}_{i=1}^d.
\end{equation} It follows that the hypotheses \eqref{eq:testing_gaussian_white_noise} can be consistently distinguished for any \begin{equation}
\epsilon_1 \st \epsilon_1^*(\tilde{\epsilon}_0,\GS)-\tilde{\epsilon}_0 \leq \tilde{\epsilon}_1 - \tilde{\epsilon}_0 \asymp h(0,p)^{-1} \cdot \left(\epsilon_1-\epsilon_0\right)
\end{equation} where $d \asymp \epsilon_1^{-1/s}$. Hence, we have that \begin{align}
\epsilon_1^*(\epsilon_0,\GW)-\epsilon_0 \lesssim \left(\epsilon_1^*(\tilde{\epsilon}_0,\GS)-\tilde{\epsilon}_0\right) \cdot h(0,p)
\end{align}where $d^{-s} \asymp
\epsilon_1^*(\tilde{\epsilon}_0,\GS) \cdot h(0,p)$.

\textbf{Lower bound.} Using the null and alternative hypotheses sets in Lemma \ref{lemma:discretization_lb}, tolerant testing under the Gaussian white noise model \eqref{eq:testing_gaussian_white_noise} is at least as hard as tolerant testing under the following Gaussian sequence model: \begin{align}
&\Given X_i= X(\varphi_i)\sim \mathcal{N}(v_i,\sigma^2) \quad \for1\leq i \leq d\\
&\Test H_0: \norm{v}_p \leq \tilde{\epsilon}_0 \vs H_1: \norm{v}_p = \tilde{\epsilon}_1
\end{align} where \begin{equation}
\tilde{\epsilon}_0 = C_0  \cdot  h(0,p)^{-1} \cdot \epsilon_0
,\  \tilde{\epsilon}_1 = C_1 \cdot h(0,p)^{-1} \cdot \epsilon_1
,\  d = \left[\frac{C_0}{C_1}\cdot L\right]^{1/s} \cdot \epsilon_1^{-1/s},
\end{equation} and $C_0$ and $C_1$ are positive constants.

Fixing $\epsilon_0$, the hypotheses cannot be consistently distinguished for any \begin{equation}
\epsilon_1 \st \epsilon_1^*(\tilde{\epsilon}_0,\GS)-\tilde{\epsilon}_0 \geq \tilde{\epsilon}_1 - \tilde{\epsilon}_0 \asymp h(0,p)^{-1} \cdot \left(\epsilon_1-\epsilon_0\right)
\end{equation} where $d \asymp \epsilon_1^{-1/s}$. Hence, we have that \begin{align}
\epsilon_1^*(\epsilon_0,\GW)-\epsilon_0 \gtrsim (\epsilon_1^*(\tilde{\epsilon}_0,\GS)-\tilde{\epsilon}_0) \cdot h(0,p)
\end{align}where $d^{-s} \asymp \epsilon_1^*(\tilde{\epsilon}_0,\GS)   \cdot h(0,p)$.

\end{proof}

We conclude the section by proving Lemma \ref{lemma:discretization_lb}.

\begin{lemma}\label{lemma:discretization_lb} Let $\varphi$ be any $(s+2)$-continuously differentiable function supported on $(0,1)$ with $\norm{\varphi}_2=1$, and define the following orthonormal system in $L_2[0,1]$ with disjoint supports \begin{equation}
\varphi_i(t) = d^{1/2} \cdot \varphi(d\cdot t-i+1)\cdot \delta_{I_i} \where I_i = \left[\frac{i-1}{d},\frac{i}{d}\right] \for 1\leq i \leq d.
\end{equation} Henceforth, for any $v\in \R^d$, let \begin{equation}
f_{\varphi,v} = \sum_{i=1}^{d} \varphi_i \cdot v_i\period
\end{equation} For $s\geq 0$, $\epsilon_1 \geq \epsilon_0 \geq0$, define the sets \begin{align}
\GWnullTilde &= \left\{f_{\varphi,v} : v\in \mathbb{R}^d \textand  \norm{v}_p  \leq \tilde{\epsilon}_0   \right\}\\
\GWaltTilde &= \left\{f_{\varphi,v} : v\in \mathbb{R}^d \textand  \norm{v}_p=\tilde{\epsilon}_1 \right\}
\end{align} where \begin{equation}
\tilde{\epsilon}_0 = C_0  \cdot  d^{1/p-1/2} \cdot \epsilon_0
\comma \tilde{\epsilon}_1 = C_1 \cdot d^{1/p-1/2} \cdot \epsilon_1
\textand d = \left[\frac{C_0}{C_1}\cdot L\right]^{1/s} \cdot \epsilon_1^{-1/s},
\end{equation} and $C_0$ and $C_1$ are positive constants.

Finally, consider hypotheses \eqref{eq:testing_gaussian_white_noise}, it follows that \begin{equation}
\GWnullTilde  \subseteq \GWnull\textand
\GWaltTilde \subseteq \GWalt
\end{equation}
\end{lemma}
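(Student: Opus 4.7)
The plan is to verify the two set inclusions by direct norm computations that exploit the orthonormal-system structure with pairwise disjoint supports. First, I would compute the $L_p$ norm of $f_{\varphi,v}$. Since the supports of $\varphi_i$ are contained in the disjoint intervals $I_i$, and a change of variables $u = dt - i + 1$ yields $\int |\varphi_i|^p = d^{p/2-1}\|\varphi\|_p^p$, one gets the identity
\begin{equation}
\|f_{\varphi,v}\|_p \;=\; d^{1/2 - 1/p}\,\|\varphi\|_p\,\|v\|_p .
\end{equation}
Plugging in $\|v\|_p \leq \tilde{\epsilon}_0 = C_0 d^{1/p-1/2}\epsilon_0$ gives $\|f_{\varphi,v}\|_p \leq C_0 \|\varphi\|_p \epsilon_0$, which is $\leq \epsilon_0$ for any $C_0 \leq 1/\|\varphi\|_p$, establishing $\widetilde{\mathcal{G}}_0 \subseteq \mathcal{G}_0$. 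The same identity with $\|v\|_p = \tilde{\epsilon}_1$ gives $\|f_{\varphi,v}\|_p = C_1\|\varphi\|_p \epsilon_1 \geq \epsilon_1$ provided $C_1 \geq 1/\|\varphi\|_p$, settling the $L_p$ lower bound required for $\mathcal{G}_1^s$.

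Next I would verify the Besov constraint. The key step is the standard dilation scaling: under $\varphi(t) \mapsto d^{1/2}\varphi(dt - i + 1)$, the Besov (semi)norm scales as $d^{1/2 - 1/p + s}$, so that $\|\varphi_i\|_{s,p,q} \leq C\, d^{1/2 - 1/p + s}\|\varphi\|_{s,p,q}$ with a constant depending only on $s,p,q$. Because $\varphi$ is compactly supported in the open interval $(0,1)$, each $\varphi_i$ vanishes in a neighborhood of $\partial I_i$, so the Besov (semi)norm of $f_{\varphi,v}$ decouples across cells up to a universal constant, giving
\begin{equation}
\|f_{\varphi,v}\|_{s,p,q} \;\lesssim\; d^{1/2 - 1/p + s}\,\|\varphi\|_{s,p,q}\,\|v\|_p .
\end{equation}
Substituting $\|v\|_p = \tilde{\epsilon}_1$ and then the choice $d^s = (C_0/C_1)\, L/\epsilon_1$ causes $\epsilon_1$ and $C_1$ to cancel, leaving $\|f_{\varphi,v}\|_{s,p,q} \leq C' C_0 L \|\varphi\|_{s,p,q}$. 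Choosing $C_0 \leq \min\!\bigl(1/\|\varphi\|_p,\; 1/(C'\|\varphi\|_{s,p,q})\bigr)$ and $C_1 \geq 1/\|\varphi\|_p$ simultaneously secures the $L_p$ lower bound and the Besov upper bound, proving $\widetilde{\mathcal{G}}_1^s \subseteq \mathcal{G}_1^s$.

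The main obstacle is the Besov superposition estimate, since in general the norm does not split across disjoint supports like $L_p$ does: the modulus of smoothness $\omega_r(f,h)_p$ couples behavior across the boundaries of adjacent cells. The hypothesis that $\varphi$ is $(s+2)$-continuously differentiable and supported strictly inside $(0,1)$ is what closes this gap. For scales $h \lesssim 1/d$, the $r$-th forward differences of $f_{\varphi,v}$ still split cell-by-cell because $\varphi$ vanishes near $\partial I_i$, so $\omega_r(f_{\varphi,v},h)_p^p \lesssim \sum_i \omega_r(\varphi_i v_i, h)_p^p$; for scales $h \gtrsim 1/d$ one switches to the derivative bound $\omega_r(f,h)_p \leq h^r \|f^{(r)}\|_p$ with $r > s$, and $\|f_{\varphi,v}^{(r)}\|_p$ satisfies the same disjoint-support identity as in the first paragraph and scales as $d^{r+1/2-1/p}\|\varphi^{(r)}\|_p\|v\|_p$. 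Integrating these bounds against $h^{-sq-1}dh$ yields the advertised $d^{1/2 - 1/p + s}$ scaling, with a constant absorbed into $C'$. Once this scaling lemma is in hand, the remainder of the argument is the elementary constant-tracking exercise carried out above.
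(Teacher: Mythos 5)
Your proof takes a genuinely different route from the paper's. The paper simply cites Equations (4.1) and (4.2) of \citet{ingsterTestingHypothesisWhich2001}, which provide the two-sided comparisons $C_0\norm{f_{\varphi,v}}_p \leq d^{1/2-1/p}\norm{v}_p \leq C_1\norm{f_{\varphi,v}}_p$ and $C_0\norm{f_{\varphi,v}}_{s,p,q}\leq d^{s+1/2-1/p}\norm{v}_p\leq C_1\norm{f_{\varphi,v}}_{s,p,q}$, and then performs the constant-tracking. You instead re-derive these scaling relations. Your $L_p$ computation is actually cleaner than what the paper needs: the disjoint supports give the exact identity $\norm{f_{\varphi,v}}_p = d^{1/2-1/p}\norm{\varphi}_p\norm{v}_p$, so the first inclusion and the $L_p$ lower bound in the second inclusion follow immediately with explicit constants. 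The Besov dilation scaling $\norm{\varphi_i}_{s,p,q}\asymp d^{s+1/2-1/p}\norm{\varphi}_{s,p,q}$ and the cancellation under $d^s = (C_0/C_1)L/\epsilon_1$ are also correct.

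There is, however, a genuine gap in the superposition step. You assign the derivative bound $\omega_r(f,h)_p\leq h^r\norm{f^{(r)}}_p$ to the regime $h\gtrsim 1/d$, but that bound is only useful for \emph{small} $h$. For $h\gtrsim 1/d$ the integral
\begin{equation}
\int_{1/d}^{1/r}\left(\frac{h^r\norm{f_{\varphi,v}^{(r)}}_p}{h^s}\right)^q\frac{dh}{h}
\;\asymp\;\norm{f_{\varphi,v}^{(r)}}_p^q
\;\asymp\; d^{q(r+1/2-1/p)}\norm{\varphi^{(r)}}_p^q\norm{v}_p^q,
\end{equation}
because $(r-s)q>0$ makes the $h$-integral $\Theta(1)$ at the upper end. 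This overshoots the target $d^{q(s+1/2-1/p)}$ by a factor $d^{q(r-s)}$, so the bound you quote does not follow. The fix is to use the trivial estimate $\omega_r(f_{\varphi,v},h)_p\leq 2^r\norm{f_{\varphi,v}}_p = 2^r d^{1/2-1/p}\norm{\varphi}_p\norm{v}_p$ in the regime $h\gtrsim 1/d$; integrating that against $h^{-sq-1}\,dh$ over $(c/d,\infty)$ produces $\asymp d^{sq}\cdot d^{q(1/2-1/p)}\norm{v}_p^q$, which is the right order. For $h\lesssim 1/d$ your cell-by-cell splitting (which exploits the buffer $\supp\varphi\subset(0,1)$ so that any $r$-span of length $\leq 2\delta/d$ meets at most one $\supp\varphi_i$) already gives $\omega_r(f_{\varphi,v},h)_p = d^{1/2-1/p}\norm{v}_p\,\omega_r(\varphi,dh)_p$, whose Besov integral scales as $d^{q(s+1/2-1/p)}\norm{v}_p^q$ after the substitution $h'=dh$ --- so the derivative bound is not even needed there. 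With the two regimes matched up this way your argument closes; as written, the second regime does not.
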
\begin{proof}
Let $h(s,p)=d^{s+1/2-1/p}$, then Equation (4.2) of \citet{ingsterTestingHypothesisWhich2001} states that \begin{equation}
C_0 \cdot \norm{f_{\varphi,v}}_p \leq \norm{v}_p \cdot h(0,p) \leq C_1 \cdot \norm{f_{\varphi,v}}_p \label{eq:Lp_ineq}.
\end{equation} Thus, it holds that \begin{equation}
\norm{v}_p  \leq \tilde{\epsilon}_0 \implies \norm{f_{\varphi,v}}_p \leq \epsilon_0,
\end{equation} which implies that $\GWnullTilde \subseteq \GWnull$. Furthermore, we have that \begin{equation}
\norm{v}_p  \geq \tilde{\epsilon}_1 \implies \norm{f_{\varphi,v}}_p \geq \epsilon_1 \period
\end{equation} By Equation (4.1) of \citet{ingsterTestingHypothesisWhich2001}, it holds that \begin{align}
C_0 \cdot \norm{f_{\varphi,v}}_{s,p,q} \leq \norm{v}_{p} \cdot h(s,p) \leq C_1 \cdot \norm{f_{\varphi,v}}_{s,p,q}  \period\label{eq:besov_ineq}
\end{align}Thus, we guarantee that $\norm{f_{\varphi,v}}_{s,p,q} \leq L$ and $\GWaltTilde \subseteq \GWalt$ whenever \begin{align}
\norm{v}_p \leq \tilde{\epsilon}_1 = CC_0 \cdot h(s,p)^{-1},
\end{align} which is implied by \begin{equation}\label{eq:alternative_constraints}
C_1 \cdot h(0,p)^{-1} \cdot \epsilon_1 =   C_0C \cdot h(s,p)^{-1},
\end{equation} or equivalently \begin{equation}
d^s = \frac{C_0}{C_1} \cdot L \cdot \epsilon_1^{-1}.
\end{equation}
\end{proof}

\section{Critical separation for the Density model}\label{sec:DensityEquivalence}


In this section, we prove Lemma \ref{lemma:DensityEquivalence}, which provides upper and lower bounds for the critical separation of the density model. The proof of the lower bound depends on Lemma \ref{lemma:discretization_lb_multinomial}, which is stated and proved at the end of this section.

\begin{proof}[Proof of Lemma \ref{lemma:DensityEquivalence}]

\textbf{Upper bound.} Assume that we have access to $n$ observations from a density \eqref{eq:density_model} \begin{equation}
X_1,\dots,X_n \iid f \where f \in \D.
\end{equation} Our goal is to test the hypotheses \eqref{eq:density_testing} \begin{equation}
H_0: f \in \Dnull \vs H_1 : f \in \Dalt,
\end{equation} by using a tolerant test for Multinomial distributions.

Let $\{I_i\}_{i=1}^d$ be a uniform partition of $[0,1]$ into $d$ intervals of length $1/d$: \begin{equation}
I_i = \left[\frac{i-1}{d},\frac{i}{d}\right] \for 1\leq i \leq d
\end{equation} and define the functions \begin{equation}
\varphi_i = d^{1/2} \cdot \delta_{I_i} \for 1\leq i \leq d.
\end{equation} It follows that $\{\varphi_i\}_{i=1}^d$ is an orthonormal basis in $L_2[0,1]$: \begin{equation}
\inner{\varphi_i}{\varphi_j}_{L_2[0,1]} = d \int \delta_{I_i} \cdot \delta_{I_j} =  \delta_{i=j}
\end{equation} Define the following $d$ observations:
\begin{equation}
Y_i = \sum_{j=1}^n \delta_{I_i}(X_j)\quad  1\leq i\leq d
\end{equation} For any $f \in L_2[0,1]$, let $f_\varphi$ be its projection into $\{\varphi\}_{i=1}^d$. Namely, \begin{equation}
f_\varphi = \sum_{i=1}^d \varphi_i \cdot v_i(f) \where v_i(f) = \inner{f}{\varphi_i}_{L_2[0,1]}=d^{1/2} \cdot F_i \textand F_i = \int_{I_i} f(u)\ du.
\end{equation} Consequently, we have that \begin{equation}
(Y_1,\dots,Y_d) \sim \Multi_d(n,F).
\end{equation} Since $\varphi=\{\varphi_i\}_{i=1}^d$ is a basis, under the null hypothesis $H_0: \norm{f-g}_p\leq \epsilon_0$, it holds that \begin{equation}
\epsilon_0 \geq \norm{f-g}_p \geq \norm{f_\varphi-g_\varphi}_p = h(0,p) \cdot \norm{v(f)-v(g)}_p = h(0,p) \cdot d^{1/2} \cdot \norm{F-G}_p.
\end{equation} where $h(0,p)=d^{1/2-1/p}$. Thus, under the null hypothesis, it holds that \begin{equation}
\norm{F-G}_p \leq
\tilde{\epsilon}_0 \where \tilde{\epsilon}_0 = d^{-1/2} \cdot h(0,p)^{-1} \cdot \epsilon_0
\end{equation}

Under the alternative hypothesis $H_1: \epsilon_1 \leq \norm{f-g}_p$ and $\norm{f-g}_{s,p,q}\leq L$. By Lemma \ref{lemma:projection_error}, it follows that \begin{equation}
\norm{f_\varphi-g_\varphi}_p \geq C_2 \cdot \epsilon_1 - C_3 \cdot L \cdot d^{-s} = \frac{C_2}{2} \epsilon_1 \quad \for  d = \left[\frac{C_3}{C_2/2}\cdot L\cdot \frac{1}{\epsilon_1}\right]^{1/s}
\end{equation}
Furthermore, by Equation (4.2) of \citet{ingsterTestingHypothesisWhich2001}, it holds that \begin{equation}
C_0 \cdot \norm{f_{\varphi}-f_\varphi}_p \leq d^{1/2}\cdot \norm{F-G}_p \cdot h(0,p).
\end{equation} Consequently, under the alternative hypothesis, it holds that \begin{equation}
\norm{F-G}_p \geq \tilde{\epsilon}_1  \where \tilde{\epsilon}_1 = \frac{C_0C_2}{2} \cdot d^{-1/2} \cdot h(0,p)^{-1} \cdot \epsilon_1.
\end{equation}

In summary, we have reduced the tolerant testing under the density model to tolerant testing under the Multinomial model
\begin{align}\label{eq:multinomial_testing_lp}
&\Given Y \sim \Multi_d(n,F)\\
&\Test H_0: \norm{F-G}_p \leq \tilde{\epsilon}_0 \vs H_1: \norm{F-G}_p \geq \tilde{\epsilon}_1
\end{align}

Let $\tilde{\psi}$ be the minimax test statistic for \eqref{eq:multinomial_testing_lp}. We define the following test for hypotheses \eqref{eq:density_testing} \begin{equation}
\psi(X,\epsilon_0) = \tilde{\psi}(Y,\tilde{\epsilon}_0)
\end{equation} It follows that the hypotheses \eqref{eq:density_testing} can be consistently distinguished for \begin{equation}
\epsilon_1 \st \epsilon_1^*(\tilde{\epsilon}_0,\M_d)-\tilde{\epsilon}_0 \leq \tilde{\epsilon}_1 - \tilde{\epsilon}_0 \asymp d^{-1/2}\cdot h(0,p)^{-1} \cdot \left(\epsilon_1-\epsilon_0\right)
\end{equation} where $d \asymp \epsilon_1^{-1/s}$. Hence, we have that \begin{equation}
\epsilon_1^*(\epsilon_0,\D)-\epsilon_0 \lesssim \left(\epsilon_1^*(\tilde{\epsilon}_0,\M_d)-\epsilon_0\right) \cdot h(0,p) \cdot d^{1/2}
\end{equation} where $d^{-s} \asymp \epsilon_1^*(\tilde{\epsilon}_0,\M_d) \cdot d^{1/2} \cdot h(0,p)$.

\textbf{Lower bound.} Using the null and alternative hypotheses sets in Lemma \ref{lemma:discretization_lb_multinomial}, tolerant testing under the density model \eqref{eq:density_testing} is at least as hard as tolerant testing under the following Multinomial model: \begin{align}
&\Given (Y_1,\dots,Y_d) \sim \Multinomial_d(n,F) \quad \\
&\Test H_0: \norm{F-G}_p \leq \tilde{\epsilon}_0 \vs H_1: \norm{F-G}_p = \tilde{\epsilon}_1
\end{align} where \begin{align}
\tilde{\epsilon}_0 = C_0  \cdot  h(0,p)^{-1} \cdot \epsilon_0
\comma \tilde{\epsilon}_1 &= C_1 \cdot h(0,p)^{-1} \cdot d^{-1/2} \cdot \epsilon_1
\textand d = \left[\frac{C_0}{C_1}\cdot L\right]^{1/s} \cdot \epsilon_1^{-1/s},
\end{align} $h(0,p)=d^{1/2-1/p}$, and $C_0$ and $C_1$ are positive constants. Fixing $\epsilon_0$, the hypotheses cannot be consistently distinguished for any \begin{equation}
\epsilon_1 \st \epsilon_1^*(\tilde{\epsilon}_0,\Multinomial_d)-\tilde{\epsilon}_0 \geq \tilde{\epsilon}_1 - \tilde{\epsilon}_0 \asymp h(0,p)^{-1} \cdot d^{-1/2} \cdot \left(\epsilon_1-\epsilon_0\right)
\end{equation} where $d \asymp \epsilon_1^{-1/s}$. Hence, we have that \begin{align}
\epsilon_1^*(\epsilon_0,\D)-\epsilon_0 &\gtrsim \left(\epsilon_1^*(\tilde{\epsilon}_0,\Multinomial_d)-\tilde{\epsilon}_0\right) \cdot h(0,p)
\end{align}where $d^{-s} \asymp \epsilon_1^*(\tilde{\epsilon}_0,\Multinomial_d) \cdot h(0,p) \cdot d^{1/2}$.

\end{proof}

We conclude this section by proving Lemma \ref{lemma:discretization_lb_multinomial}.

\begin{lemma}\label{lemma:discretization_lb_multinomial}
Let $\varphi$ be any $(s+2)$-continuously differentiable function supported on $(0,1)$ with $\norm{\varphi}_2=1$ and $\norm{\varphi}_1=1$, and define the following orthonormal system in $L_2[0,1]$ with disjoint supports  \begin{equation}
\tilde{\varphi}_i(t) = d^{1/2} \cdot \varphi(d\cdot t-i+1)\cdot \delta_{I_i}(t)\where I_i = \left[\frac{i-1}{d},\frac{i}{d}\right] \for 1\leq i \leq d.
\end{equation} Henceforth, for any $F,G\in \Delta^d$, let \begin{equation}
f_\varphi = \sum_{i=1}^d \varphi_i \cdot F_i \textand g_\varphi = \sum_{i=1}^d \varphi_i \cdot G_i \where \varphi_i =  \tilde{\varphi}_i \cdot d^{1/2}.
\end{equation}

For $s\geq 0$, $\epsilon_1 \geq \epsilon_0 \geq0$, define the sets \begin{align}
\DnullTilde &= \left\{f_{\varphi} \in \D : F\in \Delta^d \textand  \norm{F-G}_p  \leq \tilde{\epsilon}_0  \right\} \\
\textand \DaltTilde &= \left\{f_{\varphi} \in \D : F\in \Delta^d \textand  \norm{F-G}_p=\tilde{\epsilon}_1 \right\}
\end{align} where \begin{align}
\tilde{\epsilon}_0
= C_0  \cdot d^{1/p-1}  \cdot \epsilon_0, \tilde{\epsilon}_1
= C_1 \cdot d^{1/p-1} \cdot \epsilon_1,
\textand d = \left[\frac{C_0}{C_1}\cdot L\right]^{1/s} \cdot \epsilon_1^{-1/s},
\end{align}
and $C_0$ and $C_1$ are positive constants. Consider hypotheses \eqref{eq:density_testing}, it follows that \begin{equation}
\DnullTilde  \subseteq \Dnull \textand \DaltTilde \subseteq \Dalt.
\end{equation}
\end{lemma}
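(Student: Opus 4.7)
The plan is to mirror the structure of the proof of \zcref{lemma:discretization_lb} for the Gaussian white noise model, with a careful accounting of the extra normalization factor introduced by the Multinomial reparametrization. The key structural observation is that the factor $d^{1/2}$ built into $\varphi_i=d^{1/2}\tilde\varphi_i$ can be absorbed into the ``coefficient'' vector. Concretely, setting $v_i=d^{1/2}(F_i-G_i)$, we have
\begin{equation}
f_\varphi-g_\varphi=\sum_{i=1}^d(F_i-G_i)\,\varphi_i=\sum_{i=1}^d v_i\,\tilde\varphi_i,
\end{equation}
so $f_\varphi-g_\varphi$ is exactly the ``reconstruction'' studied by \citet{ingsterTestingHypothesisWhich2001}, and we can directly invoke his equations (4.1) and (4.2) with $h(s,p)=d^{s+1/2-1/p}$.

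Given this, the proof reduces to three short computations. First, applying (4.2) to $v$ yields
\begin{equation}
C_0\,\norm{f_\varphi-g_\varphi}_p \;\leq\; d^{1/2}\,h(0,p)\,\norm{F-G}_p \;=\; d^{1-1/p}\,\norm{F-G}_p \;\leq\; C_1\,\norm{f_\varphi-g_\varphi}_p.
\end{equation}
Under the null, $\norm{F-G}_p\leq\tilde\epsilon_0=C_0\,d^{1/p-1}\epsilon_0$ implies $C_0\,\norm{f_\varphi-g_\varphi}_p\leq C_0\epsilon_0$, hence $\norm{f_\varphi-g_\varphi}_p\leq\epsilon_0$; under the alternative, $\norm{F-G}_p=\tilde\epsilon_1=C_1\,d^{1/p-1}\epsilon_1$ together with the right-hand side of the display gives $\norm{f_\varphi-g_\varphi}_p\geq\epsilon_1$. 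This handles both the null inclusion $\DnullTilde\subseteq\Dnull$ and the separation part of the alternative inclusion.

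Second, to check the smoothness constraint in $\Dalt$, I apply (4.1) to $v$:
\begin{equation}
\norm{f_\varphi-g_\varphi}_{s,p,q} \;\leq\; C_0^{-1}\,d^{1/2}\,h(s,p)\,\norm{F-G}_p \;=\; C_0^{-1}\,d^{1-1/p+s}\,\tilde\epsilon_1 \;=\; \tfrac{C_1}{C_0}\,d^s\,\epsilon_1,
\end{equation}
which is bounded by $L$ precisely because the statement chooses $d=[(C_0/C_1)L]^{1/s}\epsilon_1^{-1/s}$.

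I do not expect a real obstacle: the Besov and $L_p$ equivalences from (4.1)--(4.2) do the heavy lifting, and the only genuine work is confirming the change in exponent from $d^{1/2-1/p}$ (as in the GWN case) to $d^{1-1/p}$, which arises from the extra $d^{1/2}$ rescaling of $\varphi_i$ vs.\ $\tilde\varphi_i$ and which is precisely why the statement uses $\tilde\epsilon_0\propto d^{1/p-1}\epsilon_0$ rather than $d^{1/p-1/2}\epsilon_0$. The hypothesis that $\varphi$ is $(s+2)$-continuously differentiable with unit $L_1$ and $L_2$ norms ensures both that (4.1) applies and that $\int\varphi_i=1$ so the Multinomial-to-density embedding $F\mapsto f_\varphi$ is measure-preserving; membership in $\D$ is already built into the definitions of $\DnullTilde,\DaltTilde$, so no additional verification is required.
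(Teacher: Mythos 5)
Your proof is correct and takes essentially the same route as the paper: the paper also verifies the normalization $\int f_\varphi = 1$ (using $\|\varphi\|_1=1$) and then cites "a similar argument to the proof of Lemma~\ref{lemma:discretization_lb}", which is exactly the application of Ingster's equations (4.1)--(4.2) after the reparametrization $v_i = d^{1/2}(F_i - G_i)$ that you spell out. Your write-up makes explicit the exponent bookkeeping ($d^{1/2-1/p}\to d^{1-1/p}$) that the paper leaves implicit, which is helpful but not a different argument.
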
\begin{proof}
Note that \begin{align}
\int f_\varphi = 1 &\impliedby 1=\sum_{i=1}^d \varphi_d \cdot F_i\\
&\impliedby F \in \Delta^d \textand \int_{I_i} \varphi_i(t)\ dt = 1\\
&\impliedby F \in \Delta^d \textand \int_{I_i} \varphi(d\cdot t-i+1)\ dt = \frac{1}{d}\\
&\impliedby F \in \Delta^d \textand \int_{0}^1 |\varphi(t)|\ dt = 1\period
\end{align} Analogously, we have that $\int g_\varphi = 1$. Following a similar argument to the proof of Lemma \ref{lemma:discretization_lb}, it follows that $
\norm{f_\varphi-g_\varphi}_p \leq \epsilon_0$ for $f_\varphi \in \DnullTilde$. Thus, $\DnullTilde \subseteq \Dnull$. Furthermore, $\norm{f_\varphi-g_\varphi}_p = \epsilon_1$ and $\norm{f_\varphi-g_\varphi}_{s,p,q} \leq L$ for $f_\varphi \in \DaltTilde$. Thus, $\DaltTilde \subseteq \Dalt$.
\end{proof}

\section{Critical separation for the Poisson sequence model}\label{sec:PoissonSequenceEquivalence}

This section reviews the known fact that Multinomial and Poisson sequence models are equivalent when restricted to the simplex. The equivalence is based on the Poissonization trick  \citep{canonneTopicsTechniquesDistribution2022,kimConditionalIndependenceTesting2023}, and the testing algorithm proposed by \citet{neykovMinimaxOptimalConditional2021}.

Henceforth, restrict $\lambda$ and $\lambda_0$ to be a positive $d$-dimensional probability vectors \begin{equation}
\lambda,\lambda_0 \in \Delta^d,\ \min_{1\leq i\leq d}\lambda_i >0  \textand \min_{1\leq i\leq d}\lambda_{0,i} >0.
\end{equation} Furthermore, let $\epsilon_1^*(n,\Poisson_d)$ denote the critical separation for tolerant testing under the Poisson sequence model \begin{align}\label{eq:poisson_testing}
&\Given X_i \sim \Poi(n \cdot \lambda_i) \quad \for 1\leq i \leq d \\
&\Test H_0: (\lambda-\lambda_0) \in V_0 \vs
H_1: (\lambda-\lambda_0) \in V_1
\end{align} and let $\epsilon_1^*(n,\Multi_d)$ denote the critical separation for tolerant testing under the Multinomial model \begin{align}\label{eq:multinomial_testing}
&\Given X_1,\dots,X_n \iid \Multi(1,\lambda) \where \Multi(1,\lambda) = \sum_{i=1}^d\delta_{i}\cdot \lambda_i\\
&\Test H_0: (\lambda-\lambda_0) \in V_0 \vs
H_1: (\lambda-\lambda_0) \in V_1.
\end{align} It holds that \begin{equation}
\epsilon_1^*(n,\Poisson_d) \asymp \epsilon_1^*(n,\Multinomial_d)
\end{equation}

\textbf{From Multinomial to Poisson.} Let $\psi_n$ be a test for problem \eqref{eq:poisson_testing} that controls type-I and type-II error by $\alpha$ and $\beta$ respectively. Furthermore, for $k \leq n$, define \begin{equation}
X^k = \sum_{i=1}^{k}X_i \sim \Multi(k,\lambda),
\end{equation} and note that \begin{equation}
X^{\tilde{n}}| \tilde{n}\sim \Multi(\tilde{n},\lambda) \textand X^{\tilde{n}} \sim \Poi(n\cdot \lambda) \for \tilde{n} \sim \Poi(n/2).
\end{equation} We proceed as follows: sample $\tilde{n} \sim \Poi(n/2)$, $\tilde{n} > n$, accept the null hypothesis. Otherwise, return $\psi_{\tilde{n}}(X^{\tilde{n}})$. That is, we construct the following randomized test \begin{equation}
\psi(X) = \psi_{\tilde{n}}(X^{\tilde{n}})\cdot I( \tilde{n} \leq n) \quad \where \tilde{n} \sim \Poi(n/2).
\end{equation} First note that the type-I error is bounded by $\alpha$: \begin{align}
P_{X,\tilde{n}}(\tilde{\psi}(X)=1)\leq P_{X^{\tilde{n}} \sim \Poi(n \cdot \lambda)}(\psi_{\tilde{n}}(X^{\tilde{n}})=1)
\leq \alpha
\end{align} where the last inequality follows from the facts that $X^{\tilde{n}} \sim \Poi(n\cdot \lambda)$ and $\psi_{\tilde{n}}$ is a valid test. Furthermore, the type-II error is bounded by $2\beta$. To see that, first note that by the concentration of measure of Poisson random variables, and the fact that $\psi_{\tilde{n}}$ is a powerful test, we have that
\begin{align}
P_{X,\tilde{n}}(\tilde{\psi}(X)=0)=P_{X^{\tilde{n}} \sim \Poi(n \cdot \lambda)}(\psi_{\tilde{n}}(X^{\tilde{n}})=0) + P_{\tilde{n} \sim \Poi(n/2)}(\tilde{n} > n)
\leq \beta + e^{-n/8}.
\end{align} Consequently, \begin{equation}
P_{X,\tilde{n}}(\tilde{\psi}(X)=0) \leq 2\beta \quad \for n \geq 8\log(1/\beta).
\end{equation} In summary, we have that \begin{equation}
\epsilon_1^*(n,\Multinomial_d) \gtrsim \epsilon_1^*(n,\Poisson_d) \for n \gtrsim 1.
\end{equation}

\textbf{From Poisson to Multinomial.} Let $
X = (X_1,\dots,X_d)$, it holds that \begin{equation}
X | K \sim \Multi(K,\lambda) \where K = \sum_{i=1}^d X_i\sim \Poi(n).
\end{equation} Let $\psi_k$ be a test that controls type-I and II errors in \eqref{eq:multinomial_testing} for $n=k$, and define the test  \begin{equation}
\psi(X) = \psi_{K}(X)\cdot I(\mathcal{A}) \where \mathcal{A} = \left\{\frac{n}{2} \leq K \leq \frac{3}{2}n\right\}.
\end{equation} For type-I error, it follows that \begin{align}
P_{X}(\tilde{\psi}(X)=1) \leq E_K \left[I(\mathcal{A})\cdot E_{X|K}\left[\psi_K(X)\right]\right]
\leq \alpha,
\end{align} where in the last inequality we used the fact that $X|K\sim \Multi(K,\lambda)$ and $\psi_K(X)$ is a valid test. Finally, the type-II error is at most doubled \begin{align}
P_{X}(\psi(X)=0) &= P_{X}(\psi(X)=1|\mathcal{A}) + P(\mathcal{A}^c)\\
&\leq E_K \left[I(\mathcal{A})\cdot E_{X|K}\left[1-\psi_K(X)\right]\right] + 2\exp(-n/12) \\
&\leq 2\beta \for n \geq 12\cdot\log(2/\beta),
\end{align} where we used the concentration of measure for the Poisson random variable $K$ and the fact that $\psi_K(X)$ is powerful.

Note that we have mapped one Poisson hypothesis testing problem to a set of Multinomial hypothesis testing problems. Thus, the overall difficulty is dominated by the hardest of the problems. Namely, \begin{equation}
\epsilon_1^*(n,\Poisson) \gtrsim \max_{\frac{n}{2}\leq k \leq \frac{3}{2}n} \epsilon_1^*(k,\Multinomial_d) = \epsilon_1^*(\frac{n}{2},\Multinomial_d) \quad \for n \gtrsim 1
\end{equation}

\section{Besov norm}\label{sec:besov_norm}

We refer the reader to section 4.3 of \citet{gineMathematicalFoundationsInfinitedimensional2016} for an introduction to the Besov norm. In the following, we state the definition of the Besov norm, followed up by a lemma that indicates that projections of functions in a Besov ball are well behaved, which is needed for our lower-bound arguments for the Gaussian white-noise model, see Section \ref{sec:GaussianWhiteNoiseEquivalence}, and the density model, see  Section \ref{sec:DensityEquivalence}.

The Besov norm is defined as
\begin{equation}
\norm{f}_{s,p,q} = \norm{f}_p + \begin{cases}
\left[\int_0^\infty\ \left(\frac{w_{r,p}(f,u)}{u^{s}}\right)^{q}\ \frac{du}{u} \right]^{1/q} &\textif 1\leq q <\infty \\
\sup_{0<u<1} \frac{w_{r,p}(f,u)}{u^{s}} &\textif q = \infty
\end{cases}
\end{equation} where $\norm{f}_p$ is the norm in $L_p(0,1)$ and $w_{r,p}(f,u)$ is the $L_p$ norm of the $r$th difference of $f$: \begin{equation}
w_{r,p}(f,u)=\int_{0}^{1-ru} |\left(\Delta_{u}^{(r)}f\right)(t)|^p\ dt \where  \left(\Delta_{u}^{(r)}f\right)(t)=\sum_{k=0}^rC_r^k(-1)^kf(t+ku).
\end{equation}

The previous definitions allow us to state that low-frequency projections of functions that live in a Besov space preserve the size of the $L_p$ norm. The following lemma can be found in inequality 5.16 of part III of \citet{ingsterAsymptoticallyMinimaxHypothesisI1993} or inequality 4.10 of \citet{ingsterTestingHypothesisWhich2001}. A simple proof is provided in proposition 2.16 of \citet{ingsterNonparametricGoodnessofFitTesting2003}.

\begin{lemma}[Projection of functions on a Besov ball]\label{lemma:projection_error} Let $p\geq1$ and $s > 0$. For any $f\in L_2(0,1)$ such that $\norm{f}_{s,p,q} \leq L$. Furthermore, let $\varphi=\{\varphi_i\}_{i=1}^d$ be an $d$-dimensional orthonormal system in $L_2[0,1]$, and define the $L_2$ projection of $f$ onto $\varphi$: \begin{equation}
f_\varphi = \sum_{i=1}^d \varphi_i \cdot v_i  \where v_i = \inner{f}{\varphi_i}_{L_2[0,1]}.
\end{equation} It holds that \begin{equation}\label{eq:projection_error}
\norm{f_\varphi}_p \geq C_2 \cdot \norm{f}_p - C_3 \cdot L \cdot d^{-s}
\end{equation} where $C_2$ and $C_3$ are positive constants that depend only on $(p,s)$.\end{lemma} Note that \eqref{eq:projection_error} is weaker than guaranteeing a bound on the approximation error \begin{equation}
\norm{f-f_\varphi}_p\leq C' \cdot L \cdot d^{-s}
\end{equation} which doesn't hold for $s > 1$. In other words, $f_\varphi$ might be a bad approximation for $f$ but still have good projection properties, which allow us to construct consistent tests.


\etocdepthtag.toc{mtreferences}
\addcontentsline{toc}{section}{References}
\bibliography{paper}

\end{document}